\documentclass[reqno,11pt]{amsart}

\pdfoutput1

\usepackage[utf8]{inputenc}
\usepackage{pigpen}
\usepackage{multicol}

\DeclareFontEncoding{LS1}{}{}
\DeclareFontSubstitution{LS1}{stix2}{m}{n}
\DeclareSymbolFont{symbols4}      {LS1}{stix2bb}   {m}{it}
\DeclareMathSymbol{\smblkdiamond}{\mathord}{symbols4}{"E4}
\newcommand{\blackdiamond}{\mathbin{\smblkdiamond}}

\usepackage[bbgreekl]{mathbbol} 
\DeclareSymbolFontAlphabet{\mathbb}{AMSb} 

\usepackage{enumerate}
\usepackage{amsmath}
\usepackage{amssymb}
\usepackage{mathtools}

\usepackage[hidelinks]{hyperref}
\makeatletter
\renewcommand{\eqref}{%
             \@ifstar
                  \eqrefStar%
                  \eqrefNoStar%
}
\newcommand{\eqrefStar}[1]{\textup{\tagform@{\ref*{#1}}}}
\newcommand{\eqrefNoStar}[1]{\textup{\tagform@{\ref{#1}}}}
\makeatother

\usepackage{mathrsfs}

\usepackage{thmtools}
\usepackage{thm-restate}

\usepackage{tikz}
\usetikzlibrary{decorations.markings}

\usepackage{tikz-cd}

\setlength{\topmargin}{-15mm}
\setlength{\oddsidemargin}{0mm}
\setlength{\evensidemargin}{0mm}
\setlength{\textwidth}{170mm}
\setlength{\textheight}{248mm}


\setlength{\marginparwidth}{1.5cm}
\newcommand\mnote[1]{} 

\newcommand{\acomment}[1]{} 



\newcounter{thecounter}
\numberwithin{thecounter}{section}
\newtheorem{lemma}[thecounter]{Lemma}
\newtheorem{prop}[thecounter]{Proposition}
\newtheorem{thrm}[thecounter]{Theorem}
\newtheorem{cor}[thecounter]{Corollary}

\newtheorem{question}[thecounter]{Question}

\newtheorem{Th}{Theorem}

\theoremstyle{definition}

\newtheorem{example}[thecounter]{Example}

\newtheorem{rem}[thecounter]{Remark}

\newtheorem{notation}[thecounter]{Notation}
\newtheorem{defn}[thecounter]{Definition}

\newtheorem{constr}[thecounter]{Construction}

\newtheorem{terminology}[thecounter]{Terminology}

\numberwithin{equation}{section}



\newcommand{\map}{\operatorname{map}}

\newcommand{\Fibre}{\operatorname{Fib}} 

\DeclareMathOperator{\ho}{Ho}


\DeclareMathOperator*{\colim}{colim}
\newcommand{\aut}{\mathrm{aut}}
\newcommand{\Aut}{\mathrm{Aut}}
\newcommand{\Rep}{\operatorname{Rep}}

\newcommand{\Out}{\mathrm{Out}}
\newcommand{\End}{\operatorname{End}}
\newcommand{\rk}{\operatorname{rk}}

\newcommand{\Hom}{\operatorname{Hom}}
\DeclareMathOperator{\im}{Im}

\let\ker\undefined
\DeclareMathOperator{\ker}{Ker}

\newcommand{\Tor}{\operatorname{Tor}}
\newcommand{\id}{\mathrm{id}}

\newcommand{\diag}{\operatorname{diag}}

\newcommand{\holim}{\operatorname{holim}}
\newcommand{\hocolim}{\operatorname{hocolim}}

\newcommand{\lcom}{\hat{{}_\ell}}
\newcommand{\threecom}{\hat{{}_3}}
\newcommand{\fivecom}{\hat{{}_5}}
\newcommand{\twocom}{\hat{{}_2}}




\newcommand{\GL}{\mathrm{GL}}
\newcommand{\SO}{\mathrm{SO}}

\newcommand{\SU}{\mathrm{SU}}
\newcommand{\Sp}{\mathrm{Sp}}

\newcommand{\Spin}{\mathrm{Spin}}



\newcommand{\Q}{{\mathbb {Q}}}
\newcommand{\R}{{\mathbb {R}}}
\newcommand{\F}{{\mathbb {F}}}
\newcommand{\Z}{{\mathbb {Z}}}

\newcommand{\T}{{\mathbf {T}}}
\newcommand{\CP}{{\mathbb{C}P}}


\newcommand{\Sq}{\operatorname{Sq}}

\newcommand{\cZ}{{\mathcal{Z}}}



\newcommand{\beq}{\begin{eqnarray*}}
\newcommand{\eeq}{\end{eqnarray*}}

\newcommand{\tuborg}{\left\{\begin{array}{ll}}
\newcommand{\sluttuborg}{\end{array}\right.}

\newfont{\bm}{msbm10}

\newcommand{\Ob}{\operatorname{Ob}}

\usepackage[all,cmtip]{xy}

\newcommand{\cW}{{\mathcal W}}

\newcommand{\mynote}[1]{}

\newcommand{\oldphi}{\phi}

\let\oldphi\phi
\let\phi\varphi

\newcommand{\G}{{\mathbf G}}

\def\co{\colon\thinspace}




\newcommand{\smCat}{\mathbf{smCat}}

\newcommand{\Spectra}{\mathbf{Sp}}
\newcommand{\hpSpectra}{\mathbf{hpSp}}

\newcommand{\hpC}{\hp\calC}

\newcommand{\pT}{p\calT} 

\newcommand{\hpMod}{\mathbf{hpMod}}
\newcommand{\Mod}{\mathbf{Mod}}

\newcommand{\grMod}{\mathbf{grMod}}

\newcommand{\SpectralSequences}{\mathbf{SS}}

\newcommand{\Ex}{\ensuremath{\mathbb{E}\mathbf{x}}}


\newcommand{\shiftedH}[1]{{}^{#1}\bbH}

\newcommand{\pt}{\mathrm{pt}}
\newcommand{\ev}{\mathrm{ev}}

\newcommand{\cupprod}{\smallsmile} 

\DeclareMathOperator{\gr}{gr}

\newcommand{\incl}{\hookrightarrow}

\newcommand{\tensor}{\otimes}
\newcommand{\smashprod}{\wedge}
\newcommand{\extsmashprod}{\mathbin{\bar{\wedge}}}
\newcommand{\exttensor}{\mathbin{\bar{\tensor}}}

\newcommand{\stringprod}{\circ}

\newcommand{\tildestringprod}{\mathbin{\tilde{\stringprod}}}

\newcommand{\isom}{\cong}
\newcommand{\homot}{\simeq}

\newcommand{\homeom}{\approx}

\newcommand{\suspension}{\Sigma}
\newcommand{\loops}{\Omega}

\newcommand{\op}{\mathrm{op}}
\newcommand{\fw}{\mathrm{fw}}
\newcommand{\fop}{\mathrm{fop}}
\newcommand{\dfop}{\mathrm{u}}
\newcommand{\cart}{\mathrm{cart}}

\newcommand{\hp}{\mathbf{hp}}
\newcommand{\pr}{\mathrm{pr}}

\DeclareSymbolFont{bbold}{U}{bbold}{m}{n}
\DeclareSymbolFontAlphabet{\mathbbold}{bbold}
\newcommand{\bbOne}{\mathbbold{1}}


%
%
\makeatletter
\def\slashedarrowfill@#1#2#3#4#5{%
  $\m@th\thickmuskip0mu\medmuskip\thickmuskip\thinmuskip\thickmuskip
   \relax#5#1\mkern-7mu%
   \cleaders\hbox{$#5\mkern-2mu#2\mkern-2mu$}\hfill
   \mathclap{#3}\mathclap{#2}%
   \cleaders\hbox{$#5\mkern-2mu#2\mkern-2mu$}\hfill
   \mkern-7mu#4$%
}
\def\rightslashedarrowfill@{%
  \slashedarrowfill@\relbar\relbar\mapstochar\rightarrow}
\newcommand\xhto[2][]{%
  \ext@arrow 1579{\rightslashedarrowfill@}{#1}{#2}}
\makeatother


\makeatletter
\def\circarrowfill@#1#2#3#4#5{%
  $\m@th\thickmuskip0mu\medmuskip\thickmuskip\thinmuskip\thickmuskip
   \relax#5#1\mkern-7mu%
   \cleaders\hbox{$#5\mkern-2mu#2\mkern-2mu$}\hfill
   \mathclap{#3}\mathclap{#2}%
   \cleaders\hbox{$#5\mkern-2mu#2\mkern-2mu$}\hfill
   \mkern-7mu#4$%
}
\def\rightcircarrowfill@{%
  \circarrowfill@\relbar\relbar{\mkern1.8mu\circ}\rightarrow}
\newcommand\xoto[2][]{%
  \ext@arrow 1579{\rightcircarrowfill@}{#1}{#2}}
\makeatother

\newcommand{\oto}{\xoto{}}
\newcommand{\longoto}{\xoto{\quad}}

%
%
%
%
\def\circsym{%
  \mathchoice%
	{\raisebox{-2.75pt}[0pt][0pt]{$\displaystyle{\circ}$}}
	{\raisebox{-2.75pt}[0pt][0pt]{$\textstyle{\circ}$}}
    {\raisebox{-2.0pt}[0pt][0pt]{$\scriptstyle{\circ}$}}
    {\raisebox{-1.5pt}[0pt][0pt]{$\scriptscriptstyle{\circ}$}}
}
\def\circdec{\mathclap{\circsym}}

\newcommand{\xto}{\xrightarrow}
\newcommand{\xot}{\xleftarrow}

\newcommand{\longto}{\xto{\quad}}

\newcommand{\longincl}{\xhookrightarrow{\quad}}


\newcommand{\bbC}{\mathbb{C}}
\newcommand{\bbD}{\mathbb{D}}
\newcommand{\bbE}{\mathbb{E}}

\newcommand{\bbH}{\mathbb{H}}

\newcommand{\bbN}{\mathbb{N}}

\newcommand{\bbR}{\mathbb{R}}


\newcommand{\calA}{\mathcal{A}}

\newcommand{\calC}{\mathcal{C}}
\newcommand{\calD}{\mathcal{D}}

\newcommand{\calF}{\mathcal{F}}

\newcommand{\calH}{\mathcal{H}}

\newcommand{\calL}{\mathcal{L}}

\newcommand{\calP}{\mathcal{P}}

\newcommand{\calT}{\mathcal{T}}

\newcommand{\calV}{\mathcal{V}}
\newcommand{\calW}{\mathcal{W}}


\newcommand{\fH}{\mathfrak{H}}

\newcommand{\fS}{\mathfrak{S}}




\newcommand{\sk}{\operatorname{sk}}




\DeclareMathSymbol{\mathinvertedexclamationmark}{\mathord}{operators}{'074}
\DeclareMathSymbol{\mathexclamationmark}{\mathord}{operators}{'041}
\makeatletter
\newcommand{\raisedmathinvertedexclamationmark}{%
  \mathord{\mathpalette\raised@mathinvertedexclamationmark\relax}%
}
\newcommand{\raised@mathinvertedexclamationmark}[2]{%
  \raisebox{\depth}{$\m@th#1\mathinvertedexclamationmark$}%
}
\makeatother

\newcommand{\muet}{\bbtau'}

\newcommand{\BtGq}{B\, {}^{\tau}\!G(q)} 

\hyphenation{pa-ram-e-trized}

\newcommand{\pb}{\ar@{}[dr]|(0.33){\text{\pigpenfont J}}}
\newcommand{\pbop}{\ar@{}[dl]|(0.33){\text{\pigpenfont L}}}

\allowdisplaybreaks

\newcounter{saveenumi}

\pdfgentounicode=0 %

\usepackage{xr}
\externaldocument[U-]{umkehr-maps-arXiv}

\usepackage[
	style=alphabetic,
	backend=bibtex,
	maxnames=5,
	maxcitenames=5,
	doi=false,
    isbn=false,
    url=false,
    maxalphanames=4,
    minalphanames=3,
    maxbibnames=99,
]{biblatex}
\addbibresource{stringtop.bib} %

\renewbibmacro{in:}{}

\begin{document}
\title{String topology of finite groups of Lie type}

\date{\today}

\author{Jesper Grodal}
\author{Anssi Lahtinen}
\thanks{%
    \begin{tabular*}{0.97\textwidth}{@{}c@{}@{\extracolsep{\fill }}p{0.90\textwidth}@{}}
    \raisebox{-0.66\height}{%
        \begin{tikzpicture}
        	[
            	y=0.80pt, 
    			x=0.8pt, 
    			yscale=-1, 
    			inner sep=0pt, 
    			outer sep=0pt, 
    			scale=0.12
			]
            \definecolor{c003399}{RGB}{0,51,153}
            \definecolor{cffcc00}{RGB}{255,204,0}
            \begin{scope}[shift={(0,-872.36218)}]
            	\path[shift={(0,872.36218)},fill=c003399,nonzero rule] 
					(0.0000,0.0000) rectangle (270.0000,180.0000);
            	\foreach \myshift in 
                	{
    					(0,812.36218), 
						(0,932.36218), 
                		(60.0,872.36218), 
						(-60.0,872.36218), 
                		(30.0,820.36218), 
						(-30.0,820.36218),
                		(30.0,924.36218), 
						(-30.0,924.36218),
                		(-52.0,842.36218), 
						(52.0,842.36218), 
                		(52.0,902.36218), 
						(-52.0,902.36218)
					}
                    \path[shift=\myshift,fill=cffcc00,nonzero rule] 
                    	(135.0000,80.0000) -- 
    					(137.2453,86.9096) -- 
    					(144.5106,86.9098) -- 
    					(138.6330,91.1804) -- 
    					(140.8778,98.0902) -- 
    					(135.0000,93.8200) -- 
    					(129.1222,98.0902) -- 
    					(131.3670,91.1804) -- 
    					(125.4894,86.9098) -- 
    					(132.7547,86.9096) -- 
						cycle;
            \end{scope}
        \end{tikzpicture}%
    }
    &
Supported by the Danish National Research Foundation grants DNRF92 and DNRF151
and by the European Union's Horizon 2020 research and 
innovation programme under grant agreements No 800616 and 682922.
    \end{tabular*}\nopunct%
}

\address[J. Grodal]{Department of Mathematical Sciences, University of
Copenhagen, Denmark}
\email{jg@math.ku.dk}
\address[A. Lahtinen]{Copenhagen, Denmark}
\email{anssi@anssilahtinen.net}

\subjclass[2020]{%
20J06 
(Primary)
20D06,
55R35,
55P50 	
(Secondary)}

\begin{abstract}
We show that the mod $\ell$ cohomology of any finite group of Lie type
in characteristic $p \neq \ell$
admits the structure of a module over the mod $\ell$ cohomology of the free loop space
of the classifying space $BG$ of the 
corresponding compact Lie group $G$, via ring and module structures
constructed from string topology, \`a la Chas--Sullivan.
If a certain class in the homology of the
finite group of Lie type, arising from the fundamental class of $G$, is nontrivial, then this module structure is
free of rank one, providing a highly structured isomorphism
between the two cohomologies.
We verify the nontriviality of the class in a range of
cases, including all simply connected untwisted classical groups over $\F_q$,
with $q$ congruent to $1$ mod $\ell$. We also show how to deal with twistings and avoid
the congruence condition by replacing $BG$ by a certain $\ell$--compact fixed point group depending on the order of $q$ mod
$\ell$, without changing the finite group. With this modification, we know of no examples where the
class is trivial, raising the possibility of a general structural answer to an open question of Tezuka, who speculated 
about the existence of an isomorphism between the two cohomology  rings.

\end{abstract}

\maketitle

\setcounter{tocdepth}{2}
\tableofcontents

\section{Introduction}
The mod $\ell$ cohomology ring of a finite group of Lie type
$\G(\F_q)$ over a finite field $\F_q$ of characteristic $p\neq \ell$
occurs in many parts of mathematics,
from representation theory to $K$--theory. For large primes $\ell$,
the cohomology ring
was calculated by Quillen
\cite[\S2]{quillen71icm}.
When in addition $q\equiv 1$ mod $\ell$,
it can also be observed, rather mysteriously, to coincide with a different object,
namely the mod $\ell$ cohomology
ring of the free loop space
$LB \G(\bbC) = \map(S^1,B\G(\bbC))$,
or the homotopy equivalent classifying space $BL\G(\bbC)$
of the loop group $L\G(\bbC)$  studied 
e.g.\ in \cite{PS86} (taking either smooth or continuous loops \cite[Thm.~4.6]{stacey09}).
Here $\G$ is the underlying split reductive group scheme over $\Z$
and $\G(\bbC)$ denotes the complex points of $\G$ with the analytic
topology.
The abstract isomorphism
between $H^\ast(B\G(\F_q);\F_\ell)$ and $H^\ast(LB\G(\bbC);\F_\ell)$
arises 
as the consequence of the collapse of two spectral sequences with isomorphic $E_2$ pages
(both isomorphic to $\F_\ell[x_1, \ldots,x_r] \otimes
\Lambda_{\F_\ell}(y_1,\ldots,y_r) $ with $|x_i|=2d_i$ and $|y_i|=2d_i-1$, for $d_i$'s the
degrees of the root system of $\G$ \cite[\S3.7,Table~1]{humphreys90}).

When $\ell$ is small, more specifically when $\ell$ is a
torsion prime for $\G$, both 
$H^*(B\G(\F_q);\F_\ell)$ and $H^*(L B\G(\bbC);\F_\ell)$
remain of considerable interest but become very difficult to compute, 
and are in general unknown.
Calculations have also revealed isomorphic rings, even in the presence of $\ell$--torsion in $\G$, see
\cite{quillen72}, \cite{quillen71spin}, \cite{FP78},
\cite{kleinerman82}, \cite{MT91}, \cite{KK93}. 
Indeed, in an unpublished note \cite{tezuka98}, Tezuka asked
if the two cohomologies  \emph{always} agree, as long as $q \equiv 1$ mod $\ell$ 
(or $1$ mod $4$ in the case $\ell =2$).
Further calculations
supporting this ``Tezuka conjecture'' have been worked out in 
\cite{KMT00}, \cite{KK10}, \cite{KMN06}, \cite{KTY12},
\cite{Kameko-spin}, \cite{KajiMod2total} but still without pointing to any direct
structural way of relating the two objects.
The underlying spaces are certainly not homotopy
equivalent in any sense, as even the most basic cases show: For $\T$ a
one-dimensional torus, $B\T(\F_q) \homot B\Z/(q-1)$ 
is a rationally trivial space depending heavily on
$q$, whereas $L B\T(\bbC) \homot S^1 \times \CP^\infty$
is rationally nontrivial and independent of $q$.

\smallskip

The goal of this paper is to use string topology 
to establish a general structural relationship between 
$H^*(LB\G(\bbC);\F_\ell)$ and $H^*(B\G(\F_q);\F_\ell)$:
we show that $H^*(B\G(\F_q);\F_\ell)$ carries a natural module
structure over
$H^*(LB\G(\bbC);\F_\ell)$, equipped with a
Chas--Sullivan-type string product, compatible with much
additional structure. Furthermore the non-vanishing
of the image of the $d$--dimensional manifold fundamental class of the maximal compact subgroup of $\G(\bbC)$ in $H_d(B\G(\F_q);\F_\ell)$
implies that the module structure is free of rank $1$, 
which in turn yields an isomorphism between the two
cohomologies that respects a large
amount of extra structure.
We prove the non-vanishing of the image of the fundamental class for
a range of cases, including all simply connected non-exceptional groups, as long as $q$ satisfies the aforementioned congruence
condition. In fact we also show how to avoid the congruence condition
by instead modifying $\G$,
depending on the congruence of $q$ modulo $\ell$, without modifying
$\G(\F_q)$, in a technical sense that we explain below. In this formulation we know of no case where the
class vanishes, raising the question whether it is always
nonzero.

\smallskip

We work towards stating our theorems in more detail. With this aim we will first recall the homotopy theory of Lie groups at a prime $\ell$, i.e., $\ell$--compact groups, and their relationship to the $\ell$-local structure of finite groups of Lie type, followed by a recap on Chas--Sullivan string products. 
\subsubsection*{Lie groups and $\ell$--compact groups}\label{subsec:intro-Lie-lcg}
An $\ell$--compact group is an $\ell$--complete pointed connected space $BG$ whose based loop space $G = \Omega BG$ has
finite mod $\ell$ cohomology. 
Any compact
connected Lie group $K$ has an associated $\ell$--compact group,
obtained as the $\F_\ell$--homology localization $BK\lcom$.
For $\G$ a connected reductive algebraic group, there exists a homotopy equivalence
$BK \xrightarrow{\homot} B\G(\bbC)$ where $K$ is a maximal
compact subgroup of the complex algebraic group $\G(\bbC)$ (see e.g.\ \cite[\S8.1]{AGMV08}), so 
$B\G(\bbC)\lcom$ is a connected $\ell$--compact group as well.
An $\ell$--compact group $BG$ is called connected
if $G$ as a space is,  semisimple if in addition $\pi_1(G)$ is
finite, and the
dimension of $BG$ is defined as the degree $d$ of the 
top nontrivial mod $\ell$ homology group of $G$.

It turns out that $\ell$--compact groups admit a classification much like the classification of compact connected Lie groups, but everywhere replacing ordinary root data over $\Z$ with $\Z_\ell$--root data. More precisely,
the classification of  \cite{AGMV08,AG09} states that connected
$\ell$--compact groups, up to isomorphism, are in
one-to-one correspondence with root data $\bbD$ over the $\ell$--adic
integers $\Z_\ell$, up to isomorphism, and $\Out(BG) \isom \Out(\bbD_G)$. Here $\Out(BG)$
denotes the outer automorphism group of the $\ell$--compact group $BG$, 
i.e., free homotopy classes of 
self-homotopy equivalences of $BG$, and $\Out(\bbD_G)$ denotes the outer
automorphism group of the root datum $\bbD_G$ of $BG$
(see Appendix~\ref{app:pcg} and the survey \cite{grodal10}).
The process described above of passing from a compact Lie group, or complex algebraic group, to the associated $\ell$--compact group corresponds on the level of root data to tensoring with $\Z_\ell$.
Since the group of units $\Z_\ell^\times$ is
uncountable, every $\ell$--compact group has uncountably many outer
automorphisms given by multiplication by  $q \in
\Z_\ell^\times$ on $\bbD_G$. These correspond to  ``unstable Adams
operations'' $\psi^q$ on $BG$ which extend the classical operations
\cite{AM76, JMO92, JMO95}, and should be thought of as
``$q$--th power Frobenius maps'' in a sense that will be made precise below.

\subsubsection*{Finite groups of Lie type from $\ell$--compact groups}
Fundamental to our construction of the module structure 
on the cohomology $H^*(B\G(\F_q);\F_\ell)$ of a finite group of Lie type
is that, up to homotopy equivalence,
the space $B\G(\F_q)\lcom$ may be realized as a space of paths
in the $\ell$--compact group $BG = B\G(\bbC)\lcom$, as we will 
now explain. We start by recalling the definition of a 
general finite group
of Lie type. Let $\G$ be a connected split reductive algebraic group scheme
over $\Z$ with $\bar \F_p$--rational points $\G(\bar \F_p)$, and 
let $\sigma$ be a Steinberg endomorphism, i.e., an endomorphism of
$\G(\bar \F_p)$ as an algebraic group over $\bar \F_p$, which, when raised to some power,
becomes a standard Frobenius map $\psi^q\co \G(\bar\F_p) \to \G(\bar\F_p)$
induced by the $q$--th power map on $\bar \F_p$. 
A finite group of Lie type is a group (necessarily finite)
which arises as the fixed points
$\G(\bar \F_p)^\sigma$ for some such $\G$ and $\sigma$;
important examples are of course given by
the ``untwisted case'' where $\sigma =
\psi^q$ and $\G(\bar \F_p)^{\psi^q} = \G( \F_q)$. 
The classical groups
$\GL_n(\F_q)$, $\Sp_n(\F_q)$, etc.\ are 
examples of finite groups of Lie type; 
see e.g.\ \cite[\S22.1]{MT11} for more information. 

By a theorem of Friedlander--Mislin \cite[Thm.~1.4]{FM84} 
(generalizing work of Quillen \cite{quillen72}), there is a homotopy equivalence
\begin{equation}\label{lie}
	BG = B\G(\bbC)\lcom \xleftarrow{\ \homot\ } (B\G(\bar \F_p))\lcom
\end{equation}
for $\ell \neq p$
relating characteristic $p$ to characteristic $0$. 
Combining this with  another theorem of Quillen and Friedlander,
we obtain homotopy equivalences
\begin{equation}\label{lietype}
    (B\G(\bar \F_p)^\sigma)\lcom 
    \xrightarrow{\ \homot\ } 
    (B\G(\bar\F_p)\lcom)^{h\sigma} 
    \xrightarrow{\ \homot\ } 
    BG^{h\sigma}
\end{equation}
relating actual fixed points to homotopy fixed points
where $\sigma$ is a Steinberg endomorphism and
we have continued to write $\sigma$ for the self-equivalences
of $B\G(\bar\F_p)\lcom$ and $BG$ induced by $\sigma$.
See \cite[Thm.~2.9]{friedlander76},
\cite[Thm.~12.2]{FriedlanderEtaleHomotopy}, and also \cite[Thm.~3.1]{BrotoMoellerOliver}.
In particular,
in this picture the Frobenius map $\psi^q$ of $\G(\bar\F_p)$
corresponds to the map $\psi^q$ of $BG$ mentioned above.

In \eqref{lietype}, and throughout, by the homotopy fixed point space $X^{h\sigma}$ of a
self-map $\sigma\co X \to X$ we mean
the space
$X^{h\sigma} = \{ \alpha\co I \to X\ |\ \sigma\alpha(1) = \alpha(0)\}
$,
a subspace of the mapping space $X^I$. It also identifies with the
homotopy pullback
\begin{equation}
\vcenter{\xymatrix{ X^{h\sigma} \ar[r] \ar[d]\pb & X \ar[d]^{\Delta}\\
    X \ar[r]^-{(1,\sigma)} & X \times X}}
\end{equation}
  See Proposition~\ref{prop:fixed-point} for more information.

With the above dictionary in place, 
the rest of the paper is formulated in terms of
homotopy fixed points on $\ell$--compact groups.
Via \eqref{lie} and \eqref{lietype}, our results then imply
results about finite groups of Lie type in any characteristic $p \neq
\ell$. The space $BG^{h\sigma}$ is also quite interesting
for general $\ell$--compact groups
$BG$ and self-maps $\sigma$
not arising from algebraic groups and Steinberg endomorphisms, 
and is often known to be an ``exotic'' $\ell$--local
finite group, in the sense of \cite{BLO03jams}, although 
such results so far build on
case-by-case considerations (see \cite[Thm.~4.5]{LO02} \cite[Thm.~A]{BM07}).
We also note that for $\sigma = \psi^1 = \id$,
the  homotopy fixed point space $BG^{h\sigma}$ agrees with 
the free loop space $LBG$.
Thus, in view of the equivalence
$BG^{h\psi^q} \homot B \G( \F_q)\lcom$ afforded by equivalences 
\eqref{lie} and \eqref{lietype}, we may think of the 
free loop space $LBG$ as a ``finite group of Lie type over the field of one element.''

\subsubsection*{String module structure}
Chas and Sullivan 
\cite{ChasSullivan} and later authors, see e.g.\ \cite{Sullivan04, CG04},
observed that, for $X$ a closed oriented manifold, $H^*(LX)$ and
$H_*(LX)$ carry additional ``string'' products and coproducts. 
These structures are constructed, roughly speaking,
by reversing the direction of one of the two horizontal maps in the diagram
\begin{equation}\label{eq:stringtop101}
\xymatrix@R-15pt{
 \map(S^1 \amalg S^1,X) \ar@{=}[d] & \map(S^1 \vee S^1,
 X) \ar[r]\ar[l] \ar@{=}[d]& \map(S^1,X) \ar@{=}[d]\\
 LX \times LX & LX \times_X LX & LX }
 \end{equation}
by an umkehr map (also called degree-shifting transfer map,
wrong-way map, or ``integration along the fiber map''). Here the first map is induced by joining the circles at the basepoint and the
second map is induced by the pinch map. Furthermore,
versions of these constructions, allowing $X$ to be an orbifold,
Borel construction, Gorenstein space, stack, or classifying space, 
have been considered by a number of authors, see e.g.\ 
\cite{LUX08, FelixThomas, BGNX07, BGNX12, GS08, GW08, ChataurMenichi,HL15}.
In particular, Chataur and Menichi
\cite{ChataurMenichi} constructed a "string product" $\stringprod$ on the shifted cohomology $\bbH^\ast(LBG;\F_\ell) = H^{*+d}(LBG;\,\F_\ell)$ (putting non-trivial cohomology between degree $-d$ and infinity) which is associative and commutative, and turns out also to be unital. 
(See Theorem~\ref{thm:cmcomparison2} and Remark~\ref{rk:unitality} for a summary and
generalization of previous results.) 
The product should be thought
of as mixing the cup product
on $H^*(BG)$ with a dual of the Pontryagin product on $H_*(G)$, by
choosing an umkehr map for the right-hand map in
\eqref{eq:stringtop101}.  Note in particular that the
algebra structure makes $\bbH^*(LBG)$ a free module of rank one over
itself on the unit of the string product, which is a class in $H^d(LBG)$ that maps
non-trivially to $H^d(G)$.

Our goal in this paper is to connect the string topology of $LBG$
to the study of finite groups of Lie type via a module structure.
To understand where this module structure comes from, 
observe that we may concatenate a 
path with a loop starting at the end point of the path 
to obtain a new path with the same start and end points.
Hence we also have a diagram
\begin{equation}\label{eq:stringmodules101}
\xymatrix@R=0pt{
	LX \times X^{h\sigma}  
	&
	\ar[l]
	LX \times_X  X^{h\sigma}
	\ar[r] 
	&
	X^{h\sigma}
	\\	
	(\alpha,\beta) 
	&
	\ar@{|->}[l]
	(\alpha,\beta)
	\ar@{|->}[r]
	&
	\alpha \star \beta
}
\end{equation}
paralleling \eqref{eq:stringtop101}.
Here a point $(\alpha,\beta)$ in the space
$LX \times_X  X^{h\sigma}$
in the middle
may be pictured as follows:
\[
    \begin{tikzpicture}
    	[
    		baseline=0cm,
    		endpoint/.style={
    			circle,
    			draw=black,
    			fill=black,
    			inner sep=0pt,
    			minimum size=2.6pt
    		},
    		label/.style={
    			font=\scriptsize
    		},
    		->-/.style={
    			decoration={
    				markings,
    				mark=at position .52 with {\arrow{>}}
    			},
    			postaction={decorate}
    		}
    	]
    	\node [endpoint] (start) at (1.5,0) {};
    	\node [endpoint] (end) at (0,0) {};
    	\node [label] at (start) [above] {$\sigma(x)$};
    	\node [label] at (end) [above] {$x$};
    	
    	\draw [->-,thick] (start) .. controls (1.0,-0.26) and (0.6,0.2) .. (end)
	   		node [label,above,midway] {$\beta$}
		;
    	\draw [->-,thick] 
			(end) ..controls (-0.15,-0.1) and (-1.3,-1.0)  .. (-1.3,0)
			node [label,left] {$\alpha$}
    		..  controls (-1.3,1.0) and (-0.2,0.066) .. (end);
    \end{tikzpicture}
\]
In this paper, we show that, for $X = BG$ 
a semisimple $\ell$--compact group, we can choose an umkehr map for the
right-hand map in \eqref{eq:stringmodules101} in such a way that
$H^\ast(BG^{h\sigma};\,\F_\ell)$ becomes a module over 
$\bbH^\ast(LBG;\F_\ell)$ with remarkable properties. This again, via
\eqref{lie} and \eqref{lietype}, endows the cohomology groups of
finite groups of Lie type with the desired module structure.

\begin{Th}
\label{thm:mainresult} 
Let $\ell$ be a prime and let $BG$ be a semisimple $\ell$--compact group of
dimension $d$ with a self-map $\sigma\co BG \to BG$.
	The cohomology groups $H^\ast(BG^{h\sigma};\F_\ell)$ admit a
        $\bbH^\ast(LBG;\F_\ell)$--module structure
\[\stringprod\co \bbH^\ast(LBG;\F_\ell) \otimes H^\ast(BG^{h\sigma};\F_\ell)
  \longto H^\ast(BG^{h\sigma};\F_\ell)\]
via the above constructions, extending
the natural $H^*(BG;\F_\ell)$--module structure on these cohomology groups (Definition~\ref{def:stringprodandmod} and Corollary~\ref{cor:bulletalgandmodstr}).
Moreover, the string product on $\bbH^\ast(LBG)$ and
this module structure lift to the level of 
the Serre spectral sequences of the
evaluation maps
$LBG \to BG$ and $BG^{h\sigma} \to BG$
(Theorem~\ref{thm:ssforlbgbghsigma}).
\end{Th}

In good cases both Serre spectral sequences of Theorem~\ref{thm:mainresult} collapse at the $E_2$--pages, providing a
structured isomorphism between the $E_\infty$--pages, and hence a
structured isomorphism between $H^*(LBG)$ and $H^*(BG^{h\sigma})$.
In general the spectral sequences do not collapse, however, and indeed while the product
on $\bbH^*(LBG)$ is commutative (see Theorem~\ref{thm:cmcomparison2}),
the product on the $E_2$--page of the corresponding spectral sequence
in general is not,
forcing nontrivial differentials to appear.
See Remark~\ref{rk:noncommutativee2}.
Nevertheless, we show that the question whether
$H^*(BG^{h\sigma};\F_\ell)$ is free of rank one over
$\bbH^*(LBG;\F_\ell)$ boils down to a single class
being nonzero.
Consider the homotopy fibre sequence 
\begin{equation}\label{eq:fibseq}
	G \xrightarrow{\ i\ } BG^{h\sigma} \longto BG
\end{equation}
associated to the evaluation fibration 
$BG^{h\sigma} \to BG$, $\alpha \mapsto \alpha(1)$, where by definition
$G = \loops BG$.

\begin{defn}
\label{def:fundamentalclass} 
Given a connected $\ell$--compact group $BG$ and a self-map $\sigma\colon BG \to BG$,
we say that \emph{$BG^{h\sigma}$ has a $[G]$--fundamental class}
if the map $i_\ast \colon H_d(G)\to H_d(BG^{h\sigma})$
is nontrivial.
\end{defn}
As  $H_d(G)$ is one-dimensional, the existence of a $[G]$--fundamental class
is equivalent to the map $i^*\co
H^d(BG^{h\sigma}) \to H^d(G)$ being surjective, which in 
turn is equivalent to a
nontrivial class in $E^{0,d}_2$ in the cohomological Serre
spectral sequence of the fibration $G \to BG^{h\sigma} \to BG$ being a
permanent cycle. When $\sigma$ is the identity, i.e., when $BG^{h\sigma} = LBG$, the existence of a $[G]$--fundamental class is a consequence of the non-obvious fact that 
$\bbH^*(LBG)$ is a unital algebra and hence free of rank one over itself (see Remark~\ref{rk:unitality}).

Our next theorem says that in general the existence of 
a $[G]$--fundamental class is equivalent to
being free-of-rank-one, and implies that 
$H^\ast(LBG)$ and $H^\ast(BG^{h\sigma})$
are isomorphic as rings up to specified filtrations.

\begin{Th}
\label{thm:strtoptezukacrit} 
Let $BG$ be a semisimple $\ell$--compact group of
dimension $d$ with a self-map $\sigma\co BG \to BG$.
Then the following three conditions are equivalent.

\begin{enumerate}
\item \label{item:fundclass} 
The homomorphism 
$i_*\co H_d(G;\F_\ell) \to H_d(BG^{h\sigma};\F_\ell)$ for the map $i$
of \eqref{eq:fibseq}
is non-trivial, i.e., $BG^{h\sigma}$ has a $[G]$--fundamental class.
\item \label{item:freerk1}
$H^\ast(BG^{h\sigma};\F_\ell)$ is free of rank $1$
as an  $\bbH^\ast (LBG;\F_\ell)$--module.
\item \label{item:assgraded} 
There exists an $x  \in H^d(BG^{h\sigma};\F_\ell)$ for which
$\stringprod$--product with $x$ induces an $H^*(BG;\F_\ell)$--algebra isomorphism
\begin{equation}
	\gr H^\ast(LBG;\F_\ell) \xto{\ \isom\ } \gr H^\ast(BG^{h\sigma};\F_\ell)
\end{equation}
on the associated graded algebras for
the Serre spectral sequences of Theorem~\ref{thm:mainresult}.
\end{enumerate}
\end{Th}

The equivalence of \eqref{item:fundclass} and \eqref{item:freerk1} is proved in
Section~\ref{subsec:for1crit}, and that of
\eqref{item:freerk1} and \eqref{item:assgraded} in Section~\ref{subsec:isosonassocgradeds}.
The existence of a $[G]$--fundamental 
class is easily seen to imply that $H^*(BG;\F_\ell) \to H^*(BG^{h\sigma};\F_\ell)$
is injective and that $\sigma$ induces the identity map on
$H^*(BG;\F_\ell)$ (see Corollary~\ref{cor:tezukaimpliesid}).
We do not know an example where
these necessary conditions are not also sufficient for a
$[G]$--fundamental class to exist.

\subsubsection*{First results on existence of fundamental classes}
To make our main theorems about the string module structure easier to apply, 
we now embark on the study of when fundamental classes exist.
We first record that they exist when
$H^*(BG;\F_\ell)$ is a polynomial ring with $\sigma$ acting as the
identity.
\begin{prop}
 \label{prop:polycollapse}
Suppose $BG$ is a connected $\ell$--compact group
for which $H^\ast(BG;\F_\ell)$ is a polynomial ring,
and let  $\sigma \colon BG \to BG$ be a self-map 
of $BG$ inducing the identity map on $H^\ast(BG;\F_\ell)$.
Then the  Serre spectral sequence of fibre sequence 
$G \xrightarrow{i} BG^{h\sigma} \to BG$ 
of \eqref{eq:fibseq}
  collapses at the $E_2$--page. In particular, the map 
$H^*(BG^{h\sigma};\F_\ell) \xrightarrow{i^*} H^*(G;\F_\ell)$ 
is surjective and
 $BG^{h\sigma}$ has a $[G]$--fundamental class.
\end{prop}
For completeness, we give a proof of Proposition~\ref{prop:polycollapse} 
using the Eilenberg--Moore spectral sequence in Section~\ref{subsec:polycase}.

Our next result implies that for any automorphism $\sigma$ of $BG$, some
power $\sigma^k$ of it has a fundamental class. More precisely:

\begin{Th} \label{thm:tezukasubgrp}
Let $BG$ be a semisimple $\ell$--compact group.
Then the set 
$$D = \{ [\sigma] \in \Out(BG) \mid BG^{h\sigma} \text{ has a $[G]$--fundamental class} \}$$
is a finite-index closed normal subgroup of $\Out(BG)$ contained in 
the kernel
$\{ [\sigma] \in  \Out(BG) \mid \sigma^* = \id \in  \Aut(H^\ast
BG)\}$. 
In particular,
\begin{equation}
\label{eq:psiqsinD}
\{q \in 1 + 2\ell\Z_\ell \mid [\psi^q] \in D\} = 1 + 2\ell^k \Z_\ell
\end{equation}
for some $k \geq 1$.
\end{Th}
The proof of Theorem~\ref{thm:tezukasubgrp}
is given in Section~\ref{subsec:Dclosed}.
Here $\Out(BG)$ is equipped with the natural $\ell$--adic topology induced
by its action on $H^*(BG;\Z/\ell^s)$,
$s\geq 1$, which agrees with the natural topology on $\Out(\bbD_G)$ by Proposition~\ref{prop:topcomparison}.
We do not know an example of a connected $\ell$--compact group for which the subgroup in
Theorem~\ref{thm:tezukasubgrp} is proper (even without the semi-simplicity assumption).
A concrete bound on the number $k$ in \eqref{eq:psiqsinD} is implied by
Theorem~\ref{thm:ss-comparison2}.
Note that Theorem~\ref{thm:tezukasubgrp} implies, for example, that
$E_8(q)$ at $\ell=2$ has a fundamental class 
as long as $q-1$ is highly $2$--divisible, despite
the exact cohomology ring not being known.

\begin{rem}
\label{rk:droppingsemisimplicity2}
We expect Theorems~\ref{thm:mainresult}, \ref{thm:strtoptezukacrit}, 
and \ref{thm:tezukasubgrp} to generalize from semisimple $\ell$--compact groups
to arbitrary connected $\ell$--compact groups.
See Remark~\ref{rk:semisimplicityassumptionnontechnical}.
\end{rem}

\subsubsection*{Steinberg endomorphisms and untwisting}
The necessary condition $\sigma^\ast = \id$  for $H^\ast(BG^{h\sigma};\F_\ell)$ to be free of rank $1$
over $\bbH^\ast(LBG;\F_\ell)$
is often not satisfied
when $\sigma$ is a Frobenius operation $\psi^q$
with $q \not\equiv 1$  modulo $\ell$. 
However, we show that
in this case 
$H^\ast(BG^{h\sigma};\F_\ell)$ is often
free of rank $1$ over $\bbH^\ast(LBH;\F_\ell)$
where $BH$ is a certain sub--$\ell$--compact group of $BG$
on whose cohomology $\sigma$ does act trivially.
We note that even when $BG$ comes from a compact Lie group, 
the $\ell$--compact group $BH$ often does not, 
again motivating our setting of $\ell$--compact groups.

Before stating our results, we introduce some notation.
Recall that the classification of automorphisms of
$\ell$--compact groups (see \cite[Sec.~8.4]{AG09} and \cite[Sec.~13]{AGMV08}) shows that 
any automorphism $\sigma$ of a simple $\ell$--compact group 
can be written as $\sigma = \tau \psi^q$ for some $\tau \in
\Out(\bbD_G)$ of finite order and $q \in \Z_\ell^\times$, and we will
henceforth in the introduction consider 
automorphisms of this form.
We write
\begin{equation}\label{eq:lietypegrp}
\BtGq = BG^{h(\tau\psi^q)}
\end{equation}
for short, and abbreviate further to $BG(q)$ when $\tau =1$.
By~\eqref{lietype},
this 
agrees with standard Lie theoretic notation, as in e.g.\ 
\cite[Table~22.1]{MT11}, when
the latter makes sense.
Thus $\BtGq = B{}^{(\tau\psi^{-1})}\!G(-q)$ (and in fact one
calculates that
$\psi^{-1}$ equals $1$ in $\Out(\bbD)$ if $G$ has type $A_1$, $B_n$,
$C_n$, $D_{2n}$, $E_7$, $E_8$,
$F_4$, or $G_2$ and is the non-trivial graph automorphism
if $G$ has type $A_n$, $D_{2n+1}$ or $E_6$; see
Proposition~\ref{prop:twistingclassification}).

The following theorem allows us to reduce the
study of $\BtGq$ to the case where $\tau =1$
and $q$ is congruent to $1$ modulo $\ell$,
at least when $\tau$ has order prime to $\ell$. It is a small generalization of a result of Broto--M{\o}ller
\cite[Thms.~B and E(1)]{BM07}.

\newcommand{\untwistingThmStatement}{%
Let $BG$ be a connected $\ell$--compact group, let $q \in
  \Z_\ell^\times$, and let $\tau \in \Out(BG) \isom \Out(\bbD_G)$ 
  be of finite order prime to $\ell$. 
  Let $e$ be the multiplicative order of $q$
  mod $\ell$, and write $q =
  \zeta_e q'$  in $\Z_\ell^\times$
  for $\zeta_e$ a primitive $e$--th root of unity and
$q' \equiv 1$ mod $\ell$.  Set $\tau' = \tau \psi^{\zeta_e}$
 and $\muet = \langle \tau'\rangle$.
Then the finite $\ell'$--group $\muet$ has a
canonical homotopical action on $BG$ 
such that the homotopy fixed point space $BG^{h\muet}$ is
a connected $\ell$--compact group (semisimple or simply connected if $BG$ is),
and there exists a homotopy equivalence
\begin{equation*}%
\BtGq \xto{\ \homot\ } (BG^{h\muet})(q').
\end{equation*}%
}

\begin{thrm}[Untwisting theorem]\label{thm:untwisting-intro} 
\untwistingThmStatement
\end{thrm}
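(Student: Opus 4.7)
The plan is to factorize $\tau\psi^q$ into two commuting self-maps of $BG$ and then reduce to the untwisted case by iterated homotopy fixed points, appealing to a generalization of Broto--M{\o}ller's theorem for the hard step. Concretely, $q = \zeta_e q'$ gives the factorization $\tau\psi^q = \tau_e \psi^{q'}$ with $\tau_e = \tau\psi^{\zeta_e}$, and $\psi^{q'}$ commutes with $\tau_e$ inside $\Out(\bbD_G)$ because $\psi^{q'}$ is the central element corresponding to scalar multiplication by $q'$ on the root datum.

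First, I would verify that $\muet = \langle \tau_e\rangle$ is a finite group of order prime to $\ell$. Centrality of $\psi^{\zeta_e}$ in $\Out(\bbD_G)$ ensures that $\tau_e$ is a product of commuting elements of orders $|\tau|$ and $e$ respectively; by hypothesis the former is prime to $\ell$, and the latter divides $\ell - 1$. Rigidity of self-equivalences of connected $\ell$-compact groups (the components of $B\aut(BG)$ are contractible and $\pi_0 B\aut(BG) = \Out(\bbD_G)$) then lifts the subgroup $\muet \subset \Out(BG)$ to an honest $\muet$-action on $BG$, which allows one to form the homotopy fixed point space $BG^{h\muet}$.

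The central technical step --- and the expected main obstacle --- is to establish that $BG^{h\muet}$ is again a \emph{connected} $\ell$-compact group, carrying a residual action by the centralizer of $\muet$ in $\Out(\bbD_G)$; this generalizes the special case $\tau = 1$ proved in Broto--M{\o}ller \cite[Thms.~B and E(1)]{BM07}. I would adapt their approach: use the Smith-theoretic / Sullivan-conjecture-type tools available for $\ell'$-group actions on $\ell$-compact groups to control the mod $\ell$ cohomology of the fixed points, and identify the root datum of $BG^{h\muet}$ with a suitable fixed sub-datum of $\bbD_G$ under the induced $\muet$-action. The essential new point beyond Broto--M{\o}ller is allowing $\tau$ to be a non-central element of $\Out(\bbD_G)$, but since $\muet$ still has order prime to $\ell$ their fixed-point machinery applies essentially verbatim, the main items to check being the finiteness of $H^*(BG^{h\muet};\F_\ell)$, the simple-connectivity of the fixed point space, and the existence and properties of the associated root datum.

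Once $BG^{h\muet}$ is established as a connected $\ell$-compact group, the rest is formal. Centrality of $\psi^{q'}$ in $\Out(\bbD_G)$ ensures it commutes with $\tau_e$, so it lifts to a canonical self-map of $BG^{h\muet}$ realizing multiplication by $q'$ on the fixed sub-root datum. Regarding $BG$ as a space with commuting actions of $\muet$ and of $\bbN_0$ (via $\psi^{q'}$), the standard identification of $BG^{h(\tau_e\psi^{q'})}$ with iterated homotopy fixed points --- taking $\muet$ first and $\psi^{q'}$ second --- then yields
\[
\BtGq = BG^{h(\tau_e\psi^{q'})} \xto{\ \homot\ } (BG^{h\muet})^{h\psi^{q'}} = (BG^{h\muet})(q'),
\]
which is the desired equivalence.
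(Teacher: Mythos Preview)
Your proposal has a genuine gap in the final step, which you describe as ``formal'' but is in fact the heart of the argument.

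Transitivity of homotopy fixed points gives $(BG^{h\muet})^{h\psi^{q'}} \homot BG^{hA}$ where $A = \muet \times \langle\psi^{q'}\rangle$. But $\BtGq = BG^{h\langle\tau_e\psi^{q'}\rangle}$, and $\langle\tau_e\psi^{q'}\rangle$ is the \emph{diagonal} copy of $\Z$ inside $A$, a proper subgroup. The inclusion $\langle\tau_e\psi^{q'}\rangle \hookrightarrow A$ only produces a comparison map $BG^{hA} \to BG^{h\langle\tau_e\psi^{q'}\rangle}$, and showing this map is an equivalence is precisely the nontrivial content of the theorem. The paper identifies the homotopy fibre of this map as $(G/G^{h\muet})^{h\langle\sigma\rangle}$ (with $\sigma = \tau_e\psi^{q'}$) and argues it is contractible: since $(\pi_*(G/G^{h\muet}))^{\muet} = 0$, the endomorphism $1 - (\tau_e)_*$ is invertible on $\pi_*(G/G^{h\muet})$; and because $q' \equiv 1 \pmod \ell$, the operator $(\psi^{q'})_*$ has $\ell$-power order on $\pi_n(G/G^{h\muet}) \otimes \F_\ell$, so $(1-\sigma_*)^{\ell^s} = (1-(\tau_e)_*)^{\ell^s}$ there and Nakayama finishes. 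This is exactly where the congruence condition on $q'$ enters, and your outline never uses it.

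Two smaller points. First, the components of $B\aut(BG)$ are $B^2\cZ(G)$, not contractible in general; the lift of the $\muet$-action exists, but via an obstruction argument using $H^j(\muet;\pi_i(B^2\cZ G)) = 0$ for $\ell'$-groups $\muet$ (Proposition~\ref{autBTtoBG}), not rigidity. Second, the step you flag as hardest---that $BG^{h\muet}$ is a connected $\ell$-compact group---is handled directly in the paper (Proposition~\ref{prop:ghksplit}) via a norm-map splitting $G \homot G^{h\muet} \times G/G^{h\muet}$, without Smith theory; that same splitting is then reused in the fibre computation above.
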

We will deduce this result in Appendix~\ref{subapp:untwisting} from  the more general Theorem~\ref{thm:class-lcglie}, which also gives a reduction when $\tau$ is not of coprime order, and addresses the question of how the \emph{homotopy type} of $\BtGq$ depends on e.g.\ $q$, generalizing \cite{BM07,BrotoMoellerOliver}.
The ``untwisting'' procedure above is analogous to the
$\Phi_e$--theory of finite groups of Lie
type of Brou\'e--Malle--Michel \cite{BM92,malle98}, but has the
advantage that the untwisted group $BG^{h\muet}$ is again an actual $\ell$--compact
group, rather than a mythical ``Spets''.  Theorem~\ref{thm:root-datum}
describes the root datum of  $BG^{h\muet}$, which in
general does not lift to a root datum over $\Z$ even if the root datum of
$BG$ does.

Using Theorem~\ref{thm:untwisting-intro} to consider the
$\psi^{q'}$--action on $BG^{h\muet}$, we  obtain a fibration
sequence
\begin{equation}\label{eq:fibseqtwist}
G^{h\muet} \xrightarrow{\ i\ }  \BtGq \longto BG^{h\muet}.
\end{equation}
With the notation of Theorem~\ref{thm:untwisting-intro},
we can now make the following definition.
\begin{defn}[{$[G^{h\muet}]$}--fundamental class]
\label{def:ghmuetfunclass}
We say that $\BtGq$ has a \emph{$[G^{h\muet}]$--fundamental class} if
the map  $H_d( G^{h\muet};\F_\ell) \xrightarrow{i_*} H_d( \BtGq;\F_\ell)$ is
non-trivial for $d$ the dimension of the $\ell$--compact
group $BG^{h\muet}$.
\end{defn}

We note that when $BG$ is semisimple, 
the cohomology $H^\ast(\BtGq;\F_\ell)$ acquires 
via the homotopy equivalence of Theorem~\ref{thm:untwisting-intro}
the structure of an $\bbH^\ast(LBG^{h\muet};\F_\ell)$--module,
and that in view of Theorem~\ref{thm:strtoptezukacrit}
this module structure is free of rank $1$ precisely 
when $\BtGq$ has a $[G^{h\muet}]$--fundamental class.

By applying Theorem~\ref{thm:tezukasubgrp} to the twisted version
$BG^{h\muet}$, we immediately get that twisted fundamental classes
always exist as long as $q'-1$ is sufficiently divisible by $\ell$.
\begin{cor} \label{cor:tezukasubgrp} 
Let $BG$ be a semisimple $\ell$--compact group and let $\tau \in\Out(BG)\isom \Out(\bbD_G)$ be
an element of finite order prime to $\ell$.
Then $\BtGq$ has a 
$[G^{h\muet}]$--fundamental class as long as
$q'-1$ is divisible by a sufficiently high power of 
$\ell$ where $q'$ is as in Theorem~\ref{thm:untwisting-intro}.
\end{cor}

Again, an explicit bound for what counts as ``sufficiently high power of $\ell$''
in Corollary~\ref{cor:tezukasubgrp}
is implied by Theorem~\ref{thm:ss-comparison2}.

With the twisting in place, we are now able to state what is in some
sense our most general result. It uses the classification of
$\ell$--compact groups and their automorphisms along with looking at
individual cases to conclude that for simply
connected groups, away from a few
potential exceptions, twisted fundamental classes always exist.
Moreover, the result shows that these potential exceptions can be eliminated
by verifying the existence of fundamental classes in only a few specific 
cases---a task that may eventually be feasible, for example,
with the aid of a computer.

\begin{Th}
\label{thm:examples}
Suppose $BG$ is a simply connected $\ell$--compact group, $q\in \Z_\ell^\times$,
and $\tau \in\Out(BG)$ is an element of finite order prime to $\ell$.
Then
\begin{enumerate}[(i)]
\item\label{it:fcexistwithexclusions}
	 $\BtGq$ has a $[G^{h\muet}]$--fundamental class except possibly in the 
	following cases: $\ell = 5$ and $G$ contains an $E_8$--summand;
	$\ell=3$ and $G$ contains an $F_4$ or an $E_i$--summand for $i=6$, $7$, or $8$;
	and $\ell=2$ and $G$ contains an $E_i$--summand for $i=6$, $7$, or $8$.	
\item\label{it:liftexclusions}
	The restrictions placed on $BG$ in part (\ref{it:fcexistwithexclusions}) 
	are unnecessary	when
    \begin{enumerate}
    \item 	
    	$\ell=5$ if $BE_8(11)\fivecom$ turns out to have a 
    	$[(E_8)\fivecom]$--fundamental class;
    \item
    	$\ell=3$ if
    	$BF_4(7)\threecom$ and $BE_i(7)\threecom$ turn out to have
    	$[(F_4)\threecom]$-- and $[(E_i)\threecom]$--fundamental classes, respectively,
    	for $i=6$, $7$, and $8$; and
    \item
    	$\ell=2$ if
    	$BE_6(3)\twocom$, $BE_6(5)\twocom$, $BE_7(5)\twocom$, and $BE_8(5)\twocom$ turn out to have
        $[(E_6)\twocom]$, $[(E_6)\twocom]$, $[(E_7)\twocom]$, and 
        $[(E_8)\twocom]$--fundamental classes, respectively. 
    \end{enumerate}
\end{enumerate}
\end{Th}
We prove Theorem~\ref{thm:examples} in Section~\ref{subsec:putittogether}.
\subsubsection*{Isomorphisms preserving ring structure and Steenrod operations}
In view of Theorem~\ref{thm:strtoptezukacrit}, the existence of a 
$[G^{h\muet}]$--fundamental class as in Theorem~\ref{thm:examples}
implies that for a suitably chosen class $x \in H^\ast(\BtGq;\F_\ell)$, 
the map 
\begin{equation}\label{map:stringprodwithx}
- \stringprod x 
\co 
H^\ast(LBG^{h\muet};\F_\ell)
\longto
H^\ast(\BtGq;\F_\ell)
\end{equation}
induced by string module multiplication  by $x$ is a ring isomorphism
up to specified filtrations. The final result of this introduction 
shows that in the situation of 
Theorem~\ref{thm:examples}(\ref{it:fcexistwithexclusions}),
at least under some additional assumptions for $\ell=2$,
it is possible to strengthen this result and 
choose the class $x$ so that \eqref{map:stringprodwithx}
is an honest ring isomorphism which furthermore commutes with most Steenrod operations.
Write $\calA_\ell$ for the mod $\ell$ Steenrod algebra, and let $\calA' \subset \calA_\ell$
denote the subalgebra of $\calA_\ell$ generated by the Steenrod reduced $\ell$--th power 
operations when $\ell$ is odd and all of $\calA_2$ when $\ell=2$.
Then we have 

\begin{Th} \label{thm:polyexamples} 
Suppose $BG$ is a simply connected $\ell$--compact group, $q\in \Z_\ell^\times$,
and $\tau \in\Out(BG)$ is an element of finite order prime to $\ell$.
Assume that we are away from the cases excluded in 
Theorem~\ref{thm:examples}(\ref{it:fcexistwithexclusions}),
and in the case $\ell=2$ assume furthermore that 
$4 | (q-1)$ and that $G$ has no $\Spin(n)$--summands for $n \geq 10$.
Then there exists an element $x \in H^d(\BtGq;\F_\ell)$ such that
the map \eqref{map:stringprodwithx}
induced by string module multiplication by $x$ 
is an isomorphism of $H^*(BG;\F_\ell)$--algebras and
$\calA'$--modules.
\end{Th}
The proof of Theorem~\ref{thm:polyexamples} 
is given in Section~\ref{subsec:polysimplyconn2cptgrps}.
The assumption that $4 | (q-1)$ when $\ell=2$ is necessary, essentially
because $\Z/4$ and $\Z/2$ do not have the same
mod $2$ cohomology ring 
(see Example~\ref{ex:slnfq} for elaboration), but the 
restriction away from the cases excluded in 
Theorem~\ref{thm:examples}(\ref{it:fcexistwithexclusions})
and cases with $\Spin(n)$--summands may not be.
If $H^*(BG;\F_\ell)$ is concentrated in even degrees, then the class
$x$ in Theorem~\ref{thm:polyexamples} is unique and given by the product
of the exterior generators of $H^\ast(\BtGq;\F_\ell)$, up to a unit (see
Theorem~\ref{thm:polytezukaelaboration}). 
See also Remark~\ref{rem:uniqueclass} for further discussion.

Theorems~\ref{thm:examples} and \ref{thm:polyexamples}  and can be seen as generalizing a number of
previous results in the literature where both $H^*(BG(q))$ and $H^*(LBG)$
were calculated to various degrees of precision and amount of
structure, and subsequently observed to coincide. See e.g.\ 
\cite{kleinerman82, milgram96, KMT00,  Kameko-pb,  grbic06, KK10, KMN06,KTY12}
for previous work on polynomial cases, and
\cite{Kameko-spin} for the spin groups, where an isomorphism of graded
abelian groups
was established---it was noticing this last
paper in an arXiv listing which originally alerted
us to Tezuka's question.

For emphasis, we now state explicitly several questions raised by the preceding discussion.

\begin{question} \label{qu:thequestion} \mbox{ }
\begin{enumerate}
\item \label{q1} 
    Does $BG^{h\sigma}$ have a $[G]$--fundamental
    class if and only if
    $\sigma$ acts as the identity on $H^*(BG)$?
\item \label{q2} 
	Can the exclusions in Theorem~\ref{thm:examples}(\ref{it:fcexistwithexclusions}) 
	be avoided, i.e., do the fundamental classes listed in 
	Theorem~\ref{thm:examples}(\ref{it:liftexclusions}) exist?
	Furthermore, can the assumption that $BG$ is simply connected be dropped
	in Theorem~\ref{thm:examples}?
\item \label{q3} 
	Is there for every connected $\ell$--compact group
	a general algebraic or geometric construction of a dual 
	$x \in H^*( \BtGq;\F_\ell)$ for the $[G^{h\muet}]$--fundamental class
	such that multiplication by $x$ induces a ring isomorphism as in 
	Theorem~\ref{thm:polyexamples}? Does the property that $x$
	induces a ring isomorphism characterize it uniquely up to an
	automorphism of $\BtGq$?
\end{enumerate}
\end{question}
Note that a positive answer to \eqref{q2}  would imply that, for $q
\equiv 1\ \mathrm{mod}\ \ell$,  the map $\psi^q$
 induces the identity map on $H^*(B\G(\bar \F_q))$,  that $H^*(B\G(\bar
  \F_q)) \to H^*(B\G(\F_q))$ is injective, 
  and that every elementary abelian $\ell$--subgroup of $\G(\bar \F_q)$ is
  conjugate to a subgroup
in $\G(\F_q)$ whenever $\G$ is a connected split reductive group
scheme over the integers; 
whether these properties hold does
not seem to be known in full generality. Note also that in the 
calculational literature it is common to work under a simply connectedness 
or simplicity assumption, and Tezuka, in his original question \cite{tezuka98},
seems implicitly to make those assumptions, though their roles seem
unclear.

\subsubsection*{Outline of the paper}
Section~\ref{sec:notation} establishes notation and conventions.
The string product and the string module
structure are constructed in Section~\ref{sec:products}.
In Section~\ref{sec:spectralsequences} we set up the Serre
spectral sequences and show how they interact with the product
structures, and use this to prove Theorem~\ref{thm:mainresult}. In
Section~\ref{sec:tezuka} we prove
Theorem~\ref{thm:strtoptezukacrit}, and start our investigation on when fundamental classes exist. Section~\ref{sec:funclass2}  proves that they exist generically by establishing 
Theorem~\ref{thm:tezukasubgrp}. Then in Section~\ref{sec:funclass1} we use untwisting and case-by-case arguments to prove Theorem~\ref{thm:examples}. Finally, in Section~\ref{sec:algiso}, we examine when the isomorphism 
arising from the string module structure
can be upgraded to a ring isomorphism with extra structure, establishing Theorem~\ref{thm:polyexamples}. 
Appendix~\ref{app:pcg} lays out the relationship between $\Z_\ell$--root data, $\ell$--compact groups and the
$\ell$--local theory of finite groups of Lie type needed in the paper
proper, and in particular contains a proof
Theorem~\ref{thm:untwisting-intro}. The appendix provides refinements of the existing literature throughout, and may also be of independent interest.

\subsubsection*{Acknowledgements}  We would like to thank Andrea Bianchi, Masaki Kameko, Katsuhiko Kuribayashi, and Luc Menichi for exchange of calculations on string topology of classifying spaces, and Kasper Andersen, Roman Bezrukavnikov, Radha Kessar, Richard Lyons, Gunter Malle, Bob Oliver, Rapha\"el Rouquier, and Ron Solomon for helpful conversations and correspondence about finite groups of Lie type and their subgroups. Finally we would like to thank Nathalie Wahl for pointing the first-named author to the second-named author during the initial stages of this project.

\smallskip

\section{Notation and conventions}\label{sec:notation}
As explained in the introduction, we will work in the setting of 
$\ell$--compact groups for a fixed prime $\ell$. A summary of this theory is contained Section~\ref{subsec:pcg-recol}.
Throughout the paper, $BG$ denotes an $\ell$--compact group,
and \emph{unless specified otherwise, $BG$ is furthermore assumed to be semisimple, i.e., 
that $\pi_1(BG) =0$ and $\pi_2(BG)$ is finite}; see Remark~\ref{rk:semisimplicityassumptionnontechnical}.
We set $G = \Omega BG$ and write $d$ for the dimension of $BG$ as an
$\ell$--compact group $BG$, i.e., for the largest $n$ such that $H^n(G;\F_\ell) \neq 0$. 
We also assume that $G$ has been equipped 
with a fixed $\F_\ell$--orientation, i.e., an
identification
\begin{equation} \label{as:gorientation}  
H^d(G;\F_\ell) \xto{\ \isom\ }  \F_\ell.
\end{equation}Unless indicated otherwise, all
homology and cohomology is with $\F_\ell$--coefficients.
As in the introduction, we write $\bbH^\ast$
for cohomology groups shifted by $d$, so that $\bbH^k(X) = H^{k+d}(X)$.
The shifted and unshifted cohomology groups are
related by graded maps
\begin{equation}
\label{eq:degreeshiftmaps} 
	s^d \colon \bbH^\ast \longto H^\ast
	\qquad\text{and}\qquad
	s^{-d} \colon H^\ast \longto \bbH^\ast
\end{equation}
of degree $d$ and $-d$, respectively, sending each element to
itself, but now considered as having a different degree. Remembering
the degrees of these maps is important for ensuring that products
obey the Koszul sign rule.
When working with topological spaces, we always work within the 
category $\calT$ of compactly generated weak Hausdorff spaces.

In this paper we will make repeated use of umkehr maps, which have a long history and have been developed in different settings. 
In Sections~\ref{subsec:secondconstruction}---\ref{sec:spectralsequences}, we rely on \cite{umkehr-maps} for results about these maps, and we hope that the reader will benefit from a single reference 
adapted to the needs of the present paper.
We use the shorthand Theorem~U.X.Y to refer to Theorem X.Y of \cite{umkehr-maps}. Section~\ref{subsubsec:umkehrmapssummary} contains a summary of the 
main points of \cite{umkehr-maps} needed in the present work. 

\section{Construction of the string products: Proof of Theorem~\ref{thm:mainresult}, part 1}
\label{sec:products}

In this section we construct the 
string product on $\bbH^\ast(LBG)$
and the string module structure on $\bbH^\ast (BG^{h\sigma})$.
It turns out to be convenient to define these structures structures in terms
of a more general pairing
\begin{equation}
\label{eq:pairing}
	\stringprod
	\co
	\bbH^\ast P(g,h) \tensor \bbH^\ast P(f,g) 
	\longto 
	\bbH^\ast P(f,h)
\end{equation}
where $P(u,v)$ is a space which for suitable choices of $u$ and $v$
recovers the spaces
$LBG$, $BG^{h\sigma}$ and $G$, as we will observe in Proposition~\ref{prop:specialcasesofpathspaceconstr}.
Section~\ref{subsec:resultsonproducts}
will introduce this pairing along with many of its properties,
and the remaining subsections are devoted to the actual construction
of the string pairing and the verification of the asserted 
properties.

\subsection{The string pairing and its properties}
\label{subsec:resultsonproducts}

\subsubsection{The string pairing 
$\stringprod
	\co
	\bbH^\ast P(g,h) \tensor \bbH^\ast P(f,g) 
	\to 
	\bbH^\ast P(f,h)
$}
\label{subsubsec:thepairingbullet}

\begin{defn}[The space $P(f,g)$]
\label{def:pfg}
For a fixed space $B$ and maps 
$f,g\co B \to BG$, define  $P(f,g)$, \emph{the space of paths in
  $BG$ from $f$ to $g$},
  and the map $\pi_{f,g}\colon P(f,g) \to B$
via the  pullback diagram
$$ 
\xymatrix{
    P(f,g)  
    \ar[r]
    \ar[d]_{\pi_{f,g}}  
    \pb 
    & 
    BG^I \ar[d]^{(\ev_0,\ev_1)} 
    \\
    B \ar[r]^-{(f,g)}
    & 
    BG \times BG
}
$$
where the maps $\ev_0$ and $\ev_1$ are evaluation at $0$ and $1$, respectively.
\end{defn}
Explicitly,
\[
	P(f,g) 
	= 
	\big\{
		(b,\gamma)\in B\times BG^I
		\,|\, 
		\gamma(0) = f(b), \gamma(1) = g(b)
	\big\},
\]
with $\pi_{f,g}\co P(f,g) \to B$ 
given by projection onto the first coordinate. We may picture a point in $P(f,g)$ as follows:
\[
	\left(
		b, 
		\begin{tikzpicture}
			[
				baseline=0cm,
				endpoint/.style={
					circle,
					draw=black,
					fill=black,
					inner sep=0pt,
					minimum size=2.6pt
				},
				label/.style={
					font=\scriptsize
				},
				->-/.style={
					decoration={
						markings,
						mark=at position .5 with {\arrow{>}}
					},
					postaction={decorate}
				}
			]
			\node [endpoint] (start) at (0,0) {};
			\node [endpoint] (end) at (1.5,0) {};
			\node [label] at (start) [above] {$f(b)$};
			\node [label] at (end) [above] {$g(b)$};
			\draw [->-,thick] (start) .. controls (0.5,0.2) and (1.0,-0.26) .. (end) node [label,above,midway] {$\gamma$};
		\end{tikzpicture}
	\right).
\]

Over the course of Section~\ref{sec:products},
we will give three different, but equivalent, constructions
of the string pairing $\stringprod$ of equation~\eqref{eq:pairing},
each of them useful for proving different properties of the string pairing.
While the constructions differ in details, they are all based
on the same fundamental idea, which we now explain.
Given $f, g, h\co B \to BG$, 
the spaces $P(f,g)$, $P(g,h)$ and $P(f,h)$
all fit into a commutative diagram
\begin{equation}
\label{diag:pushpull} 
\vcenter{\xymatrix@C-1em{
	P(g,h) \times P(f,g) 
	\ar[d]_{\pi_{g,h}\times \pi_{f,g}} 
	&& \pbop 
	P(f,g,h)
	\ar[d]^(0.4){\pi_{f,g,h}}
	\ar[ll]_-{\mathrm{split}}
	\ar[rr]^-{\mathrm{concat}}
	&&
	P(f,h)
	\ar[dll]^(0.4){\pi_{f,h}}
	\\
	B\times B
	&&
	B
	\ar[ll]_{\Delta}
}}
\end{equation}
Here the square on the left is a pullback square,
so that 	
\[
	P(f,g,h) 
	= 
	\big\{
    	(b,\gamma_2,\gamma_1) \in B \times BG^I \times BG^I 
    	\,|\,
    	\gamma_1(0) = f(b), 
    	\gamma_1(1) = \gamma_2(0)=g(b), 
    	\gamma_2(1) = h(b)
	\big\},
\]
the map 
$\pi_{f,g,h}$ is the projection onto $B$, and the map 
`$\mathrm{split}$' is given by
\[
	(b,\gamma_2,\gamma_1) \longmapsto ((b,\gamma_2),(b,\gamma_1)).
\]
We may picture a point in $P(f,g,h)$ as follows:
\[
	\left(
		b, 
		\begin{tikzpicture}
			[
				baseline=0cm,
				endpoint/.style={
					circle,
					draw=black,
					fill=black,
					inner sep=0pt,
					minimum size=2.6pt
				},
				label/.style={
					font=\scriptsize
				},
				->-/.style={
					decoration={
						markings,
						mark=at position .5 with {\arrow{>}}
					},
					postaction={decorate}
				}
			]
			\node [endpoint] (start) at (0,0) {};
			\node [endpoint] (mid) at (1.5,0) {};
			\node [endpoint] (end) at (3,0) {};
			\node [label] at (start) [above] {$f(b)$};
			\node [label] at (mid) [above] {$g(b)$};
			\node [label] at (end) [above] {$h(b)$};
			\draw [->-,thick] (start) .. controls (0.5,-0.2) and (1.0,-0.12) .. (mid) node [label,above,midway] {$\gamma_1$};
			\draw [->-,thick] (mid) .. controls (2.0,0.12) and (2.5,0.15) .. (end) node [label,above,midway] {$\gamma_2$};			
		\end{tikzpicture}
	\right).
\]
The map `$\mathrm{concat}$' on the right
in diagram \eqref{diag:pushpull}
is given by concatenation of paths:
\[
	\mathrm{concat}
	\co
	(b,\gamma_2,\gamma_1) \longmapsto (b,\gamma_2 \star \gamma_1).
\]
The string pairing $\stringprod$ is now obtained by a push--pull construction 
in the top row of 
diagram \eqref{diag:pushpull}. 
Modulo the degree shift maps $s^d\colon \bbH^\ast \to H^\ast$ 
and $s^{-d} \colon H^\ast \to \bbH^\ast$
of equation~\eqref{eq:degreeshiftmaps}
and appropriate signs following from the Koszul sign rule,
this means that the string pairing $\stringprod$ 
is given by the composite
\begin{equation}
\label{eq:pairingdescr}
	H^\ast P(g,h) \tensor H^\ast P(f,g) 
	\xto{\,\times\,}
	H^\ast (P(g,h) \times P(f,g))
	\xto{\, \mathrm{split}^\ast\, }
	H^\ast P(f,g,h)
	\xto{\, \mathrm{concat}_!\, }
	H^{\ast-d} P(f,h)
\end{equation}
of the cross product, the induced map $\mathrm{split}^\ast$
associated to $\mathrm{split}$, and an umkehr map $\mathrm{concat}_!$
associated to $\mathrm{concat}$. 

The nontrivial ingredient in the above construction is, of course,
the umkehr map $\mathrm{concat}_!$, and 
the essential differences between
the constructions of the string pairing $\stringprod$ lie in how this map 
is defined.
In the first construction, to construct $\mathrm{concat}_!$,
we make use of the fact that the homotopy fibre of $\mathrm{concat}$
is homotopy equivalent to $G$, and hence satisfies a finiteness condition.
On the other hand, in the second and third constructions,
we rely on the fact that the fibres of $\pi_{f,g,h}$ and $\pi_{f,h}$
are homotopy equivalent to $G\times G$ and $G$, respectively,
and therefore satisfy suitable finiteness conditions.

In order to state the main properties of the string pairing $\stringprod$,
we first note that  $\bbH^\ast P(f,g)$  carries a natural $H^\ast(B)$--module
structure.
\begin{defn}[$H^\ast(B)$--module structure on $\bbH^\ast P(f,g)$]
\label{def:cupmodstr}
Given $f,g\colon B \to BG$, equip $\bbH^\ast P(f,g)$ 
with a $(H^\ast(B),\cupprod)$--module structure 
by first giving $H^\ast P(f,g)$
the $H^\ast(B)$--module structure 
induced by $\pi_{f,g}$, so that
\[
	ax = \pi_{f,g}^\ast(a)\cupprod x \in H^\ast P(f,g)
\]
for $a \in H^\ast(B)$ and $x\in H^\ast P(f,g)$,
and by then 
using the degree shift map $s^{-d}$ 
to define the $H^\ast(B)$--module structure
on $\bbH^\ast P(f,g)$ by the formula
\begin{equation}
\label{eq:cupmodstr}
	as^{-d}(x) = (-1)^{d\deg(a)}s^{-d}(ax)
	\in \bbH^\ast P(f,g)
\end{equation}
for $a \in H^\ast(B)$ and $s^{-d}(x) \in \bbH^\ast P(f,g)$.
Notice that the sign in \eqref{eq:cupmodstr}
follows the Koszul sign rule.
\end{defn}

The following theorem summarizes the principal properties of our pairing.
\begin{thrm}[Properties of the string pairing $\stringprod$]
\label{thm:pairings4}
Suppose $BG$ is a semisimple $\ell$--compact group.
Given a space $B$, the string pairings
\begin{equation}
\label{eq:pairingbullet}
	\stringprod
	\co
	\bbH^\ast P(g,h) \tensor \bbH^\ast P(f,g) 
	\longto 
	\bbH^\ast P(f,h).
\end{equation}
for maps $f,g,h\colon B \to BG$ 
are $H^\ast(B)$--bilinear and
define the composition law
in a category enriched in graded $H^\ast(B)$--modules
whose objects are maps $f\colon B \to BG$ and whose morphisms
from $f\colon B\to BG$ to $g\colon B \to BG$ 
are given by $\bbH^\ast P(f,g)$. 
\end{thrm}
The proof of Theorem~\ref{thm:pairings4} is given
at the beginning of Section~\ref{subsec:furtherpropertiesofthepairing}.
Given a map $f\colon B\to BG$, we write $\bbOne$ or $\bbOne_f$
for the identity element in $\bbH^\ast P(f,f)$
in the enriched category of Theorem~\ref{thm:pairings4}.
The string pairing $\stringprod$ and the identity elements $\bbOne_f$ %
will depend on the chosen 
orientation of $G$, fixed in \eqref{as:gorientation}, 
but only in a rather mild way: changing the orientation 
replaces these data by some nonzero scalar
multiples. 

\begin{rem}
\label{rk:unitdescription}
An inspection of the proof of Theorem~\ref{thm:pairings4},
and in particular the construction of the string pairing $\stringprod$
given in Section~\ref{subsec:secondconstruction}, reveals that 
modulo degree shifts, the unit $\bbOne_f$ for a map $f\colon B \to BG$
is given by the element $s_!(1) \in H^d P(f,f)$
where $1 \in H^0 (B)$ is the unit with respect to cup product
and 
\[
	s_! \colon H^\ast(B) \longto H^{\ast+d} P(f,f)
\]
is an umkehr map induced by the section 
\[
	s \colon B \longto P(f,f), \qquad b \mapsto (b,c_{f(b)})
\]
of the projection $\pi_{f,f} \colon P(f,f) \to B$ where 
$c_{f(b)}$ denotes the constant path at the point $f(b)$.
\end{rem}

Theorem~\ref{thm:pairings4} has the following immediate corollary, 
which we will specialize 
to the desired string product and string module
structures in Definition~\ref{def:stringprodandmod} below.

\begin{cor}
\label{cor:bulletalgandmodstr}
\mbox{}
\begin{enumerate}[(i)]
\item\label{it:bulletalgstr}
	For every $f\colon B \to BG$, 
	the string pairing $\stringprod$ makes $\bbH^\ast P(f,f)$ into a graded ring with unit 
	$\bbOne_f$ (described in Remark~\ref{rk:unitdescription}).
	Moreover, the map 
    \begin{equation}
	   \label{eq:iotaf}
		\iota = \iota_f
		\colon
		H^\ast(B) \longto \bbH^\ast P(f,f), 
		\qquad
		a \longmapsto a\bbOne_f.
    \end{equation}
    is a ring homomorphism making $(\bbH^\ast P(f,f),\stringprod)$
	into a $(H^\ast(B),\cup)$--algebra.
\item\label{it:bulletmodstr}
	For every $f,g\colon B \to BG$, 
	the string pairing $\stringprod$ makes $\bbH^\ast P(f,g)$
	into a  left $(\bbH^\ast P(g,g),\stringprod)$--module.
    This module structure extends, via the map $\iota_g$, the $H^\ast(B)$--module
	structure on $\bbH^\ast P(f,g)$.
	\qed
\end{enumerate}
\end{cor}
Later, in Proposition~\ref{prop:augmentation}, we will show
that the $H^\ast(B)$--algebra structure on $\bbH^\ast P(f,f)$
admits an augmentation 
\begin{equation}
\label{eq:rho}
	\rho =\rho_f \colon \bbH^\ast P(f,f)\longto H^\ast(B).
\end{equation}
In particular, the map $\iota_f$ of equation \eqref{eq:iotaf}
is always a monomorphism. 
The ring structure 
on $\bbH^\ast P(f,f)$
is in general noncommutative (see Remark~\ref{rk:noncomm}), 
but turns out to be commutative
in the case of $\bbH^*(LBG)$ (see Theorem~\ref{thm:cmcomparison2}).

We conclude the discussion of the general properties of the string pairing
by explaining how the pairing
$\stringprod$ of \eqref{eq:pairingbullet}
behaves when the space $B$ varies.
First note that the formation of the space $P(f,g)$ is well behaved 
under base change in $B$: given a
map $\phi\co A \to B$, we bave a pullback
\begin{equation}
\label{sq:inducedmap}
\vcenter{\xymatrix{
    P(f\phi,g\phi)
    \ar[r]^{\bar{\phi}}
    \ar[d]_{\pi_{f\phi,g\phi}} \pb
    &
    P(f,g)
    \ar[d]^{\pi_{f,g}}
    \\
    A
    \ar[r]^{\phi}
    &
    B
}}
\end{equation}
where $\bar{\phi}$ is the map $\bar{\phi}(a,\gamma) =(\phi(a),\gamma)$.
Write $F_\phi$ for the induced map
\begin{equation}
\label{eq:fphidef}
	F_\phi = \bar{\phi}^\ast\co \bbH^\ast P(f,g) \longto \bbH^\ast P(f\phi,g\phi) 
\end{equation}
on cohomology groups. We have
\begin{thrm}
\label{thm:functoriality}
Suppose $BG$ is a semisimple $\ell$--compact group.
\begin{enumerate}[(i)]
\item\label{it:functoriality2}
The category
of Theorem~\ref{thm:pairings4},
	viewed as enriched in graded $\F_\ell$--modules,
	depends functorially on the space $B$: 
	given a map $\phi \colon A \to B$,
	the maps $F_\phi$ of equation~\eqref{eq:fphidef}
	for varying $f,g \colon B\to BG$
	define a functor of categories enriched in graded $\F_\ell$--modules
	which on objects is given by the assignment $f\mapsto f\phi$.
\item\label{it:functorialityandmodstr}
	The functor of part (\ref{it:functoriality2}) is compatible with the $H^\ast(B)$-- and 
	$H^\ast(A)$--module structures
	on the hom-objects of its source and target in the sense that
	\[
		F_\phi(bx) = \phi^\ast(b) F_\phi(x)
	\]
	for all $b\in H^\ast(B)$ and $x\in \bbH^\ast P(f,g)$.
\end{enumerate}
\end{thrm}
The proof of 
Theorem~\ref{thm:functoriality} is
given at the beginning of Section~\ref{subsec:furtherpropertiesofthepairing}
together with the proof of Theorem~\ref{thm:pairings4}.
Theorem~\ref{thm:functoriality}(\ref{it:functoriality2})
in particular implies
that the string pairing $\stringprod$ of equation \eqref{eq:pairingbullet}
is homotopy invariant in $f$, $g$, and $h$. 
See Proposition~\ref{prop:homotopyinvariance}.

\subsubsection{The string product on $\bbH^*(LBG)$ and 
the string module structure on $\bbH^*(BG^{h\sigma})$}
We will now specialize Corollary~\ref{cor:bulletalgandmodstr}
to the string product on $\bbH^*(LBG)$ and 
the string module structure on $\bbH^*(BG^{h\sigma})$.
It is straightforward to 
verify the following result.
\begin{prop}
\label{prop:specialcasesofpathspaceconstr}
The  projection map $P(f,g) \to BG^I$,  $(b,\gamma) \mapsto \gamma$,
induces homeomorphisms
\begin{multicols}{2}
\begin{enumerate}[(i)]
\item\label{it:lbg}
  $\begin{tikzcd}
    P(\id_{BG},\id_{BG}) \arrow{r}{\homeom}
    \arrow[d, "{\pi_{\id_{BG},\id_{BG}}}"'] &LBG^{\mathclap{\phantom{I}}} \arrow{d}{\ev_1} \\
    BG \arrow{r}{=}& BG
  \end{tikzcd}$
\item\label{it:bghsigma}
  $\begin{tikzcd}
    P(\sigma,\id_{BG}) \arrow{r}{\homeom}
    \arrow[d, "{\pi_{\sigma,\id_{BG}}}"'] &BG^{h\sigma} \arrow{d}{\ev_1} \\
    BG \arrow{r}{=}& BG
  \end{tikzcd}$ \\
      \item \label{it:pathspaceconstrbgi}
  $\begin{tikzcd}
    P(\pi_1,\pi_2) \arrow{r}{\homeom}
    \arrow[d, "{\pi_{\pi_1,\pi_2}}"'] & BG^I \arrow{d}{(\ev_0,\ev_1)} \\
    BG \times BG  \arrow{r}{=}& BG \times BG
  \end{tikzcd}$\\

  \item \label{it:ptloopspace}
  $\begin{tikzcd}
    P(c,c) \arrow{r}{\homeom}
    \arrow[d, "{\pi_{c,c}}"'] & \loops BG \arrow{d} \\
    \pt \arrow{r}{=}& \pt
  \end{tikzcd}$

\end{enumerate}
\end{multicols}
\noindent
where $\pi_1,\pi_2\colon BG\times BG \to BG$ are the projections,
$c\colon \pt \to BG$ is the basepoint inclusion, and 
$\sigma \colon BG \to BG$ is any self-map of $BG$.
\qed      
\end{prop}
We will use the 
homeomorphisms of Proposition~\ref{prop:specialcasesofpathspaceconstr}
freely throughout the paper to identify the spaces in question.

\begin{defn}
\label{def:stringprodandmod}
The \emph{string product}
on $\bbH^\ast(LBG)$ is obtained by taking 
$f=\id_{BG}$ in 
Corollary~\ref{cor:bulletalgandmodstr}(\ref{it:bulletalgstr}), and using the identification of
Proposition~\ref{prop:specialcasesofpathspaceconstr}(\ref{it:lbg}).
The \emph{string module structure} on
$\bbH^\ast(BG^{h\sigma})$
over 
$\bbH^\ast(LBG)$
is obtained, via
the identifications in
Proposition~\ref{prop:specialcasesofpathspaceconstr}(\ref{it:lbg}) and (\ref{it:bghsigma}),
by taking
$f=\sigma$ and $g=\id_{BG}$ 
in Corollary~\ref{cor:bulletalgandmodstr}(\ref{it:bulletmodstr}).
Finally, we define the 
string module structure over $\bbH^\ast(LBG)$
on the unshifted cohomology $H^\ast(BG^{h\sigma})$
by setting
\begin{equation}
\label{eq:unshiftedmodstr}
	s^{-d}(x\stringprod y) 
	= 
	(-1)^{d \deg(x)} x \stringprod s^{-d}(y)
	\in 
	\bbH^*(BG^{h\sigma})
\end{equation}
for all 
$x\in \bbH^\ast(LBG)$ and 
$y\in H^\ast(BG^{h\sigma})$
so that, taking into account the Koszul sign rule,
the degree shift map $s^{-d}$ 
of equation \eqref{eq:degreeshiftmaps}
is $\bbH^\ast(LBG)$--linear.
\end{defn}

\begin{thrm}
\label{thm:cmcomparison2} 
Suppose $BG$ is a semisimple $\ell$--compact group.
The string product $\stringprod$ on $\bbH^\ast(LBG)$ 
makes $\bbH^\ast(LBG)$ into a commutative 
unital
$H^\ast(BG)$--algebra
and agrees with the product
$\odot$ of Chataur and Menichi \cite[p.~848]{KM19}
on $\bbH^\ast(LBG)$.
\end{thrm}

The proof of Theorem~\ref{thm:cmcomparison2}
is given at the beginning of Section~\ref{subsec:furtherpropertiesofthepairing}.
Part of Theorem~\ref{thm:cmcomparison2} is immediate from
Corollary~\ref{cor:bulletalgandmodstr}(\ref{it:bulletalgstr}),
which asserts that the product $\stringprod$ makes $\bbH^\ast P(f,f)$ 
into a unital $H^\ast(BG)$--algebra
for any $f$. Proving the commutativity of $\stringprod$ on $\bbH^\ast(LBG)$
requires considerations specific to $LBG$, however, as the following remark
demonstrates.

\begin{rem}[On (non-)commutativity of $\stringprod$]
	\label{rk:noncomm}
	For general $f$, the product $\stringprod$ on $\bbH^\ast P(f,f)$ 
    fails to be commutative.
    This follows from 
    Proposition~\ref{prop:pontryaginproduct}
    below together with the non-commutativity of the Pontryagin product
    on $H_\ast(G)$ in general. Indeed for $\ell$ odd, $H_*(G)$ is
    always non-commutative when $H_*(G;\Z_\ell)$ has
    $\ell$--torsion, as follows from \cite[Thm.~1 and text
    after]{Browder68}, using as input that $H^*(\Omega G;\Z_\ell)$ is
    always torsion-free for $\ell$--compact groups \cite[text after Thm~1.5]{AG09}.
    Borel also showed in \cite[Thm.~16.4]{borel54} that
    $H_*(\Spin(10);\F_2)$ is non-commutative providing a $\ell =2$
    example.
Indeed, as far as we know it could be true for all primes $\ell$ that $H_*(G)$ is
commutative if and only if  $H^*(BG;\F_\ell)$ is a polynomial ring,
but we do not believe this is known in full generality. (See also \cite[Thm.~1.1]{Kane76}.)
\end{rem}

As this route is the quickest for us,
we will deduce the commutativity of $\stringprod$ on $\bbH^\ast(LBG)$
from the comparison with Chataur and Menichi's product $\odot$, which 
is already known to be commutative \cite[Cor.~B.3]{KM19}.
A more direct proof would also be possible, however.  The key features
of $LBG$ implying the desired commutativity are the self-homeomorphism of $LBG$
given by rotation of loops by 180 degrees and the fact that this homeomorphism
is homotopic to the identity map; cf.\ \cite[Pf.~of Prop.~3.4]{tamanoi2009cap}.

\begin{rem}[Unitality of $\stringprod$]
\label{rk:unitality}
The fact that the string product on $\bbH^\ast(LBG)$ 
admits a unit appears to have been previously established  
in special cases only. When $\ell=2$
and $G$ is a compact Lie group, the unitality of $\stringprod$ can be deduced from
\cite{HL15}. Moreover, Kuribayashi and Menichi have given a computational
proof of the unitality of the string product on $\bbH^\ast(LBG)$ when $H^\ast (BG)$
is a polynomial algebra \cite[Cor.~4.2 and Thm.~5.5]{KM19}, 
providing concrete formulas for the unit. 
When $H^\ast(BG)$ is a polynomial ring concentrated in even degrees, 
the unit turns out to be the degree shift of a product of the 
exterior classes in $H^\ast(LBG)$ under cup product. See \cite[Pf.~of Cor.~4.2]{KM19}.

Specializing Remark~\ref{rk:unitdescription}, we see that the unit 
$\bbOne \in \bbH^0(LBG)$ can for general $BG$ be described as the degree shift of the 
image of the unit $1 \in H^0(BG)$ under an umkehr map 
\[
	s_! \colon H^\ast(BG) \longto H^{\ast+d}(LBG)
\]
associated with the section $s\colon BG \to LBG$ given by constant loops.
The key ingredient in establishing the unitality of $\stringprod$ in general
is a theory of umkehr maps where both $s_!$ and the umkehr map $\mathrm{concat}_!$
needed in the construction of $\stringprod$ exist and fit together in the 
required way.

\end{rem}

\begin{rem}[{$[G]$--fundamental class for $LBG$}]
Let $i \colon G \incl LBG$ be the inclusion of the fibre of evaluation map 
$LBG \to BG$, $\alpha \mapsto \alpha(1)$.
By Theorem~\ref{thm:functoriality}, the induced map
$i^\ast \colon \bbH^\ast(LBG) \to \bbH^\ast(G)$
sends $\bbOne$ to $\bbOne$,
so the map $i^\ast \colon H^d(LBG) \to H^d(G)$ and 
hence the map $i_\ast \colon H_d(G) \to H_d(LBG)$ are nontrivial.
Thus $LBG = BG^{h \id}$ has a $[G]$--fundamental class in the sense of 
Definition~\ref{def:fundamentalclass}. Indeed, writing $[G] \in H_d(G)$
for the fundamental class, the unit $\bbOne \in \bbH^\ast(LBG)$ is dual to 
the class $i_\ast[G] \in H_\ast(LBG)$ in the sense that 
$\langle s^d(\bbOne), i_\ast[G] \rangle \neq 0$.
\end{rem}

As we noted earlier in
Proposition~\ref{prop:specialcasesofpathspaceconstr}\eqref{it:ptloopspace},  
the space $P(c,c)$ for $c: \pt \to BG$ the basepoint inclusion
recovers the based loop space $G=\loops BG$.
The resulting product on $\bbH^\ast(G)$ can be described
in more familiar terms:
\begin{prop}
\label{prop:pontryaginproduct}
There exists an isomorphism
$\bbH^\ast  (G) \isom  H_{-\ast}(G)$
under which the product $\stringprod$ on $\bbH^\ast(G)$
corresponds to the Pontryagin product
\[
	\mathrm{concat}_\ast 
	\co 
	H_\ast(G) \tensor H_\ast(G)
	\longto
	H_\ast(G)	
\]
on $H_\ast(G)$
where $\mathrm{concat} \co G\times G \to G$
is the concatenation map.
\end{prop}
The proof of 
Proposition~\ref{prop:pontryaginproduct} is given 
in Section~\ref{subsec:thirdperspective}.
The isomorphism 
$\bbH^\ast  (G) \isom  H_{-\ast}(G)$
in the proposition should be thought of as a manifestation 
of Poincaré duality. See Remark~\ref{rk:pd}.

To give the reader a computational example to keep in mind, 
we conclude by describing
the ring $(\bbH^\ast(LBG),\stringprod)$
explicitly in a common special case.
\begin{thrm}
\label{thm:hlbgcompintro}
Suppose $\ell$ is odd and $BG$ is a semisimple $\ell$--compact group such that 
$H^\ast(BG)$ is a polynomial algebra $H^\ast(BG) = \F_\ell[x_1,\ldots,x_n]$. 
Then there exist ring isomorphisms
$$ \bbH^*(LBG) \cong  H^*(BG) \otimes \bbH^*(G) \cong H^*(BG) \otimes H_{-*}(G)
\isom \F_\ell[x_1,\ldots,x_n] \tensor \Lambda(y_1,\ldots,y_n)$$
where $\deg(y_i) = -(\deg(x_i)-1)$.
Here the second isomorphims is induced by the Poincar{\'e} duality isomorphism of Proposition~\ref{prop:pontryaginproduct}.
\end{thrm}
We will prove Theorem~\ref{thm:hlbgcompintro} in
Section~\ref{subsec:polycase} by combining 
Proposition~\ref{prop:polycollapse},
Proposition~\ref{prop:pontryaginproduct},
and the compatibility of the string product 
with Serre spectral sequences
established in Section~\ref{sec:spectralsequences}.
We refer to \cite[Sections 4 and 5]{KM19} for
further computations of the string product on $\bbH^\ast(LBG)$ using different methods, 
including the $\ell=2$ case of Theorem~\ref{thm:hlbgcompintro}.
These computations also provide detailed information on the
relationship between the string product and 
cup product structures of $H^\ast(LBG)$.

\subsubsection{Outline of the rest of Section~\ref{sec:products}}
In the remainder of Section~\ref{sec:products}, we will 
construct the string pairing $\stringprod$ and 
prove the results stated in Section~\ref{subsec:resultsonproducts}
(with the exception of Theorem~\ref{thm:hlbgcompintro},
which will be proven in Section~\ref{subsec:polycase}).
Indeed, we will construct the string pairing $\stringprod$ in three
different ways, with each construction revealing different
properties of the pairing, and show that the constructions 
produce the same result.
Our first construction of the pairing, carried out in 
Section~\ref{subsec:firstconstruction},
is similar to Chataur and Menichi's 
construction of a string product on $\bbH^\ast(LBG)$,
and has the advantages of being simple,
being straightforward to compare with Chataur and Menichi's product 
(Proposition~\ref{prop:cmcomp}),
and allowing easy proofs of the 
the $H^\ast(B)$--bilinearity of the pairing $\stringprod$
(Proposition~\ref{prop:cupmodstrbilin})
and 
the compatibility of the pairing 
with cartesian products of $\ell$--compact groups
(Proposition~\ref{prop:pairingforproduct}).
This construction does not lend itself
to proving the existence of the units $\bbOne_f$ 
or to the construction of the pairings
on the level of spectral sequences
alluded to in Theorem~\ref{thm:mainresult}, however, so in 
Section~\ref{subsec:secondconstruction}
we present another construction of the pairing
based on the 
technology developed in \cite{umkehr-maps}.
The two constructions are shown to produce
the same result in 
Section~\ref{subsec:comparison}.
In Section~\ref{subsec:furtherpropertiesofthepairing},
we will combine the viewpoints afforded by the
two constructions to complete the proofs of 
Theorems~\ref{thm:pairings4}, \ref{thm:functoriality},
and \ref{thm:cmcomparison2},
in addition to which we will prove the existence of the
augmentation $\rho$ of equation \eqref{eq:rho}.
Finally, in Section~\ref{subsec:thirdperspective},
we provide the third construction on the pairing by
reinterpreting the second construction in 
terms of fibrewise duals,
and use this perspective to prove 
Proposition~\ref{prop:pontryaginproduct}.
This third point of view will also 
prove useful for us in Section~\ref{sec:spectralsequences}
when we prove that the pairing lifts to the 
level of Serre spectral sequences.

\begin{rem}[The semisimplicity assumption on $BG$]
\label{rk:semisimplicityassumptionnontechnical}
As mentioned in the introduction, 
we expect Theorems~\ref{thm:mainresult}, 
\ref{thm:strtoptezukacrit} and \ref{thm:tezukasubgrp}
to generalize from semisimple $\ell$--compact groups to 
arbitrary connected $\ell$--compact groups.
Indeed, the first construction
of the pairing  $\stringprod$  works unchanged 
in this greater generality, and
the third construction could be adapted to
the more general setting by replacing the natural 
isomorphism \eqref{eq:recognitionthmiso2} with one that is independent of
the second construction. 
Where the semisimplicity assumption enters into our arguments in a crucial 
way is the second construction, without which we do not know 
how to relate the first and third constructions.
See Remark~\ref{rk:semisimplicityassumption}.
\end{rem}

\subsection{The first construction of the string pairing}
\label{subsec:firstconstruction}

In this subsection, we will give a simple construction of 
the string pairing \eqref{eq:pairing} using
``integration along fibre'' maps defined 
in terms of Serre spectral sequences.

\begin{defn}[{Integration along fibre maps; cf.~\cite[\S A.3]{KM19} 
and Definition~U.\ref*{U-def:intalongfibrecohomology}}]
\label{def:integrationalongfibre}
Let $p\colon E\to B$ be a fibration,
and write $\calH^\ast(F)$ for the local coefficient
system of graded $\F_\ell$--vector spaces 
over $B$ given by the cohomology
groups of the fibres of $p$.
Call $p$ \emph{orientable} if 
there exists an $n$ such that 
$\calH^k(F) = 0$ for $k > n$
and $\calH^n(F)$ is isomorphic to the 
trivial coefficient system given by $\F_\ell$.
An \emph{orientation} for an orientable fibration $p$ is
an isomorphism $o \colon \calH^n(F) \xto{\isom} \F_\ell$,
of local coefficient systems,
and $p$ equipped with an orientation is called an 
\emph{oriented fibration}.
Given an orientable fibration $p$ with an orientation $o$,
we define the \emph{integration along fibre map}
\[
	p_! = (p,o)_! \colon H^\ast(E) \longto H^{\ast-n}(B)
\]
to be the map given by the composite
\begin{equation}
\label{eq:iafdef}
\xymatrix{
	H^{k+n}(E) 
	\longto 
	E_\infty^{k,n} 
	\longincl 
	E_2^{k,n} 
	=
	H^{k}(B;\, \calH^n(F))
	\ar[r]_-\isom^-{o_\ast} 
	&
	H^k(B;\,\F_\ell)
	=
	H^k(B)
} 
\end{equation}
where $E_2$ and $E_\infty$ refer to pages in the relative Serre 
spectral sequence of $p$
and $o_\ast$ is the map induced by $o$.
\end{defn}

\begin{rem}
For various signs to work out correctly, in
\eqref{eq:iafdef} one must interpret $\calH^n(F)$
as a graded object concentrated in degree $n$,
and $o$ as a map lowering degrees by $n$.
\end{rem}

\begin{rem}
\label{rk:orientationwithconnectedbase}
For an orientable fibration $p\colon E \to B$ 
with $B$ path connected, the data of an orientation 
amounts to the choice of an isomorphism 
$o\colon H^n(F)\isom \F_\ell$
for a fibre $F$ of $p$.
Moreover, in this case
\eqref{eq:iafdef} amounts to the composite
\begin{equation}
\xymatrix{
	H^{k+n}(E) 
	\longto 
	E_\infty^{k,n} 
	\longincl 
	E_2^{k,n} 
	\isom H^{k}(B) \tensor H^n (F) 
	\ar[r]_-\isom^-{\id \tensor o} 
	&
	H^k(B)
} 
\end{equation}
where 
\[
	(\id \tensor o)(\beta \tensor\phi) = (-1)^{n\deg \beta} \beta o(\phi)
\]
in keeping with the Koszul sign rule. 
\end{rem}

\begin{rem}
\label{rk:orfibpb}
Suppose
\[\xymatrix{
	D 
	\ar[r]
	\ar[d]_q
	\pb
	&
	E
	\ar[d]^p
	\\
	A
	\ar[r]
	&
	B
}\]
is a pullback square with $p$ an orientable fibration.
Then $q$ is an orientable fibration, and an orientation for $p$
induces one for $q$.
\end{rem}

A naive attempt to use 
the integration along fibre maps
to define \eqref{eq:pairing}
using the strategy outlined in 
Section~\ref{subsubsec:thepairingbullet}
runs into the problem that the map $\mathrm{concat}$ of 
diagram \eqref{diag:pushpull}
is usually not a fibration.
To overcome this problem, we will replace 
the map $\mathrm{concat}$ 
with an equivalent fibration.

\begin{defn}
Given maps $f,g,h\colon B \to BG$,
write $P'(f,g,h)$ for the space
\[
	P'(f,g,h) 
	= 
	\{
		(b,\alpha) \in B \times BG^{\Delta^2} 
		\mid 
		\alpha(e_0) = f(b),
		\alpha(e_1) = g(b),
		\alpha(e_2) = h(b)
	\}
\]
where $e_0$, $e_1$ and $e_2$ are the vertices of 
the standard $2$--simplex $\Delta^2$.
Moreover, let 
\[
	\mathrm{split}' \colon P'(f,g,h) \longto P(g,h) \times P(f,g) 
\qquad\text{and}\qquad
	\mathrm{concat}' \colon P'(f,g,h) \longto P(f,h)
\]
be the evident maps given induced by the various face inclusions
of $I\homeom \Delta^1$ into $\Delta^2$, and let
\[
	\pi'_{f,g,h} \colon P'(f,g,h) \longto B
\]
be the map given by projection onto the first coordinate.
\end{defn}

We observe that
the usual deformation retraction of $\Delta^2$ onto the horn 
$\Lambda^2_1 \subset \Delta^2$ induces a homotopy equivalence
$P(f,g,h) \to P'(f,g,h)$ making the following diagram commutative.
\begin{equation}
\label{diag:pandpprime}
\vcenter{\xymatrix@!0@C=8em{
	&
	P'(f,g,h)
	\ar[dl]_{\mathrm{split}'}
	\ar[dr]^{_{}\mathrm{concat}'}
	\\
	P(g,h) \times P(f,g) 
	&&
	P(f,h)
	\\
	&
	P(f,g,h)
	\ar[uu]_{\homot}
	\ar[ul]^{\mathrm{split}}
	\ar[ur]_{\mathrm{concat}}
}}
\end{equation}

Our next aim is to equip the fibration $\mathrm{concat}'$
with an orientation. To this end,
we note that there is a canonical pullback square
\begin{equation}
\label{sq:concatprimepb} 
\vcenter{\xymatrix@C+2em{
	P'(f,g,h) 
	\ar[r]^{\mathrm{concat}'}
	\ar[d]
	\pb
	&
	P(f,h)
	\ar[d]
	\\
	P'(\pi_1,\pi_2,\pi_3) 
	\ar[r]^{\mathrm{concat}'}
	&
	P(\pi_1,\pi_3)
}}
\end{equation}
where $\pi_1,\pi_2,\pi_3\colon BG^3 \to BG$ are the coordinate
projection maps. As the bottom map can be identified with 
the map
\begin{equation}
\label{eq:univconcatprimedesc}
	BG^{\Delta^2} \longto BG \times BG^I,
	\qquad
	\alpha
	\longmapsto
	(\alpha(e_1), \alpha  | [e_0,e_2]),
\end{equation}
an orientable fibration with fibre homotopy equivalent to $G$,
we see that the map 
$\mathrm{concat}' \colon P'(f,g,h) \to P(f,h)$
is an orientable fibration.
Let $F$ be the fibre of the map \eqref{eq:univconcatprimedesc}
over the point of $BG \times BG^I$ given by the 
basepoint of $BG$ and the constant path at the basepoint.
Restriction to the line segment 
$[e_1,(e_0 + e_2)/2] \subset \Delta^2$
provides an explicit homotopy equivalence between 
$F$ and $G$, and we give \eqref{eq:univconcatprimedesc}
the orientation defined by the composite
\[
	H^d(F) \xto{\ \isom\ } H^d(G) \xto{\ \isom\ } \F_\ell
\]
of the resulting isomorphism $H^d(F) \isom H^d(G)$
and the isomorphism $H^d(G)\isom \F_\ell$
of \eqref{as:gorientation}.
The identification of the bottom map
$\mathrm{concat}'$ in \eqref{sq:concatprimepb}
with the map \eqref{eq:univconcatprimedesc}
now yields an orientation for the former,
and we equip the map 
$\mathrm{concat}' \colon P'(f,g,h) \to P(f,h)$
with the induced orientation from 
\eqref{sq:concatprimepb}.

We are now ready to give the first definition of the string pairing $\stringprod$.
\begin{defn}[The string pairing $\stringprod$]
\label{def:bulletdef1}
Given $f,g,h \colon B \to BG$, we let
\[
	\tildestringprod 
	\colon 
	H^\ast P(g,h) \tensor H^\ast P(f,g)
	\longto
	H^{\ast-d} P(f,h)
\]
be the composite
\def\firstentry{H^\ast P(g,h) \tensor H^\ast P(f,g)\;}
\begin{equation}
\label{eq:tildebulletdef}
\vcenter{\xymatrix@1@!0@C=5em{
	*!R{\firstentry}
	\ar[r]^\times
	&
	*!L{\;H^\ast (P(g,h) \times P(f,g))}
	\\
	*!R{\phantom{\firstentry}}
	\ar[r]^-{(\mathrm{split}')^\ast}
	&
	*!L{\;H^\ast P'(f,g,h)}
	\\
	*!R{\phantom{\firstentry}}
	\ar[r]^{\mathrm{concat}'_!}
	&
	*!L{\;H^{\ast-d} P(f,h)}
}}
\end{equation}
and define the pairing
\[
	\stringprod 
	\colon
	\bbH^\ast P(g,h) \tensor \bbH^\ast P(f,g)
	\longto
	\bbH^\ast P(f,h)
\]
by setting
\begin{equation}
\label{eq:bulletandtildebullet}
	x \stringprod y = (-1)^{d \deg(x)} s^{-d}(s^d(x) \tildestringprod s^d(y)) 
\end{equation}
for all $x\in \bbH^\ast P(g,h)$, $y\in \bbH^\ast P(f,g)$.
\end{defn}

\subsection{Consequences of the first construction of the string pairing}

Our aim in this subsection is to prove various properties 
of the string pairing $\stringprod$ that follow easily from 
the construction given in Section~\ref{subsec:firstconstruction}.
We will start by proving that on $\bbH^\ast(LBG)$,
our string product $\stringprod$ agrees with 
product $\odot$ constructed by Chataur and Menichi 
(Proposition~\ref{prop:cmcomp})
after which we will prove that the pairing $\stringprod$
is $H^\ast(B)$--bilinear (Proposition~\ref{prop:cupmodstrbilin}).
We will continue by proving that the pairing $\stringprod$
is compatible cartesian products of $\ell$--compact groups 
(Proposition~\ref{prop:pairingforproduct}).
Finally, we will establish a formula which is frequently
useful in computations of $\stringprod$
and which will be needed later in the paper
in the proof of Theorem~\ref{thm:realization}
(Proposition~\ref{prop:middle} and Remark~\ref{rk:middle}).

Let us first recall the definition of the Chataur--Menichi product
\[
\odot\colon \bbH^\ast(LBG) \tensor \bbH^\ast(LBG) \longto \bbH^\ast(LBG).
\]
Let $\Sigma$ be the pair of pants surface with 
one incoming and two outgoing boundary circles,
and let $\mathrm{in} \colon S^1 \to \Sigma$
and $\mathrm{out} \colon S^1 \sqcup S^1 \to \Sigma$
be the inclusions of the incoming and outgoing boundaries
to $\Sigma$.
Then the product $\odot$ is defined by
\[
	x\odot y 
	= 
	(-1)^{d \deg(x)} s^{-d}\mathrm{Dlcop}(s^d(x) \tensor s^d(y))
\]
where $\mathrm{Dlcop}$ is the composite
\def\firstentry{H^\ast (LBG) \tensor H^\ast (LBG)\;}
\begin{equation}
\label{eq:dlcopdef}
\vcenter{\xymatrix@1@!0@C=6em{
	*!R{\firstentry}
	\ar[r]^\times
	&
	*!L{\;H^\ast (LBG \times LBG)}
	\\
	*!R{\phantom{\firstentry}}
	\ar[r]^-{\map(\mathrm{out},BG)^\ast}
	&
	*!L{\;H^\ast (\map(\Sigma, BG))}
	\\
	*!R{\phantom{\firstentry}}
	\ar[r]^{\map(\mathrm{in},BG)_!}
	&
	*!L{\;H^{\ast-d} (LBG)}
}}
\end{equation}
See \cite[pp.~847--848]{KM19}. 

\begin{prop}
\label{prop:cmcomp}
On $\bbH^\ast(LBG)$, the string pairing $\stringprod$ agrees with the 
product $\odot$ of Chataur and Menichi.
\end{prop}

\begin{proof}
Recall that we have identified $LBG$ with 
$P(\id_{BG},\id_{BG})$,
and observe that %
$P'(\id_{BG},\id_{BG},\id_{BG})$
can be identified with the mapping space
$\map(\Delta^{2}/\{e_0,e_1,e_2\},BG)$.
View the surface
$\Sigma$ as built from $\Delta^2 / \{e_0,e_1,e_2\}$
by attaching a cylinder to each of the $3$ boundary 
circles, and observe that collapsing the attached cylinders
yields a deformation retraction 
$r\colon \Sigma \xto{\homot} \Delta^2 / \{e_0,e_1,e_2\}$
such that the following diagram commutes:
\[\xymatrix@!0@C=10em@R=3em{
	&
	\map(\Sigma,BG)
	\ar[dl]_{\map(\mathrm{out},BG)\quad}
	\ar[dr]^{_{}\map(\mathrm{in},BG)}
	\\
	LBG \times LBG
	&&
	LBG
	\\
	&
	\map(\Delta^{2}/\{e_0,e_1,e_2\},BG)
	\ar[uu]_{\homot}^{\map(r,BG)}
	\ar[ul]^(0.6){\mathrm{split}'}
	\ar[ur]_(0.6){\mathrm{concat}'}
}\]
Comparing \eqref{eq:tildebulletdef} and 
\eqref{eq:dlcopdef}, we now see that 
$\stringprod$ and $\odot$ agree as long as
the orientations for $\map(\mathrm{in},BG)$
and $\mathrm{concat}'$ match.
An inspection of Kuribayashi and Menichi's 
orientation for  $\map(\mathrm{in},BG)$ 
\cite[p.~848]{KM19},
also derived from an isomorphism $H^d(G) \isom \F_\ell$,
reveals that it does correspond to our orientation
for $\mathrm{concat}'$ under $\map(r,BG)$.
\end{proof}

The following lemma follows from the 
compatibility of the Serre spectral sequence with products.

\begin{lemma}
\label{lm:intalongfibreandproducts}
Suppose $p_1 \colon E_1 \to B_1$ and  $p_2\colon E_2 \to B_2$
are oriented fibrations with fibres $F_i$ and orientations 
$o_i\colon \calH^{n_i}(F_i)\xto{\ \isom\ } \F_\ell$, $i=1,2$.
Equip $p_1\times p_2 \colon E_1\times E_2 \to B_1\times B_2$
with the orientation 
\[
	o_{12} \colon \calH^{n_1+n_2}(F_1\times F_2) \xto{\ \isom\ } \F_\ell
\]
corresponding to the map 
\[
	o_1\tensor o_2 
	\colon
	\calH^{n_1}(F_1) \tensor \calH^{n_2}(F_2) 
	\xto{\ \isom\ }
	\F_\ell,
	\qquad
	x_1 \tensor x_2
	\longmapsto 
	(-1)^{n_1 n_2} o(x_1)o(x_2)
\]
under the isomorphism
\[\xymatrix{
	\calH^{n_1}(F_1) \tensor \calH^{n_2}(F_2) 
	\ar[r]^-{\times}_-{\isom}
	&
	\calH^{n_1+n_2}(F_1\times F_2) 
}
\]
given by cross product.
Then the following diagram commutes for all $k_1$ and $k_2$:
\[\xymatrix{
	H^{k_1+n_1} (E_1) \tensor H^{k_2+n_2} (E_2)
	\ar[r]^-\times
	\ar[d]_{(p_1)_!\tensor (p_2)_!}
	&
	H^{k_1+k_2+n_1+n_2} (E_1\times E_2)
	\ar[d]^{(p_1\times p_2)_!}
	\\
	H^{k_1}(B_1) \tensor H^{k_2}(B_2)
	\ar[r]^-\times
	&
	H^{k_1+k_2}(B_1\times B_2)
}\]
Here the map $(p_1)_!\tensor (p_2)_!$ is given by
\[
	((p_1)_!\tensor (p_2)_!)(x_1 \tensor x_2) 
	= 
	(-1)^{n_2 \deg(x_1)}
	(p_1)_! (x_1) \tensor (p_2)_!(x_2)
\]
in accordance with the Koszul sign rule. \qed
\end{lemma}

\begin{prop}
\label{prop:cupmodstrbilin}
Let $B$ be a space, and let $f,g,h\co B \to BG$ be maps.
The string pairing
\[
	\stringprod
	\co
	\bbH^\ast P(g,h) \tensor \bbH^\ast P(f,g) 
	\longto 
	\bbH^\ast P(f,h)
\]
is $H^\ast(B)$--bilinear with respect to the 
module structures of Definition~\ref{def:cupmodstr}:
for all $a\in H^{\ast}(B)$, $b\in H^{\ast} (B)$ and 
$x\in  \bbH^{\ast} P(g,h)$, $y\in \bbH^{\ast} P(f,g)$
we have
\[
	(ax) \stringprod (by) = (-1)^{\deg(b)\deg(x)}(ab)(x\stringprod y).
\]
\end{prop}

\begin{proof}%
The claim follows readily from the  formula
\begin{equation}
\label{eq:concatprojformula}
	\mathrm{concat}'_! (
		(\mathrm{concat}')^\ast(a) \cupprod x
	)
	=
	(-1)^{d\deg(a)} a \cupprod 	\mathrm{concat}'_! (x)
\end{equation}
valid for $a\in H^\ast P(f,h)$, $x\in H^\ast P'(f,g,h)$.
The formula in turn follows from the naturality of the Serre
spectral sequence with respect to the square
\[
        \xymatrix@C+3em{
        	P'(f,g,h)
        	\ar[d]_{\mathrm{concat}'}
        	\ar[r]^-{(\mathrm{concat}',\id)}
        	&
        	P(f,h)\times P'(f,g,h)
        	\ar[d]^{\id\times \mathrm{concat}'}
        	\\
        	P(f,h)
        	\ar[r]^-{\Delta}
        	&
        	P(f,h)\times P(f,h)
        }
\]
together with Lemma~\ref{lm:intalongfibreandproducts}.
\end{proof}

Our next goal is to relate the string pairing \eqref{eq:pairing}
for a product of semisimple $\ell$--compact
groups to the pairings for the factors. 
The result (Proposition~\ref{prop:pairingforproduct})
shows that the pairings are compatible in the expected way,
although articulating this compatibility precisely 
will require a bit of effort.

 Suppose $BG$ splits
as a product
\[
	BG = BG_1 \times BG_2
\]
where $BG_1$ and $BG_2$ are semisimple
$\ell$--compact groups of dimension $d_1$ and $d_2$, respectively.
Given maps $f_1,g_1 \colon B_1 \to BG_1$ and $f_2,g_2 \colon B_2 \to BG_2$,
we have a homeomorphism
\[
	c 
	\colon
	P(f_1\times f_2, g_1 \times g_2)
	\xto{\ \homeom\ }
	P(f_1,g_1) \times P(f_2,g_2),
	\quad
	((b_1,b_2),(\gamma_1,\gamma_2))
	\longmapsto 
	((b_1,\gamma_1),(b_2,\gamma_2)) 
\]
making the triangle below commutative.
\[\xymatrix@C-1.5em{%
	P(f_1\times f_2, g_1 \times g_2)
	\ar[rr]_-\homeom^-{c}%
	\ar[dr]_(0.45){\pi_{f_1\times f_2, g_1\times g_2}\;}
	&&
	P(f_1,g_1) \times P(f_2,g_2)
	\ar[dl]^(0.45){\quad\pi_{f_1,g_1} \times \pi_{f_2,g_2}}
	\\
	&
	B_1\times B_2	
}\]
Generalizing our earlier notation $\bbH^\ast$
and the degree shift maps $s^d \colon \bbH^\ast \to H^\ast$,
given an integer $k$ and a space $X$, we write 
$\shiftedH{k}^\ast(X) = H^{\ast+k}(X)$, and define 
\[
	s^k \colon \shiftedH{k}^\ast(X) \longto H^\ast(X)
\]	
to be the map sending each class to itself, but now considered as
an element of $H^\ast(X)$. Given spaces $X_1$ and  $X_2$ and integers $k_1$ and $k_2$,
we define a cross product 
\[
	\times 
	\colon 
	\shiftedH{k_1}^\ast(X_1) \tensor \shiftedH{k_2}^\ast(X_2)
	\longto
	\shiftedH{k_1+k_2}^\ast(X_1\times X_2)	
\]
by the requirement that the diagram
\[\xymatrix{
	\shiftedH{k_1}^\ast(X_1) \tensor \shiftedH{k_2}^\ast(X_2)
	\ar@{-->}[r]^(0.52){\times}
	\ar[d]_{s^{k_1} \tensor s^{k_2}}
	&
	\shiftedH{k_1+k_2}^\ast(X_1\times X_2)
	\ar[d]^{s^{k_1+k_2}}
	\\
	H^\ast(X_1) \tensor H^\ast(X_2)
	\ar[r]^(0.52){\times}
	&
	H^\ast(X_1\times X_2)
}\]
commutes. Here the map $s^{k_1} \tensor s^{k_2}$ is given by
\[
	(s^{k_1} \tensor s^{k_2})(x_1 \tensor x_2) 
	= 
	(-1)^{k_2 \deg(x_1)} s^{k_1}(x_1) \tensor s^{k_1}(x_2)
\]
in accordance with the Koszul sign rule.

\begin{prop}
\label{prop:pairingforproduct}
Suppose $BG$ factors as $BG = BG_1 \times BG_2$ where $BG_1$ and $BG_2$
are semisimple $\ell$--compact groups
of dimension $d_1$ and $d_2$, respectively. 
Suppose $BG_1$ and $BG_2$ are equipped with orientations 
$o_1 \colon H^{d_1} (G_1) \xto{\ \isom\ } \F_\ell$
and
$o_2 \colon H^{d_2} (G_2) \xto{\ \isom\ } \F_\ell$
respectively, and $BG$ is equipped with the orientation
$H^{d_1 + d_2}(G) \xto{\ \isom\ } \F_\ell$
corresponding to the map
\[
	(-1)^{d_1 d_2} o_1\tensor o_2 
	\colon 
	H^{d_1}(G_1) \tensor H^{d_2}(G_2) \longto \F_\ell,
	\qquad
	x_1 \tensor x_2 \longmapsto o_1(x_1) o_2(x_2)
\]
under the isomorphism $H^{d_1}(G_1) \tensor H^{d_2}(G_2) \isom H^{d_1+d_2}(G)$
given by cross product.
Here $G_1 = \loops BG_1$ and $G_2 = \loops BG_2$,
so that $G = \loops BG = \loops (BG_1\times BG_2) = G_1 \times G_2$.
Then, given maps 
$f_1, g_1, h_1 \colon B_1 \to BG_1$ and $f_2, g_2, h_2 \colon B_2 \to BG_2$,
the following diagram commutes:
\begingroup
\small
\[
\xymatrix@C+1em{
	\shiftedH{d_1}^\ast P(g_1,h_1)
	\tensor
	\shiftedH{d_1}^\ast P(f_1,g_1)
	\tensor
	\shiftedH{d_2}^\ast P(g_2,h_2)
	\tensor
	\shiftedH{d_2}^\ast P(f_2,g_2)
	\ar[r]^-{\stringprod_1\tensor \stringprod_2}
	\ar[d]_{1\tensor \chi \tensor 1}
	&
	\shiftedH{d_1}^\ast P(f_1,h_1)
	\tensor
	\shiftedH{d_2}^\ast P(f_2,h_2)
	\ar[dd]^{\times}
	\\
	\shiftedH{d_1}^\ast P(g_1,h_1)
	\tensor
	\shiftedH{d_2}^\ast P(g_2,h_2)
	\tensor
	\shiftedH{d_1}^\ast P(f_1,g_1)
	\tensor
	\shiftedH{d_2}^\ast P(f_2,g_2)	
	\ar[d]_{\times \tensor \times}
	\\
	\shiftedH{d_1+d_2}^\ast (P(g_1,h_1) \times P(g_2,h_2))
	\tensor
	\shiftedH{d_1+d_2}^\ast (P(f_1,g_1) \times P(f_2,g_2))	
	\ar[d]^\isom_{c^\ast\tensor c^\ast}
	&
	\shiftedH{d_1+d_2} (P(f_1,h_1) \times P(f_2,h_2))
	\ar[d]^{c^\ast}_\isom
	\\
	\shiftedH{d_1+d_2}^\ast P(g_1\times g_2, h_1\times h_2)
	\tensor
	\shiftedH{d_1+d_2}^\ast P(f_1\times f_2, g_1\times g_2)
	\ar[r]^-\stringprod
	&
	\shiftedH{d_1+d_2} P(f_1\times f_2, h_1\times h_2)
}\]
\endgroup
Here $\stringprod_1$ and $\stringprod_2$ refer to 
the string pairing \eqref{eq:pairing} for $BG_1$ and $BG_2$,
respectively, and $\chi$ is the usual symmetry constraint
$x\tensor y \mapsto (-1)^{\deg(x)\deg(y)} y\tensor x$.
\end{prop}

\begin{proof}[Proof of Proposition~\ref{prop:pairingforproduct}]
The claim follows by unpacking definitions and applying 
Lemma~\ref{lm:intalongfibreandproducts}.
\end{proof}

We conclude the section with the following result, which 
is often useful in computations of
the string pairing $\stringprod$. 
Compare with~\cite[Lemma~2.3]{KM19} and \cite[Thm.~8.2]{ChasSullivan}.
In the present paper, we will need the result in the proof of 
Theorem~\ref{thm:realization}.
\begin{prop}
\label{prop:middle}
Fix $f,g,h\colon B \to BG$, and 
suppose $A \in H^\ast P(f,h)$ and 
$a = \sum_i a^{}_i \times a'_i \in H^\ast(P(g,h)\times P(f,g))$
satisfy 
$\mathrm{concat}^\ast(A) = \mathrm{split}^\ast (a)
\in 
H^\ast P(f,g,h)$.
Then 
\begin{equation}
\label{eq:middleformula} 
	A \cupprod(u_1 \tildestringprod u_2) 
	= 
	\sum_i (-1)^{d \deg(a_i)  + (d+ \deg(u_1))\deg(a'_i)} 
		(a_i\cupprod u_1) \tildestringprod (a'_i \cupprod u_2)
\end{equation}
for all $u_1\in H^\ast(LBG)$ and $u_2 \in H^\ast(BG^{h\sigma})$. 
\end{prop}
\begin{proof}
In view of diagram~\eqref{diag:pandpprime},
the equation 
\[
	\mathrm{concat}^\ast(A) = \mathrm{split}^\ast (a)
\]
implies the equation
\[
	(\mathrm{concat}')^\ast(A) = (\mathrm{split}')^\ast (a).
\]
The claim now follows easily from formula~\eqref{eq:concatprojformula}.
\end{proof}

\begin{rem}
\label{rk:middle}
The sign in formula \eqref{eq:middleformula}
becomes less mysterious 
when Proposition~\ref{prop:middle}
is phrased in terms of the pairing $\stringprod$ instead of $\tildestringprod$.
Given a space $X$, equip 
the shifted cohomology groups $\bbH^\ast(X)$
with a  $(H^\ast(X),\cupprod)$--module structure 
by requiring 
$s^d \colon \bbH^\ast(X) \to H^\ast(X)$
to be $H^\ast(X)$--linear, so that
\[
	s^d(a x) = (-1)^{d\deg(a)} a \cupprod s^d(x)
\]
for all $a\in H^\ast(X)$, $x\in \bbH^\ast(X)$.
Then \eqref{eq:middleformula} is equivalent 
to the formula
\begin{equation}
\label{eq:bettermiddleformula}
	A (v_1 \stringprod v_2)
	= 
	\sum_i (-1)^{\deg(v_1) \deg(a'_i)}
		(a_i v_1) \stringprod (a'_i v_2) 
\end{equation}
for all $v_1\in \bbH^\ast(LBG)$ and $v_2 \in \bbH^\ast(BG^{h\sigma})$
where the sign is what one would expect from the Koszul rule. 
\end{rem}

\subsection{The second construction of the string pairing}
\label{subsec:secondconstruction}

In this subsection, we will present the second 
construction of the string pairing $\stringprod$.
To distinguish the pairings produced by the 
first and second constructions,
we will temporarily adopt the notation $\stringprod'$
for the pairing produced by the latter.
The main results of the subsection will be

\begin{thrm}
\label{thm:pairingssecondconstruction}
Suppose $BG$ is a semisimple $\ell$--compact group.
\begin{enumerate}[(i)]
\item\label{it:pairingssecondconstructioncats}
    Given a space $B$, the pairings
    \begin{equation}
    \label{eq:pairingbulletprime}
    	\stringprod'
    	\co
    	\bbH^\ast P(g,h) \tensor \bbH^\ast P(f,g) 
    	\longto 
    	\bbH^\ast P(f,h)
    \end{equation}
    for maps $f,g,h\colon B \to BG$ define the composition law
    in a category enriched in graded $\F_\ell$--modules
    whose objects are maps $f\colon B \to BG$ and whose morphisms
    from $f\colon B\to BG$ to $g\colon B \to BG$ 
    are given by $\bbH^\ast P(f,g)$. 
\item\label{it:pairingssecondconstructionfuns}
	The enriched category of part~(\ref{it:pairingssecondconstructioncats})
	depends functorially on the space $B$:
	given a map $\phi \colon A \to B$,
	the maps $F_\phi$ of equation~\eqref{eq:fphidef}
	for varying $f,g \colon B\to BG$
	define a functor of categories enriched in graded $\F_\ell$--modules
	which on objects is given by the assignment $f\mapsto f\phi$.
\end{enumerate}
\end{thrm}
Compare with Theorems~\ref{thm:pairings4} and 
\ref{thm:functoriality}(\ref{it:functoriality2}).
As was the case with $\stringprod$, the pairing $\stringprod'$
will depend  on a piece of orientation data, but again
only in a mild way, so that changing the orientation
has the effect of replacing $\stringprod'$ by a nonzero scalar multiple.
Later, in Section~\ref{subsec:comparison},
we will see that the orientation data can be chosen
so that the pairings $\stringprod'$ and $\stringprod$ agree,
after which point we will drop the notation $\stringprod'$ 
in favor of $\stringprod$.

\subsubsection{Outline of the construction}
\label{subsubsec:strategy2}

We will phrase the construction of the pairing $\stringprod'$ and the proof of 
Theorem~\ref{thm:pairingssecondconstruction}
using the language of enriched category theory, which we will now briefly 
recall.

\begin{defn}[Enriched categories]
\label{def:enrichedcat}
A \emph{category} $\calC$ 
\emph{enriched in a monoidal category} $\calV$
consists of the following data: 
a collection of objects $\Ob \calC$,
a hom-object $\calC(A,B) \in \calV$ for 
every pair of objects $A,B\in \Ob\calC$,
a composition law $\calC(B,C)\tensor \calC(A,B) \to \calC(A,C)$
for every triple of objects $A,B,C\in\Ob\calC$,
and an identity element $I \to \calC(A,A)$ for every object $A\in\Ob\calC$,
where $I$ denotes the monoidal unit in $\calV$.
These data are supposed to satisfy
the evident analogues of the axioms of an ordinary category
\cite[\S1.2]{KellyEnriched}.
\end{defn}

\begin{defn}[Enriched functors]
\label{def:enrichedfunctor} 
An \emph{enriched functor}
$F\co \calC \to \calD$ between categories enriched in $\calV$
consists of a map $F\co \Ob \calC \to \Ob \calD$ and a 
map $F=F_{A,B}\co\calC(A,B) \to \calC(FA,FB)$ for every pair of objects 
$A,B\in \Ob\calC$, these data being subject to 
the evident analogues of the axioms
for an ordinary functor 
\cite[\S1.2]{KellyEnriched}.
\end{defn}

\begin{terminology}[(Symmetric) monoidal functors]
\label{term:monfun}
By a (symmetric) monoidal functor $F\co\calC \to \calD$
between (symmetric) monoidal categories, 
we mean a \emph{strong} (symmetric) monoidal functor 
in the sense of Mac~Lane~\cite[\S XI.2]{MacLane}, 
meaning that the monoidality and identity constraints
\[
	F_\tensor \co F(X) \tensor F(Y) \longto F(X\tensor Y)
	\qquad\text{and}\qquad
	F_I\co I_\calD \longto F(I_\calC)
\]
are assumed to be isomorphisms. Here $I_\calC$ and $I_\calD$ 
denote the unit objects of $\calC$ and $\calD$, respectively.
In a \emph{lax} (symmetric) monoidal functor the requirement that the 
maps are isomorphisms is dropped, and in an \emph{oplax}
(symmetric) monoidal functor the direction of the maps 
is in addition reversed.
\end{terminology}

The following construction gives a basic way of 
obtaining new enriched categories and functors 
from existing ones.

\begin{constr}
\label{constr:newenrichedcats}
Let 
$M\co \calV \to \calW$ be a lax monoidal functor.
Then from a $\calV$--enriched
category $\calC$ we obtain a $\calW$--enriched category $M_\ast\calC$
with $\Ob M_\ast\calC = \Ob \calC$ and hom-objects 
\[
	(M_\ast\calC)(A,B) = M\calC(A,B)
\]
by taking as the composition law the composite
\[
	M\calC(B,C)\tensor M\calC(A,B) 
	\xto{\ M_\tensor\ } 
	M(\calC(B,C)\tensor \calC(A,B))
	\xto{\ M(\mu_{A,B,C})\ }
	M\calC(A,C)
\]
and as the identity element for an object $A$ the composite
\[
	I_\calW \xto{\ M_I\ } M(I_\calV) \xto{\ M(\iota_A)\ } M\calC(A,A),
\]
where $\mu_{A,B,C}$ and $\iota_A$ refer to 
the composition law and the identity element in $\calC$,
respectively, and $M_\tensor$ and $M_I$
are the monoidality and identity constraints of $M$.
Moreover, a $\calV$--enriched functor $F\co \calC \to \calD$
induces a $\calW$--enriched functor 
$M_\ast(F) \co M_\ast\calC \to M_\ast\calD$
by letting $M_\ast(F) = F$ on objects, and by defining
\[
	M_\ast(F)_{A,B} = M(F_{A,B})
	\co 
	(M_\ast\calC)(A,B) \longto (M_\ast\calD)(FA,FB)
\]
on morphisms.
\end{constr}

In view of 
Construction~\ref{constr:newenrichedcats},
one might try to construct the pairing $\stringprod'$ and prove 
Theorem~\ref{thm:pairingssecondconstruction}(\ref{it:pairingssecondconstructioncats})
by first constructing an enriched category $\calP_B$
where the objects are maps 
$f\co B \to BG$,
where the hom-object from $f$ to $g$ is given by the fibration 
$P(f,g) \to B$, and where the composition law is given by 
diagram~\eqref{diag:pushpull}, and then applying
Construction~\ref{constr:newenrichedcats} to $\calP_B$
with a suitable monoidal functor $M$ to obtain the category of 
Theorem~\ref{thm:pairingssecondconstruction}(\ref{it:pairingssecondconstructioncats}).
Moreover, 
Theorem~\ref{thm:pairingssecondconstruction}(\ref{it:pairingssecondconstructionfuns})
would follow if the pullback squares \eqref{sq:inducedmap}
assembled into an enriched 
functor $K_\phi\co \calP_B \to \calP_A$ which, upon
application of $M$, yielded $F_\phi$.
This strategy is indeed the one we will follow. 
The functor $M$ will be the composite
\def\firstentry{(h\calF^\fop)^\op\;}
\begin{equation}
\label{eq:functorMnew2}
\vcenter{\xymatrix@1@!0@C=5em{
	*!R{\firstentry}
	\ar[r]^{\fH_\bullet^\op}
	&
	*!L{\;\ho(\Mod^{H\F_\ell})^\op}
	\\
	*!R{\phantom{\firstentry}}
	\ar[r]^{H^\ast}
	&
	*!L{\;\grMod^{\F_\ell}}
}}
\end{equation}
Here 
$\ho(\Mod^{H\F_\ell})$ denotes the 
homotopy category of $H\F_\ell$--modules,
$\grMod^{\F_\ell}$ the category of graded
$\F_\ell$--modules,
and
$H^\ast$ cohomology with $\F_\ell$ coefficients.
The category $h\calF^\fop$ will be constructed in 
Section~\ref{subsubsec:hcalffop}, 
the categories $\calP_B$ enriched in 
$(h\calF^\fop)^\op$ 
in Section~\ref{subsubsec:calpb2},
and the functor $\fH_\bullet$ in 
Section~\ref{subsubsec:fhbullet}.
Once all the categories and functors in \eqref{eq:functorMnew2}
have been constructed, the proof of 
Theorem~\ref{thm:pairingssecondconstruction}
will be completed by identifying 
the image of $P(f,g)$ under the composite functor~\eqref{eq:functorMnew2}
as $\bbH^\ast P(f,g)$. 
This will be done in Section~\ref{subsubsec:result2}.
In our work, we will be relying on the foundations developed in \cite{umkehr-maps}. 
We will therefore begin by summarizing in Section~\ref{subsubsec:umkehrmapssummary} 
the main points of \cite{umkehr-maps} the reader of the present work 
needs to know to follow the construction.

\subsubsection{A brief summary of \cite{umkehr-maps}}
\label{subsubsec:umkehrmapssummary}

The paper \cite{umkehr-maps} develops 
a theory of umkehr maps suited to 
the needs of the present paper. 
The theory is phrased in terms of
``twisted homology objects'' $H_\bullet(B;X)$
and maps between them, where
$B$ is a space and $X$ is an object parametrized by $B$
such as a parametrized spectrum or a parametrized $H\F_\ell$--module
over $B$.
In more detail, suppose $\calC$ is a 
presentable symmetric monoidal $\infty$--category
with symmetric monoidal product $\tensor$
such as the $\infty$--category 
$\Spectra$ of spectra with smash product $\smashprod$, 
the $\infty$--category
$\Spectra^\ell$ of $\ell$--complete 
(that is, $H\F_\ell$--local) spectra with smash product $\smashprod^\ell$,
or the $\infty$--category $\Mod^{R}$ of modules
over a commutative ring spectrum $R$ with smash product $\smashprod^R$.
Given a space $B$ and a parametrized $\calC$--object $X$ over $B$,
there is then an associated object $H_\bullet(B;X) \in \ho(\calC)$,
the ``twisted homology of $B$ with coefficients in $X$'' 
(see Definition~U.\ref*{U-def:hbullethc} or below).
In special cases, these objects recover the usual generalized
homology groups of $B$ and ordinary twisted homology groups of $B$:
when  $\calC$ is $\Spectra$ or a similar $\infty$--category,
Example~U.\ref*{U-ex:hbulletfortrivialcoeffs2}
shows that the homotopy groups of the objects $H_\bullet(B;X)$
for trivial parametrized $\calC$--objects $X$ over $B$
agree with the usual untwisted generalized homology groups of $B$,
and for $\calC = \Mod^{H\Z}$,
Corollary~U.\ref*{U-cor:homologywithlocalcoefficientscomp}
shows how to recover ordinary homology groups of $B$ 
with local coefficients from the homotopy groups of the objects 
$H_\bullet(B;X)$. 

The construction of the objects $H_\bullet(B;X)$
is conveniently expressed as a functor
\begin{equation}
\label{eq:hbullethpctohocalc}
	H_\bullet \colon \hpC \longto \ho(\calC)
\end{equation}
where $\hpC$ is a category 
obtained by assembling the homotopy categories
$\ho(\calC_{/B})$ of parameterized $\calC$--objects over $B$
for varying base spaces $B$
with their symmetric monoidal products $\tensor_B$ induced by $\tensor$
together into a symmetric monoidal category
fibred and opfibred over the category $\calT$
of compactly generated weak Hausdorff spaces.
The symmetric monoidal product on $\hpC$ is denoted by $\exttensor$.
Formally, $(\hpC,\exttensor)$ is the symmetric monoidal 
category obtained by the 
Grothendieck construction from the pseudofunctor
\[
	\calT^\op \longto \smCat,
	\qquad
	B \longmapsto (\ho(\calC_{/B}), \tensor_B),
	\qquad
	f \longmapsto f^\ast
\]
where $\smCat$ is the $2$--category of symmetric monoidal categories
and $f^\ast$ is the pullback functor induced by $f$.
Concretely, the objects of $\hpC$
are pairs $(B,X)$ where $B \in \calT$
and $X$ is a parametrized $\calC$--object over $B$.
We often write just $X$ for $(B,X)$, 
leaving the base space $B$ implicit.
In terms of the fibred and opfibred category $\hpC \to \calT$, 
the object $H_\bullet(B;X)$
is determined by a universal property: identifying 
$\ho(\calC)$ with $\ho(\calC_{/\pt})$, the object $H_\bullet(B;X)$
is characterized up to unique equivalence as the 
target of an opcartesian morphism $X \to H_\bullet(B;X)$
covering the unique map $B \to \pt$.

On morphisms, the functor $H_\bullet$
of equation~\eqref{eq:hbullethpctohocalc}
sends a map
$\phi \colon (A,X) \to (B,Y)$ of $\hpC$ covering 
a map $f\colon A \to B$ in $\calT$ to the map
\[
	(f,\phi)_\bullet \colon H_\bullet(A;X) \longto H_\bullet(B;Y)
\]
induced by the universal property of opcartesian morphisms.
These are the usual induced maps on twisted homology.
In the theory of \cite{umkehr-maps}, the umkehr maps
arise from a contravariant functor (also denoted by $H_\bullet$)
\begin{equation}
\label{eq:hbullethpcdfoptohocalc}
	H_\bullet \colon \hpC^\dfop \longto \ho(\calC)
\end{equation}
where $\hpC^\dfop$ is a category opfibred over $\calT$
obtained from $\hpC$ by, roughly speaking, retaining 
all opcartesian morphisms of $\hpC$
while replacing all fibres of the projection
$\hpC\to \calT$ 
by their opposite categories.
Formally, the objects of $\hpC^\dfop$ are precisely the 
objects of $\hpC$, and morphisms of $\hpC^\dfop$ covering a continuous
map $f\colon A \to B$ are given by equivalence classes of zigzags
\[
	X \xto{\ \alpha\ } X' \xot{\ \beta\ } Y
\]
where $\alpha$ is an opcartesian morphism of $\hpC$ covering $f$
and $\beta$ is a morphism of $\hpC$ covering the identity map of $B$.
The functors 
\eqref{eq:hbullethpctohocalc}
and 
\eqref{eq:hbullethpcdfoptohocalc}
agree on objects.
We use the notation  $\theta\colon (A,X) \oto (B,Y)$ to indicate
that $\theta$ is a morphism from $(A,X)$ to $(B,Y)$ in $\hpC^\dfop$.
Given such a $\theta$ covering a map $f\colon A \to B$
in $\calT$, applying the functor $H_\bullet$ to it 
provides us an ``umkehr map''
\begin{equation}
\label{eq:fthetaumk}
	(f,\theta)^\leftarrow \colon H_\bullet(B;Y) \longto H_\bullet(A;X).
\end{equation}
In Section~U.\ref*{U-sec:comparisonofumkehrmaps},
it is shown that umkehr maps obtained in this way
recover various classically defined umkehr maps.

While in principle any morphism $\theta$ of $\hpC^\dfop$ 
gives rise to an umkehr map as in \eqref{eq:fthetaumk},
it is the cartesian morphisms of $\hpC^\dfop$
that are of particular interest in this regard.
(The opcartesian morphisms of $\hpC^\dfop$
turn out not to induce interesting umkehr maps as
$H_\bullet$ sends opcartesian morphisms to equivalences.)
Recall that $\hpC^\dfop$ is opfibred rather than fibred
over $\calT$, so morphisms of $\calT$
do not necessarily admit cartesian morphisms of $\hpC^\dfop$ 
covering them.
A sufficient condition on a map $f\colon A \to B$
ensuring a plentiful supply of cartesian morphisms
in $\hpC^\dfop$ covering $f$ is that $f$ is 
small-fibred with respect to $\calC$ in the sense of 
Definition~U.\ref*{U-def:smallfibred}:
when $f$ is small-fibred, Theorem~U.\ref*{U-thm:hypercartexistence}
implies that for every object $Y$ of $\hpC^\dfop$
over $B$, there exists a hypercartesian morphism $X \oto Y$
in $\hpC^\dfop$ covering $f$.
(Hypercartesian morphisms are cartesian morphisms with 
particularly strong properties; 
see Definition~U.\ref*{U-def:hypercart}
and Proposition~U.\ref*{U-prop:superandhypercartmorprops}.)
For the purposes of the present paper, 
the crucial example to know is that a map whose homotopy 
fibres are homotopy equivalent to $G$
is small-fibred with respect to $\Spectra^\ell$.
See Theorem~U.\ref*{U-thm:ellcptgrpscwdualizable}.

\subsubsection{The category \texorpdfstring{$h\calF^\fop$}{hF\textasciicircum fop}}
\label{subsubsec:hcalffop}

Our goal now is to construct the category 
$h\calF^\fop$ appearing in \eqref{eq:functorMnew2}.

\begin{defn}
\label{def:calf}
We let $\calF$ be the category defined as follows.
The objects of $\calF$ are fibrations in $\calT$ 
which are small-fibred in $\Spectra^\ell$ in the sense of 
Definition~U.\ref*{U-def:smallfibred},
and a morphisms in $\calF$ from $\pi \co E\to B$
to $\pi'\co E' \to B'$ is a pair $(f,\bar{f})$
of maps making the square
\begin{equation}
\label{eq:calfmor}
\vcenter{\xymatrix{
	E
	\ar[r]^{\bar{f}}
	\ar[d]_\pi
	&
	E'
	\ar[d]^{\pi\smash{'}}
	\\
	B
	\ar[r]^f
	&
	B'
}}
\end{equation}
commutative.
We equip $\calF$ with the symmetric monoidal structure given 
by the direct product
\begin{equation}
\label{eq:calftensorprod} 
	(E\xto{\ \pi\ } B) \times (E' \xto{\ \pi'\ } B') 
	= 
	(E\times E' \xto{\ \pi\times \pi'\ } B\times B');
\end{equation}
the monoidal unit given by the identity map $\pt \xto{\ \id\ }\pt$.
(The product of small-fibred fibrations is again
small-fibred by~U.\ref*{U-prop:cwdualityproduct}.)
\end{defn}

\begin{prop}
\label{prop:pfgobjinf}
Suppose $BG$ is a semisimple $\ell$--compact group. 
Then for all $f,g\colon B \to BG$, the fibration
$\pi_{f,g}\colon P(f,g) \to B$  is an object in $\calF$.
\end{prop}

\begin{proof}
The fibres of  $\pi_{f,g}\colon P(f,g) \to B$ 
are homotopy equivalent to $G$, so the claim follows from
Theorem~U.\ref*{U-thm:ellcptgrpscwdualizable}.
\end{proof}

\begin{rem}
\label{rk:semisimplicityassumption}
The reason why we are working under the assumption that $BG$ is a semisimple
$\ell$--compact group instead of an arbitrary connected $\ell$--compact group
is that Theorem~U.\ref*{U-thm:ellcptgrpscwdualizable} and hence 
Proposition~\ref{prop:pfgobjinf}
fail to hold in that greater generality. 
See Remark~U.\ref*{U-rk:s1lcomnotcwdualizable}.
Indeed, apart from Proposition~\ref{prop:pfgobjinf},
almost everything in the proofs of Theorems~\ref{thm:mainresult}, 
\ref{thm:strtoptezukacrit} and \ref{thm:tezukasubgrp}
generalizes without change to arbitrary connected $\ell$--compact 
groups. The only exception is the proof of Proposition~\ref{prop:Dopen},
which in the more general case requires the elaboration indicated
in Remark~\ref{rk:Dopengeneralization}. 
\end{rem}

The functor $\calF \to \calT$ given by 
sending a fibration $\pi\co E \to B$ to the space $B$ and 
a pair of maps $(f,\bar{f})$ to the map $f$
makes $\calF$ a category fibred over $\calT$
where a morphism $(f,\bar{f})$ is cartesian 
if and only if the corresponding 
square \eqref{eq:calfmor} is a pullback square in $\calT$.
See Definition~U.\ref*{U-def:fibredcats}.

\begin{defn}
We define $\calF^\fop \to \calT$ to be the fibrewise opposite of 
the fibred category $\calF\to \calT$.
See Section~U.\ref*{U-subsubsec:fop}.
\end{defn}

\begin{rem}
\label{rk:calffopdesc}
Explicitly, $\calF^\fop$ is the category
whose objects are the objects of $\calF$, and where
morphisms in $\calF^\fop$ from $\pi \colon E\to B$ 
to $\pi' \colon E'\to B'$ covering a map $f\colon B\to B'$
in $\calT$ are given by equivalence classes of 
zigzags 
\begin{equation}
\label{eq:calffopzigzag}
	\pi \xot{\ (\id_B,\alpha)\ } \tau \xto{\ (f,\bar{f})\ } \pi'
\end{equation}
of morphisms in $\calF$ where $(f,\bar{f})$ is cartesian.
We may depict~\eqref{eq:calffopzigzag} 
more fully as a commutative diagram
\[\vcenter{\xymatrix@!C=0.7em{
	E
	\ar[dr]_{\pi}
	&&
	E''
	\pb
	\ar[dl]^{\tau}
	\ar[ll]_-\alpha
	\ar[rr]^-{\bar{f}}
	&&
	E'
	\ar[d]^{\pi'}
	\\
	&
	B
	\ar[rrr]^f
	&&&
	B'
}}\]
where the trapezoid on the right is a pullback square.
Here two zigzags
\[
	\pi \xot{\ (\id_B,\alpha)\ } \tau \xto{\ (f,\bar{f})\ } \pi'
	\qquad\text{and}\qquad
	\pi \xot{\ (\id_B,\alpha')\ } \tau' \xto{\ (f,\bar{f}')\ } \pi'	
\]
are equivalent if there exists a morphism
$(\id_B,\theta)\co \tau \to \tau'$ such that
$\bar{f} = \bar{f}'\theta$ and $\alpha = \alpha'\theta$;
such a $\theta$ is necessarily 
unique and a homeomorphism. The composite of 
two equivalence classes
\[
	[\pi \xot{\ (\id,\alpha)\ } \tau \xto{\ (f,\bar{f})\ } \pi']
	\qquad\text{and}\qquad
	[\pi' \xot{\ (\id,\alpha')\ } \tau' \xto{\ (f',\bar{f}')\ } \pi'']
\]
is represented by the zigzag given by the composites
along the two sides of the diagram
\[\xymatrix{
	&&
	\sigma
	\ar[dl]_{(\id,\tilde{\alpha}')}
	\ar[dr]^{(f,\tilde{f})}
	\\
	&
	\tau
	\ar[dl]_{(\id,\alpha)}
	\ar[dr]_{(f,\bar{f})}
	&&
	\tau'
	\ar[dl]^{(\id,\alpha')}
	\ar[dr]^{(f',\bar{f}')}
	\\
	\pi
	&&
	\pi'
	&&
	\pi''
}\]
where $\sigma$, $\tilde{\alpha}'$ and $\tilde{f}$
are determined by the requirement that $(f,\tilde{f})$
is cartesian and $\tilde{\alpha}'$ is the unique morphism making
the diamond in the middle commutative.
The symmetric monoidal product on $\calF^\fop$ is 
given by~\eqref{eq:calftensorprod} on objects and by
\[
	[\pi \xot{\, (\id,\alpha)\, } \tau \xto{\, (f,\bar{f})\, } \pi']
	\times
	[\pi' \xot{\, (\id,\alpha')\, } \tau \xto{\, (f',\bar{f}')\, } \pi'']
	=
	[
		\pi\times \pi' 
		\xot{\, (\id\times\id,\alpha\times\alpha')\, } 
		\tau\times\tau'
		\xto{\, (f\times f',\bar{f}\times \bar{f}')\, } 
		\pi'\times \pi''
	]
\]
on morphisms.
\end{rem}

\begin{defn}
\label{def:calffophtopy}
Given an object $\pi \colon E\to B$ in $\calF^\fop$, 
the composite of $\pi$ with  the projection $E\times I \to E$
gives an object $\mathrm{cocyl}(\pi) =  \pi\circ\pr$
of $\calF^\fop$
we call the $\emph{cocylinder}$ of $\pi$ in $\calF^\fop$.
(That $\mathrm{cocyl}(\pi)$ is small-fibred in $\Spectra^\ell$ and 
therefore again an object of $\calF^\fop$
follows from Proposition~U.\ref*{U-prop:cwdualityweyinvariance}.)
The inclusions of $E$ as the two ends of the cylinder $E\times I$ 
define maps $p_0,p_1 \colon \pi \to \mathrm{cocyl}(\pi)$ in 
$\calF^\fop$, and we call two maps 
$\phi_0,\phi_1\colon \pi \to \pi'$ 
in $\calF^\fop$
\emph{homotopic}
if there exists a map 
$\psi \colon \pi \to\mathrm{cocyl}(\pi')$
in $\calF^\fop$ such that $\phi_0 = p_0\psi$ and $\phi_1 = p_1 \psi$. 
Such a map is called a \emph{homotopy} between $\phi_0$ and $\phi_1$.
Finally, 
we define $h\calF^\fop$ to be the homotopy category of $\calF^\fop$
resulting from this notion of homotopy between maps in $\calF^\fop$.
\end{defn}

Explicitly, a homotopy between maps
$\phi_0,\phi_1 \colon \pi \to \pi'$ in $\calF^\fop$ 
is a morphism represented by a diagram
\[\xymatrix@!C=0.7em{
	E
	\ar[dr]_{\pi}
	&&
	E'' \times I
	\pb
	\ar[dl]^{\tau\circ \pr}
	\ar[ll]_-\alpha
	\ar[rr]^-{\bar{f}\times I}
	&&
	E'\times I
	\ar[d]^{\pi'\circ \pr}
	\\
	&
	B
	\ar[rrr]^f
	&&&
	B'
}\]
such that 
$\phi_i = [\pi \xot{\ (\id,\alpha_i)\ } \tau \xto{\ (f,\bar{f})\ } \pi']$,
$i=0,1$, where $\alpha_i \colon E'' \to E$ is the map obtained by
restricting $\alpha$ to $E''\times \{i\} \subset E'' \times I$.
We note that homotopic maps in $\calF^\fop$ cover the same 
map in $\calT$, so that the functor $\calF^\fop \to \calT$
descends to a functor $h\calF^\fop \to \calT$. Moreover,
we note that the symmetric monoidal structure on $\calF^\fop$
descends to one on $h\calF^\fop$.

\subsubsection{The category \texorpdfstring{$\calP_B$}{P\_B} enriched in \texorpdfstring{$(h\calF^\fop)^\op$}{(F\textasciicircum fop)\textasciicircum op}}
\label{subsubsec:calpb2}
Our goal now is to construct the category $\calP_B$
enriched in $(h\calF^\fop)^\op$ discussed at the end of
Section~\ref{subsubsec:strategy2}. 
From Proposition~\ref{prop:pfgobjinf} and 
the description of $\calF^\fop$ given in Remark~\ref{rk:calffopdesc},
we see that diagram~\eqref{diag:pushpull} defines a morphism
\begin{equation}
\label{eq:complaw} 
	\Big(P(f,h)\xto{\pi_{f,h}} B\Big) 
	\longto 
	\Big(P(g,h) \xto{\pi_{g,h}} B\Big) \times \Big(P(f,g) \xto{\pi_{f,g}} B\Big)
\end{equation}
in $\calF^\fop$ and hence in $h\calF^\fop$.
Moreover, for $f\co B \to BG$, we also have a morphism (in $h\calF^\fop$)
\begin{equation}
\label{eq:idmor} 
	\Big(P(f,f) \xto{\pi_{f,f}} B\Big) \longto \Big(\pt \xto{\id} \pt\Big)
\end{equation}
into the monoidal unit
given by the diagram
\[\xymatrix@!C=0.7em{
	P(f,f)
	\ar[dr]_(0.45){\pi_{f,f}}
	&&
	B
	\ar[dl]^(0.45){\id}
	\ar[ll]_-s
	\ar[rr]^-r
	\pb
	&&
	\pt
	\ar[d]^{\id}
	\\
	&
	B
	\ar[rrr]^r
	&&&
	\pt
}\]
Here the map $s$ is given by $s(b) = (b,c_{f(b)})$
where $c_{f(b)}$ denotes the constant path onto $f(b)\in BG$.
Finally, the pullback square \eqref{sq:inducedmap}
gives a map (in $h\calF^\fop$)
\begin{equation}
\label{eq:kphicomp}
	\Big(P(f\phi,g\phi)\xto{\pi_{f\phi,g\phi}} A\Big) 
	\longto 
	\Big(P(f,g) \xto{\pi_{f,g}} B\Big). 
\end{equation}

\begin{defn}
\label{def:calpb2}
The category $\calP_B$ is the category enriched in 
$(h\calF^\fop)^\op$
whose objects are maps $B\to BG$, where
the hom-object of maps from 
$f\co B \to BG$ to $g\co B\to BG$
is $\pi_{f,g}\colon P(f,g) \to B$, and where the composition law
and identity elements are given by 
the maps \eqref{eq:complaw} and \eqref{eq:idmor},
respectively. The functor $K_\phi \co \calP_B \to \calP_A$
is the enriched functor given by the maps \eqref{eq:kphicomp}.
\end{defn}

\begin{rem}
In Definition~\ref{def:calpb2}, we are forced to work 
in the homotopy category $h\calF^\fop$ instead of 
$\calF^\fop$ by the fact that composition of paths is
unital and associative only up to homotopy.
\end{rem}

\subsubsection{The functor \texorpdfstring{$\fH_\bullet$}{fH.}}
\label{subsubsec:fhbullet}

We will now proceed to construct the functor
$\fH_\bullet\colon h\calF^\fop \to \ho(\Mod^{H\F_\ell})$
featuring in \eqref{eq:functorMnew2}.

\begin{notation}
\label{ntn:unotations}
As in \cite{umkehr-maps}, 
for $B$ a space, we write $S_{B,\ell} \in \ho(\Spectra^\ell_{/B})$ 
for the parametrized $\ell$--complete sphere spectrum over $B$.
Moreover, we write $\smashprod^\ell$ (resp.\ $\smashprod^{H\F_\ell}$) 
for the smash product in $\Spectra^\ell$ (resp.\ $\Mod^{H\F_\ell}$)
and $\extsmashprod^\ell$ (resp.\ $\extsmashprod^{H\F_\ell}$) 
for the induced product
on $\hpSpectra^\ell$ (resp.\ $\hpMod^{H\F_\ell}$).
\end{notation}

\begin{defn}[The functor 
$\tilde{\fH}_\bullet \co \calF^\fop \to \ho(\Mod^{H\F_\ell})$]
\label{def:tildefhbullet}
For each object $\pi\colon E\to B$ in $\calF^\fop$, 
fix a hypercartesian morphism 
$\theta_\pi \colon \omega_\pi \oto S_{B,\ell}$
in $(\hpSpectra^\ell)^\dfop$ covering $\pi$.
(Such a hypercartesian morphism exists 
by Theorem~U.\ref*{U-thm:hypercartexistence}.)
When $\pi = \id_B$, we may and do choose
$\theta_\pi$ to be the identity map of $S_{B,\ell}$.
Write $(-)^{H\F_\ell}$ for the functors
\[
	\hpSpectra^\ell \longto \hpMod^{H\F_\ell}
	\qquad\text{and}\qquad
	(\hpSpectra^\ell)^\dfop \longto (\hpMod^{H\F_\ell})^\dfop
\]
induced by the left adjoint 
\[
	H\F_\ell \smashprod^\ell(-) 
	\colon 
	\Spectra^\ell 
	\longto 
	\Mod^{H\F_\ell}
\]
to the forgetful functor
$\Mod^{H\F_\ell} \to \Spectra^\ell$.
See Definition~U.\ref*{U-def:fwfun} and 
Proposition~U.\ref*{U-prop:fprimefw}.
Define a functor 
\[
	\tilde{\fH}_\bullet 
	\colon
	\calF^\fop
	\longto 
	\ho(\Mod^{H\F_\ell})
\]
on objects by setting 
\[
	\tilde{\fH}_\bullet (E\xto{\ \pi\ } B) 
	= 
	H_\bullet(E;\, \omega_\pi^{H\F_\ell})
\]
where
\[
	H_\bullet \colon \hpMod^{H\F_\ell} \longto \ho(\Mod^{H\F_\ell})
\]
is the functor discussed in Section~U.\ref*{U-subsec:hbullethc}.
On morphism, $\tilde{\fH}_\bullet$
is defined by sending the morphism 
$\psi \colon \pi \to \pi'$ of $\calF^\fop$
represented by the diagram
\begin{equation}
\label{diag:calffopmor} 
\vcenter{\xymatrix@!C=0.7em{
	E
	\ar[dr]_{\pi}
	&&
	E''
	\ar[dl]^{\tau}
	\ar[ll]_-\alpha
	\ar[rr]^-{\bar{f}}
	&&
	E'
	\ar[d]^{\pi'}
	\\
	&
	B
	\ar[rrr]^f
	&&&
	B'
}}
\end{equation}
to the morphism constructed as follows.
Construct a commutative diagram 
\begin{equation}
\label{diag:liftedcalffopmor}
\vcenter{\xymatrix@!C=0.7em{
	\omega_\pi
	\ar[dr]|{\circdec}_{\theta_\pi}
	&&
	\zeta
	\ar[dl]|{\circdec}^{\kappa_\tau}
	\ar[ll]|{\circdec}_-{\kappa_\alpha}
	\ar[rr]^-{\phi_{\bar{f}}}_-{\cart}
	&&
	\omega_{\pi'}
	\ar[d]|{\circdec}^{\theta_{\pi'}}
	\\
	&
	S_{B,\ell}
	\ar[rrr]^{\phi_f}_{\cart}
	&&&
	S_{B',\ell}
}}
\end{equation}
in $\hpSpectra^\ell$ and $(\hpSpectra^\ell)^\dfop$
covering \eqref{diag:calffopmor}
by taking $\phi_f$ to be the canonical
cartesian morphism in $\hpSpectra^\ell$,
choosing a cartesian morphism 
$\phi_{\bar{f}} \colon \zeta \to \omega_{\pi'}$
in  $\hpSpectra^\ell$ covering $\bar{f}$,
taking $\kappa_\tau$ to be the base change of 
$\theta_{\pi'}$ along $\phi_f$ and $\phi_{\bar{f}}$
in the sense of Definition~U.\ref*{U-def:basechange},
and by taking $\kappa_\alpha$ in $(\hpSpectra^\ell)^\dfop$
to be the unique morphism covering $\alpha$ making 
the triangle in \eqref{diag:liftedcalffopmor}
commutative. Now $\tilde{\fH}_\bullet(\psi)$
is defined to be the composite
\[
	H_\bullet(E;\, \omega_\pi^{H\F_\ell})
	\xto{\ (\alpha,\kappa_\alpha^{H\F_\ell})^{\leftarrow}}
	H_\bullet(E'';\, \zeta^{H\F_\ell})
	\xto{\ (\bar{f},\phi_{\bar{f}}^{H\F_\ell})_\bullet\ }
	H_\bullet(E';\, \omega_{\pi'}^{H\F_\ell})
\]
where $(\alpha,\kappa_\alpha^{H\F_\ell})^{\leftarrow}$ 
and $(\bar{f},\phi_{\bar{f}}^{H\F_\ell})_\bullet$ are the
maps defined in Definitions~U.\ref*{U-def:hprime} 
and U.\ref*{U-def:hbullethc}, respectively.
\end{defn}

It is readily verified that $\tilde{\fH}_\bullet(\psi)$
as defined above is independent of the choices 
made during its construction.
That $\tilde{\fH}_\bullet$ respects composition of morphisms
follows from 
Proposition~U.\ref*{U-prop:umkehrprops}.
Finally, 
$\tilde{\fH}_\bullet$
is a symmetric monoidal functor.
Given objects $\pi_1 \colon E_1 \to B_1$ 
and $\pi_2 \colon E_2 \to B_2$
in $\calF^\fop$,
the monoidality constraint
\[
	H_\bullet(E_1;\, \omega_{\pi_1}^{H\F_\ell})
	\smashprod^{H\F_\ell}
	H_\bullet(E_2;\, \omega_{\pi_2}^{H\F_\ell})
	\xto{\ \homot\ }
	H_\bullet(E_1\times E_2;\, \omega_{\pi_1\times \pi_2}^{H\F_\ell})
\]
for  $\tilde{\fH}_\bullet$
is the composite of the cross product
\[
	\times
	\colon
	H_\bullet(E_1;\, \omega_{\pi_1}^{H\F_\ell})
	\smashprod^{H\F_\ell}
	H_\bullet(E_2;\, \omega_{\pi_2}^{H\F_\ell})
	\xto{\ \homot\ }
	H_\bullet(
		E_1\times E_2;\, 
		\omega_{\pi_1}^{H\F_\ell}
		\extsmashprod^{H\F_\ell}
		\omega_{\pi_2}^{H\F_\ell}
	)
\]
of  Proposition~U.\ref*{U-prop:hbulletsm}
and the equivalence
\[
	H_\bullet(
		E_1\times E_2;\, 
		\omega_{\pi_1}^{H\F_\ell}
		\extsmashprod^{H\F_\ell}
		\omega_{\pi_2}^{H\F_\ell}
	)
	\xto{\ \homot\ }
	H_\bullet(
		E_1\times E_2;\, 
		\omega_{\pi_1\times \pi_2}^{H\F_\ell}
	)
\]
induced by the monoidality constraint
$\omega_{\pi_1}^{H\F_\ell}
\extsmashprod^{H\F_\ell}
\omega_{\pi_2}^{H\F_\ell}
\homot
(\omega_{\pi_1}
\extsmashprod^\ell
\omega_{\pi_2})^{H\F_\ell}$
of $(-)^{H\F_\ell}$
and the 
equivalence 
$\omega_{\pi_1} \extsmashprod^\ell \omega_{\pi_2}
\homot
\omega_{\pi_1\times \pi_2}$
obtained as follows:
By 
Proposition~U.\ref*{U-prop:superandhypercartmorprops}(\ref*{U-it:hcexttensorhc}),
the map 
\[
	\theta_{\pi_1} \extsmashprod^\ell \theta_{\pi_2}
    \colon 
    \omega_{\pi_1} \extsmashprod^\ell  \omega_{\pi_2}
    \oto
    S_{B_1,\ell} \extsmashprod^\ell S_{B_2,\ell}
\]
is hypercartesian.
Composing it with 
the isomorphism given by 
the  canonical equivalence
$S_{B_1,\ell} \extsmashprod^\ell S_{B_2,\ell}
\homot
S_{B_1 \times B_2,\ell}$
in $\ho(\Spectra^\ell_{/{B_1\times B_2}})$
therefore gives a cartesian morphism 
$\omega_{\pi_1} \extsmashprod^\ell  \omega_{\pi_2}
\oto
S_{B_1\times B_2,\ell}$
in $(\hpSpectra^\ell)^\dfop$
covering $\pi_1\times \pi_2$.
Since 
$\theta_{\pi_1\times\pi_2}
\colon 
\omega_{\pi_1\times\pi_2}
\oto
S_{B_1\times B_2,\ell}$ 
is another cartesian morphism
in $(\hpSpectra^\ell)^\dfop$
covering $\pi_1\times\pi_2$,
the uniqueness of cartesian morphisms
(Proposition~U.\ref*{U-prop:cartmorprops}(\ref*{U-it:cartsourceiso}))
yields the desired equivalence 
$\omega_{\pi_1} \extsmashprod^\ell \omega_{\pi_2}
\homot
\omega_{\pi_1\times \pi_2}$.

\begin{lemma}
\label{lm:fhhtopyinvariance}
Suppose $\phi_0,\phi_1 \colon \pi \to \pi'$ in $\calF^\fop$
are homotopic. Then 
$\tilde{\fH}_\bullet(\phi_0) = \tilde{\fH}_\bullet(\phi_1)$.
\end{lemma}
\begin{proof}
It suffices to show that 
$\tilde{\fH}_\bullet(p_0) = \tilde{\fH}_\bullet(p_1)$
for the maps $p_0,p_1 \colon \mathrm{cocyl}(\pi) \to \pi$
of Definition~\ref{def:calffophtopy}
for every object $\pi \colon E \to B$ of $\calF^\fop$.
Let $i_0,i_1\colon E \to E\times I$ be the 
inclusions of $E$ as the two ends of the cylinder $E\times I$.
By definition, for $\lambda = 0,1$, the map 
$\tilde{\fH}_\bullet(p_\lambda)$
is equal to 
\begin{equation}
\label{eq:ilmabdaumk} 
	H_\bullet(E\times I;\,\omega_{\pi\circ\pr}^{H\F_\ell})
	\xto{\ (i_\lambda,\kappa_\lambda^{H\F_\ell})^{\leftarrow}\ }
	H_\bullet(E;\, \omega_\pi^{H\F_\ell})
\end{equation}
where $\kappa_\lambda \colon \omega_\pi \oto \omega_{\pi\circ\pr}$
is the unique map covering $i_\lambda$ such that 
$\theta_{\pi\circ\pr}\circ \kappa_\lambda= \theta_\pi$.
By the universal property of the cartesian morphism 
$\theta_\pi$, we may factor $\theta_{\pi\circ \pr}$
as a composite 
$\theta_{\pi\circ \pr} = \theta_\pi \circ \kappa_{\pr}$
where $\kappa_{\pr} \colon \omega_{\pi\circ\pr} \oto \omega_{\pi}$
is a cartesian morphism covering the map $\pr\colon E\times I \to E$.
Now $\theta_\pi \circ \kappa_\pr \circ \kappa_\lambda = \theta_\pi$
for $\lambda=0,1$. Since $\theta_\pi$ is cartesian, it follows that 
$\kappa_\pr \circ \kappa_\lambda = \id$ for $\lambda = 0,1$.
Consequently, the composite of \eqref{eq:ilmabdaumk} with 
\begin{equation}
\label{eq:prumk}
	H_\bullet(E;\,\omega_\pi^{H\F_\ell})
	\xto{\ 	(\pr,\kappa_\pr^{H\F_\ell})^{\leftarrow} \ }
	H_\bullet(E\times I;\,\omega_{\pi\circ\pr}^{H\F_\ell})	
\end{equation}
is equal to the identity map of 
$H_\bullet(E;\,\omega_\pi^{H\F_\ell})$
for both $\lambda = 0$ and $\lambda = 1$.
But as the map $\pr\colon E\times I \to E$ 
is a homotopy equivalence,
Proposition~U.\ref*{U-prop:superandhypercartmorprops}(\ref*{U-it:hypercartiffopcart})
implies that the cartesian morphism $\kappa_\pr$ is 
also opcartesian in $(\hpSpectra^\ell)^\dfop$,
so \eqref{eq:prumk} is an equivalence by
Remark~U.\ref*{U-rk:opcartpresfprimefw}
and
Proposition~U.\ref*{U-prop:opcarttoequivdfop}.
The claim follows.
\end{proof}

By Lemma~\ref{lm:fhhtopyinvariance},
the symmetric monoidal functor 
$\tilde{\fH}_\bullet \colon \calF^\fop \to \ho(\Mod^{H\F_\ell})$
factors through the homotopy category $h\calF^\fop$,
yielding the desired symmetric monoidal functor 
\[
	\fH_\bullet \colon h\calF^\fop \longto \ho(\Mod^{H\F_\ell}).
\]

\subsubsection{Identifying the result}
\label{subsubsec:result2}

We have now constructed the categories
$(H^\ast \fH_\bullet^\op)_\ast\calP_B$
enriched in graded $\F_\ell$--modules
we set out to construct at the end of
Section~\ref{subsubsec:strategy2},
along with the enriched functors
\[
	(H^\ast \fH_\bullet^\op)_\ast (K_\phi)
	\co
	(H^\ast \fH_\bullet^\op)_\ast \calP_B
	\longto 
	(H^\ast \fH_\bullet^\op)_\ast \calP_A
\]
between them. 
To complete the construction of the pairing $\stringprod'$
and the proof of 
Theorem~\ref{thm:pairingssecondconstruction},
it now suffices to prove 
the following result.

\begin{thrm}
\label{thm:recognitionthm3}
Suppose $BG$ is a semisimple $\ell$--compact group.
For all $f,g\co B \to BG$, there is an isomorphism
\begin{equation}
\label{eq:recognitionthmiso} 
	H^\ast H_\bullet(P(f,g);\,\omega_{\pi(f,g)}^{H\F_\ell})
	\isom 
	\bbH^\ast(P(f,g))
\end{equation}
natural with respect to the homomorphisms induced by diagram
\eqref{sq:inducedmap}.
\end{thrm}

Here we have written $\pi(f,g)$ for $\pi_{f,g}$
to avoid an excess of subscripts. The rest of the 
subsubsection is dedicated to the proof of 
Theorem~\ref{thm:recognitionthm3}.

\begin{notation}
As in \cite{umkehr-maps}, 
given an $H\F_\ell$--module $M \in \ho(\Mod^{H\F_\ell})$ and a space $B$,
we write $\underline{M} \in \ho(\Mod^{H\F_\ell}_{/B})$ for the trivial parametrized
$H\F_\ell$--module over $B$ defined by $M$.
\end{notation}

With the above notation, we have natural isomorphisms
\begin{equation}
\label{eq:isosbbhpfg2}
\begin{aligned}
	\bbH^\ast P(f,g)
	=
	H^{\ast+d} P(f,g)
	&\isom
	H^{\ast+d} H_\bullet(P(f,g);\, \underline{H\F_\ell})
	\\
	&\isom
	H^\ast(\suspension^{-d} H_\bullet(P(f,g);\, \underline{H\F_\ell}))
	\isom
	H^\ast H_\bullet(P(f,g);\, \suspension^{-d}_{P(f,g)}\underline{H\F_\ell}).
\end{aligned}
\end{equation}
Here the isomorphism on the first line 
follows from Example~U.\ref*{U-ex:hbulletfortrivialcoeffs2},
the first isomorphism on the second line is given by the 
suspension isomorphism, and the final isomorphism
is induced by an instance of U.\eqref*{U-eq:fshriektcomm}.
To prove Theorem~\ref{thm:recognitionthm3},
it is therefore enough to construct an equivalence
\begin{equation}
\label{eq:omegapifghfltriv}
	\omega_{\pi(f,g)}^{H\F_\ell} 
	\homot
	\suspension^{-d}_{P(f,g)} \underline{H\F_\ell}
\end{equation}
in $\ho(\Mod^{H\F_\ell}_{/P(f,g)})$
natural with respect to the maps induced
by \eqref{sq:inducedmap}.

Let us be more explicit about what these maps induced
by \eqref{sq:inducedmap} are. First, tracing through the isomorphisms
\eqref{eq:isosbbhpfg2}, 
the relevant map induced  
by \eqref{sq:inducedmap}
on the right hand side 
of \eqref{eq:omegapifghfltriv}
is the cartesian morphism 
$\suspension^{-d}_{P(f\phi,g\phi)} \underline{H\F_\ell} 
\to 
\suspension^{-d}_{P(f,g)} \underline{H\F_\ell}$
obtained by applying the functor 
$\suspension^{-d}_\fw$ to the cartesian morphism
$\underline{H\F_\ell} \to  \underline{H\F_\ell}$ covering the map
$\bar{\phi} \colon P(f\phi,g\phi) \to P(f,g)$.
On the other hand, an inspection of 
the definition of $\fH_\bullet$ reveals that  
the map induced by \eqref{sq:inducedmap}
on the left hand side 
of \eqref{eq:omegapifghfltriv}
is induced by the unique cartesian morphism 
$\omega_{\pi(f\phi,g\phi)}\to \omega_{\pi(f,g)}$
making the square on the left below 
a commutative square (in 
$\hpSpectra^\ell$ and $(\hpSpectra^\ell)^\dfop$, 
in the sense of Definition~U.\ref*{U-def:mixedcomm})
covering the square on the right.
\[
	\xymatrix{
    	\omega_{\pi(f\phi,g\phi)}
    	\ar[r]^\cart
    	\ar[d]|\circdec_{\theta_{\pi(f\phi,g\phi)}}
    	&
    	\omega_{\pi(f,g)}
    	\ar[d]|\circdec^{\theta_{\pi(f,g)}}
    	\\
    	S_{A,\ell}
    	\ar[r]^\cart
    	&
    	S_{B,\ell}
	}
	\qquad\qquad
	\xymatrix{
		P(f\phi,g\phi)
		\ar[d]_{\pi_{f\phi,g\phi}}
		\ar[r]^{\bar{\phi}}
		&
		P(f,g)
		\ar[d]^{\pi_{f,g}}
		\\
		A
		\ar[r]^{\phi}
		&
		B
	}
\]
See 
Propositions~U.\ref*{U-prop:dfopcartmixedprop}
and
U.\ref*{U-prop:cartandhypercartrel}.
Here the bottom map on the left is the canonical cartesian morphism
$S_{A,\ell} \to S_{B,\ell}$ covering $\phi$.

Notice that for all $f,g\colon B \to BG$, we have a canonical
pullback square
\[\xymatrix{
	P(f,g)
	\ar[d]_{\pi_{f,g}}
	\ar[r]
	\pb
	&
	P(\pi_1,\pi_2)
	\ar[d]^{\pi_{\pi_1,\pi_2}}
	\\
	B
	\ar[r]^-{(f,g)}
	&
	BG\times BG	
}\]
where $\pi_1,\pi_2\co BG\times BG \to BG$ 
are the coordinate projections (see
Proposition~\ref{prop:specialcasesofpathspaceconstr}\eqref{it:pathspaceconstrbgi}).
In view of the above discussion, 
to construct the desired 
natural equivalence~\eqref{eq:omegapifghfltriv},
it is therefore enough to prove the following lemma.
The equivalence \eqref{eq:omegapifghfltriv}
for general $f$ and $g$ then follows by
pulling back \eqref{eq:omegapiunivtriv}
along the cartesian morphisms
$\omega_{\pi(f,g)} \to \omega_{\pi(\pi_1,\pi_2)}$
and 
$\suspension^{-d}_{P(f,g)} \underline{H\F_\ell}
\to
\suspension^{-d}_{P(\pi_1,\pi_2)}\underline{H\F_\ell}$.

\begin{lemma}
\label{lm:omegapipipitriv}
There exists an equivalence
\begin{equation}
\label{eq:omegapiunivtriv}
 	\omega_{\pi(\pi_1,\pi_2)}^{H\F_\ell} 
	\homot
	\suspension^{-d}_{P(\pi_1,\pi_2)}\underline{H\F_\ell}
\end{equation}
in $\ho(\Mod^{H\F_\ell}_{/P(\pi_1,\pi_2)})$.
\end{lemma}
In the proof of Lemma~\ref{lm:omegapipipitriv},
we will need 
\begin{lemma}
\label{lm:omegapipipifibres}
The fibres of 
$\omega_{\pi(\pi_1,\pi_2)}\in \ho(\Spectra^\ell_{/P(\pi_1,\pi_2)})$ are $(-d)$--dimensional $\ell$--complete spheres. 
\end{lemma}
\begin{proof}
As the functor 
$b^{[\ast]} \colon \Ex_{P(\pi_1,\pi_2)}(\Spectra^\ell) \to  \Ex(\Spectra^\ell)$
of Appendix~U.\ref*{U-app:exbc}
induced by the inclusion $b \colon \pt \to P(\pi_1,\pi_2)$
of the basepoint into $P(\pi_1,\pi_2)$
preserves dual pairs, it follows from 
Theorem~U.\ref*{U-thm:hypercartdata} that 
$b^{[\ast]} (\omega_{\ev_1})$ is the 
Costenoble--Waner dual of $G$ in $\Spectra^\ell$.
The claim now follows from 
Theorem~U.\ref*{U-thm:ellcptgrpscwdualizable}.
\end{proof}

\begin{proof}[Proof of Lemma~\ref{lm:omegapipipitriv}]
For brevity, let us write $Q$ for 
$\omega_{\pi(\pi_1,\pi_2)}^{H\F_\ell}$.
Since $P(\pi_1,\pi_2) \homeom BG^I$ is simply connected,
by Theorem~U.\ref*{U-thm:serresss}
there is a spectral sequence
\[
	E^{s,t}_2 = H^s(P(\pi_1,\pi_2)) 
	\tensor 
	H^t(Q_b)
	\Longrightarrow 
	H^{s+t} H_\bullet({P(\pi_1,\pi_2)};\, Q)
\]
where $b\in P(\pi_1,\pi_2)$ is a basepoint.
By Lemma~\ref{lm:omegapipipifibres},
the spectral sequence is concentrated on the $t=-d$ line,
so the spectral sequence collapses on the $E_2$ page
and converges strongly to the indicated target. Let 
\[
	u \in H^{-d}H_\bullet({P(\pi_1,\pi_2)};\, Q)
\]
be the class corresponding to the class
$1\tensor x \in H^0(P(\pi_1,\pi_2)) \tensor H^{-d}(Q_b)$
where $x$ is a generator of $H^{-d}(Q_b) \isom \F_\ell$.
By functoriality of the spectral sequence, 
the class $u$ now has the property 
that for every $b\in P(\pi_1,\pi_2)$, the restriction of
$u$ to $H^{-d} H_\bullet(\{b\}; Q_b) \isom H^{-d}(Q_b) \isom \F_\ell$
is a generator. 
Letting $r$ to be the unique map from $P(\pi_1,\pi_2)$ to $\pt$,
the class $u$ is equivalent to the data of a map
\[
	r_! Q \longto \suspension^{-d} H\F_\ell 
\]
in $\ho(\Mod^{H\F_\ell})$, which via the 
$(r_!,r^\ast)$ adjunction
is equivalent to the data of a map
\[
	\tilde{u}
	\co
	Q 
	\longto
	r^\ast (\suspension^{-d}H\F_\ell)
	\homot
	\suspension^{-d}_{P(\pi_1,\pi_2)}\underline{H\F_\ell}
\]
in $\ho(\Mod^{H\F_\ell}_{/P(\pi_1,\pi_2)})$. 
The property that $u$ restricts to a generator for
each fibre is easily seen to translate
precisely to the property that 
$\tilde{u}$ is an equivalence on all fibres. 
Since equivalences in 
$\ho(\Mod^{H\F_\ell}_{/P(\pi_1,\pi_2)})$
are detected on fibres, the claim follows.
\end{proof}

The construction of the pairing $\stringprod'$
and the proof of 
Theorem~\ref{thm:pairingssecondconstruction}
are now complete.

\begin{rem}
\label{rk:orientation2}
The equivalence \eqref{eq:omegapiunivtriv} 
in Lemma~\ref{lm:omegapipipitriv} is not unique:
the set of all such equivalences forms an $\F_\ell^\times$--torsor,
as follows by reversing the argument in the proof of
Lemma~\ref{lm:omegapipipitriv}. 
The choice of such an equivalence should be thought of as
an $H\F_\ell$--orientation for the ``sphere bundle''
$\omega_{\pi(\pi_1,\pi_2)}$ over $P(\pi_1,\pi_2)$.
This indeterminacy in the construction of 
the pairing $\stringprod'$ parallels the one found in the construction
of the pairing $\stringprod$, which depended on 
the choice of an orientation for $G$ (encoded as 
the choice of an isomorphism $H^d(G)\isom  \F_\ell$ in \eqref{as:gorientation}).
Notice that changing the choice of equivalence \eqref{eq:omegapiunivtriv}
results in pairings $\stringprod'$ which differ from each other 
by some nonzero scalar multiple.
\end{rem}

\subsection{Comparison of the two constructions of the string pairing}
\label{subsec:comparison}

This subsection is dedicated to the proof of the following theorem.

\begin{thrm}
\label{thm:pairingcomparison}
For a suitable choice of the equivalence \eqref{eq:omegapiunivtriv},
the pairing $\stringprod'$ constructed in 
Section~\ref{subsec:secondconstruction}
agrees with the pairing $\stringprod$ constructed in 
Section~\ref{subsec:firstconstruction}.
\end{thrm}
\noindent
The proof is based on Theorem~U.\ref*{U-thm:umkehrmapcomparison}
which expresses the integration along fibre maps $(p,o)_!$ featuring 
in the construction of the  pairing $\stringprod$ 
in terms of the umkehr maps $(p,\theta)^\leftarrow$ 
featuring in the construction of the pairing $\stringprod'$.

In view of Theorem~\ref{thm:pairingcomparison},
from the next subsection onwards, we agree to choose 
equivalence \eqref{eq:omegapiunivtriv}
so that the pairings $\stringprod$ and $\stringprod'$
agree, and cease to distinguish the pairings notationally.
For the remainder of this subsection, however, we will 
 use the more precise notation $\stringprod'_t$
for the pairing $\stringprod'$ resulting by the construction of 
Section~\ref{subsec:secondconstruction}
from an equivalence
\begin{equation} 
\label{eq:trivt}
	t 
	\colon
	\omega_{\pi(\pi_1,\pi_2)}^{H\F_\ell} 
	\xto{\ \homot\ }
	\suspension^{-d}_{P(\pi_1,\pi_2)}\underline{H\F_\ell}
\end{equation}
where $\pi_1,\pi_2 \colon BG^2 \to BG$ are the coordinate projections.
We call such an equivalence $t$ a \emph{universal trivialization}.
Rephrasing Theorem~\ref{thm:pairingcomparison}, 
our task is to show that $\stringprod'_t = \stringprod$ for a
suitably chosen $t$.

\begin{notation}
For a space $B$ and integer $n$, write $u^n$ for the composite isomorphism
\[	
	u^n 
	\colon 
	H^\ast(B) 
	\xto{\ \isom\ }
	H^\ast H_\bullet(B;\,\underline{H\F_\ell})
	\xto{\ \isom\ }
	H^{\ast+n} (\suspension^n H_\bullet(B;\,\underline{H\F_\ell}))
	\xto{\ \isom\ }
	H^{\ast+n} H_\bullet(B;\,\suspension_B^n\underline{H\F_\ell})
\]
where the first isomorphism
follows from Example~U.\ref*{U-ex:hbulletfortrivialcoeffs2},
the second is given by the suspension isomorphism, and 
the final isomorphism is an instance of U.\eqref*{U-eq:fshriektcomm}.
Compare with \eqref{eq:isosbbhpfg2}. 
\end{notation}

\begin{notation}
Given a universal trivialization $t$ and maps
$f,g\colon B \to BG$, write $t_{f,g}$ for the  
equivalence
\[
	t_{f,g} 
	\colon 
	\omega_{\pi(f,g)}^{H\F_\ell}
	\xto{\ \homot\ }
	\suspension_B^{-d} \underline{H\F_\ell}
\] 
induced by $t$.
\end{notation}

\begin{notation}
We write $\blackdiamond$
for the composition in the categories
$(H^\ast \fH_\bullet^\op)_\ast\calP_B$
enriched in graded $\F_\ell$--modules.
\end{notation}

With the above notations, 
by the construction of $\stringprod'_t$,
we have the following lemma.
\begin{lemma}
\label{lm:bullettandblackdiamond}
Given a universal trivialization $t$, 
the following diagram commutes for all $f,g,h \colon B \to BG$ and $a,b\in\Z$:
\[	
	\pushQED{\qed}
	\raisebox{\dimexpr\depth-\fboxsep-6.25pt}{%
    	\xymatrix{
        	\bbH^a P(g,h) \tensor \bbH^b P(f,g)
        	\ar[r]^-{\stringprod'_t}
        	\ar[d]_{u^{-d} s^d\tensor u^{-d} s^d}^\isom
        	&
        	\bbH^{a+b} P(f,h)
        	\ar[d]^{u^{-d} s^d}_\isom
        	\\
        	H^a H_\bullet(P(g,h);\,\suspension_{P(g,h)}^{-d} \underline{H\F_\ell})
        	\tensor
        	H^b H_\bullet(P(f,g);\,\suspension_{P(f,g)}^{-d} \underline{H\F_\ell})
        	\ar[d]_{t_{g,h}^\ast \tensor t_{f,g}^\ast}^\isom
        	&
        	H^{a+b} H_\bullet(P(f,h);\,\suspension_{P(f,h)}^{-d} \underline{H\F_\ell})
        	\ar[d]^{t_{f,h}^\ast}_\isom
        	\\
        	H^a H_\bullet(P(g,h);\, \omega_{\pi(g,h)}^{H\F_\ell})
        	\tensor
        	H^b H_\bullet(P(f,g);\, \omega_{\pi(f,g)}^{H\F_\ell})
        	\ar[r]^-{\blackdiamond}
        	&
        	H^{a+b} H_\bullet(P(f,h);\, \omega_{\pi(f,h)}^{H\F_\ell}) 
		}
	} 
	\qedhere
    \popQED
\] 
\end{lemma}

We will make use of the following description of the product  $\blackdiamond$.

\begin{lemma}
\label{lm:blackdiamondproddesc}
Let $f,g,h \colon B \to BG$. Suppose
\begin{equation}
\label{diag:blackdiamondproddesc}
\vcenter{\xymatrix@!C=2.2em{
	\omega_{\pi(g,h)} \extsmashprod^\ell \omega_{\pi(f,g)}
	\ar[d]|{\circdec}_{
		\theta_{\pi(g,h)} \extsmashprod^\ell \theta_{\pi(f,g)}
	}
	&&
	\zeta
	\ar[ll]_-{\phi}
	\ar[rr]|{\circdec}^{\kappa}
	\ar[dr]|{\circdec}_\theta
	&&
	\omega_{\pi(f,h)}
	\ar[dl]|{\circdec}^{\theta_{\pi(f,h)}}
	\\
	S_{B,\ell} \extsmashprod^\ell S_{B,\ell}
	&&&
	S_{B,\ell}
	\ar[lll]_{\phi_{\Delta}}
}}
\end{equation}
is a commutative diagram in $\hpSpectra^\ell$ and $(\hpSpectra^\ell)^\dfop$ 
covering 
\begin{equation}
\label{diag:pprimepushpull}
\vcenter{\xymatrix@!C=2.2em{
	P(g,h) \times P(f,g)
	\ar[d]_{\pi_{g,h}\times \pi_{f,g}}
	&&
	P'(f,g,h)
	\ar[ll]_-{\mathrm{split}\mathrlap{'}}
	\ar[rr]^-{\mathrm{concat}'}
	\ar[dr]_(0.4){\pi'_{f,g\mathrlap{,h}}}
	&&
	P(f,h)
	\ar[dl]^{\pi_{f,h}}
	\\
	B\times B
	&&&
	B
	\ar[lll]_\Delta
}} 
\end{equation}
such that the trapezoid in the upper diagram is a base change square
in the sense of Definition~U.\ref*{U-def:basechange}.
Here $\phi_\Delta$ is the cartesian morphism covering $\Delta$
given by the composite of the canonical cartesian morphism 
$S_{B,\ell} \to S_{B\times B,\ell}$ covering $\Delta$ and the canonical equivalence
$S_{B\times B,\ell}\homot S_{B,\ell} \extsmashprod^\ell  S_{B,\ell}$.
Then the pairing
\begin{equation}
\label{eq:blackdiamondpairing}
	\blackdiamond
	\colon
	H^\ast H_\bullet(P(g,h);\omega_{\pi(g,h)}^{H\F_\ell})
	 \tensor 
	H^\ast H_\bullet(P(f,g);\omega_{\pi(f,g)}^{H\F_\ell})
	\longto
	H^\ast H_\bullet(P(f,h);\omega_{\pi(f,h)}^{H\F_\ell}) 
\end{equation}
agrees with the composite
\begin{equation}
\label{eq:blackdiamondfactorization}
\begin{aligned}
	H^\ast H_\bullet(P(g,h);\,\omega_{\pi(g,h)}^{H\F_\ell})
	\tensor 
	H^\ast H_\bullet(&P(f,g);\,\omega_{\pi(f,g)}^{H\F_\ell})
	\\
	\xto{\qquad\qquad\mathclap{\times}\qquad\qquad}
	&
	H^\ast H_\bullet(
		P(g,h) \times P(f,g);\, 
		(\omega_{\pi(g,h)} \extsmashprod^\ell \omega_{\pi(f,g)})^{H\F_\ell}
	) 
	\\
	\xto{
		\qquad\qquad
		\mathclap{H^{\ast}((\mathrm{split}', \phi^{H\F_\ell})_\bullet)}
		\qquad\qquad
	}
	&
	H^\ast H_\bullet(
		P'(f,g,h);\,
		\zeta^{H\F_\ell}
	)
	\\
	\xto{
		\qquad\qquad
		\mathclap{
			H^{\ast}((\mathrm{concat}', \kappa^{H\F_\ell})^{\leftarrow})
		}
		\qquad\qquad
	}
	&
	H^\ast H_\bullet(
		P(f,h);\,
		\omega_{\pi(f,h)}^{H\F_\ell}
	).
\end{aligned}
\end{equation}
Here the first map is given by the composite
\begin{equation}
\label{eq:timesexpansion}
\begin{aligned}
	H^\ast H_\bullet(P(g,h);\,\omega_{\pi(g,h)}^{H\F_\ell})
	\tensor 
	H^\ast H_\bullet(&P(f,g);\,\omega_{\pi(f,g)}^{H\F_\ell})
	\\
	\xto{\qquad\qquad\mathclap{\times}\qquad\qquad}
	&
	H^\ast\bigl(
		H_\bullet(P(g,h);\,\omega_{\pi(g,h)}^{H\F_\ell})
		\smashprod^{H\F_\ell}
		H_\bullet(P(f,g);\,\omega_{\pi(f,g)}^{H\F_\ell})
	\bigr)
	\\
	\xto{\qquad\qquad\mathclap{\isom}\qquad\qquad}
	&
	H^\ast H_\bullet\bigl(
		P(g,h)\times P(f,g);\,
		\omega_{\pi(g,h)}^{H\F_\ell}
		\extsmashprod^{H\F_\ell}
		\omega_{\pi(f,g)}^{H\F_\ell}
	\bigr)
	\\
	\xto{\qquad\qquad\mathclap{\isom}\qquad\qquad}
	&
	H^\ast H_\bullet\bigl(
		P(g,h) \times P(f,g);\, 
		(\omega_{\pi(g,h)} \extsmashprod^\ell \omega_{\pi(f,g)})^{H\F_\ell}
	\bigr) 
\end{aligned}
\end{equation}
of the cross product on cohomology and the maps induced 
by the monoidality constraints for the functors
$H_\bullet$ and $(-)^{H\F_\ell}$.
\end{lemma}

\begin{rem}
A diagram as in \eqref{diag:blackdiamondproddesc}
satisfying the requirements of Lemma~\ref{lm:blackdiamondproddesc}
exists for all $f,g,h\colon B\to BG$.
To construct an  example, 
one can choose $\phi$ to be a cartesian morphism with the indicated
target covering 
$\mathrm{split}'$, 
obtain $\theta$ from 
$\theta_{\pi(g,h)} \extsmashprod^\ell \theta_{\pi(f,g)}$
by base change, and finally obtain $\kappa$
from $\theta$ by using the universal property of the 
cartesian morphism $\theta_{\pi(f,h)}$.
\end{rem}

\begin{rem}
\label{rk:phicartkappahypercart}
In any diagram \eqref{diag:blackdiamondproddesc} satisfying the 
requirements of Lemma~\ref{lm:blackdiamondproddesc},
the map $\phi$ is cartesian and the map $\kappa$ is
hypercartesian. That $\phi$ is cartesian is part of the assumption
that the trapezoid in \eqref{diag:blackdiamondproddesc} 
is a base change square. To see that $\kappa$ is hypercartesian,
notice that the product
$\theta_{\pi(g,h)} \extsmashprod^\ell \theta_{\pi(f,g)}$
is hypercartesian by 
Proposition~U.\ref*{U-prop:superandhypercartmorprops}(\ref*{U-it:hcexttensorhc}),
so by the definition of hypercartesian morphisms 
the morphism $\theta$ is supercartesian and hence cartesian. 
Since $\theta_{\pi(f,h)}$ is cartesian, by properties
of cartesian morphisms it follows that the morphism $\kappa$ is 
cartesian; see Proposition~U.\ref*{U-prop:cartmorprops}(\ref*{U-it:cartfactor}).
The map $\mathrm{concat}'$ is a fibration with 
fibres homotopy equivalent to $G$, so $\mathrm{concat}'$
is small-fibred with respect to $\Spectra^\ell$
in the sense of Definition~U.\ref*{U-def:smallfibred},
and Theorem~U.\ref*{U-thm:hypercartexistence}
implies that $\mathrm{concat}'$ is covered with a hypercartesian
morphism $\kappa'$ whose target is the target of $\kappa$.
By the uniqueness of cartesian morphism 
(Proposition~U.\ref*{U-prop:cartmorprops}(\ref*{U-it:cartsourceiso})),
$\kappa$ factors as a composite of $\kappa'$ and an isomorphism, so 
Proposition~U.\ref*{U-prop:superandhypercartmorprops}(\ref*{U-it:isosarehypercart})
and (\ref*{U-it:hypercartcomp}) imply that $\kappa$ is hypercartesian, as claimed.
\end{rem}

\begin{proof}[Proof of Lemma~\ref{lm:blackdiamondproddesc}]
Given the data in the lemma, choose a cartesian morphism
$\psi\colon \hat{\zeta} \to \zeta$ in $\hpSpectra^\ell$
covering the vertical homotopy equivalence 
in diagram \eqref{diag:pandpprime}, and set $\hat{\phi} = \phi \circ \psi$.
Since $\psi$ covers a homotopy equivalence, it is also
opcartesian in $\hpSpectra^\ell$ by Proposition~U.\ref*{U-prop:wecartiffopcart}.
Therefore we may also interpret $\psi$ as a morphism in $(\hpSpectra^\ell)^\dfop$,
and form the composites $\hat{\theta} = \theta \circ \psi$ and 
$\hat{\kappa} = \kappa \circ \psi$.
Now 
\begin{equation}
\label{diag:hats}
\vcenter{\xymatrix@!C=2.2em{
	\omega_{\pi(g,h)} \extsmashprod^\ell \omega_{\pi(f,g)}
	\ar[d]|{\circdec}_{
		\theta_{\pi(g,h)} \extsmashprod^\ell \theta_{\pi(f,g)}
	}
	&&
	\hat{\zeta}
	\ar[ll]_-{\hat{\phi}}
	\ar[rr]|{\circdec}^{\hat{\kappa}}
	\ar[dr]|{\circdec}_{\hat{\theta}}
	&&
	\omega_{\pi(f,h)}
	\ar[dl]|{\circdec}^{\theta_{\pi(f,h)}}
	\\
	S_{B,\ell} \extsmashprod^\ell S_{B,\ell}
	&&&
	S_{B,\ell}
	\ar[lll]_{\phi_{\Delta}}
}}
\end{equation}
is a commutative diagram in $\hpSpectra^\ell$ and $(\hpSpectra^\ell)^\dfop$
covering the analogue of \eqref{diag:pprimepushpull}
where $P'(f,g,h)$ has been replaced with $P(f,g,h)$,
and the trapezoid in \eqref{diag:hats} is a base change square.
Tracing through the construction of $\blackdiamond$,
we see that \eqref{eq:blackdiamondpairing}
is given by the composite obtained from 
\eqref{eq:blackdiamondfactorization}
by replacing
$\mathrm{split}'$, $\mathrm{concat}'$, $\zeta$, $\phi$, and $\kappa$ by
$\mathrm{split}$, $\mathrm{concat}$, $\hat{\zeta}$, $\hat{\phi}$, and $\hat{\kappa}$,
respectively. Since $\psi$ is opcartesian in 
$\hpSpectra^\ell$,
by Proposition~U.\ref*{U-prop:opcartpreservation}
the map $\psi^{H\F_\ell}$
is opcartesian in $\hpMod^{H\F_\ell}$.
Thus $\psi^{H\F_\ell}$ induces an equivalence
upon application of $H_\bullet$, and the claim follows.
\end{proof}

\begin{lemma}
\label{lm:compdiags}
Let $t$ be a universal trivialization, 
let $f,g,h\colon B \to BG$, and suppose we have been given 
a diagram \eqref{diag:blackdiamondproddesc}
satisfying the conditions of Lemma~\ref{lm:blackdiamondproddesc}.
Then the following diagrams commute
for all $a,b\in\Z$:
\begin{equation}
\label{diag:comp1}
\vcenter{\xymatrix@!0@C=15em@R=8.5ex{
	H^{a+d} P(g,h) \tensor H^{b+d} P(f,g)
	\ar[rd]^\times
	\ar[dd]_{u^{-d}\tensor u^{-d}}^\isom
	\\
	&
	H^{a+b+2d}(P(g,h) \times P(f,g))
	\ar[dd]^{u^{-2d}}_\isom
	\\
	H^a H_\bullet(P(g,h);\, \suspension_{P(g,h)}^{-d} \underline{H\F_\ell})
	\tensor
	H^b H_\bullet(P(f,g);\, \suspension_{P(f,g)}^{-d} \underline{H\F_\ell})
	\ar[rd]^\times
	\ar[dd]_{t_{g,h}^\ast \tensor t_{f,g}^\ast}^\isom
	\\
	&
	H^{a+b} H_\bullet (
		P(g,h)\times P(f,g);\, 
		\suspension_{P(g,h)\times P(f,g)}^{-2d}\underline{H\F_\ell}
	)
	\ar[dd]^{t_1^\ast}_\isom
	\\
	H^a H_\bullet (P(g,h);\, \omega_{\pi(g,h)}^{H\F_\ell})
	\tensor
	H^b H_\bullet (P(f,g);\, \omega_{\pi(f,g)}^{H\F_\ell})
	\ar[rd]^\times
	\\
	&
	H^{a+b} H_\bullet(
		P(g,h)\times P(f,g);\, 
		(\omega_{\pi(g,h)}\extsmashprod^\ell \omega_{\pi(f,g)})^{H\F_\ell}
	)
}}
\end{equation}
\begin{equation}
\label{diag:comp2}
\vcenter{\xymatrix@!0@C=15em@R=8.5ex{
	H^{a+b+2d}(P(g,h) \times P(f,g))
	\ar[dd]_{u^{-2d}}^\isom
	\ar[rd]|*+<0.2em>{_{(\mathrm{split}')^\ast}}
	\\
	&
	H^{a+b+2d} P'(f,g,h)
	\ar[dd]^{u^{-2d}}_\isom
	\\
	H^{a+b} H_\bullet (
		P(g,h)\times P(f,g);\, 
		\suspension_{P(g,h)\times P(f,g)}^{-2d}\underline{H\F_\ell}
	)
	\ar[dd]_{t_1^\ast}^\isom
	\ar[rd]|*+<0.2em>{_{H^\ast((\mathrm{split}',\phi_{\mathrm{can}})_\bullet)}}
	\\
	&
	H^{a+b} H_\bullet (P'(f,g,h);\, \suspension_{P'(f,g,h)}^{-2d}\underline{H\F_\ell})
	\ar[dd]^{t_2^\ast}_\isom
	\\
	H^{a+b} H_\bullet(
		P(g,h)\times P(f,g);\, 
		(\omega_{\pi(g,h)}\extsmashprod^\ell \omega_{\pi(f,g)})^{H\F_\ell}
	)
	\ar[rd]|*+<0.2em>{_{H^\ast((\mathrm{split}',\phi)_\bullet)}}
	\\
	&
	H^{a+b} H_\bullet (P'(f,g,h);\, \zeta^{H\F_\ell})
}}
\end{equation}
\begin{equation}
\label{diag:comp3}
\vcenter{\xymatrix@!0@C=15em@R=8.5ex{
	H^{a+b+2d} P'(f,g,h)
	\ar[dd]_{u^{-2d}}^\isom
	\ar[rd]|*+<0.2em>{_{(\mathrm{concat}',\kappa_{t;f,g,h})_\sharp}}
	\\
	&
	H^{a+b+d} P(f,h)
	\ar[dd]^{u^{-d}}_\isom
	\\
	H^{a+b} H_\bullet (P'(f,g,h);\, \suspension_{P'(f,g,h)}^{-2d}\underline{H\F_\ell})
	\ar[dd]_{t_2^\ast}^\isom
	\ar[rd]|*+<0.2em>{_{H^\ast((\mathrm{concat}',\kappa_{t;f,g,h})^{\leftarrow})}}
	\\
	&
	H^{a+b} H_\bullet (P(f,h);\, \suspension_{P(f,h)}^{-d} \underline{H\F_\ell})
	\ar[dd]^{t_{f,h}^\ast}_\isom
	\\
	H^{a+b} H_\bullet (P'(f,g,h);\, \zeta^{H\F_\ell})
	\ar[rd]|*+<0.2em>{_{H^\ast((\mathrm{concat}',\kappa^{H\F_\ell})^{\leftarrow})}}
	\\
	&
	H^{a+b} H_\bullet (P(f,h);\, \omega_{\pi(f,h)}^{H\F_\ell})
}}
\end{equation}
Here the bottom map $\times$ in \eqref{diag:comp1} is 
the composite \eqref{eq:timesexpansion};
the middle map $\times$ in \eqref{diag:comp1} is the composite
\begin{equation*}
\begin{aligned}
	H^a H_\bullet(P(g,h);\, \suspension_{P(g,h)}^{-d} \underline{H\F_\ell})
	\tensor
	H^b &H_\bullet(P(f,g);\, \suspension_{P(f,g)}^{-d} \underline{H\F_\ell})
	\\
	\xto{\qquad\qquad\mathclap{\times}\qquad\qquad}
	&
	H^{a+b}\bigl(
		H_\bullet(P(g,h);\, \suspension_{P(g,h)}^{-d} \underline{H\F_\ell})
		\smashprod^{H\F_\ell}
		H_\bullet(P(f,g);\, \suspension_{P(f,g)}^{-d} \underline{H\F_\ell})
	\bigr)
	\\
	\xto{\qquad\qquad\mathclap{\isom}\qquad\qquad}
	&
	H^{a+b} H_\bullet\bigl(
		P(g,h)\times P(f,g);\,
		\suspension_{P(g,h)}^{-d} \underline{H\F_\ell}
		\extsmashprod^{H\F_\ell}
		\suspension_{P(f,g)}^{-d} \underline{H\F_\ell}
	\bigr)
	\\
	\xto{\qquad\qquad\mathclap{\isom}\qquad\qquad}
	&
	H^{a+b} H_\bullet\bigl(
		P(g,h) \times P(f,g);\, 
		\suspension_{P(g,h)_\times P(f,g)}^{-2d} \underline{H\F_\ell}
	\bigr) 
\end{aligned}
\end{equation*}
where the first isomorphism is induced by the monoidality 
constraint of $H_\bullet$ and the second one  by the evident equivalence
\[
	\suspension_{P(g,h)}^{-d} \underline{H\F_\ell}
	\extsmashprod^{H\F_\ell}
	\suspension_{P(f,g)}^{-d} \underline{H\F_\ell}
	\homot 
	\suspension_{P(g,h)_\times P(f,g)}^{-2d} \underline{H\F_\ell};
\]
$t_1$ is the composite
\begin{multline*}
	t_1 
	\colon 
	(\omega_{\pi(g,h)}\extsmashprod^\ell \omega_{\pi(f,g)})^{H\F_\ell}
	\xto{\quad\homot\quad}
	\omega_{\pi(g,h)}^{H\F_\ell} \extsmashprod^{H\F_\ell} \omega_{\pi(f,g)}^{H\F_\ell}
	\\
	\xto[\homot]{\ (t_{g,h}) \extsmashprod^{H\F_\ell} (t_{f,h})\ }
	\suspension_{P(g,h)}^{-d} \underline{H\F_\ell} 
	\extsmashprod^{H\F_\ell} 
	\suspension_{P(f,g)}^{-d} \underline{H\F_\ell}	
	\xto{\quad\homot\quad}
	\suspension_{P(g,h)\times P(f,g)}^{-2d} \underline{H\F_\ell}
 \end{multline*}
where the first equivalence is the inverse of the monoidality constraint for 
$(-)^{H\F_\ell}$ and the last equivalence is the evident one;
\[
    \phi_{\mathrm{can}} 
    \colon  
    \suspension_{P'(f,g,h)}^{-2d}\underline{H\F_\ell}
    \longto
    \suspension_{P(g,h)\times P(f,g)}^{-2d}\underline{H\F_\ell}
\]
is the canonical cartesian morphism covering $\mathrm{split}'$;
\[
	t_2
	\colon 
	\zeta^{H\F_\ell} 
	\xto{\ \homot\ }
	\suspension_{P'(f,g,h)}^{-2d} \underline{H\F_\ell}
\]
is the unique morphism in $\ho(\Mod^{H\F_\ell}_{/P'(f,g,h}))$
making the square
\[\xymatrix{
	\zeta^{H\F_\ell} 
	\ar[d]_{t_2}^{\homot}
	\ar[r]^-{\phi_\mathrm{can}}_-\cart
	&
	(\omega_{\pi(g,h)}\extsmashprod^\ell \omega_{\pi(f,g)})^{H\F_\ell}
	\ar[d]^{t_1}_{\homot}
	\\
	\suspension_{P'(f,g,h)}^{-2d} \underline{H\F_\ell}
	\ar[r]^-{\phi^{H\F_\ell}}_-\cart
	&
	\suspension_{P(g,h)\times P(f,g)}^{-2d} \underline{H\F_\ell}
}\]
commutative;
\[
	\kappa_{t;f,g,h} 
	\colon
	\suspension_{P'(f,g,h)}^{-2d} \underline{H\F_\ell}
	\longoto
	\suspension_{P(f,h)}^{-d} \underline{H\F_\ell}
\]
is the morphism in $(\hpMod^{H\F_\ell})^\dfop$
covering $\mathrm{concat}'$
corresponding to %
$\kappa^{H\F_\ell} \colon \zeta^{H\F_\ell} \oto \omega_{\pi(f,h)}^{H\F_\ell}$
under the equivalences
\[
	t_2 \colon \zeta^{H\F_\ell} 
	\xto{\ \homot\ } 
	\suspension_{P'(f,g,h)}^{-2d} \underline{H\F_\ell}
	\qquad\text{and}\qquad
	t_{f,h} \colon \omega_{\pi(f,h)}^{H\F_\ell} 
	\xto{\ \homot\ } 
	\suspension_{P(f,h)}^{-d} \underline{H\F_\ell};
\]
and $(\mathrm{concat}',\kappa_{t;f,g,h})_\sharp$ is 
the umkehr map of Definition~U.\ref*{U-def:psharpumkehrmaps}.
\end{lemma}

\begin{rem}
By Remark~\ref{rk:phicartkappahypercart} and 
Proposition~U.\ref*{U-prop:hypercartmorpreservation},
the morphism $\kappa^{H\F_\ell}$ and hence the morphism $\kappa_{t;f,g,h}$
in Lemma~\ref{lm:compdiags} are hypercartesian.
\end{rem}

\begin{rem}
\label{rk:ududsign}
In accordance with the Koszul sign rule, 
the morphism $u^{-d} \tensor u^{-d}$ in diagram~\eqref{diag:comp1}
is given by
\[
	(u^{-d} \tensor u^{-d})(x\tensor y) = (-1)^{d(a+d)} u^{-d}(x) \tensor u^{-d}(y)
\]
for $x\tensor y \in H^{a+d} P(g,h) \tensor H^{b+d} P(f,g)$.
\end{rem}

\begin{proof}[Proof of Lemma~\ref{lm:compdiags}]
The bottom parallelograms in \eqref{diag:comp1}, \eqref{diag:comp2}, \eqref{diag:comp3}
commute by the choice of $t_1$, $t_2$, and $\kappa_{t;f,g,h}$, respectively.
That the top parallelogram in \eqref{diag:comp1} commutes is a tedious but
essentially straightforward exercise in unrolling the relevant definitions.
Checking the commutativity of the top parallelogram in \eqref{diag:comp2}
is straightforward, and the top parallelogram in \eqref{diag:comp3}
commutes by the definition of the umkehr map
$(\mathrm{concat}',\kappa_{t;f,g,h})_\sharp$.
\end{proof}

Stacking diagrams~\eqref{diag:comp1}, \eqref{diag:comp2}, 
and \eqref{diag:comp3}
together horizontally, and taking into account the sign 
explained in Remark~\ref{rk:ududsign},
Lemmas~\ref{lm:bullettandblackdiamond}, \ref{lm:blackdiamondproddesc}, and
\ref{lm:compdiags} imply 
\begin{lemma}
Let $t$ be a universal trivialization, and 
let $f,g,h\colon B \to BG$.
Then for all $x\in \bbH^a P(g,h)$, $y\in \bbH^b P(f,g)$
\[
	x\stringprod'_t y 
	= 
	(-1)^{d(a+d)}
	s^{-d} \bigl( 
		(\mathrm{concat}',\kappa_{t;f,g,h})_\sharp
		(\mathrm{split}')^\ast 
		(s^d(x) \times s^d(y))
	\bigr)
\]
where $\kappa_{t;f,g,h}$ is as in Lemma~\ref{lm:compdiags}.
\qed
\end{lemma}

\begin{proof}[Proof of Theorem~\ref{thm:pairingcomparison}]
Suppose $f,g,h\colon B \to BG$.
By Theorem U.\ref*{U-thm:umkehrmapcomparison}, 
for any universal trivialization $t$ we have
\[
	(\mathrm{concat}',\kappa_{t;f,g,h})_\sharp = (\mathrm{concat}',o^{\kappa(t;f,g,h)})_!
\]
where
\[
	o^{\kappa(t;f,g,h)} 
	\colon 
	\calH^d(F)
	\longto
	\F_\ell
\]
is the map of local coefficient systems over $P(f,h)$ defined in
Definition~U.\ref*{U-def:othetat}.
Here $F$ denotes the fibre
of the fibration $\mathrm{concat}' \colon P'(f,g,h) \to P(f,h)$
and we have written $\kappa(t;f,g,h)$ for
$\kappa_{t;f,g,h}$.
Notice that since $F\homot G$ is connected,
Remark~U.\ref*{U-rk:othetaaltdesc} implies that
$o^{\kappa(t;f,g,h)}$
is an orientation for $\mathrm{concat}'$
in the sense of Definition~\ref{def:integrationalongfibre},
as required in our definition of integration along fibre maps
there.
Thus we obtain the formula
\[
	x\stringprod'_t y 
	= 
	(-1)^{d(\deg(x)+d)}
	s^{-d} \bigl( 
		(\mathrm{concat}',o^{\kappa(t;f,g,h)})_!
		(\mathrm{split}')^\ast 
		(s^d(x) \times s^d(y))
	\bigr)
\]
for the product
\[
	\stringprod'_t 
	\colon 
	\bbH^\ast P(g,h) \tensor \bbH^\ast P(f,g)
	\longto
	\bbH^\ast P(f,h).
\]
Comparing this formula with the definition of $\stringprod$ at 
Definition~\ref{def:bulletdef1}, we see that 
\[
	\stringprod = \stringprod'_t 
	\colon 
	\bbH^\ast P(g,h) \tensor \bbH^\ast P(f,g)
	\longto
	\bbH^\ast P(f,h)
\]
as long as $(-1)^d o^{\kappa(t;f,g,h)}$ agrees with the 
orientation for $\mathrm{concat}'$ used in the construction 
of $\stringprod$. Let us write $o^{f,g,h}$ for the latter orientation.
Given a map $\phi \colon A \to B$ of spaces, notice that
\[
	o^{\kappa(t;f\phi,g\phi,h\phi)} = \bar{\phi}^\ast (o^{\kappa(t;f,g,h)})
	\qquad\text{and}\qquad
	o^{f\phi,g\phi,h\phi} = \bar{\phi}^\ast (o^{f,g,h})
\]
where $\bar{\phi} \colon P(f\phi,h\phi) \to P(f,h)$ is the map induced by $\phi$.
Consequently, to ensure that $\stringprod = \stringprod'_t$ for all $f$,$g$, and $h$,
it suffices to choose $t$ so that 
\begin{equation}
\label{eq:targetorel}
 	o^{\kappa({t;p_1,p_2,p_3})} = (-1)^d o^{p_1,p_2,p_3}
\end{equation}
for $p_1,p_2,p_3 \colon BG^3 \to BG$ the coordinate projections.

Recall that a universal trivialization is an equivalence
\[
	\omega_{\pi(\pi_1,\pi_2)}^{H\F_\ell} 
	\xto{\ \homot\ }
	\suspension^{-d}_{P(\pi_1,\pi_2)}\underline{H\F_\ell}
\]
where $\pi_1,\pi_2 \colon BG^2 \to BG$ are the
coordinate projections, and that an orientation for 
\[
	\mathrm{concat}'\colon P'(p_1,p_2,p_3) \longto P(p_1,p_3)
\]
amounts to the data of an isomorphism $H^d(F) \isom \F_\ell$
for a fibre $F$ of $\mathrm{concat}'$.
See Remark~\ref{rk:orientationwithconnectedbase}.
Thus the orientations  of $\mathrm{concat}'$ form an $\F_\ell^\times$--torsor.
Moreover,
associated to each unit $u \in \F_\ell^\times$ we have an automorphism
of $H\F_\ell$, and hence an automorphism of 
the parametrized $H\F_\ell$--module 
$\suspension_{P(\pi_1,\pi_2)}^{-d}\underline{H\F_\ell}$
over $P(\pi_1,\pi_2)$, and postcomposition with 
these automorphism yields an action of $\F_\ell^\times$ 
on the set of universal trivializations. 
Tracing through the construction of $o^{\kappa({t;f,g,h})}$, 
we see that 
\[
	o^{\kappa({ut;p_1,p_2,p_3})} = u o^{\kappa({t;p_1,p_2,p_3})}
\]
for all $u \in \F_\ell^\times$ and universal trivializations $t$.
Consequently, given a  universal trivialization $t_0$, we may 
find a unit $u\in \F_\ell^\times$ such that $t = ut_0$
satisfies \eqref{eq:targetorel}.
\end{proof}

\subsection{Further properties of the string pairing}
\label{subsec:furtherpropertiesofthepairing}

As mentioned after Theorem~\ref{thm:pairingcomparison},
we will from now on assume that orientations 
have been chosen so that the pairings $\stringprod$
and $\stringprod'$ agree, and we will no longer 
distinguish between the two notationally.
Armed with the first two constructions of the pairing $\stringprod$
and Theorem~\ref{thm:pairingcomparison},
in this subsection we will establish further properties
of the pairing $\stringprod$. We begin with

\begin{proof}[Proof of Theorems~\ref{thm:pairings4} and~\ref{thm:functoriality}]
In view of Theorem~\ref{thm:pairingcomparison},
Theorem~\ref{thm:pairings4}
follows by combining 
Theorem~\ref{thm:pairingssecondconstruction}(\ref{it:pairingssecondconstructioncats})
with 
Proposition~\ref{prop:cupmodstrbilin}
while 
Theorem~\ref{thm:functoriality}(\ref{it:functoriality2})
follows from 
Theorem~\ref{thm:pairingssecondconstruction}(\ref{it:pairingssecondconstructionfuns}).
Finally, 
Theorem~\ref{thm:functoriality}(\ref{it:functorialityandmodstr})
follows readily from Definition~\ref{def:cupmodstr}
and the construction of the map $F_\phi$.
\end{proof}

\begin{proof}[Proof of Theorem~\ref{thm:cmcomparison2}]
The claim is immediate from 
Corollary~\ref{cor:bulletalgandmodstr}(\ref{it:bulletalgstr}) of
Theorem~\ref{thm:pairings4}; Proposition~\ref{prop:cmcomp}; 
and the fact that Chataur and Menichi's product $\odot$ on $\bbH^\ast(LBG)$
is commutative \cite[Cor.~B.3]{KM19}.
\end{proof}

Having proven 
Theorem~\ref{thm:functoriality}(\ref{it:functoriality2}),
we note that it implies the following homotopy invariance property for the 
pairing $\stringprod$.
\begin{prop}
\label{prop:homotopyinvariance}
Let $f_i,g_i,h_i\co B \to BG$, $i=0,1$ be maps, and let 
$H_f\co f_0\homot f_1$,
$H_g\co g_0\homot g_1$, and
$H_h\co h_0\homot h_1$
be homotopies.
Let $j_i \colon B \to B\times I$ be the inclusions
$b \mapsto (b,i)$, $i=0,1$.
Then the following diagram commutes:
\[
    \pushQED{\qed}
    \raisebox{\dimexpr\depth-\fboxsep-3.0pt}{%
		\xymatrix{
        	\bbH^\ast P(g_0,h_0) \tensor \bbH^\ast P(f_0,g_0) 	
        	\ar[r]^(0.62)\stringprod
        	\ar[d]^\isom_{F_{j_0}\tensor F_{j_0}}
        	&
        	\bbH^\ast P(f_0,h_0)
        	\ar[d]_\isom^{F_{j_0}}
        	\\
        	\bbH^\ast P(H_g,H_h) \tensor \bbH^\ast P(H_f,H_h)
        	\ar[r]^(0.62)\stringprod
        	&
        	\bbH^\ast P(H_f,H_g)
        	\\
        	\bbH^\ast P(g_1,h_1) \tensor \bbH^\ast P(f_1,g_1) 
        	\ar[r]^(0.62)\stringprod
        	\ar[u]^{F_{j_1}\tensor F_{j_1}}_\isom
        	&
        	\ar[u]_{F_{j_1}}^\isom
        	\bbH^\ast P(f_1,h_1) 
		}
    }
    \qedhere
    \popQED
\]
\end{prop}

In the remainder of the subsection, 
we will show that
the $H^\ast(B)$--algebra structure 
on $\bbH^\ast P(f,f)$ 
admits an augmentation 
\[
	\rho = \rho_f \colon \bbH^\ast P(f,f) \longto H^\ast B
\]
as asserted after Corollary~\ref{cor:bulletalgandmodstr}.
Moreover, we will show that these augmentations 
are compatible with the maps
\[
	F_\phi \colon \bbH^\ast P(f,f) \longto \bbH^\ast P(f\phi,f\phi)
\]
of equation~\eqref{eq:fphidef}.
We start by constructing the map
$\rho$ and providing what will turn out 
to be an alternative construction of the map 
\begin{equation}
\label{eq:iota}
	\iota = \iota_f \co H^\ast B \longto \bbH^\ast P(f,f),
	\quad
	a \longmapsto a\bbOne_f
\end{equation}
of Corollary~\ref{cor:bulletalgandmodstr}(\ref{it:bulletalgstr}).

\begin{defn}%
For a map $f\co B \to BG$, consider the commutative triangles
\begin{equation}
\label{triangles:algstr} 
    \vcenter{\xymatrix@!0@R=8ex@C=4em{
    	B 
    	\ar[rr]^-s
    	\ar[dr]_{\id_B}
    	&&
    	P(f,f)
    	\ar[dl]^{\pi_{f,f}}
    	\\
    	&
    	B
    }}
    \qquad\text{and}\qquad
    \vcenter{\xymatrix@!0@R=8ex@C=4em{
    	P(f,f)
    	\ar[rr]^-{\pi_{f,f}}
    	\ar[dr]_{\pi_{f,f}}
    	&&
    	B
    	\ar[dl]^{\id}
    	\\
    	&
    	B
    }}
\end{equation}
where $s$ is the section sending a point $b\in B$ to the pair consisting of $b$ and the constant path 
at $f(b)$. The triangles define morphisms 
\begin{equation}
\label{mors:iotarhopre} 
	(B\xto{\id} B) \longto (P(f,f)\xto{\pi(f,f)} B)
	\qquad\text{and}\qquad
	(P(f,f) \xto{\pi(f,f)} B) \longto (B\xto{\id} B)
\end{equation}
in the category $(h\calF^\fop)^\op$. 
We define the maps
\begin{equation}
\label{eq:iotaprimeandrho}
	\iota' = \iota'_f \co H^\ast B \longto \bbH^\ast P(f,f)
	\qquad\text{and}\qquad
	\rho = \rho_f \co \bbH^\ast P(f,f) \longto H^\ast B 
\end{equation}
to be the morphisms obtained by first applying 
the composite functor $H^\ast \fH_\bullet^\op$
to the morphisms~\eqref{mors:iotarhopre}
and then using the isomorphism
\[
	H^\ast H_\bullet(P(f,f);\,\omega_{\pi(f,f)}^{H\F_\ell})
	\isom 
	\bbH^\ast P(f,f)
\]
of Theorem~\ref{thm:recognitionthm3} and the computation
\begin{equation*}
     H^\ast H_\bullet(B; \omega_{\id_B}^{H\F_\ell})
     = 
     H^\ast H_\bullet(B; S_{B,\ell}^{H\F_\ell})
     \isom
     H^\ast H_\bullet(B; \underline{H\F_\ell})
     \isom
     H^\ast B
\end{equation*}
to recognize the source and the target.
The equality 
follows from our choice $\omega_{\id_B} = S_{B,\ell}$ 
(see Definition~\ref{def:tildefhbullet}),
the first isomorphism is induced by the unitality 
constraint for the functor $(-)^{H\F_\ell}$,
and the second isomorphism follows from 
Example~U.\ref*{U-ex:hbulletfortrivialcoeffs2}.
\end{defn}

\begin{lemma}
\label{lm:iotaprimeisiota}
Given $f\colon B \to BG$, we have
$\iota'_f = \iota_f \colon H^\ast B \to \bbH^\ast P(f,f)$.
\end{lemma}

\begin{prop}
\label{prop:augmentation}
Let $B$ be a space, and let $f\co B\to BG$ be a map. 
Then the map 
\[
	\rho = \rho_f \co (\bbH^\ast P(f,f),\stringprod) \longto (H^\ast B,\cupprod)
\]
is a homomorphism of graded rings
satisfying $\rho_f \iota_f = \id_{H^\ast(B)}$.
\end{prop}

\begin{proof}[Proof of Lemma~\ref{lm:iotaprimeisiota} and Proposition~\ref{prop:augmentation}]
The diagrams
\[
    \vcenter{\xymatrix{
    	B
    	\ar[r]^-\Delta
    	\ar[d]_\id
    	&
    	B\times B 	
    	\ar[d]^{\id\times\id}
    	\\
    	B
    	\ar[r]^-\Delta
    	&
    	B\times B
    }}
    \qquad\text{and}\qquad
    \vcenter{\xymatrix{
    	B 
    	\ar[r]^r
    	\ar[d]_\id
    	&
    	\pt
    	\ar[d]^\id
    	\\
    	B 
    	\ar[r]^r
    	&
    	\pt
    }}
\]
define morphisms in $h\calF^\fop$ making 
the identity map $(B \xto{\id} B)$ into a monoid
object in the category $(h\calF^\fop)^\op$. 
Moreover, the morphisms \eqref{eq:idmor} and 
\eqref{eq:complaw} (with $g=h=f$) make $P(f,f)\to B$
a monoid object in  $(h\calF^\fop)^\op$.
It is straightforward to verify that the 
morphisms~\eqref{mors:iotarhopre}
in $(h\calF^\fop)^\op$
are monoid object homomorphisms whose composite is the identity.
Applying the functor $H^\ast \fH_\bullet^\op$,
recognizing the image of the monoid object
$(B\xto{\id} B)$ under this functor as $H^\ast(B)$ equipped
with the cup product, and using Theorem~\ref{thm:recognitionthm3}
to recognize the image of $(P(f,f)\to B)$ as the graded ring 
$\bbH^\ast P(f,f)$,
we see that $\iota'_f$ and $\rho_f$ are $\F_\ell$--algebra
homomorphisms with $\iota'_f\rho_f = \id$.
Thus Proposition~\ref{prop:augmentation} 
follows from Lemma~\ref{lm:iotaprimeisiota}.
Comparing the definition of $\iota'_f$ 
to that of $\bbOne_f\in \bbH^\ast P(f,f)$, 
we see that $\iota'_f(1) = \bbOne_f$.
Thus to prove Lemma~\ref{lm:iotaprimeisiota}, it suffices
to show that the map $\iota'_f$ is $H^\ast(B)$--linear.
To do that, notice that the map defined by the diagram
\begin{equation}
\label{diag:pffbbbmodstr}
	\vcenter{\xymatrix@C+3.4em{
		P(f,f)
		\ar[r]^-{(\pi_{f,f},\id)}
		\ar[d]_{\pi_{f,f}}
		&
		B \times P(f,f)
		\ar[d]^{\id \times \pi_{f,f}}
		\\
		B
		\ar[r]^{\Delta}
		&
		B\times B
	}}
\end{equation}
makes $(P(f,f) \xto{\pi(f,f)} B)$ into a module object over the monoid $(B\xto{\id} B)$
in the category $(h\calF^\fop)^\op$, and that map 
\[
	(B\xto{\id} B) \longto (P(f,f) \xto{\pi(f,f)} B)
\]
in $(h\calF^\fop)^\op$ defined
by the diagram on the left in \eqref{triangles:algstr}
is a homomorphism of $(B\to B)$--modules.
It follows that $\iota'_f$ is $H^\ast(B)$--linear 
when we equip $\bbH^\ast P(f,f)$ with the 
$H^\ast(B)$--module structure induced by
\eqref{diag:pffbbbmodstr}.
The proof is now completed by 
the verification that this $H^\ast(B)$--module
structure  on $\bbH^\ast P(f,f)$
agrees with the one we placed on $\bbH^\ast P(f,f)$
in Definition~\ref{def:cupmodstr}.
\end{proof}

\begin{prop}
\label{prop:functorcompat}
Given a map of spaces $\phi\co A \to B$ and a map $f\co B \to BG$,
the following diagram commutes:
\begin{equation}
\label{diag:functorcompat}
\vcenter{\xymatrix@C+1em{
	H^\ast B 
	\ar[r]^{\phi^\ast}
	\ar[d]_{\iota_f}
	&
	H^\ast A
	\ar[d]^{\iota_{f\phi}}
	\\
	\bbH^\ast P(f,f)
	\ar[r]^{F_\phi}
	\ar[d]_{\rho_f}
	&
	\bbH^\ast P(f\phi,f\phi)
	\ar[d]^{\rho_{f\phi}}
	\\
	H^\ast B 
	\ar[r]^{\phi^\ast}
	&
	H^\ast A	
}} 
\end{equation}
\end{prop}

\begin{proof}%
The commutativity of the top square is
immediate from the definitions of the maps
involved. It remains to prove the commutativity
of the bottom square.
Consider the diagram in $(h\calF^\fop)^\op$ 
\[\xymatrix{
	(P(f,f) \to B)
	\ar[r]
	\ar[d]
	&
	(P(f\phi,f\phi) \to A)
	\ar[d]
	\\
	(B\xto{\id} B)
	\ar[r]
	&
	(A\xto{\id} A)
}\]
where the vertical arrows are induced by the right-hand diagram
in~\eqref{triangles:algstr} and the analogous diagram for $f\phi$;
where the top horizontal arrow is induced by 
square~\eqref{sq:inducedmap}; and
where the bottom horizontal arrow is induced by the pullback square
\[\xymatrix{
	A
	\ar[r]^\phi
	\ar[d]_\id
	\pb
	&
	B
	\ar[d]^\id
	\\
	A
	\ar[r]^\phi
	&
	B
}\]
It is straightforward to verify 
that the diagram commutes. The claim now follows by 
applying the functor $H^\ast \fH_\bullet^\op$
to the diagram and recognizing the result as 
the bottom square in diagram~\eqref{diag:functorcompat}.
\end{proof}

\subsection{The third construction of the string pairing}
\label{subsec:thirdperspective}

In this subsection, we will give a third perspective on the 
construction of the string pairing
\[
	\stringprod
	\co
	\bbH^\ast P(g,h) \tensor \bbH^\ast P(f,g) 
	\longto 
	\bbH^\ast P(f,h)
\]
by reinterpreting the second construction of $\stringprod$
in terms of fibrewise duals,
and use this perspective to prove
Proposition~\ref{prop:pontryaginproduct}.
Later, in
Section~\ref{sec:spectralsequences},
we will use this point of view
on the pairing $\stringprod$ to
prove that the pairing lifts to the
level of Serre spectral sequences.
We begin by constructing a functor sending 
an object $(E\to B)\in h\calF^\fop$ to its 
``fibrewise dual with respect to $H\F_\ell$.''

\begin{defn}[The fibrewise dual functor $D_\fw$]
\label{def:dfw}
Construct a symmetric monoidal morphism 
\begin{equation}
\label{eq:dfwhcalffoptohpmodhfell}
	D_\fw \colon h\calF^\fop \longto \hpMod^{H\F_\ell}
\end{equation}
of fibrations over $\calT$ (see Definition~U.\ref*{U-def:smfibs})
as follows. 
Write $\hpMod^{H\F_\ell}_{\mathrm{dl}}$ 
for the full subcategory of $\hpMod^{H\F_\ell}$ 
spanned by the objects admitting a fibrewise dual, 
so that a parametrized $H\F_\ell$--module $X$ over a space $B$ 
belongs
to $\hpMod^{H\F_\ell}_{\mathrm{dl}}$ if and only if 
$X$ is dualizable in $\ho(\Mod^{H\F_\ell}_B)$,
and notice that $\hpMod^{H\F_\ell}_{\mathrm{dl}}$
can alternatively be constructed by applying the Grothendieck construction
of Theorem~U.\ref*{U-thm:smgrothendieckconstr}
to the pseudofunctor
\[
	\calT^\op \longto \smCat,
	\quad
	B\longmapsto \ho(\Mod^{H\F_\ell}_{/B})_{\mathrm{dl}},
	\quad
	f\longmapsto f^\ast
\]
where $\ho(\Mod^{H\F_\ell}_{/B})_{\mathrm{dl}}$ is the 
full subcategory of $\ho(\Mod^{H\F_\ell}_{/B})$
spanned by dualizable objects. Applying
the Grothendieck construction to the pseudo natural transformation
given by the functors
\[
	D_B
	\colon 
	\ho(\Mod^{H\F_\ell}_{/B})_{\mathrm{dl}}^\op
	\longto 
	\ho(\Mod^{H\F_\ell}_{/B})
\]
sending each object to its dual
therefore yields a symmetric monoidal morphism 
\[
	\tilde{D}
	\colon
	(\hpMod^{H\F_\ell}_{\mathrm{dl}})^\fop 
	\longto 
	\hpMod^{H\F_\ell}
\]	
of fibrations over $\calT$.
Now recall from Proposition~U.\ref*{U-prop:tcalc2}
the symmetric monoidal functor 
\[
	t = t_{\Mod^{H\F_\ell}} \colon \pT \longto \hpMod^{H\F_\ell}
\]
over $\calT$
given on objects by
\[
	t (E\xto{\pi} B) = \pi_! \underline{H\F_\ell},
\]
and notice that Proposition~U.\ref*{U-prop:hypercartandfwduality}
combined with Theorem~U.\ref*{U-thm:hypercartexistence}
and Proposition~U.\ref*{U-prop:hypercartmorpreservation}
ensures that the restriction of $t$  
to the subcategory $\calF$ of $\pT$ takes values 
in the subcategory $\hpMod^{H\F_\ell}_{\mathrm{dl}}$.
Thus $t$ restricts to a functor
\[
	t' \colon \calF \longto \hpMod^{H\F_\ell}_{\mathrm{dl}}
\]
which with the aid of Proposition~U.\ref*{U-prop:tcalc2} 
is easily verified to be a symmetric monoidal 
morphism of fibrations in the sense of 
Definition~U.\ref*{U-def:smfibs}.
The composite
\begin{equation}
\label{eq:calffoptohpmodhfell}
	\calF^\fop 
	\xto{\ (t')^\fop\ }
	(\hpMod^{H\F_\ell}_{\mathrm{dl}})^\fop 
	\xto{\ \tilde{D}\ }
	\hpMod^{H\F_\ell}
\end{equation}
factors through the projection $\calF^\fop \to h\calF^\fop$, 
and we define $D_\fw$ in \eqref{eq:dfwhcalffoptohpmodhfell}
to be the symmetric monoidal morphism of fibrations
induced by \eqref{eq:calffoptohpmodhfell}.
We will often write $D_B E$ for the image of an object
$(E\to B) \in h\calF^\fop$ under $D_\fw$.
\end{defn}

The following proposition is the key result of this subsection.

\begin{prop}
\label{prop:fhbulletfactorization}
The functor $\fH_\bullet$
of Section~\ref{subsubsec:fhbullet}
is symmetric monoidally naturally equivalent to the composite
\begin{equation}
\label{eq:thirdconstructioncomposite}
	h\calF^\fop
	\xto{\ D_\fw\ }
	\hpMod^{H\F_\ell}
	\xto{\ H_\bullet\ }
	\ho(\Mod^{H\F_\ell}).
\end{equation}
\end{prop}
\begin{proof}
At an object $(E\xto{\,\pi\,} B) \in h\calF^\fop$, the equivalence is 
given by the composite
\begin{equation}
\label{eq:hbulletehbulletdbe}
	H_\bullet(E;\, \omega_\pi^{H\F_\ell}) 
	\homot
	H_\bullet(B;\, \pi_! \omega_\pi^{H\F_\ell})
	\homot
	H_\bullet(B;\, D_B E) 
\end{equation}
of the equivalences afforded by 
Corollary~U.\ref*{U-cor:hbulletandshriek}
and Proposition~U.\ref*{U-prop:hypercartandfwduality}.
That these composites assemble to a natural equivalence
between $\fH_\bullet$ and $H_\bullet D_\fw$
follows from Proposition~U.\ref*{U-prop:dualmaps}.
Finally, a tedious verification using the explicit description 
of the unit of the dual pair 
$(\pi_! \underline{H\F_\ell}, \pi_! \omega_{\pi})$
given in Proposition~U.\ref*{U-prop:hypercartandfwduality},
Proposition~U.\ref*{U-prop:superandhypercartmorprops}(\ref*{U-it:hcexttensorhc}),
and the fact that $\extsmashprod^{H\F_\ell}$ preserves both cartesian and
opcartesian morphism in $\hpMod^{H\F_\ell}$
(Definition~U.\ref*{U-def:smfibs} 
and Proposition~U.\ref*{U-prop:exttensoropcarts})
shows that the natural equivalence
so obtained is symmetric monoidal.
\end{proof}

By Proposition~\ref{prop:fhbulletfactorization},
we may replace the functor $\fH_\bullet$
in the second construction of the pairing $\stringprod$
by the composite $H_\bullet D_\fw$.
Thus we have

\begin{cor}
\label{cor:bulletconstr3}
For $f,g\co B \to BG$, there is an isomorphism
\begin{equation}
\label{eq:recognitionthmiso2} 
	\xi_{f,g}
	\colon
	H^\ast H_\bullet(B;\,D_B P(f,g))
	\isom 
	\bbH^\ast(P(f,g))
\end{equation}
natural with respect to the homomorphisms induced by diagram
\eqref{sq:inducedmap}
so that for all $f,g,h\colon B \to BG$, the 
pairing 
\[	
	\stringprod\colon \bbH^\ast P(g,h)\tensor \bbH^\ast P(f,g) \longto \bbH^\ast P(f,h)
\]
corresponds 
under $\xi_{g,h}$, $\xi_{f,g}$ and $\xi_{f,h}$
to the pairing
\begin{equation}
\label{eq:dfwpairing}
	H^\ast H_\bullet(B;D_B P(g,h))
	\tensor
	H^\ast H_\bullet(B;D_B P(f,g))
	\longto
	H^\ast H_\bullet(B;D_B P(f,h)) 
\end{equation}
given by composition in the enriched category 
$(H^\ast H_\bullet^\op D_\fw^\op)_\ast \calP_B$.
\qed
\end{cor}

\begin{rem}
\label{rk:xifgdesc}
Explicitly, tracing through the construction
we see that the isomorphism $\xi_{f,g}$ of 
\eqref{eq:recognitionthmiso2}
is given by the inverse of the composite
\begin{equation}
\label{eq:xifgdesc} 
	\bbH^\ast P(f,g) 
	\isom 
	H^\ast H_\bullet (P(f,g);\, \suspension^{-d}_{P(f,g)} \underline{H\F_\ell})
	\isom 
	H^\ast H_\bullet (P(f,g);\, \omega^{H\F_\ell}_{\pi(f,g)})
	\isom
	H^\ast H_\bullet (B; D_B P(f,g))
\end{equation}
where the first isomorphism is given by \eqref{eq:isosbbhpfg2},
the second by \eqref{eq:omegapifghfltriv}, and the third by
\eqref{eq:hbulletehbulletdbe}.
\end{rem}

\begin{proof}[Proof of Proposition~\ref{prop:pontryaginproduct}]
The product on $\bbH^\ast \loops BG$ agrees under the isomorphism
\[
	\bbH^\ast \loops BG  \isom H^\ast D\loops BG
\]
provided  by Corollary~\ref{cor:bulletconstr3}
with the composite
\def\firstentry{H^\ast(D \loops BG) \tensor H^\ast(D \loops BG)\;}
\[\xymatrix@!0@C=5em{
	*!R{\firstentry}
	\ar[r]^{ \times }
	&
	*!L{\;H^\ast(D \loops BG \smashprod^{H\F_\ell} D \loops BG)}
	\\
	*!R{\phantom{\firstentry}}
	\ar[r]^{ \isom }
	&
	*!L{\;H^\ast D (\loops BG \times \loops BG)}
	\\
	*!R{\phantom{\firstentry}}
	\ar[r]^{ (D\mathrm{concat})^\ast }
	&
	*!L{\;H^\ast(D \loops BG)}.
}\]
Here $DX$ 
for a space $X$
denotes the dual of $H\F_\ell\smashprod \suspension^\infty_+ X$ in 
$\ho(\Mod^{H\F_\ell})$,
and the middle isomorphism is induced by the equivalence
$D(\loops BG \times \loops BG) \homot D\loops BG \smashprod^{H\F_\ell} D\loops BG$.
The claim now follows using 
the natural isomorphism $H^\ast DX \isom H_{-\ast} X$
valid for spaces $X$ such that $H\F_\ell\smashprod \suspension^\infty_+ X$
is dualizable in $\ho(\Mod^{H\F_\ell})$.
\end{proof}

\begin{rem}[The isomorphism of Proposition~\ref{prop:pontryaginproduct}
and Poincaré duality]
\label{rk:pd}
At least morally, we may regard the isomorphism
$H_{-\ast}(\loops BG) \isom \bbH^\ast(\loops BG)$
of Proposition~\ref{prop:pontryaginproduct}
as arising from Poincaré duality.
Recall that for a closed oriented $d$--manifold $M$,
the Poincaré duality isomorphism $H_{-\ast}(M) \isom \bbH^\ast (M)$
factors as a composite of the isomorphism
$H_{-\ast}(M) \isom H^\ast(DM)$ arising from Spanier--Whitehead duality,
the isomorphism $H^\ast(DM) \isom H^\ast(M^{-\tau_M})$
induced by the Atiyah duality equivalence $DM \homot M^{-\tau_M}$,
and the Thom isomorphism $H^\ast(M^{-\tau_M}) \isom \bbH^\ast(M)$.
Unwinding the construction, we see that the isomorphism
$H_{-\ast}(\loops BG) \isom \bbH^\ast(\loops BG)$
of Proposition~\ref{prop:pontryaginproduct}
is obtained by combining similar ingredients:
\begin{enumerate}
\item 
	An isomorphism $H_{-\ast}(\loops BG) \isom H^\ast (D\loops BG)$
	given by Spanier--Whitehead duality.
\item 
	An isomorphism $H^\ast (D\loops BG) \isom H^\ast(r_! \omega_{r}^{H\F_\ell})$
	arising from an equivalence $D\loops BG \homot r_! \omega_{r}^{H\F_\ell}$.
	Here $r$ is the unique map $\loops BG \to \pt$.
	By Theorems~U.\ref*{U-thm:hypercartdata} and U.\ref*{U-thm:ellcptgrpscwdualizable},
	there is an equivalence
	$\omega_r \homot \suspension^{-d}_{\loops BG} S_{\loops BG,\ell}$.
	Thinking of $\loops BG$ as a kind of $d$--dimensional parallelizable manifold
	at the prime $\ell$, it is natural to think of $\omega_r$
	as the sphere bundle associated to the opposite of the tangent bundle of 
	$\loops BG$ and $r_! \omega_r$ as the associated Thom spectrum.	
\item 
	An isomorphism $H^\ast(r_! \omega_{r}^{H\F_\ell}) \isom \bbH^\ast(\loops BG)$
	arising from a trivialization 
	$\omega_r^{H\F_\ell} \homot \suspension^{-d}_{\loops BG} \underline{H\F_\ell}$,
	the commutation equivalence 
	$%
		r_! \suspension_{\loops BG}^{-d} \underline{H\F_\ell}
		\homot
		\suspension^{-d} r_! \underline{H\F_\ell},
	$ %
	the suspension isomorphism
	\[
		H^\ast (\suspension^{-d} r_! \underline{H\F_\ell})
		\isom
		\bbH^\ast (r_! \underline{H\F_\ell}),
	\]
	and the observation that 
	$\bbH^\ast r_! \underline{H\F_\ell} \isom \bbH^\ast (\loops BG)$.
	Thinking of $r_! \omega_r$ as a Thom spectrum, 
	the isomorphism $H^\ast(r_! \omega_{r}^{H\F_\ell}) \isom \bbH^\ast(\loops BG)$
	amounts to a Thom isomorphism.	
\end{enumerate}
Indeed, when $BG = BK\lcom$ for a semisimple compact Lie group $K$, 
making use of Remark~U.\ref*{U-rk:cwdualofgalt},
our isomorphism 
$H_{-\ast}(\loops BG) \isom \bbH^\ast(\loops BG)$
can be shown to coincide with the usual Poincaré duality isomorphism 
$H_{-\ast}(K) \isom \bbH^\ast(K)$
under the maps induced by the evident $\F_\ell$--homology equivalence
$K \to \loops BG$.
\end{rem}

\section{Pairings on Serre spectral sequences: Proof of
  Theorem~\ref{thm:mainresult}, part 2}
\label{sec:spectralsequences}

For a fibration $\pi\co X\to B$, let us write $\bbE(X)$ for the 
strongly convergent spectral sequence
\[
	\bbE^{s,t}_2(X) \Longrightarrow \bbH^{s+t}(X)
\]
obtained from the Serre spectral sequence $E(X)$
of $\pi$ by setting $\bbE^{s,t}_r(X) = E^{s,t+d}_r(X)$
and multiplying all differentials by $(-1)^d$. 
We call $\bbE(X)$ the \emph{shifted Serre spectral sequence of $\pi$}.
In this section, we will prove the following theorem 
showing that the string product on $\bbH^\ast(LBG)$
and the string module structure on $H^\ast(BG^{h\sigma})$ over $\bbH^\ast(LBG)$
lift to the level of Serre spectral sequences. 
This result makes precise the assertions
concerning Serre spectral sequences made in Theorem~\ref{thm:mainresult}.

\begin{thrm}
Suppose $BG$ is a semisimple $\ell$--compact group of dimension $d$.
\label{thm:ssforlbgbghsigma}
\begin{enumerate}[(i)]
\item \label{it:mainresult-ringss} 
    The shifted Serre spectral sequence of the evaluation
    fibration $LBG\to BG$, $\omega \mapsto \omega(1)$, 
    is a strongly convergent
    spectral sequence of algebras
    \begin{equation}
    \label{ss:alg}
    		\bbE^{s,t}_2(LBG)
    		\isom
    		H^s(BG) \tensor \bbH^t (G)
   		    \Longrightarrow  \bbH^{s+t}(LBG).
    \end{equation}
    Here $H^\ast(BG)$ is equipped with the cup product
	and $\bbH^\ast (G)$ and $\bbH^\ast(LBG)$ with the product $\stringprod$.
\item \label{it:mainresult-modss}
    The Serre spectral sequence
    \begin{equation}
    \label{ss:mod}
    	E^{s,t}_2(BG^{h\sigma})
    		\isom
    	H^s(BG) \tensor H^t (G) \Longrightarrow H^{s+t}(BG^{h\sigma})
    \end{equation}
    of the fibration $BG^{h\sigma} \to BG$, $\alpha \mapsto \alpha(1)$,
    with fibre homotopy equivalent to $G$,
    is a module spectral sequence over 
    the spectral sequence \eqref{ss:alg}
	and converges to $H^\ast(BG^{h\sigma})$ as a module over
    $\bbH^\ast(LBG)$. On the $E_2$--page, the module structure 
    is free of rank $1$ on 
	a generator of 
	$E_2^{0,d} \isom H^d(G) \isom \F_\ell$.
\end{enumerate}
\end{thrm}

\begin{rem}
\label{rk:noncommutativee2}
We remind the reader that the ring $(\bbH^\ast(G),\stringprod)$ 
in Theorem~\ref{thm:ssforlbgbghsigma}(\ref{it:mainresult-ringss})
is isomorphic to $H_{-\ast}(G)$ equipped with the Pontryagin product.
See Proposition~\ref{prop:pontryaginproduct}.
Since, as discussed in Remark~\ref{rk:noncomm}, the Pontryagin product
is in general non-commutative, so is the product on the $E_2$--page of 
spectral sequence~\eqref{ss:alg}.
Because the target $(\bbH^\ast(LBG),\stringprod)$ 
of the spectral sequence is by Theorem~\ref{thm:cmcomparison2}
commutative, the spectral sequence must in such cases have nontrivial differentials.
\end{rem}

The proof of Theorem~\ref{thm:ssforlbgbghsigma}
will be given at the end of the section.
In the proof, we will make use of the following analogue of 
Theorem~\ref{thm:pairings4}:

\begin{thrm}
\label{thm:sspairings2}
Suppose $BG$ is a semisimple $\ell$--compact group and let $B$ be a space.
Then there exists a category enriched in spectral 
sequences whose objects are maps $f\colon B \to BG$;
whose hom-object from $f\colon B \to BG$ to $g\colon B\to BG$
is given by the shifted Serre spectral sequence
\[
	\bbE_2^{s,t} (P(f,g)) \Longrightarrow \bbH^{s+t} P(f,g)
\]
of the fibration $\pi_{f,g} \colon P(f,g) \to B$;
and whose composition law
\begin{equation}
\label{eq:ssbullet}
	\stringprod 
	\co 
	\bbE^{s_1,t_1}_r (P(g,h)) \tensor \bbE^{s_2,t_2}_r (P(f,g))
	\longto
	\bbE^{s_1+s_2,t_1+t_2}_r(P(f,h))	
\end{equation}
converges to the pairing
\[
	\stringprod
	\co
	\bbH^\ast P(g,h) \tensor \bbH^\ast P(f,g) 
	\longto 
	\bbH^\ast P(f,h).
\]
on targets.
\end{thrm}

See Proposition~\ref{prop:e2pageprod}
for a description of the pairing \eqref{eq:ssbullet}
on the level of $E_2$--pages.
The category of spectral sequences in which the
enrichment takes place is given a precise definition 
in Definition~\ref{def:sscatdef} below.

In addition to Theorem~\ref{thm:sspairings2},
we have the following analogue of 
Theorem~\ref{thm:functoriality}:

\begin{thrm}
\label{thm:ssfunctors}
Suppose $BG$ is a semisimple $\ell$--compact group.
Then the category
of Theorem~\ref{thm:sspairings2}
enriched in spectral sequences
depends functorially on the space $B$:
given a map $\phi \colon A \to B$, the maps
\begin{equation}
\label{map:ssfuncomponent}
	\bbE(P(f,g)) \longto \bbE(P(f\phi,g\phi))
\end{equation}
induced by the maps $F_\phi$ of equation~\eqref{eq:fphidef}
for varying $f,g \colon B\to BG$
define a functor of categories 
enriched in spectral sequences
which on objects is given by the assignment $f\mapsto f\phi$.
\end{thrm}

Our strategy for proving Theorems~\ref{thm:sspairings2} 
and~\ref{thm:ssfunctors} is based on the second 
and third constructions of the pairing 
$\stringprod$ of Theorem~\ref{thm:pairings4},
carried out in 
Sections~\ref{subsec:secondconstruction} and \ref{subsec:thirdperspective},
respectively,
and we continue to rely 
on the paper \cite{umkehr-maps}
briefly summarized in 
Section~\ref{subsubsec:umkehrmapssummary}.
Starting with the categories
$\calP_B$ enriched in $(h\calF^\fop)^\op$
and the enriched functors $K_\phi$ between them,
all constructed in Section~\ref{subsec:secondconstruction},
we will apply 
Construction~\ref{constr:newenrichedcats}
with a suitable lax monoidal functor
\begin{equation}
\label{eq:sscomposite}
	M
	\co
	(h\calF^\fop)^\op
	\longto
	\SpectralSequences
\end{equation}
to obtain categories $M_\ast \calP_B$
enriched in the category $\SpectralSequences$ 
of spectral sequences
along with $\SpectralSequences$--enriched functors 
$M_\ast K_\phi$.
The proof is then completed by showing that the hom-objects
$M P(f,g)$ in 
$M_\ast \calP_B$ 
are isomorphic to the shifted Serre spectral sequences
$\bbE(P(f,g))$,
and observing that under these isomorphisms, the enriched functor 
$M_\ast K_\phi$ is given by the map 
\eqref{map:ssfuncomponent}. 
The functor $M$ will be a composite
\begin{equation}
\label{eq:ssfunctorM}
	M
	\colon
	(h\calF^\fop)^\op
	\xto{\ \hat{D}_\fw^\op\ }
	(\hpMod^{H\F_\ell}_\mathrm{bb})^\op
	\xto{\ \tilde{E}\ }
	\SpectralSequences
\end{equation}
analogous to the composite~\eqref{eq:thirdconstructioncomposite}
encountered in the third construction of the pairing $\stringprod$.
The category $h\calF^\fop$
was already constructed in 
Section~\ref{subsubsec:hcalffop}.
Our next goal is to construct the remaining
categories and functors
appearing in \eqref{eq:ssfunctorM}.

\begin{defn}(The category $\hpMod^{H\F_\ell}_\mathrm{bb}$)
\label{def:bbhpmodhfell}
Call a parametrized $H\F_\ell$--module $X$ over a space $B$
\emph{bounded from below} if for every $b\in B$ there exists a $t_0\in\Z$
such that $H^t(X_b) = 0$ for all $t < t_0$.
We define  $\hpMod^{H\F_\ell}_\mathrm{bb}$
to be the full subcategory of $\hpMod^{H\F_\ell}$
spanned by the objects which are bounded from below.
\end{defn}

We note that the $\extsmashprod^{H\F_\ell}$--product of 
bounded-from-below parametrized $H\F_\ell$--modules
is again such, so that $\hpMod^{H\F_\ell}_\mathrm{bb}$
is a symmetric monoidal subcategory of $\hpMod^{H\F_\ell}$.

\begin{lemma}
The functor
\[
	D_\fw \colon h\calF^\fop \longto \hpMod^{H\F_\ell}
\]
of Definition~\ref{def:dfw}
takes values in the subcategory $\hpMod^{H\F_\ell}_\mathrm{bb}$.
\end{lemma}

\begin{proof}
Suppose $\pi\colon E\to B$ is an object in $h\calF^\fop$.
By construction of $D_\fw$, we then have a dual pair 
$(\pi_! \underline{H\F_\ell}, D_B E)$
in $\ho(\Mod^{H\F_\ell}_{/B})$.
For any $b \in B$, this dual pair 
restricts to a dual pair
$((\pi_! \underline{H\F_\ell})_b, (D_B E)_b)$
in $\ho(\Mod^{H\F_\ell})$
on fibres over $b$. 
In particular, the fibre
$(D_B E)_b$ is dualizable in $\ho(\Mod^{H\F_\ell})$,
and hence has finite-dimensional mod $\ell$ cohomology.
\end{proof}

\begin{defn}[The functor $\hat{D}_\fw$]
We let $\hat{D}_\fw$ in \eqref{eq:ssfunctorM} be the functor
obtained by restricting the codomain of $D_\fw$ 
to $\hpMod^{H\F_\ell}_\mathrm{bb}$.
\end{defn}

\begin{defn}[The category $\SpectralSequences$]
\label{def:sscatdef}
A \emph{spectral sequence} $E$ consists of the following data:
a sequence  $E^{\ast,\ast}_1,E^{\ast,\ast}_2,\ldots$
of bigraded $\F_\ell$--vector spaces;
a differential $d_r$ of bidegree $(r,1-r)$
on each $E^{\ast,\ast}_r$; and an isomorphism
\[
	\phi_r\co H(E^{\ast,\ast}_r) \xto{\ \isom\ } E^{\ast,\ast}_{r+1}
\]
for each $r$. A \emph{morphism $f\co E \to D$
of spectral sequences} consists of a sequence
\[
	f_r \co  E^{\ast,\ast}_r \longto D^{\ast,\ast}_r,
	\quad 
	r= 1,2,\ldots
\]
of morphisms commuting with the differentials and having the
property that $f_{r+1}$ corresponds to $H(f_r)$ under the 
isomorphisms $\phi_r$. The \emph{tensor product}
of spectral sequences $E$ and $D$ is the spectral sequence
$E \tensor D$ with 
\[
	(E \tensor D)^{s,t}_r
	=
	\bigoplus_{\substack{s_1+s_2 = s\\t_1+t_2=t}}
	E^{s_1,t_1}_r \tensor D^{s_2,t_2}_r,
\]
differential 
\[
	d_r (x\tensor y) = d_r(x) \tensor y + (-1)^{\deg(x)} x \tensor d_r(y)
\]
where $\deg(x) = s+t$ for $x\in E^{s,t}_r$, and isomorphisms $\phi_r$
given by the Künneth theorem. We let $\SpectralSequences$
be the resulting symmetric monoidal category of spectral sequences.
The symmetry constraint in $\SpectralSequences$  is given by 
\[
	E^{s_1,t_1}_r \tensor D^{s_2,t_2}_r 
	\longto	
	D^{s_2,t_2}_r \tensor E^{s_1,t_1}_r,
	\qquad
	x\tensor y \longmapsto (-1)^{(s_1+t_1)(s_2+t_2)} y\tensor x,
\]
while the monoidal unit is given by the
spectral sequence which on each page is a single copy of $\F_\ell$
concentrated in degree $(0,0)$.
\end{defn}

\begin{defn}[The functor $\tilde{E}$]
Given a parametrized $H\F_\ell$--module $X$ over $B$, 
that is, an $\infty$--functor 
$X \colon \Pi_\infty(B)^\op \to \Mod^{H\F_\ell}$,
write $\calL^\ast(X)$ for the local coefficient system
\[
	\xymatrix@C+1.3em{
		\calL^\ast(X)
    	\colon 
    	\Pi_1(B)
    	\ar[r]^-{X^\op}
		&
		\ho(\Mod^{H\F_\ell})^\op
		\ar[r]^-{H^\ast}
		&
		\grMod^{\F_\ell}
	}
\] 
where we have continued to write $X$ for the 
functor $\Pi_1(B)^\op \to \ho(\Mod^{H\F_\ell})$
induced by $X$.
Compare with Definition~U.\ref*{U-def:lmxs}.
We let 
\[
	\tilde{E}
	\colon
	(\hpMod^{H\F_\ell}_\mathrm{bb})^\op
	\longto
	\SpectralSequences
\]
be the functor provided by the Serre spectral sequences
\[
	E_2^{s,t}(X)
	= 
	H^s(B; \calL^t(X))
	\Longrightarrow 
	H^{s+t}H_\bullet(B;X)
\]
of Theorem U.\ref*{U-thm:serresss}(\ref*{U-it:serresscohomology}).
\end{defn}

Our restriction to bounded-from-below
parametrized $H\F_\ell$--modules
in the definition of $\tilde{E}$ ensures
that the values of $\tilde{E}$ 
converge strongly to the indicated target.
An argument analogous to 
\cite[\S XIII.8]{Whitehead}
gives

\begin{thrm}
\label{thm:sspairing}
Given parametrized $H\F_\ell$--modules $X$ over $B$ and $Y$ over $C$,
there is an associative pairing 
\[
	E^{s,t}_r (X) \tensor E^{s't'}_r (Y) 
	\longto
	E^{s+s',t+t'}_r (X\extsmashprod^{H\F_\ell} Y)
\]
of the Serre spectral sequences of 
Theorem U.\ref*{U-thm:serresss}(\ref*{U-it:serresscohomology})
which on the $E_2$--page is given by the cross product
\[
	H^s(B;\, \calL^t(X))
	\tensor
	H^{s'}(C;\, \calL^{t'}(Y))
	\xto{\ \times\ }
	H^{s+s'}(B\times C ;\, \calL^{t+t'}(X\extsmashprod^{H\F_\ell} Y))
\]
where the pairing on local coefficient systems is 
given by the products
\[	
	H^t(X_b) \tensor H^{t'}(Y_c)
	\xto{\ \times\ }
	H^{t+t'}(X_b \smashprod^{H\F_\ell} Y_c)
\]
for $b\in B$ and $c\in C$.
If $X$ and $Y$ (and hence $X\extsmashprod^{H\F_\ell} Y$) are bounded 
from below in the sense of
Definition~\ref{def:bbhpmodhfell},
ensuring strong convergence,
the pairing on the $E_\infty$--page is the one induced by
the cross product
\begin{equation*}
	\begin{split}
	\pushQED{\qed} 
	H^\ast(H_\bullet(B; X)) \tensor H^\ast(H_\bullet(C; Y)) 
	&
	\xto{\qquad \mathclap{\times} \qquad}
	H^\ast(H_\bullet(B; X) \smashprod^{H\F_\ell} H_\bullet(C; Y))
	\\
	&
	\xto[\isom]{\qquad\mathclap{H^\ast(\times)^{-1}}\qquad}
	H^\ast(H_\bullet(B\times C; X\extsmashprod^{H\F_\ell} Y)). 
	\qedhere
    \popQED
 	\end{split}
\end{equation*}
\end{thrm}

The pairing of Theorem~\ref{thm:sspairing}
in particular makes the functor $\tilde{E}$
into a lax monoidal functor.
This completes the construction of the functor $M$ 
of equation~\eqref{eq:ssfunctorM}.
The proof of Theorems~\ref{thm:sspairings2} 
and \ref{thm:ssfunctors} 
is now completed by
\begin{prop} 
For all $f,g\co B \to BG$, there is an 
isomorphism 
\[
	\tilde{E} (D_B P(f,g)) \isom \bbE(P(f,g))
\]
of spectral sequences natural 
with respect to the homomorphisms induced by diagram
\eqref{sq:inducedmap}
and compatible with the isomorphism
\[
	\xi_{f,g} 
	\colon 
	H^\ast H_\bullet(B; D_B P(f,g)) 
	\isom
	\bbH^\ast P(f,g)
\]
of Corollary~\ref{cor:bulletconstr3}
on targets.
\end{prop}

\begin{proof}
Consider the equivalences of parametrized $H\F_\ell$--modules over $B$
\begin{equation}
\label{eq:dbpfgtosuspbpifghfell}
	D_B P(f,g) 
	\homot
	(\pi_{f,g})_! \omega_{\pi(f,g)}^{H\F_\ell}
	\homot
	(\pi_{f,g})_! \suspension^{-d}_{P(f,g)} \underline{H\F_\ell}
	\homot
	\suspension^{-d}_B (\pi_{f,g})_!  \underline{H\F_\ell}
\end{equation}
where the first equivalence is given by
Proposition~U.\ref*{U-prop:hypercartandfwduality},
the second by~\eqref{eq:omegapifghfltriv},
and the third follows from Remark~U.\ref*{U-rk:opcartpreservation2}.
The desired isomorphism of spectral sequences is given by the 
sequence of isomorphisms
\begin{multline*}
	E_r^{s,t} (D_B P(f,g))
 	\isom
	E_r^{s,t} (\suspension^{-d}_B (\pi_{f,g})_! \underline{H\F_\ell})
	\isom
	E_r^{s,t+d} ((\pi_{f,g})_! \underline{H\F_\ell})
	\isom
	E_r^{s,t+d} (P(f,g))
	= 
	\bbE_r^{s,t} (P(f,g)) 
\end{multline*}
where the first isomorphism is induced by the equivalences of 
equation~\eqref{eq:dbpfgtosuspbpifghfell},
the second is given by Proposition~U.\ref*{U-prop:sssusp},
and the third follows from Remark~U.\ref*{U-rk:ordinaryserress}.
(Here the first three spectral sequences are  
Serre spectral sequences for parametrized $H\F_\ell$--modules
provided by Theorem~U.\ref*{U-thm:serresss}(\ref*{U-it:serresscohomology}),
while the last two are unshifted and shifted versions of 
the usual mod $\ell$ Serre spectral sequence of the fibration
$\pi_{f,g}\colon P(f,g) \to B$.)
In the corresponding sequence of isomorphisms
\begin{multline*}
	\qquad
	H^\ast H_\bullet (B;\, D_B P(f,g))
	\isom
	H^\ast H_\bullet (B;\, \suspension^{-d}_B (\pi_{f,g})_! \underline{H\F_\ell})
	\\
	\isom
	H^{\ast+d} H_\bullet (B;\, (\pi_{f,g})_! \underline{H\F_\ell})
	\isom 
	H^{\ast+d} P(f,g)
	=
	\bbH^\ast P(f,g)
	\qquad
\end{multline*}
between the targets of the spectral sequences,
the first isomorphism is induced by the equivalences 
\eqref{eq:dbpfgtosuspbpifghfell}, 
the second by the isomorphism $\bar{\sigma}^u$ of 
U.\eqref*{U-eq:barsigmaucohomology},
and the third follows by combining
Corollary~U.\ref*{U-cor:hbulletandshriek}
and Example~U.\ref*{U-ex:hbulletfortrivialcoeffs2}.
With the aid of Remark~\ref{rk:xifgdesc},
this composite is easily recognized as
the map $\xi_{f,g}$.
\end{proof}

Tracing through the constructions, we obtain the following
description of the pairing $\stringprod$
between spectral sequences on the $E_2$ page.

\begin{prop}
\label{prop:e2pageprod}
Let $B$ be a path connected space, 
let $f,g,h\co B \to BG$ be maps, and let $b\in B$
be a basepoint.
On the $E_2$ page, the pairing
\[
	\stringprod 
	\co 
	\bbE^{s_1,t_1}_r (P(g,h)) \tensor \bbE^{s_2,t_2}_r (P(f,g))
	\longto
	\bbE^{s_1+s_2,t_1+t_2}_r(P(f,h))	
\]
of spectral sequences is given by the map
\[
	H^\ast(B) \tensor \bbH^\ast(P(g,h)_b) 
	\tensor
	H^\ast(B) \tensor \bbH^\ast(P(f,g)_b) 
	\longto
	H^\ast(B) \tensor \bbH^\ast(P(f,h)_b)
\]
induced by the cup product on $B$ and the pairing
\[
	\stringprod
	\co
	\bbH^\ast(P(g,h)_b) 
	\tensor
	\bbH^\ast(P(f,g)_b) 
	\longto
	\bbH^\ast(P(f,h)_b)
\]
arising from the identifications
$P(g,h)_b = P(gi,hi)$,
$P(f,g)_b = P(fi,gi)$,  and
$P(f,h)_b = P(fi,hi)$
for $i$  the inclusion of $b$ into $B$.  \qed
\end{prop}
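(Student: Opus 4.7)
The plan is to unwind the construction of the pairing of Theorem~\ref{thm:sspairings} on the $E_2$--page and match it up with the two factors in the claim. By construction, the spectral sequence pairing arises by applying the composite $EU_\fw^\op$ of \eqref{eq:sscomposite} to the morphism of $(\hpSpectra^\ell)^\op$ obtained from the composition law \eqref{eq:complaw} via $D_\fw^\op$. Following the splitting of the pairing described in Remark~\ref{rk:pairingrecon}, the resulting morphism factors as an external product, provided by the lax symmetric monoidal structure of $EU_\fw^\op$, followed by a single morphism of spectral sequences induced by a map in $\hpSpectra^\op$ covering the diagonal $\Delta\co B \to B \times B$.

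Next I would invoke Theorem~\ref{thm:sspairing} to identify the external pairing on the $E_2$--page: up to the overall shift by $d$ in $\bbE = E^{\ast,\ast+d}$, it is the cross product on $H^\ast(B) \tensor H^\ast(B)$ tensored with the fibrewise smash product pairing on the cohomology of $P(g,h)_b \smashprod P(f,g)_b$. Then by Proposition~\ref{prop:ssinducedmap}, the second map acts on the $E_2$--page by pullback along $\Delta$ on the base cohomology; this converts the cross product on $H^\ast(B) \tensor H^\ast(B)$ into the cup product on $H^\ast(B)$ and simultaneously restricts the fibre part over $(b,b) \in B \times B$ to a single fibrewise pairing over $b \in B$.

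The remaining task is to match that fibrewise pairing with the pairing of Theorem~\ref{thm:pairings} applied to the fibres. The cleanest route is naturality: the inclusion $i\co \{b\} \incl B$ together with Theorem~\ref{thm:ssfunctors} says that pulling back the spectral sequence pairing along $i$ agrees with the pairing built directly for the base $\{b\}$; over a point the Serre spectral sequence degenerates to its $E_2^{0,\ast}$ line and coincides with its target $\bbH^\ast$, so the pulled-back pairing is by definition the pairing of Theorem~\ref{thm:pairings} for the constant maps $fi, gi, hi\co \{b\} \to BG$, which yields exactly the fibre identification in the statement.

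The main obstacle will be careful bookkeeping of the signs introduced by the degree shift from $E$ to $\bbE$ and by the Koszul signs in Theorem~\ref{thm:sspairing}; the underlying topological content is essentially formal once the external-versus-diagonal splitting above is in place, and does not require any new geometric input beyond the results already established.
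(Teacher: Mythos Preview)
Your proposal is correct and matches the paper's approach: the paper states the result follows by ``tracing through definitions'' and gives no further argument, and your outline is precisely such a trace---factoring the pairing via the lax monoidal structure into an external cross product (identified on $E_2$ by Theorem~\ref{thm:sspairing}) followed by pullback along the diagonal (Proposition~\ref{prop:ssinducedmap}), with the fibre pairing identified by naturality under the basepoint inclusion (Theorem~\ref{thm:ssfunctors}).
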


\begin{proof}[Proof of Theorem~\ref{thm:ssforlbgbghsigma}]
The spectral sequences of parts~(\ref{it:mainresult-ringss}) 
and (\ref{it:mainresult-modss})
were constructed in Theorem~\ref{thm:sspairings2};
the module structure
on the Serre spectral without a degree shift
in part~(\ref{it:mainresult-modss})
is obtained from the module structure 
on the degree-shifted Serre spectral sequence simply 
by regrading as in Definition~\ref{def:stringprodandmod}.
In part~(\ref{it:mainresult-modss}), the assertion about
the $E_2$--page follows from Proposition~\ref{prop:e2pageprod}
and Proposition~\ref{prop:homotopyinvariance} (which allows one to 
compare the $\bbH^\ast(G)$--module structure 
on the cohomology of the fibre of 
$BG^{h\sigma} \to BG$ to that on $\bbH^\ast(G)$ itself).
\end{proof}

\section{%
Proof of Theorem~\ref{thm:strtoptezukacrit} and results in the polynomial case}
\label{sec:tezuka}

In this section, we will prove Theorem~\ref{thm:strtoptezukacrit}, in addition to which 
we will cast an eye on the case where $H^\ast(BG)$ is polynomial and prove
Proposition~\ref{prop:polycollapse} and Theorem~\ref{thm:hlbgcompintro}.
A common theme in the section is the use of Serre spectral sequences:
at key steps in the arguments, we will make essential use of the 
results on the Serre spectral sequence established in the previous section.
The proof of Theorem~\ref{thm:strtoptezukacrit} will occupy
Sections~\ref{subsec:for1crit} and~\ref{subsec:isosonassocgradeds}
while the discussion of the polynomial case will be given in
Section~\ref{subsec:polycase}.

\subsection{The criterion for being free of rank \texorpdfstring{$1$}{1}: Proof of Theorem~\ref{thm:strtoptezukacrit} 
(\ref{item:fundclass}) \texorpdfstring{$\Leftrightarrow$}{<=>} (\ref{item:freerk1})}
\label{subsec:for1crit}

In this subsection, we will prove the following elaboration of
the equivalence of conditions (\ref{item:fundclass}) and (\ref{item:freerk1})
of Theorem~\ref{thm:strtoptezukacrit}.

\begin{thrm}
\label{thm:conj-red3}
Let $BG$ be a semisimple $\ell$--compact group of dimension $d$,
let $B$ be a path connected space, 
let $f,g\co B \to BG$ be maps,
let $F\homot \loops BG$ be a fibre of the fibration 
$\pi_{f,g}\colon P(f,g) \to B$,
and  let $i\co F\incl P(f,g)$ be the inclusion.
Then the following are equivalent conditions on an element
$x \in \bbH^0 P(f,g)$:
\newcounter{savedvalue}
\begin{enumerate}
\item \label{it:freerank1}
	$\bbH^\ast P(f,g)$ is free of rank $1$ with basis $\{x\}$
	as a graded left module over $\bbH^\ast P(g,g)$. 
\item \label{it:freerank1right}
	$\bbH^\ast P(f,g)$ is free of rank $1$ with basis $\{x\}$
	as a graded right module over $\bbH^\ast P(f,f)$. 
\item \label{it:cyclic}
	$\bbH^\ast P(f,g)$ is generated by $x$ 
	as a graded left module over $\bbH^\ast P(g,g)$.
\item \label{it:cyclicright}
	$\bbH^\ast P(f,g)$ is generated by $x$ 
	as a  graded right module over $\bbH^\ast P(f,f)$.
\item \label{it:nonzerores}
	$i^\ast(x) \neq 0 \in \bbH^0 F$.
\item  \label{it:ssiso2}
	The map
	\[
		\bbE(P(g,g)) \longto \bbE(P(f,g)),
		\quad 
		z \longmapsto z \stringprod (1\tensor i^\ast(x))
	\]
	is an isomorphism from the Serre spectral
	sequence of the fibration  $\pi_{g,g}\colon P(g,g)\to B$ 
	to that of the fibration $\pi_{f,g} \colon P(f,g)\to B$.
\item \label{it:ssiso2right}
	The map
	\[
		\bbE(P(f,f)) \longto \bbE(P(f,g)),
		\quad 
		z \longmapsto  (1\tensor i^\ast(x))\stringprod z
	\]
	is an isomorphism from the Serre spectral
	sequence of the fibration  $\pi_{f,f} \colon P(f,f)\to B$ 
	to that of the fibration $\pi_{f,g} \colon P(f,g)\to B$.
	\setcounter{savedvalue}{\value{enumi}}
\end{enumerate}
Moreover, the following conditions are equivalent:
\begin{enumerate}
\setcounter{enumi}{\value{savedvalue}}
\item \label{it:xexists}
	There exists an element $x \in \bbH^0 P(f,g)$
	satisfying conditions (\ref{it:freerank1})--(\ref{it:ssiso2right}).
\item \label{it:fclass}
	The map $i_\ast\co H_d F \to H_d P(f,g)$ is nontrivial.
\item \label{it:ssiso}
	The Serre spectral sequences of $\pi_{f,g} \colon P(f,g)\to B$ 
	and $\pi_{g,g}\colon P(g,g)\to B$
	are isomorphic.
\item \label{it:ssisoright}
	The Serre spectral sequences of $\pi_{f,g}\colon P(f,g)\to B$ 
	and $\pi_{f,f}\colon P(f,f)\to B$
	are isomorphic.
\item \label{it:permanentcycle2}
	The generator of
	$E_2^{0,d}(P(f,g))= H^0 B \tensor H^d F \isom \F_\ell$
	is a permanent cycle in the Serre spectral sequence of 
	$\pi_{f,g}\colon P(f,g)\to B$.
\end{enumerate}
\end{thrm}
Here the product $\stringprod$ on spectral sequences
in conditions (\ref{it:ssiso2})
and (\ref{it:ssiso2right}) refers to the pairing 
\eqref{eq:ssbullet} of Theorem~\ref{thm:sspairings2}.
\begin{proof}[Proof of Theorem~\ref{thm:conj-red3}]
The implications 
(\ref{it:freerank1})\,$\Rightarrow$\,(\ref{it:cyclic})
and 
(\ref{it:freerank1right})\,$\Rightarrow$\,(\ref{it:cyclicright})
are obvious. To show
(\ref{it:cyclic})\,$\Rightarrow$\,(\ref{it:nonzerores}),
let $j\co \loops BG \incl P(g,g)$ be the inclusion of a fibre.
By assumption, there exists some element $y\in \bbH^{-d} P(g,g)$ 
such that $y\stringprod x = 1\in \bbH^{-d}P(f,g)$.
We now have
\[
	j^\ast(y) \stringprod i^\ast(x) 
	= 
	i^\ast(y\stringprod x) 
	= 
	i^\ast(1) 
	= 
	1 \in \bbH^{-d}(F),
\]
showing that $i^\ast(x) \neq 0\in \bbH^0 F$. 
Here the first equality follows from 
Theorem~\ref{thm:functoriality}.
The implication 
(\ref{it:cyclicright})\,$\Rightarrow$\,(\ref{it:nonzerores})
follows similarly.

Let us now show that 
(\ref{it:nonzerores})\,$\Rightarrow$\,(\ref{it:ssiso2}).
By naturality of the Serre spectral sequence, the element 
$1\tensor i^\ast(x) \in \bbE_2^{0,0}(P(f,g)) = H^0(B)\tensor \bbH^0(F)$
is the image of $x$ under the composite
\[
	\bbH^0 P(f,g) 
	\longto
	\bbE_\infty^{0,0}(P(f,g))
	\longto
	\bbE_2^{0,0}(P(f,g))
\]
of the quotient and inclusion maps.
Thus $1\tensor i^\ast(x)$ is a
permanent cycle, and multiplication by it 
using the pairing $\stringprod$ of Theorem~\ref{thm:sspairings2}
does define a
morphism of spectral sequences
\[
	m_{1\tensor i^\ast(x)} \co \bbE(P(g,g)) \longto \bbE(P(f,g)),
	\quad 
	z\mapsto z\stringprod (1\tensor i^\ast(x))
\]
By Proposition~\ref{prop:e2pageprod}, on the $E_2$ page,
this map is given by the map
\[
	\id \tensor m_{i^\ast(x)}
	\co
	H^\ast(B) \tensor \bbH^\ast(\loops BG)
	\longto
	H^\ast(B) \tensor \bbH^\ast(F)
\]
where $m_{i^\ast(x)}$ is the multiplication map 
$z \mapsto z \stringprod i^\ast(x)$.
By Proposition~\ref{prop:homotopyinvariance}, 
there is an isomorphism $\bbH^\ast(F) \isom \bbH^\ast(\loops BG)$
under which the map $m_{i^\ast(x)}$ corresponds to the map
\[
	m_{y} 
	\co
	\bbH^\ast(\loops BG)\longto \bbH^\ast(\loops BG),
	\quad 
	z \mapsto z \stringprod y
\]
for some $y \in \bbH^0(\loops BG)$. Since $i^\ast(x)$ is nonzero,
so is $y$. Thus $y$ is a nonzero multiple of the unit 
element for the product on $\bbH^\ast(\loops BG)$.
It follows that the map $m_{y}$ and hence the map
$m_{i^\ast(x)}$ are isomorphisms. Thus the 
map $m_{1\tensor i^\ast(x)}$ is an isomorphism on the $E_2$ pages,
and hence on all further pages as well, giving an isomorphism
of spectral sequences. 
Again, the implication 
(\ref{it:nonzerores})\,$\Rightarrow$\,(\ref{it:ssiso2right})
follows similarly.

We now prove the implication
(\ref{it:ssiso2})\,$\Rightarrow$\,(\ref{it:freerank1}).
Since $x \in \bbH^0 P(f,g)$ is a lift of the element
$1\tensor i^\ast(x) \in \bbE_\infty^{0,0}(P(f,g))$,
the multiplication map
\[
	m_{x}
	\co 
	\bbH^\ast P(g,g) \longto \bbH^\ast P(f,g),
	\quad
	z \mapsto z\stringprod x
\]
induces on the associated graded modules
corresponding to the Serre spectral sequences
of $P(g,g)$ and $P(f,g)$ the isomorphism
\[
	m_{1\tensor i^\ast(x)}
	\co 
	\bbE^{\ast,\ast}_\infty (P(g,g))
	\xto{\ \isom\ } 
	\bbE^{\ast,\ast}_\infty (P(f,g)).
\]
Therefore the map $m_{x}$ itself
must be an isomorphism. Thus $\bbH^\ast P(f,g)$ is free of 
rank $1$ over $\bbH^\ast P(g,g)$ with basis $\{x\}$.
The implication 
(\ref{it:ssiso2right})\,$\Rightarrow$\,(\ref{it:freerank1right})
follows similarly.

In view of condition (\ref{it:nonzerores}), condition (\ref{it:xexists})
is equivalent to the map $i^\ast \co H^d P(f,g) \to H^d F$
being nontrivial, which in turn is equivalent to condition
(\ref{it:fclass}). Thus 
(\ref{it:xexists})\,$\Leftrightarrow$\,(\ref{it:fclass}).
The implications 
(\ref{it:xexists})\,$\Rightarrow$\,(\ref{it:ssiso})
and
(\ref{it:xexists})\,$\Rightarrow$\,(\ref{it:ssisoright})
follow from conditions (\ref{it:ssiso2}) and (\ref{it:ssiso2right}).
To show 
(\ref{it:ssiso})\,$\Rightarrow$\,(\ref{it:permanentcycle2}),
it suffices to show that the generator of 
$E_2^{0,d}(P(g,g)) = H^0(B) \tensor H^d (\loops BG) \isom \F_\ell$
is a permanent cycle.
Let $j\co \loops BG \incl P(g,g)$ be the inclusion of a fibre.
As before, the element
$1\tensor j^\ast (\bbOne) \in \bbE_2^{0,0}(P(g,g))
= H^0(B) \tensor \bbH^0 (\loops BG)$
is a permanent cycle.
Since 
$j^\ast(\bbOne) = \bbOne \neq 0 \in \bbH^0 (\loops BG)$
by Theorem~\ref{thm:functoriality},
it is nonzero, and hence generates 
$\bbE_2^{0,0}(P(g,g)) \isom \F_\ell$.
Thus the claim follows.
Again, the implication
(\ref{it:ssisoright})\,$\Rightarrow$\,(\ref{it:permanentcycle2})
follows similarly.
Finally, to show that 
(\ref{it:permanentcycle2})\,$\Rightarrow$\,(\ref{it:xexists}),
it suffices to observe that by naturality of the Serre spectral sequence, 
a lift of the nontrivial permanent cycle from $E_\infty^{0,d}(P(f,g))$ to 
an element of $H^d P(f,g)$ satisfies condition~(\ref{it:nonzerores}).
\end{proof}

The first part of Theorem~\ref{thm:strtoptezukacrit}
is immediate from Theorem~\ref{thm:conj-red3}:

\begin{proof}[Proof that 
(\ref{item:fundclass}) $\Leftrightarrow$ (\ref{item:freerk1})
in Theorem~\ref{thm:strtoptezukacrit}]
The claim is a special case of the equivalence
(\ref{it:xexists}) $\Leftrightarrow$ (\ref{it:fclass})
in Theorem~\ref{thm:conj-red3}.
\end{proof}

Theorem~\ref{thm:strtoptezukacrit}
also gives the following criterion for 
recognizing elements 
which generate $H^\ast(BG^{h\sigma})$
as a free of rank 1 module over $\bbH^\ast(LBG)$.
\begin{cor}
Let $\sigma \colon BG \to BG$ be a map and
let $i\colon G \to BG^{h\sigma}$
be the inclusion of the fibre of the evaluation
fibration $BG^{h\sigma}\to BG$, $\alpha \mapsto \alpha(1)$.
Then an element $x\in H^\ast(BG^{h\sigma})$
generates $H^\ast(BG^{h\sigma})$ as a free of rank $1$
module over $\bbH^\ast(LBG)$
if and only if $\deg(x) = d$ and 
$i^\ast(x) \neq 0 \in H^d(G)$.
\end{cor}

\begin{proof}
For degree reasons, any generator of 
$H^\ast(BG^{h\sigma})$ 
as a free of rank $1$
module over $\bbH^\ast(LBG)$
has to be of degree $d$, so the claim follows from the 
equivalence
(\ref{it:freerank1}) $\Leftrightarrow$ (\ref{it:nonzerores})
in Theorem~\ref{thm:conj-red3}.
\end{proof}

Finally, we record the following corollary of 
Theorem~\ref{thm:conj-red3}.

\begin{cor}
\label{cor:tezukaimpliesid}

Suppose $\sigma \colon BG \to BG$ is such that
$H^\ast(BG^{h\sigma})$ is free of rank $1$ over $\bbH^\ast LBG$.
Then the map $\ev_1^\ast\colon H^\ast BG \to H^\ast BG^{h\sigma}$
induced by the evaluation fibration $\ev_1\colon BG^{h\sigma}\to BG$,
$\alpha \mapsto \alpha(1)$
is injective, and $\sigma$ induces the identity map on $H^\ast(BG)$.
\end{cor}

\begin{proof}
Write $\ev_t\colon BG^{h\sigma} \to BG$
for the evaluation map $\alpha \mapsto \alpha(t)$.
The existence of 
a section for the evaluation fibration
$LGB\to BG$ implies that the Serre spectral 
sequence for $LGB\to BG$ has no differentials hitting the
bottom row. By  Theorem~\ref{thm:conj-red3}, 
the same is therefore true for the Serre spectral sequence
of the evaluation fibration $\ev_1\co BG^{h\sigma} \to BG$.
It follows that the map
\[
	\ev_1^\ast \co H^\ast BG \longto H^\ast BG^{h\sigma}
\]
is injective. 
Since the maps $\ev_0,\ev_1 \co BG^{h\sigma} \to BG$
are homotopic, we have $\ev_0^\ast = \ev_1^\ast$
on $H^\ast$
and hence
\[
	\ev_1^\ast \sigma^\ast 
	= 
	(\sigma\circ \ev_1)^\ast  
	= 
	\ev_0^\ast
	=
	\ev_1^\ast.
\]
Thus $\sigma^\ast$ is the identity, as claimed.
\end{proof}

\subsection{Isomorphisms on associated graded algebras: Proof of
  Theorem~\ref{thm:strtoptezukacrit} (\ref{item:freerk1}) 
  \texorpdfstring{$\Leftrightarrow$}{<=>} (\ref{item:assgraded})}
\label{subsec:isosonassocgradeds}
In this subsection, we will prove Theorem~\ref{thm:sscupstr} below
and use it to deduce the equivalence of conditions 
(\ref{item:freerk1}) and (\ref{item:assgraded})
in Theorem~\ref{thm:strtoptezukacrit}.

\begin{notation}[Degree shift maps $s^d$ and $s^{-d}$ for spectral sequences]
Paralleling \eqref{eq:degreeshiftmaps},
we define degree shift maps
\[
	s^d \colon \bbE(X) \longto E(X)
	\qquad\text{and}\qquad
	s^{-d} \colon E(X) \longto \bbE(X)
\]
between the shifted and unshifted Serre spectral sequences of a fibration
$X\to B$ as follows: the map $s^d$ sends each element 
$x \in \bbE_r^{s,t}(X)$ to itself considered as an element of $E_r^{s,t+d}(X)$,
while $s^{-d}$ is the inverse of $s^d$.
\end{notation}

\begin{thrm}
\label{thm:sscupstr}
Let $BG$ be a semisimple $\ell$--compact group of dimension $d$, 
let $B$ be a path connected space, 
let $f,g\co B \to BG$ be maps,
let $F\homot \loops BG$ be a fibre of the fibration 
$\pi_{f,g}\co P(f,g) \to B$,
and  let $i\co F\incl P(f,g)$ be the inclusion.
Suppose $x\in \bbH^0 P(f,g)$ is a generator of 
$\bbH^\ast P(f,g)$ as a free rank $1$ module over
$\bbH^\ast P(g,g)$ such that  
the composite
\begin{equation}
\label{map:miastx}
\xymatrix@C+1em{
	H^\ast(\loops BG)
	\ar[r]^-{s^{-d}}
	&
	\bbH^\ast(\loops BG)
	\ar[r]^-{\stringprod i^\ast(x)}
	&
	\bbH^\ast(F)
	\ar[r]^-{s^d}
	&
	H^\ast(F)
}
\end{equation}
sends $1\in H^0 \loops BG$ to $1 \in H^0 F$.
Then the composite 
\begin{equation}
\label{map:m1x}
\xymatrix{
	E(P(g,g))
	\ar[r]^{s^{-d}}
	&
	\bbE(P(g,g))
	\ar[rr]^-{\stringprod (1\tensor i^\ast(x))}_\isom
	&&
	\bbE(P(f,g))
	\ar[r]^{s^d}
	&
	E(P(f,g))
}
\end{equation}
of the isomorphism of Theorem~\ref{thm:conj-red3}(\ref{it:ssiso2})
with degree shift maps 
is an isomorphism of spectral sequences of algebras,
where the spectral sequences are equipped with the 
usual algebra structures induced by cup product.
In particular, the composite
\begin{equation}
\label{eq:bulletxconj}
\xymatrix{
	H^\ast P(g,g) 
	\ar[r]^{s^{-d}}
	&	
	\bbH^\ast P(g,g) 
	\ar[r]^{\stringprod x}
	&
	\bbH^\ast P(f,g)
	\ar[r]^{s^d}
	&	
	H^\ast P(f,g)
}
\end{equation}
induces an $H^\ast(B)$--algebra isomorphism
\begin{equation}
\label{eq:assocgradediso}
	\gr H^\ast P(g,g) \xto{\ \isom\ } \gr H^\ast P(f,g) 
\end{equation}
on the associated graded algebras of 
$H^\ast P(g,g)$ and $H^\ast P(f,g)$ 
corresponding to the Serre spectral sequences.
\end{thrm}

\begin{rem}
\label{rk:replacementtrick}
Notice that, given \emph{any} generator 
$x$ of $\bbH^\ast P(f,g)$ as a free rank $1$ module over
$\bbH^\ast P(g,g)$,
the condition that \eqref{map:miastx}
sends $1\in H^0 \loops BG$ to $1 \in H^0 F$
can be arranged to hold simply by replacing $x$ by
a suitable nonzero scalar multiple.
\end{rem}

\begin{proof}[Proof of Theorem~\ref{thm:sscupstr}]
To show that the map \eqref{map:m1x} respects the algebra
structures, it is enough to show that it does so on the 
$E_2$ pages. By Proposition~\ref{prop:e2pageprod},
on $E_2$ pages the map \eqref{map:m1x} is given by
the composite
\[\xymatrix@C+0.1em{
	H^\ast(B) \tensor H^\ast(\loops BG)
	\ar[r]^-{1\tensor s^{-d}}
	&
	H^\ast(B) \tensor \bbH^\ast(\loops BG)
	\ar[rr]^-{1\tensor (\stringprod i^\ast(x))}
	&&
	H^\ast(B) \tensor \bbH^\ast(F)
	\ar[r]^-{1\tensor s^d}
	&
	H^\ast(B) \tensor H^\ast(F),
}\]
so it is enough to show that the composite \eqref{map:miastx}
is a ring homomorphism.
Picking a path connecting $f(b_0)$ and $g(b_0)$
where $b_0$ is the point in $B$ over which $F$ 
is a fibre of $\pi_{f,g}\colon P(f,g)\to B$ and $\loops BG$ is a fibre of 
$\pi_{g,g}\colon P(g,g)\to B$,
from Proposition~\ref{prop:homotopyinvariance} we obtain 
an isomorphism
\[
	\Xi \co \bbH^\ast(F) \xto{\ \isom\ } \bbH^\ast(\loops BG)
\]
of $\bbH^\ast(\loops BG)$--modules. As the isomorphism $\Xi$
is induced by a zigzag of homotopy equivalences of spaces,
the composite
\begin{equation}
\label{eq:Xiconjugate}
\xymatrix{
	H^\ast(F)
	\ar[r]^{s^{-d}}
	&
	\bbH^\ast(F) 
	\ar[r]_-\isom^-{\Xi}
	&
	\bbH^\ast(\loops BG)
	\ar[r]^{s^d}
	&
	H^\ast(\loops BG)
} 
\end{equation}
is an algebra isomorphism with respect to 
cup products. Thus it is enough to show that the 
composite of \eqref{map:miastx} and \eqref{eq:Xiconjugate}
is a ring endomorphism of $H^\ast(\loops BG)$,
which we will do by showing that this composite
is in fact the identity map of $H^\ast(\loops BG)$.
To this end, it is enough to show that 
the composite 
\begin{equation}
\label{map:xicomp}
    \xymatrix@C+0.5em{
    	\bbH^\ast (\loops BG)
    	\ar[r]^-{\stringprod i^\ast(x)}
    	&
    	\bbH^\ast (F)
    	\ar[r]^-{\Xi}
    	&
    	\bbH^\ast (\loops BG)
    },
    \quad
    \beta \longmapsto \beta \stringprod \Xi(i^\ast(x))
\end{equation}
is the identity map of $\bbH^\ast (\loops BG)$.
We have 
\[
	s^{-d}(1) \stringprod \Xi(i^\ast(x)) 
	=
	\Xi(s^{-d}(1) \stringprod i^\ast(x))
	=
	\Xi(s^{-d}(1))
	=
	s^{-d}(1)
\]
where the first equality follows by the
$\bbH^\ast(\loops BG)$--linearity of $\Xi$,
the second equality holds by the assumption on $x$,
and the final equality holds since the composite
\eqref{eq:Xiconjugate} is 
an algebra homomorphism. Since by $\F_\ell$--linearity
the group $\bbH^0(\loops BG)\isom \F_\ell$
contains at most one element $y$ with the property that 
$s^{-d}(1)\stringprod y = s^{-d}(1) \in \bbH^{-d}(\loops BG)$, 
and both $\bbOne$ and $\Xi(i^\ast(x))$ have this property, 
we must have $\Xi(i^\ast(x)) = \bbOne$.
Thus the map \eqref{map:xicomp}
is the identity map as claimed.

The last part of the theorem follows by noting that
on $E_\infty$--pages, the isomorphism \eqref{map:m1x}
agrees with the map induced by \eqref{eq:bulletxconj}.
To see that \eqref{eq:assocgradediso}
is a map of $H^\ast(B)$--algebras,
it suffices to show that it is $H^\ast(B)$--linear,
which in turn follows by observing that 
the filtrations on $H^\ast P(g,g)$
and $H^\ast P(f,g)$ from the Serre specral sequences
are ones of $H^\ast(B)$--modules and the maps of 
equation \eqref{eq:bulletxconj} are $H^\ast(B)$--linear.
\end{proof}

\begin{proof}[Proof that 
(\ref{item:freerk1}) $\Leftrightarrow$ (\ref{item:assgraded})
in Theorem~\ref{thm:strtoptezukacrit}]
The implication 
(\ref{item:assgraded}) $\Rightarrow$ (\ref{item:freerk1})
follows by a repeated application of the short five lemma.
Moreover, in view of Remark~\ref{rk:replacementtrick},
the implication (\ref{item:freerk1}) $\Rightarrow$ (\ref{item:assgraded})
follows from Theorem~\ref{thm:sscupstr}.
\end{proof}

\subsection{Results in the polynomial case}
\label{subsec:polycase}

In this subsection, we will use the Eilenberg--Moore spectral 
sequence to provide a proof of the well-known Proposition~\ref{prop:polycollapse}.
Moreover, we will make use of the properties of the Serre spectral sequence
established in Section~\ref{sec:spectralsequences} 
to provide a proof of Theorem~\ref{thm:hlbgcompintro}.
All told, by the end of the subsection, we will have established the 
following picture in the case where $H^\ast(BG)$ is a polynomial ring
$H^\ast(BG) = \F_\ell[x_1,\ldots,x_n]$ and $\sigma \colon BG \to BG$
is a map inducing the identity map on cohomology.
\begin{enumerate}
\item $H^\ast(BG^{h\sigma})$ is free of rank $1$ over the ring $(\bbH^\ast(LBG),\stringprod)$. 
	This follows from
	the equivalence of conditions (\ref{item:fundclass}) and (\ref{item:freerk1})
	of Theorem~\ref{thm:strtoptezukacrit}
	together with Proposition~\ref{prop:polycollapse}.
\item When $\ell$ is odd, the ring 
	$(\bbH^\ast(LBG),\stringprod)$ has a simple description: we have
	\[
		\bbH^*(LBG) 
		\isom  
		\F_\ell[x_1,\ldots,x_n] \tensor \Lambda(y_1,\ldots,y_n)
	\]
	where $\deg(y_i) = -(\deg(x_i)-1)$. This follows from Theorem~\ref{thm:hlbgcompintro}.
\item \label{it:stringiso}
	For a suitably chosen element $x \in H^d(BG^{h\sigma})$, the map
	$H^\ast(LBG) \to H^\ast(BG^{h\sigma})$ induced by $\stringprod$--product with $x$
	is a $H^\ast(BG)$--module isomorphism which is close to being a 
	ring isomorphism in the sense that it induces
	an isomorphism
	\[
		\gr H^\ast(LBG) \xto{\ \isom\ } \gr H^\ast(BG^{h\sigma})
	\]
	on certain associated graded algebras. This follows from 
	the equivalence of conditions (\ref{item:fundclass}) and (\ref{item:assgraded})
	of Theorem~\ref{thm:strtoptezukacrit}
	together with Proposition~\ref{prop:polycollapse}
	and the $H^\ast(BG)$--bilinearity of the string pairing $\stringprod$.	
\end{enumerate}
The result in part (\ref{it:stringiso}) can typically be improved upon. 
We will return to the polynomial case later in Section~\ref{sec:algiso}
where our focus is in obtaining more highly structured isomorphisms
$H^\ast(LBG) \isom H^\ast(BG^{h\sigma})$ given by $\stringprod$--product
with a suitably chosen element $x \in H^\ast(BG^{h\sigma})$.
See Theorems~\ref{thm:realization}, \ref{thm:polytezukaelaboration} and~\ref{thm:polysimplyconn2cptgrps}.

\begin{proof}[Proof of Proposition~\ref{prop:polycollapse}]
Recall that for a pullback diagram of spaces
\[\xymatrix{
	X \times_B Y
	\ar[r]
	\ar[d]
	\pb
	&
	Y
	\ar[d]^{g}
	\\
	X
	\ar[r]^{f}
	&
	B
}\]
with at least one of $f$ and $g$ a fibration and 
the space $B$ simply connected, the Eilenberg--Moore
spectral sequence is a strongly convergent second-quadrant spectral 
sequence of algebras
\[
	E_2^{s,t} = \Tor^{H^\ast(B)}_{-s,t}(H^\ast(X), H^\ast(Y)) 
	\Longrightarrow
	H^{s+t}(X\times_B Y)
\]
where $t$ is the internal degree and $-s$ the 
homological degree. The entries on the $E_\infty$-page of the spectral 
sequence are filtration quotients
\[
	E_\infty^{s,t} = F^{s} H^{s+t}(X\times_B Y) / F^{s+1} H^{s+t}(X\times_B Y)
\]
for a certain descending filtration
\[
	H^\ast(X\times_B Y) 
	\supset \cdots \supset 
	F^{-2} \supset F^{-1} \supset F^0 \supset F^1 = 0.
\]
of $H^\ast(X\times_B Y)$.
In particular, we may interpret the line
$E_\infty^{0,\ast}$ as a subring of $H^\ast (X\times_B Y)$.
By naturality of the spectral sequence, it is easy to see
that this subring is precisely the image of the 
map
\[
	H^\ast(X \times Y) \longto H^\ast(X\times_B Y)	
\]
induced by the inclusion of $X\times_B Y$ into $X\times Y$.

Suppose now $H^\ast(BG) = \F_\ell[x_1,\ldots,x_n]$,
and let $\sigma \colon BG \to BG$ be a map inducing 
the identity on cohomology.
Consider the Eilenberg--Moore spectral sequence for 
the pullback square
\[\xymatrix{
	BG^{h\sigma} 
	\ar[r]
	\ar[d]_{\ev_1}
	\pb
	&
	BG^I
	\ar[d]^{(\ev_0,\ev_1)}
	\\
	BG
	\ar[r]^-{(\sigma,1)}
	&
	BG\times BG
}\]
By the assumption on $\sigma$,
the $E_2$-page of the spectral sequence amounts to 
\[
	\Tor^{\F_\ell[x_1,\ldots,x_n,x'_1,\ldots,x'_n]}(
		\F_\ell[x_1,\ldots,x_n],
		\F_\ell[x_1,\ldots,x_n]
	)
\]
where 
$\F_\ell[x_1,\ldots,x_n,x'_1,\ldots,x'_n]$ 
acts on the two copies of 
$\F_\ell[x_1,\ldots,x_n]$
via the map $x_i \mapsto x_i$, $x'_i \mapsto x_i$.
Interpreting
\[
	\F_\ell[x_1,\ldots,x_n,x'_1,\ldots,x'_n] 
	=
	\F_\ell[x_1,\ldots,x_n,y_1,\ldots,y_n]
\]
with $y_i = x'_i-x^{}_i$, it is now 
easy to compute that the $E_2$-page is 
\[
	\F_\ell[x_1,\ldots,x_n] \tensor \Lambda_{\F_\ell}(z_1,\ldots,z_n)
\]
where the $x_i$'s occur on the line $s=0$ and the $z_i$'s
on the line $s=-1$. There can be no 
differentials, so the spectral sequence collapses on the $E_2$-page.
From the freeness of the $E_2$-page as an $H^\ast(BG)$-module we now 
deduce that the $H^\ast(BG^{h\sigma})$ is free as an 
$H^\ast(BG)$ module (with the module structure 
induced by $\ev_1\colon BG^{h\sigma} \to BG$).
It follows that the Eilenberg--Moore spectral sequence
of the pullback diagram
\[\xymatrix{
	G
	\ar[r]^-{i}
	\ar[d]
	\pb
	&
	BG^{h\sigma}
	\ar[d]^{\ev_1}
	\\
	\pt
	\ar[r]
	&
	BG	
}\]
is concentrated on the line $s=0$, which in turn implies that the 
map $i^\ast \colon H^\ast(BG^{h\sigma})\to H^\ast(G)$ is an epimorphism.  
In particular, $i^\ast$ and hence $i_\ast$ are nonzero in degree $d$, 
so $BG^{h\sigma}$ has a $[G]$--fundamental class by
Definition~\ref{def:fundamentalclass}.

The Serre spectral sequence of the fibre sequence
$G \xto{i} BG^{h\sigma} \to BG$ collapses at the $E_2$ page, as surjectivity of $i^\ast$ 
guarantees that there can be no differentials 
originating on the $E_r^{0,*}$ line,  and hence there cannot be any nonzero 
differentials at all by multiplicativity.
\end{proof}

\begin{proof}[Proof of Theorem~\ref{thm:hlbgcompintro}]
  By Proposition~\ref{prop:polycollapse},
  the shifted Serre spectral sequence
\begin{equation}
\label{eq:lbgss}
	\bbE_2^{s,t}(LBG) \isom H^s(BG) \tensor \bbH^t(G) 
	\Longrightarrow 
	\bbH^{s+t} (LBG)
\end{equation}
of Theorem~\ref{thm:ssforlbgbghsigma}(\ref{it:mainresult-ringss})
collapses on the $E_2$-page.
By Proposition~\ref{prop:pontryaginproduct}, the ring $\bbH^\ast(G)$
is isomorphic to the homology $H_{-\ast}(G)$ equipped with the 
Pontryagin product. Notice that the assumption that $\ell$ is odd
implies that $\deg(x_i)$ is even for all $i$, say
$\deg(x_i) = 2k_i$.
By Borel's theorem 
(see e.g.\ \cite[Cor.~VII.2.8]{MT91}),
the cohomology ring $H^\ast(G)$ is an exterior algebra on 
generators $z_1,\ldots,z_n$ with $\deg(z_i) = 2k_i-1$,
so by the Leray--Samelson theorem 
(see \cite[Thm.~2-1B]{kane88}), the homology
$H_\ast(G)$ equipped with Pontryagin product 
is isomorphic to an exterior algebra on 
generators $y'_1,\ldots,y'_n$, $\deg(y'_i) = 2k_i-1$.
In particular, the $E_2$-page of 
the spectral sequence~\eqref{eq:lbgss}
is a free graded commutative algebra.
By Theorem~\ref{thm:cmcomparison2},
the target $\bbH^\ast(LBG)$ 
of the spectral sequence
is graded commutative,
so no extension problems arise in passing from
the spectral sequence to the target.
The claim follows.
\end{proof}

\section{Fundamental classes exist generically: Proof of Theorem~\ref{thm:tezukasubgrp}}\label{sec:funclass2}

In this section, we will prove Theorem~\ref{thm:tezukasubgrp}.
The proof consists of three parts. First, 
in Section~\ref{subsec:sigmakinD}, we will 
show that for any self-map $\sigma\co BG \to BG$,
some power of $[\sigma]$ lies in the set 
\begin{equation}
\label{eq:Ddef}
	D 
	= 
	\{
		[\sigma] \in \Out(BG)	
		\,|\, 
		BG^{h\sigma} \text{ has a $[G]$--fundamental class}
	\}
\end{equation}
of Theorem~\ref{thm:tezukasubgrp}. Then, in Section~\ref{subsec:Dnormalsg}, we show
that $D$ is a normal subgroup of $\Out(BG)$. 
Finally, in Section~\ref{subsec:Dclosed}, we show that 
the subgroup $D$ is closed and of finite index in $\Out(BG)$,
and finish the proof of Theorem~\ref{thm:tezukasubgrp}.

\subsection{A power of \texorpdfstring{$[\sigma]$}{[sigma]} lies in \texorpdfstring{$D$}{D}}
\label{subsec:sigmakinD}

In this subsection, our aim is to prove that for any self-map $\sigma \colon BG \to BG$,
some power of $[\sigma] \in \Out(BG)$ lies in the set $D$ of 
equation~\eqref{eq:Ddef}. We will do so by proving
the following sharpening of a result of Kameko \cite[Thm.~1.5]{Kameko-pb}.

\begin{thrm}
\label{thm:ss-comparison2}
Let $BG$ be a connected $\ell$--compact group (not necessarily semisimple).
Let $\sigma \colon BG \to BG$ be a self-map of $BG$, and 
let $k > 0$ be an integer divisible by $\ell^{d^2+d}r^2$
where $d$ is the dimension of $BG$ and
$r$ is the order of the automorphism of $H^\ast(G)$
induced by $\sigma$. Then the Serre spectral sequences
with coefficients in $\F_\ell$
of the evaluation fibrations 
$\ev_1 \colon BG^{h\sigma^k} \to BG$ and $\ev_1 \colon LBG \to BG$ 
are isomorphic.
In particular, in this case, if $BG$ is semisimple,
$BG^{h\sigma^k}$ has a $[G]$--fundamental
class in the sense of Definition~\ref{def:fundamentalclass}.
\end{thrm}

We will prove Theorem~\ref{thm:ss-comparison2} by 
showing that the Serre spectral sequences of 
$\ev_1 \colon BG^{h\sigma^k} \to BG$ and $\ev_1 \colon LBG \to BG$ 
embed with the same image
as sub-spectral sequences of the Serre spectral sequence of a third fibration
indicated in diagram~\eqref{eqn:fibrations-sigmak1-sigmak2} below.
The proof is an abstraction of the earlier work of  Kameko \cite{Kameko-pb}, 
which again builds on work of 
Friedlander and Mislin \cite{FM84}. As we are working in a more general
setup,  and the draft
\cite{Kameko-pb} remains unpublished, we will give a self-contained
treatment, which also leads to slightly improved bounds.

The remainder of Section~\ref{subsec:sigmakinD}
is divided into two subsections.
First, in Section~\ref{subsubsec:mapxhstoxhsk},
we will define and consider the homological behavior of a certain map
$X^{h\sigma} \to X^{h\sigma^k}$ where $\sigma$ is a self-map of 
a space $X$.
Then, in Section~\ref{subsubsec:comparisonofspectralsequences},
we will develop tools for comparing spectral sequences,
and provide the proof of Theorem~\ref{thm:ss-comparison2}.

\subsubsection{The map $X^{h\sigma} \to X^{h\sigma^k}$ on homology}
\label{subsubsec:mapxhstoxhsk}
 
\begin{defn}[The maps $i \colon X^{h\sigma} \to X^{h\sigma^k}$ and 
$\bar{i} \colon \loops X \to \loops X$]
\label{def:iandbari}
Let $X$ be a connected space equipped with a basepoint.
Given a basepoint-preserving self-map $\sigma \colon X \to X$
and an integer $k > 0$, we define a map 
\[
	i = i^\sigma_k \colon X^{h\sigma} \longto  X^{h\sigma^k}
\]
by the formula
\[
	i^\sigma_k(\alpha) 
	= 
	\alpha \star (\sigma \circ \alpha) \star \cdots \star (\sigma^{k-1} \circ \alpha) 
\]
where $\star$ refers to concatenation of paths. Moreover, we write
\[
	\bar{i} = \bar{i}^\sigma_k \colon \loops X \to \loops X
\]
for the restriction of $i$ to a map between the fibres of the evaluation
fibrations $\ev_1 \colon X^{h\sigma} \to X$ and $\ev_1 \colon X^{h\sigma^k} \to X$,
so that altogether we have a commutative diagram
\[\xymatrix{
	\loops X 
	\ar[r]^{\bar{i}}
	\ar[d]
	&
	\loops X
	\ar[d]
	\\
	X^{h\sigma}
	\ar[r]^{i}
	\ar[d]_{\ev_1}
	&
	X^{h\sigma^k}
	\ar[d]^{\ev_1}
	\\
	X
	\ar@{=}[r]
	&
	X
}\]
where the columns are fibration sequences.
\end{defn}

\begin{rem}
\label{rk:fromunpointedtopointed}
In Definition~\ref{def:iandbari} and many other places in this section, it is 
most convenient to work with basepoint-preserving self-maps $\sigma$
instead of arbitrary ones, like in Theorem~\ref{thm:ss-comparison2}.
The restriction to basepoint-preserving self-maps is essentially
harmless, as it is easy to convert a non-basepoint-preserving self-map
into a basepoint-preserving one e.g.\ by the following ``whisker trick.''
Suppose $X$ is a connected space and $\sigma \colon X \to X$ is a self-map
of $X$. Let $x_0\in X$ be a basepoint, and consider the wedge sum 
$X \vee I = (X,x_0) \vee (I,1)$ where  $I$ is the interval $[0,1]$.
Equip $X \vee I$ with the basepoint given by $0\in I$.
Defining $\sigma' \colon X \vee I \to X \vee I$
to agree with $\sigma$ on $X$ and to be given by the formula 
$t \mapsto 2t$ on $[0,1/2] \subset I$ and 
by a path in $X$ connecting $x_0$ to $\sigma(x_0)$ on $[1/2,1] \subset I$
now yields a basepoint-preserving self-map $\sigma'$ of $X\vee I$
fitting into a commutative diagram 
\[\xymatrix{
	X
	\ar[r]^\sigma
	\ar[d]_\homot
	&
	X\ar[d]^\homot
	\\
	X\vee I
	\ar[r]^{\sigma'}
	&
	X\vee I
}\]
where both vertical maps are the (unpointed) homotopy equivalence given 
by the inclusion of $X$ into $X\vee I$.
\end{rem}
 
The following is a generalization of a well-known result of Friedlander and Mislin
\cite[Lem.~1.2]{FM84}.
\begin{prop} \label{prop:fixed-point-map2} 
Suppose that in the situation of Definition~\ref{def:iandbari} the space $X$ is simply connected and that the induced map $(\loops\sigma)_\ast \colon H_\ast(\Omega X;\F_\ell)\to H_\ast(\Omega X;\F_\ell)$ is an automorphism of finite order $r$. Then

\begin{enumerate}[(i)]

\item \label{item:prop-prim2}
If $\ell r | k$, then 
$\bar{i}_\ast \colon H_\ast(\loops X;\F_\ell) \to H_\ast(\loops X;\F_\ell)$ is zero
on primitive elements. In particular, it is the zero map 
in positive degrees if $H_\ast(\loops X;\F_\ell)$  is primitively generated.

\item \label{item:prop-general2} 
For $m > 0$,
the map $\bar{i}_\ast \colon H_m(\loops X;\F_\ell) \to H_m(\loops X;\F_\ell)$
vanishes if $\ell^m r | k$.
In particular, if $H_*(\loops X;\F_\ell)$ vanishes in degrees higher than
$d$ and $\ell^d r | k$, then 
$\bar{i}_\ast \colon H_\ast(\loops X;\F_\ell) \to H_\ast(\loops X;\F_\ell)$
is the zero map in positive degrees.
\end{enumerate}
\end{prop}

For the proof of Proposition~\ref{prop:fixed-point-map2}, we need a small algebraic lemma.
For a bialgebra $A$ and $n\geq 1$, let $P_n \colon A \to A^{\tensor n} \to A$
be the map obtained by composing the iterated coproduct with the iterated product.
\begin{lemma}
\label{lemma:nullmap2} 
Let $A$ be a cocommutative bialgebra. Then
\begin{enumerate}[(i)]
\item \label{it:coalgebramap}
	$P_n$ is a map of coalgebras for all $n\geq 1$.
\item \label{it:composition} 
	$P_{mn} = P_m \circ P_n \colon A\to A$ for all $m,n \geq 1$.
\item \label{it:primitives} 
	On primitive elements, the map $P_n \colon A \to A$ is given by multiplication by $n$.
\item \label{it:vanishing}
	Suppose $A$ is a connected cocommutative graded bialgebra in positive characteristic 
	$\ell$. Then $P_n$ vanishes in degree $m \geq 1$ as long as $\ell^m | n$.
\end{enumerate}
\end{lemma}
\begin{proof}
Parts (\ref{it:coalgebramap}), (\ref{it:composition}) and (\ref{it:primitives}) 
follow readily from the definition of the maps $P_n$, 
using that comultiplication is a map of coalgebras by cocommutativity.
In view of part (\ref{it:composition}), to show part (\ref{it:vanishing}),
it suffices to prove that for any $m\geq 0$, the map $P_{\ell^m}$ 
vanishes in positive degrees $\leq m$, which we will do by induction on $m$.
When $m = 0$, there is nothing to prove. 
When $m \geq 1$, we have
$P_{\ell^m} = P_{\ell} \circ P_{\ell^{m-1}}$ 
by part (\ref{it:composition}). Thus $P_{\ell^m}$ vanishes
in positive degrees $\leq m-1$ by the inductive assumption on $P_{\ell^{m-1}}$.
To see that $P_{\ell^m}$ also vanishes on degree $m$ elements,
notice that by part (\ref{it:coalgebramap}) and the inductive assumption,
the map $P_{\ell^{m-1}}$ sends degree $m$ elements to primitive elements.
As the map $P_{\ell}$ vanishes on primitive elements by part (\ref{it:primitives}), 
the claim follows. 
\end{proof}

\begin{proof}[Proof of Proposition~\ref{prop:fixed-point-map2}]
From the definition of $\bar{i}$, we have the formula
\[
	\bar{i}_\ast(x) = \sum x_{(1)} \sigma_\ast(x_{(2)}) \cdots \sigma_\ast^{k-1}(x_{(k)})
	\in
	H_\ast(\loops X)
\]
where $\sum x_{(1)}\tensor \cdots \tensor x_{(k)}$ is the iterated coproduct of $x$.
When $x$ is primitive, we therefore have
$
	\bar{i}_\ast(x) = \sum_{i = 0}^{k-1} \sigma_\ast^i(x)
$
which, when $\ell r | k$, gives us 
$
	\bar{i}_\ast(x) = (k/r) \sum_{i = 0}^{r-1} \sigma_\ast^i(x) = 0.
$
Thus part (\ref{item:prop-prim2}) follows. 
To show part (\ref{item:prop-general2}), 
notice that we have
$\bar{i} = \bar{i}^\sigma_k \homot \bar{i}^{\sigma^r}_{k/r} \circ \bar{i}^{\sigma}_r$
when $r | k$. As 
$(\bar{i}^{\sigma^r}_{k/r})_\ast = P_{k/r} \colon H_\ast(\loops X) \to H_\ast(\loops X)$,
part (\ref{item:prop-general2}) follows from Lemma~\ref{lemma:nullmap2}(\ref{it:vanishing}).\end{proof}

If a fibration admits a section, i.e., if it receives a map from the
fibration $* \to B \xto{\id} B$, then the cohomology of the base obviously
injects into the cohomology of the total space. The following
proposition records a more general version of this fact, just assuming that the
fibration receives a map from fibrations with the same base, with
trivial map on fibers.

\begin{prop}\label{prop:base-mono}
Suppose that we have a commutative diagram of nonempty connected spaces where the 
columns are fibration sequences 
\begin{equation}
\label{eq:fibrationsequenceladder}
\vcenter{\xymatrix{
    F_1 
    \ar[r] 
    \ar[d] 
    & 
    F_{2}
    \ar[r]
    \ar[d] 
    & 
    \cdots
    \ar[r]  
    &
    F_n 
    \ar[d]
    \\
    E_1 
    \ar[r]
    \ar[d]_{\pi_1}
    & 
    E_{2}
    \ar[r] 
    \ar[d]_{\pi_2}
    & 
    \cdots 
    \ar[r] 
    &
    E_n 
    \ar[d]^{\pi_n}
    \\
    B 
    \ar@{=}[r] 
    &
    B 
    \ar@{=}[r]
    & 
    \cdots 
    \ar@{=}[r] 
    & 
    B
}}
\end{equation}
Let $M$ be an abelian group, and consider the 
induced map 
\[
	\pi_n^\ast \colon H^*(B;M) \longto H^*(E_n;M).
\]
\begin{enumerate}[(i)]
\item \label{it:monoinlowdegrees}
	If $H^i(F_{k+1};M) \to H^i(F_k;M)$ is trivial when $1 \leq i\leq k < n$,
	the map $\pi_n^\ast$ is a monomorphism in degrees $\leq n$.
\item \label{it:monoinalldegrees}
	If in addition $H^i(F_n;M) = 0$  for $i\geq n$, the map $\pi_n^\ast$ is 
	a monomorphism in all degrees.
\item \label{it:generalcase}
	More generally, if $C \to B$ is a nonempty
	space over $B$ and $C' \subset C$ is a subspace,
	the same conclusions hold for the map
	\[
		(\pi_n \times_B C)^\ast 
		\colon 
		H^\ast(C,C';M) 
		\to 
		H^\ast(E_n \times_B C, E_n\times_BC';M)
	\]
	in place of $\pi_n^\ast$.
\end{enumerate}
\end{prop}

\begin{proof}

(\ref{it:monoinlowdegrees}):
We prove by induction on $n$ the stronger statement that 
in the Serre spectral sequence 
of the fibration $F_n \to E_n \to B$ 
for cohomology with coefficients in $M$,
all differentials $d_r: E_r^{*-r,r-1} \to E_r^{*,0}$ 
are zero when $2 \leq r \leq n$.
Notice that $E_2^{*,0} = H^*(B;M)$ as $F_k$ is connected.
For $n=1$ the claim holds vacuously.
When $n \geq 2$, we already know by induction that 
$d_r: E_r^{*-r,r-1} \to E_r^{*,0}$ is zero both for the fibration $F_{n-1} \to E_{n-1}
\to B$ and the fibration $F_n \to E_n \to B$ when $2 \leq r < n$. But then  $d_n: 
E_n^{*-n,n-1} \to E_n^{*,0}$ is zero for the fibration $F_n \to E_n \to B$ as
it factors as a composite of a map induced by the zero map
$H^{n-1}(F_n;M) \to  H^{n-1}(F_{n-1};M) $ with the corresponding differential for 
$F_{n-1} \to E_{n-1} \to B$.

(\ref{it:monoinalldegrees}):
If $H^i(F_n) = 0$ for $i\geq n$, then $d_r: E_r^{*-r,r-1} \to  E_r^{*,0}$
for $r>n$ as well because the domain is zero, so the map $\pi_n^\ast$
is a monomorphism in all degrees.

(\ref{it:generalcase}):
Proceeding component by component, we may assume that $C$ is connected.
The claim now follows by the same argument as in 
(\ref{it:monoinlowdegrees}) and (\ref{it:monoinalldegrees})
by considering the diagram of fibration sequences obtained by 
pulling back the fibrations $\pi_1,\ldots,\pi_n$ along the map $C\to B$
and 
using the relative version of the Serre spectral sequence.
\end{proof}

We note that Proposition~\ref{prop:fixed-point-map2} and
Proposition~\ref{prop:base-mono} have the
following corollary, which is an analog of \cite[Thm.~1.4]{FM84}.

\begin{cor} \label{cor:FM-cor-general} Suppose $X$ is simply connected 
pointed space with $H_*(\loops X;\F_\ell)$ 
zero in high degrees, and assume that $\sigma: X \to X$
is a basepoint-preserving 
self-map which induces an automorphism of finite order on
$H_*(\loops X;\F_\ell)$.
Then the evaluation fibrations $\ev_1 \colon X^{h\sigma^k} \to X$
for $k \geq 1$ induce an isomorphism
$$\colim_k H_*(X^{h\sigma^k};\F_\ell) \xto{\ \isom\ } H_*(X;\F_\ell)$$
where the colimit is taken over the poset of positive integers
ordered by divisibility, with the map 
$H_*(X^{h\sigma^{a}};\F_\ell) \to H_*(X^{h\sigma^b};\F_\ell)$
for $a | b$ induced by the map $i^{\sigma^{a}}_{b/a}$
of Definition~\ref{def:iandbari}.
\end{cor}

\begin{proof}
Let $\{E_{s,t}^r(k)\}$ be the  Serre spectral sequence 
of $\ev_1\colon X^{h\sigma^k} \to X$ for homology with coefficients in $\F_\ell$, 
and let
\[
	0 
	= 
	F^{-1}_n(k) 
	\subset 
	F^0_n(k) 
	\subset 
	\cdots 
	\subset 
	F^n_n(k) 
	= 
	H_n(X^{h\sigma^k};\F_\ell)
\]
be the associated filtration of $H_n(X^{h\sigma^k};\F_\ell)$ so that
$F^s_n(k) / F^{s-1}_n(k)$ is isomorphic to the $E_{s,n-s}^\infty(k)$
for all $0 \leq s \leq n$.
By Proposition~\ref{prop:fixed-point-map2}\eqref{item:prop-general2},
it is possible to find a $u > 0$ such that the map 
$E^2_{s,n-s}(k) \to E^2_{s,n-s}(ku)$
and hence the map
$E^\infty_{s,n-s}(k) \to E^\infty_{s,n-s}(ku)$
induced by the map
$i^{\sigma^k}_u \colon X^{h\sigma^k} \to X^{h\sigma^{ku}}$
are zero for $s < n$. It follows that for this $u$, we have 
$(i^{\sigma^k}_u)_\ast F^s_n(k) \subset F^{s-1}_n(ku)$ when $s < n$.
Iterating this observation, we can find a $v > 0$ such that 
$(i^{\sigma^k}_{v})_\ast F^{n-1}_n(k) \subset F^{-1}_n(kv) = 0$.
Thus any class in the kernel of 
$(\ev_1)_\ast \colon H_n(X^{h\sigma^k};\F_\ell) \to H_\ast(X;\F_\ell)$
already lies in the kernel of  
$(i^{\sigma^k}_{v})_\ast \colon H_n(X^{h\sigma^k};\F_\ell) \to H_n(X^{h\sigma^{kv}};\F_\ell)$. 
It follows that the map 
$\colim_k H_*(X^{h\sigma^k};\F_\ell) \to H_*(X;\F_\ell)$ 
is a monomorphism.
On the other hand, it follows readily from Propositions~\ref{prop:base-mono}
and \ref{prop:fixed-point-map2}(\ref{item:prop-general2}) that
there exists a $k$ for which 
the map $(\ev_1)_\ast \colon H_*(X^{h\sigma^k};\F_\ell) \to H_*(X;\F_\ell)$ 
is an epimorphism.  Thus the map 
$\colim_k H_*(X^{h\sigma^k};\F_\ell) \to H_*(X;\F_\ell)$ 
is also an epimorphism, and the claim follows.     
\end{proof}

Just for clarity, we emphasize that the above corollary in particular applies
to $\ell$--compact groups.

\begin{cor}
\label{cor:FM-cor}
Let $BG$ be a connected $\ell$--compact group (not necessarily semisimple),
and let $\sigma: BG \to BG$ be a basepoint-preserving self-map of $BG$
inducing an isomorphism on $H_*(G;\F_\ell)$. 
Then
\begin{equation}
\label{eq:colimbghsk}
\colim_k H_*(BG^{h\sigma^k};\F_\ell) \xrightarrow{\ \cong\ } H_*(BG;\F_\ell) 
\end{equation}
where the colimit is taken over the poset of positive integers
ordered by divisibility.
\qed
\end{cor}

\begin{rem}
According to Theorem~\ref{thm:ss-comparison2},
the modules $H_\ast(BG^{h\sigma^k};\F_\ell)$ 
in \eqref{eq:colimbghsk} are, from certain point onwards, all 
abstractly isomorphic to $H_\ast(LBG;\F_\ell)$.
What happens is that in the colimit system, 
the elements in the kernel of
$H_\ast(LBG;\F_\ell) \to H_\ast(BG;\F_\ell)$
are all eventually killed. Note that this is just like what
happens for $BG = BS^1\lcom$ where, after $\ell$--completion,
the space-level colimit system has a cofinal subsystem
$B\Z/\ell \to B\Z/\ell^2 \to \cdots$ with colimit
$B\Z/\ell^\infty$.
\end{rem}

\subsubsection{Comparison of spectral sequences}
\label{subsubsec:comparisonofspectralsequences}

Our aim in this sububsection is to finish the 
proof of Theorem~\ref{thm:ss-comparison2}.
The proof will be based on the following proposition.

\begin{prop}\label{prop:SS-zigzag}
Suppose that $\{'E_r\} \xrightarrow{f} \{E_r\} \xleftarrow{g} \{''E_r\}$ are maps of
spectral sequences.  
\begin{enumerate}[(i)]
\item \label{it:SS-zigzag-inj}
If $\im(f_2) \subseteq \im(g_2)$ and $g_r$ is injective
for all $r$, then $\im(f_r) \subseteq \im(g_r)$ for all $r$, and in particular
we have a well-defined map $h_r= g^{-1}_r \circ f_r\co {}'E_r \to {}''E_r$ for all $r$.
\item \label{it:SS-zigzag-iso}
If in addition $\im(f_2) = \im(g_2)$ and $f_2$ is injective, then $\im(f_r) =
\im(g_r)$ and $f_r$ is injective for all $r$, and in particular
$\{h_r\}\co \{'E_r\} \to \{''E_r\}$ defines an isomorphism of spectral
sequences. 
\end{enumerate}
\end{prop}
\begin{proof}
(\ref{it:SS-zigzag-inj}):
We prove (\ref{it:SS-zigzag-inj}) by induction on $r$, the case $r=2$ being clear. 
Suppose that $\bar x \in E_r$ is of the
form $\bar{x} = f_r(\bar y)$ for some $\bar y \in {}'E_r$. We can choose
representative $y \in {}'E_{r-1}$ of $\bar y$ such that $d_{r-1}y = 0$. 
By induction, we can find $z \in {}''E_{r-1}$ such that $f_{r-1}(y) =
g_{r-1}(z)$. But then 
$$g_{r-1}(d_{r-1}(z)) = d_{r-1}(g_{r-1}(z)) =
d_{r-1}(f_{r-1}(y)) =  f_{r-1}(d_{r-1}(y)) = 0$$
which by injectivity of $g_{r-1}$ means that $d_{r-1}(z) = 0$. Thus 
$z$ survives to $\bar z \in {}''E_{r}$ where 
$f_{r}(\bar y) = g_{r}(\bar z)$ as wanted.
  
(\ref{it:SS-zigzag-iso}): 
We again proceed by induction, the case $r=2$ being clear. 
Given an element $\bar z \in {}''E_r$, choose a lift 
$z \in {}''E_{r-1}$ with $d_{r-1}(z) =0$. By the inductive assumption,
we can find an element $y\in {}'E_{r-1}$ such that
$f_{r-1}(y) = g_{r-1}(z)$. Now
$$f_{r-1}(d_{r-1}(y)) = d_{r-1}(f_{r-1}(y)) = d_{r-1}(g_{r-1}(z)) =
g_{r-1}(d_{r-1}(z)) =0, $$
and hence $d_{r-1}(y) =0$, as $f_{r-1}$ is injective by
the inductive assumption. Hence $y$ defines a class  $\bar y \in E_r$ with $f_r(\bar y) =
g_r(\bar z)$. We conclude that $\im(g_r) \subseteq \im(f_r)$. Combining this with part (\ref{it:SS-zigzag-inj}), we see that $\im(f_r) = \im(g_r)$.

To prove that $f_r$ is injective, assume that $\bar y \in {}'E_r$ is in the kernel
of $f_r$, and choose $y \in
{}'E_{r-1}$ with $d_{r-1}(y) =0$ representing $\bar y$.
As $h_r(\bar y) =0$, we can choose $z' \in
{}''E_{r-1}$ with $h_{r-1}(y) = d_{r-1}(z')$. As 
$\im(f_{r-1}) = \im(g_{r-1})$ by the inductive assumption, we can find
$y' \in {}'E_{r-1}$ such that $f_{r-1}(y') = g_{r-1}(z')$. Then
$$h_{r-1}(y-d_{r-1}(y')) = h_{r-1}(y) - d_{r-1}(h_{r-1}(y')) =
h_{r-1}(y) - d_{r-1}(z') = 0$$
and hence $y = d_{r-1}(y')$ as $h_{r-1}$ is injective by induction. So $\bar
y = 0$ as wanted, proving that $f_r$ is injective.
\end{proof}

To apply Proposition~\ref{prop:SS-zigzag},
we need a criterion for checking that a
map between spectral sequences is injective
on all pages.
To this end, recall that one formalism for spectral sequences is via
Cartan--Eilenberg systems \cite[Section XV.7]{CE56}. 
Such a system consists of modules $H^\ast(i,j)$
for $-\infty \leq i \leq j \leq \infty$,
together with maps $\eta\colon H^\ast(i',j') \to H^\ast(i,j)$
for $i \leq i'$ and  $j \leq j'$ and 
$\delta\colon H^\ast(i,j) \to H^\ast(j,k)$ for 
$-\infty \leq i \leq j \leq k \leq \infty$,
all satisfying certain axioms we will not restate here.
For us, the first important point about Cartan--Eilenberg systems
is that the Serre spectral sequence
arises from such a system.

\begin{example}[Cartan--Eilenberg system for the Serre spectral sequence]
\label{ex:serressfromcartaneilenberg}
Suppose $\pi \colon E \to B$ is a fibration, 
and let $\Gamma B \to B$ be a CW approximation of $B$.
For $-\infty \leq s \leq \infty$, let $\sk^B_s E$ be the pullback of $E$ to 
$(\Gamma B)^{(s)}$, where $(\Gamma B)^{(s)}$ is the $s$--skeleton of $\Gamma B$
for $0 \leq s < \infty$; $(\Gamma B)^{(s)} = \emptyset$ for $s < 0$; and 
$(\Gamma B)^{(s)} = \Gamma B$ for $s = \infty$. 
Setting 
\[
	H^\ast(i,j) = H^\ast(\sk^B_{j-1} E, \sk^B_{i-1} E)
\]
for $-\infty \leq i \leq j \leq \infty$
and viewing these groups together with the maps induced by inclusions of pairs and 
the connecting homomorphisms of the long exact sequences of the triples
$(\sk_{k-1} E, \sk_{j-1} E, \sk_{i-1} E)$ yields a Cartan--Eilenberg system
whose associated spectral sequence is the cohomological Serre spectral sequence of~$\pi$.
This follows by combining \cite[Section~XV.7, Example~2]{CE56} 
with the usual construction of the Serre spectral sequence
from the filtration $\{\sk^B_i E\}_i$ of the pullback of $E$ to $\Gamma B$.
\end{example}

The second important point about Cartan--Eilenberg systems for us
is that they provide the following criterion for verifying that
a map between spectral sequences is injective.

\begin{prop} \label{prop:CE-systems} 
Assume that $\alpha\co \{H^*(i,j)\} \to \{'H^*(i,j)\}$ 
is a map of Cartan--Eilenberg systems which is 
injective
for all $-\infty < i \leq j < \infty$.
Then the associated map of spectral sequences $E_r \to
{}'E_r$ is injective for all $r \geq 2$.
\end{prop}
\begin{proof}
Suppose $\bar{x} \in E_r^i$ satisfies $\alpha(\bar{x})=0$ in ${}'E_r^i$.
Pick a representative $x\in Z_r^i \subset H^\ast(i,i+1)$
for $\bar{x}$. Then by the choice of $x$, we have $\alpha(x)\in {}'B^i_r$,
so that $\alpha(x)$ is in the image of 
$\delta \colon {}'H^\ast(i-r+1,i) \to {}'H^\ast(i,i+1)$,
or what is the same,
in the kernel of 
$\eta \colon {}'H^\ast(i,i+1) \to {}'H^\ast(i-r+1,i+1)$.
Consequently $\alpha(\eta(x)) = \eta(\alpha(x)) = 0$,
so $\eta(x) = 0$ in $H^\ast(i-r+1,i+1)$
since $\alpha \colon H^\ast(i-r+1,i+1) \to {}'H^\ast(i-r+1,i+1)$
is injective. But this means that $x$ is in the image of 
$\delta \colon  H^\ast(i-r+1,i) \to H^\ast(i,i+1)$, showing that $\bar{x} = 0$.
\end{proof}

We note parenthetically that the analogous criterion 
for checking that a map between spectral sequences
is surjective 
in terms of Cartan--Eilenberg systems
also holds. We will not need this criterion, however.

We continue with two results which will help us verify that 
the hypotheses in Proposition~\ref{prop:SS-zigzag} are satisfied.

\begin{prop}
\label{prop:e2monoswithsameimage}
Assume that $X$ is a simply connected pointed space
for which $H^\ast(\loops X; \F_\ell)$ 
is finite-dimensional and vanishes in degrees 
higher than $d$, 
and that $\sigma$ is a basepoint-preserving
self-map of $X$ such that 
$(\loops \sigma)^\ast \in \Aut(H^\ast(\loops X;\F_\ell))$
has finite order $r$.
Suppose $k_1$ and $k_2$ are positive integers with $k_2 | k_1$, 
and consider the commutative diagram
\begin{equation} \label{eqn:fibrations-sigmak1-sigmak2}
\vcenter{\xymatrix{
	\loops X \ar[d]
	& 
	\loops X \times \loops X   
	\ar[l]_(0.66){\bar f} 
	\ar[r]^(0.66){\pr_1} 
	\ar[d] 
	& 
	\loops X \ar[d]
	\\
	LX
	\ar[d]_{\ev_1}
	& 
	X^{h\sigma^{k_1}} \times_{X} X^{h\sigma^{k_2}}  
	\ar[l]_(0.66)f
	\ar[r]^(0.66){\pr_1} 
	\ar[d] 
	&
	X^{h\sigma^{k_1}}
	\ar[d]^{\ev_1}
	\\
	X 
	& 
	X
	\ar@{=}[r]
	\ar@{=}[l] 
	&
	X
}}
\end{equation}
where the columns are fibration sequences, $f$ is the composite
\[
	X^{h\sigma^{k_1}} \times_{X} X^{h\sigma^{k_2}}  
	\xto{\ \id \times_X i\ }
	X^{h\sigma^{k_1}} \times_{X} X^{h\sigma^{k_1}}
	\xto{\ c\ }
	LX
\]
where $i = i^{\sigma^{k_2}}_{k_1/k_2}$ is the map of 
Definition~\ref{def:iandbari} and $c$
maps $(\alpha,\beta)$ to $\alpha \star \beta^{-1}$,
and $\bar{f}$ is the restriction of $f$ to a map between fibers.
Then, if $\ell^d r | (k_1/k_2)$,
the induced maps from the $E_2$--pages of the mod $\ell$ cohomological 
Serre spectral sequences of the leftmost and rightmost columns 
of \eqref{eqn:fibrations-sigmak1-sigmak2}
into that of the middle column 
of \eqref{eqn:fibrations-sigmak1-sigmak2}
are monomorphisms with the same image.
\end{prop}
\begin{proof}
In view of Proposition~\ref{prop:fixed-point-map2}(\ref{item:prop-general2}),
in both cases the map on the $E_2$--pages agrees with the map
\[
	H^\ast(X;\F_\ell) \tensor H^\ast(\loops X;\F_\ell)
	\longto
	H^\ast(X;\F_\ell) \tensor H^\ast(\loops X;\F_\ell) \tensor H^\ast(\loops X;\F_\ell)
\]
induced by the monomorphism
\[
	H^\ast(\loops X;\F_\ell) 
	\longto
	H^\ast(\loops X;\F_\ell)\tensor H^\ast(\loops X;\F_\ell),
	\qquad
	x \longmapsto x \tensor 1. \qedhere
\]
\end{proof}

\begin{prop}
\label{prop:ermono}
Suppose in the situation of Proposition~\ref{prop:e2monoswithsameimage}
we have $\ell^{d^2}r | k_2$. 
Then the induced map between the mod $\ell$ cohomological
Serre spectral sequences of the rightmost and middle columns in 
\eqref{eqn:fibrations-sigmak1-sigmak2}
is a monomorphism on $E_r$--pages for all $2 \leq r < \infty$.
\end{prop}
\begin{proof}
In view of 
Example~\ref{ex:serressfromcartaneilenberg}
and
Proposition~\ref{prop:CE-systems},
it suffices to show that the map
\begin{equation}
\label{map:skmap}
	\sk_{j-1}^X (X^{h\sigma^{k_1}} \times_{X} X^{h\sigma^{k_2}})
	\longto
	\sk_{j-1}^X (X^{h\sigma^{k_1}})
\end{equation}
induced by the projection 
$\pr_1 \colon X^{h\sigma^{k_1}} \times_{X} X^{h\sigma^{k_2}} \to X^{h\sigma^{k_1}}$
induces a monomorphism
\[
	H^\ast(	\sk_{j-1}^X (X^{h\sigma^{k_1}}), \sk_{i-1}^X (X^{h\sigma^{k_1}}))
	\longto
	H^\ast(
		\sk_{j-1}^X (X^{h\sigma^{k_1}} \times_{X} X^{h\sigma^{k_2}}),
		\sk_{i-1}^X (X^{h\sigma^{k_1}} \times_{X} X^{h\sigma^{k_2}})
	)
\]
for all $-\infty < i \leq j < \infty$.
For $0 \leq a \leq d$, let $e_a = \ell^{ad-d^2} k_2$, and
consider the diagram
\begin{equation}
\label{diag:fibrationsequences}
\vcenter{\xymatrix{
	\loops X
	\ar[r]^{\bar{i}}
	\ar[d]
	&
	\loops X
	\ar[r]^{\bar{i}}
	\ar[d]
	&
	\cdots
	\ar[r]^{\bar{i}}
	&
	\loops X
	\ar[d]
	\ar@{=}[r]
	&
	\loops X
	\ar[d]
	\\
	X^{h\sigma^{e_0}}
	\ar[r]^{i}
	\ar[d]_{\ev_1}
	&
	X^{h\sigma^{e_1}}
	\ar[r]^{i}
	\ar[d]_{\ev_1}
	&
	\cdots
	\ar[r]^{i}
	&
	X^{h\sigma^{e_d}}
	\ar[d]^{\ev_1}
	\ar@{=}[r]
	&
	X^{h\sigma^{k_2}}
	\ar[d]^{\ev_1}
	\\
	X
	\ar@{=}[r]
	&
	X
	\ar@{=}[r]
	&
	\cdots
	\ar@{=}[r]
	&
	X
	\ar@{=}[r]
	&
	X
}} 
\end{equation}
where the columns are fibration sequences
and the maps labeled by $i$ and $\bar{i}$
are various instances of the similarly named maps
defined in Definition~\ref{def:iandbari}.
By Proposition~\ref{prop:fixed-point-map2}(\ref{item:prop-general2}),
the maps labeled $\bar{i}$ in \eqref{diag:fibrationsequences}
vanish on $H^\ast(\loops X; \F_\ell)$ in positive degrees,
so the claim follows from Proposition~\ref{prop:base-mono}(\ref{it:generalcase})
by observing that \eqref{map:skmap} agrees with the pullback of 
the map $\ev_1 \colon X^{h\sigma^{k_2}} \to X$ along the composite map
\[
	\sk_{j-1}^X (X^{h\sigma^{k_1}}) \longto  X^{h\sigma^{k_1}} \xto{\ \ev_1\ } X. \qedhere
\]
\end{proof}

\begin{proof}[Proof of Theorem~\ref{thm:ss-comparison2}]
Using the whisker trick of Remark~\ref{rk:fromunpointedtopointed},
we may without loss of generality assume that $\sigma$ preserves
the basepoint. 
That the Serre spectral sequences of 
$\ev_1 \colon BG^{h\sigma^k} \to BG$ and $\ev_1 \colon LBG \to BG$ 
are isomorphic
now follows from
Proposition~\ref{prop:SS-zigzag}(\ref{it:SS-zigzag-iso})
combined with Propositions~\ref{prop:e2monoswithsameimage} and \ref{prop:ermono}
by taking $X = BG$, $k_1 = k$, and $k_2 = k/(\ell^d r)$ in 
Proposition~\ref{prop:e2monoswithsameimage}.
Finally, the claim about $[G]$--fundamental classes follows from
the equivalence of conditions~(\ref{it:ssiso}) and (\ref{it:fclass})
of Theorem~\ref{thm:conj-red3}.
\end{proof}

\begin{rem}
The argument presented here only shows that,
with $X = BG$ and the numbers $k_1 = k$ and $k_2$ chosen suitably,
the maps $f$  and $\pr_1$ of \eqref{eqn:fibrations-sigmak1-sigmak2}
induce monomorphisms with the same image
on the $E_\infty$--pages of the relevant Serre spectral sequences.
It however seems very likely that the 
same is true already on the level of cohomology,
that is, that the maps induced by $f$ and $\pr_1$ embed
$H^*(BG^{h\sigma^k})$ and $H^*(LBG)$ as the same submodule of 
$H^*(BG^{h\sigma^k} \times_{BG} BG^{h\sigma^{k_2}})$, a stronger statement, 
after perhaps substituting $\sigma$ by a power. 
This would establish that $H^*(BG^{h\sigma^k})$ and $H^*(LBG)$ 
are isomorphic as 
rings over the Steenrod algebra, and also provide a distinguished 
``dual fundamental class'' in $H^*(BG^{h\sigma^k})$, namely the element
corresponding to the unit in $\bbH^*(LBG)$.
\end{rem}

\subsection{The set \texorpdfstring{$D$}{D} is a normal subgroup of 
\texorpdfstring{$\Out(BG)$}{Out(BG)}}
\label{subsec:Dnormalsg}

The aim of this subsection is to prove the following proposition.
\begin{prop}
\label{prop:dsubgrp}
When $BG$ is semisimple, 
the set $D$ of equation~\eqref{eq:Ddef} is a normal subgroup of $\Out(BG)$.
\end{prop}

We will start with

\begin{defn}
Call fibrations $\pi \colon E \to B$ and $\pi' \colon E' \to B'$
\emph{equivalent} if there exists a zigzag
\[\xymatrix@!C=0.15em{
	E
	\ar[d]_\pi
	\ar[rr]^{\homot}
	&&
	\bullet
	\ar[d]
	&&
	\ar[ll]_\homot
	\bullet
	\ar[d]
	&
	\cdots
	&
	\bullet
	\ar[d]
	\ar[rr]^\homot
	&&
	E'
	\ar[d]^{\pi'}
	\\
	B
	\ar[rr]^\homot
	&&
	\bullet
	&&
	\ar[ll]_\homot
	\bullet
	&
	\cdots
	&
	\bullet
	\ar[rr]^\homot
	&&
	B'
}\]
where all the squares commute, all horizontal morphisms are homotopy equivalences,
and all vertical morphisms are fibrations. We write $\pi\sim \pi'$
to indicate that $\pi$ and $\pi'$ are equivalent.
\end{defn}
For the fibrations $\pi(f,g) \colon P(f,g) \to B$ of Definition~\ref{def:pfg},
we then have the following.
\begin{lemma}
\label{lm:equivfreerank1}
\label{A: Here we do need $BG$ to be semisimple, since that is needed for 
the existence of the string module structure.}
Suppose $\pi(f,g)\colon P(f,g) \to B$ and $\pi(f',g')\colon P(f',g') \to B'$ 
are equivalent fibrations with $B$ and $B'$ path connected. 
Then $\bbH^\ast P(f,g)$ is free of rank $1$ as an 
$\bbH^\ast P(g,g)$--module 
if and only if 
$\bbH^\ast P(f',g')$ is free of rank $1$ as an
$\bbH^\ast P(g',g')$--module.
\end{lemma}
\begin{proof}
Clearly the Serre spectral sequences of equivalent fibrations are isomorphic.
Thus the claim follows from the equivalence of conditions (\ref{it:xexists})
and (\ref{it:permanentcycle2}) in Theorem~\ref{thm:conj-red3}.
\end{proof}

\begin{lemma}
\label{lm:equivalentfibrationcriteria}
In each of the following, the indicated fibrations are equivalent.
\begin{enumerate}[(i)]
\item \label{it:swap}
	$\pi(f,g) \colon P(f,g) \to B$ 
	and
	$\pi(g,f)\colon P(g,f) \to B$ 
	for maps $f,g\colon B\to BG$.
\item \label{it:precomp}
	$\pi(f,g) \colon P(f,g) \to B$ 
	and 
	$\pi(f\phi,g\phi) \colon P(f\phi,g\phi) \to A$
	for maps $f,g\colon B\to BG$ and a homotopy equivalence 
	$\phi \colon A \to B$
\item \label{it:postcomp}
	$\pi(f,g)\colon P(f,g) \to B$  
	and
	$\pi(\sigma f, \sigma g) \colon P(\sigma f, \sigma g) \to B$ 
	for maps $f,g\colon B\to BG$ and a homotopy equivalence 
	$\sigma \colon BG \to BG$
\item \label{it:homot}
	$\pi(f_0,g_0)\colon P(f_0,g_0) \to B$ 
	and 
	$\pi(f_1,g_1)\colon P(f_1,g_1)\to B$
	for maps $f_i,g_i \colon B \to BG$, $i=0,1$,
	with $f_0\homot f_1$ and $g_0\homot g_1$.
\end{enumerate}
\end{lemma}
\begin{proof}
Part~(\ref{it:swap}): 
The claim follows by observing that the map 
$P(f,g) \to P(g,f)$, $(b,\gamma)\mapsto (b,\gamma^{-1})$
is a homeomorphism over $B$.

Part~(\ref{it:precomp}):
The claim 
follows from the fact that the map $\bar{\phi}$ in \eqref{sq:inducedmap}
is a homotopy equivalence when $\phi$ is. To prove this fact,
one can work directly with the definition of a fibration in terms
of a homotopy lifting property, or observe that 
the Strøm model structure on topological spaces is right proper,
so that the claim follows from the definition of a right proper model category.
See \cite[Def.~13.1.1 and Cor.~13.1.3(3)]{Hirschhorn}.

Part~(\ref{it:postcomp}):
Observe that we have a commutative cube
\[\xymatrix@!0@C=5em@R=8ex{
	P(f,g)
	\ar[dd]_(0.26){\pi(f,g)}
	\ar[rr]
	\ar[dr]
	&&
	BG^I
	\ar[dd]_(0.26){(\ev_0,\ev_1)}|!{[dl];[dr]}\hole
	\ar[dr]^{\sigma_\sharp}_\homot
	\\
	&
	P(\sigma f,\sigma g)
	\ar[dd]^(0.26){\pi(\sigma f, \sigma g)}
	\ar[rr]
	&&
	BG^I
	\ar[dd]^(0.26){(\ev_0,\ev_1)}
	\\
	B
	\ar@{=}[dr]
	\ar[rr]^(0.26){(f,g)}|!{[ur];[dr]}\hole
	&&
	BG\times BG
	\ar[dr]_(0.4){\sigma\times\sigma}^{\homot}
	\\
	&
	B
	\ar[rr]^(0.26){(\sigma f,\sigma g)}
	&&
	BG\times BG
}\]
where the back and front faces are the defining pullback squares for $\pi(f,g)$
and $\pi(\sigma f, \sigma g)$, respectively, 
and the map from $P(f,g)$ to $P(\sigma f, \sigma g)$
is the unique one making the cube commutative. By \cite[Prop.~13.3.14]{Hirschhorn},
applied with the Strøm model structure on topological spaces, 
this map is a homotopy equivalence, implying the claim.

Part~(\ref{it:homot}):
Choose homotopies $F\colon f_0\homot f_1$ and $G\colon g_0 \homot g_1$.
Let $j_i \colon B \to B\times I$  be the inclusion $b \mapsto (b,i)$, $i=0,1$,
and let $\bar{j}_i \colon P(f_i,g_i) \to P(F,G)$
be the map $(b,\gamma)\mapsto ((b,i),\gamma)$,
$i=0,1$.
The claim now follows from the zigzag
\[\xymatrix{
	P(f_0,g_0)
	\ar[d]_{\pi(f_0,g_0)}
	\ar[r]^{\bar{j}_0}
	&
	P(F,G)
	\ar[d]_{\pi(F,G)}
	&
	P(f_1,g_1)
	\ar[d]^{\pi(f_1,g_1)}
	\ar[l]_{\bar{j}_1}
	\\
	B
	\ar[r]^-{j_0}_-{\homot}
	&
	B\times I
	&
	B
	\ar[l]_-{j_1}^-{\homot}
}\]
where $\bar{j}_0$ and $\bar{j}_1$ are homotopy equivalences
by the proof of part~(\ref{it:precomp}).
\end{proof}

\begin{proof}[Proof of Proposition~\ref{prop:dsubgrp}]
By Theorem~\ref{thm:strtoptezukacrit},
the homotopy class of a self homotopy equivalence
$\sigma \colon BG \to BG$
belongs to $D$ if and only if $H^\ast(BG^{h\sigma})$
is free of rank $1$ over $\bbH^\ast(LBG)$.
The cohomology of $BG^{h\id_{BG}} = LBG$ is certainly
free of rank $1$ over itself, so $[\id_{BG}] \in D$.
For a self homotopy equivalence $\sigma$ of $BG$,
let us write $\ev(\sigma)$ for the evaluation map
$\ev_1 \colon BG^{h\sigma} \to BG$.
By Proposition~\ref{prop:specialcasesofpathspaceconstr}\eqref{it:bghsigma},
we may identify $\ev(\sigma)$ with the fibration
$\pi(\sigma,\id_{BG})\colon P(\sigma,\id_{BG}) \to BG$.
By Lemma~\ref{lm:equivalentfibrationcriteria}, we have
\[
	\ev(\sigma^{-1}) 
	= 
	\pi(\sigma^{-1},\id_{BG})
	\sim
	\pi(\sigma^{-1}\sigma,\sigma)
	\sim
	\pi(\id_{BG},\sigma)
	\sim
	\pi(\sigma,\id_{BG})
	=
	\ev(\sigma)
\]
and 
\[
	\ev(\alpha\sigma\alpha^{-1})
	=
	\pi(\alpha\sigma\alpha^{-1},\id_{BG})
	\sim
	\pi(\alpha\sigma\alpha^{-1}\alpha,\alpha)
	\sim
	\pi(\alpha\sigma,\alpha)
	\sim
	\pi(\sigma,\id_{BG})
	=
	\ev(\sigma)	
\]
for self homotopy equivalences $\sigma$ 
and inverse self homotopy equivalences $\alpha$ and $\alpha^{-1}$ of $BG$,
so Lemma~\ref{lm:equivfreerank1}
implies that $D$ is closed under conjugation and taking inverses.

It remains to show that $D$ is closed under composition.
Suppose $[\sigma],[\sigma'] \in D$.
Our aim is to show that $[\sigma\sigma'] \in D$.
In view of Theorem~\ref{thm:strtoptezukacrit} and the
equivalence between (\ref{it:xexists})
and (\ref{it:permanentcycle2}) in Theorem~\ref{thm:conj-red3},
it suffices to show that the generator of
\[
	\bbE^{0,0}_2 (P(\sigma\sigma',\id_{BG}))\isom \F_\ell
\]
in the shifted Serre spectral sequence of 
$\pi(\sigma\sigma',\id_{BG})\colon P(\sigma\sigma',\id_{BG}) \to BG$
is a permanent cycle. Under our pairings between spectral 
sequences, this generator factors as the product of the 
generators of $\bbE^{0,0}_2 (P(\sigma',\id_{BG}))$
and $\bbE^{0,0}_2 (P(\sigma\sigma',\sigma'))$,
so it is enough to show that these two factors 
are permanent cycles.
That the first factor is a permanent cycle 
follows from the assumption that $[\sigma'] \in D$ 
by Theorem~\ref{thm:strtoptezukacrit} and
the equivalence between
(\ref{it:xexists})
and (\ref{it:permanentcycle2}) 
in Theorem~\ref{thm:conj-red3}.
That the second factor is also 
a permanent cycle follows similarly from the 
assumption that $[\sigma] \in D$
by noting that Lemma~\ref{lm:equivalentfibrationcriteria}(\ref{it:precomp})
implies that
\[
	\pi(\sigma\sigma',\sigma') \sim \pi(\sigma,\id_{BG}) = \ev(\sigma'),
\]
and observing that equivalent fibrations have isomorphic Serre spectral sequences.
\end{proof}

\subsection{The subgroup \texorpdfstring{$D$}{D} is closed and of finite index 
in \texorpdfstring{$\Out(BG)$}{Out(BG)}}
\label{subsec:Dclosed}

In this subsection, we will show that the subgroup $D \leq \Out(BG)$
is a closed finite-index subgroup
and finish the proof of Theorem~\ref{thm:tezukasubgrp}.
We will need the following theorem, which is a slight strengthening of a result
that traces back to \cite{BM07}.
\begin{thrm}
\label{thm:closedsubgrphelp} Let $BG$ be a connected $\ell$--compact
group (not necessarily semisimple),
and suppose that $[\sigma],[\sigma'] \in \Out(BG)$ 
generate the same
closed subgroup of $\Out(BG)$. 
Then $BG^{h\sigma}$ and $BG^{h\sigma'}$ are homotopy equivalent over
$BG$, i.e., we can choose a homotopy equivalence so that the following
diagram commutes
\[\xymatrix{
	BG^{h\sigma}
	\ar[dr]_{\ev_1}
	\ar[rr]^\homot
	&&
	BG^{h\sigma'}
	\ar[dl]^{\ev_1}
	\\
	&
	BG
}\]
In particular $[\sigma] \in D$ 
if and only if $[\sigma'] \in D$.
\end{thrm}

\begin{proof}[Proof of Theorem~\ref{thm:closedsubgrphelp}]
Note that $H^*(BG)$
is Noetherian by \cite[Thm.~2.4]{DW94} and the topology on
$\Out(BG)$ is Hausdorff by Lemma~\ref{lm:outbghausdorff}.
Hence the assumptions of \cite[Thm.~2.4]{BrotoMoellerOliver} (generalizing
\cite[Prop.~6.5]{BM07}) are satified, and that result produces a homotopy equivalence
$BG^{h\sigma} \xto{\homot} BG^{h\sigma'}$. An inspection of the proof
of that result
furthermore shows that this homotopy equivalence
can be chosen so that the indicated diagram commutes.
\end{proof}

The theorem implies the following.
\begin{lemma}
\label{lm:closedsubgrp2}
When $BG$ is semisimple,
$x\in D$ implies $\overline{\langle x \rangle} \leq D$ where
$\overline{\langle x \rangle}$ is the closure of $\langle x\rangle$ in $\Out(BG)$.
\end{lemma}
\begin{proof}
  By Proposition~\ref{prop:topcomparison}, $\Out(BG)$ is isomorphic to
  $\Out(\bbD_G)$ also as a topological group, and by Proposition~\ref{prop:outDstructure2},
  $\Out(\bbD_G)$ contains an open subgroup of finite index isomorphic to a
  pro-$\ell$--group.
  Hence some finite power of $x$, say $x^m$,
belongs to an open pro-$\ell$--subgroup of $\Out(BG)$.
The closed procyclic subgroup 
$\overline{\langle x^m\rangle} \leq \Out(BG)$
generated by $x^m$ is therefore a pro-$\ell$-group,
and hence isomorphic either to 
$\Z_\ell$ or $\Z/\ell^n$ for some finite $n$.
In either case, every closed subgroup of 
$\overline{\langle x^m\rangle}$ 
is topologically generated by some power of $x^m$.
Thus, if $y\in \overline{\langle x^m\rangle}$,
then $\overline{\langle y\rangle} = \overline{\langle x^{mk} \rangle}$
for some $k$, and therefore by Theorem~\ref{thm:closedsubgrphelp}
the element
$y$ belongs to $D$ since by Proposition~\ref{prop:dsubgrp} $x^{mk}$ does. 
We conclude that $\overline{\langle x^m \rangle} \subset D$.
Consequently by Proposition~\ref{prop:dsubgrp}
\[
	\overline{\langle x \rangle}
	= 
	\bigcup_{k=0}^{m-1} x^k \overline{\langle x^m \rangle}
	\subset
	D
\]
as claimed.
\end{proof}

\begin{prop}
\label{prop:Dopen}
Suppose $BG$ is a semisimple $\ell$--compact group. Then the set $D$ is an open subset of $\Out(BG)$.
\end{prop}
\begin{proof}
It suffices to show that the subgroup $D \leq \Out(BG)$ contains some open subgroup of 
$\Out(BG)$. 
By the assumption that $BG$ is semisimple we have $\pi_1(\bbD_G)\tensor \Q = 0$,
so by Propositions~\ref{prop:topcomparison} and \ref{prop:outDstructure2},
$\Out(BG)$ contains an open subgroup $U$ isomorphic
to $(\Z_\ell)^t$ for some $t$. Choose a set of topological generators 
$x_1, \ldots, x_t$ for $U$. 
By Theorem~\ref{thm:ss-comparison2}, for each $i$, a power
$x_i^{n_i}$ of $x_i$ lies in $D$, and by Lemma~\ref{lm:closedsubgrp2}, so
does $\overline{\langle x_i^{n_i}\rangle}$. It follows that $D$ contains
the subgroup
\[
	U' 
	= 
	\overline{\langle x_1^{n_1}\rangle} \cdots \overline{\langle x_t^{n_t}\rangle}
	\leq
	U
\]
generated by the subgroups $\overline{\langle x_i^{n_i}\rangle}$. 
On the other hand, $U'$ is closed and finite-index in $U$, and hence open in $U$
and therefore in $\Out(BG)$. The claim follows.
\end{proof}

\begin{proof}[Proof of Theorem~\ref{thm:tezukasubgrp}]
Since open subgroups of the compact group $\Out(BG)$ are
finite-index and closed, the first half of the theorem follows from 
Propositions~\ref{prop:dsubgrp} and~\ref{prop:Dopen}, 
noting that we already made the observation in 
Corollary~\ref{cor:tezukaimpliesid} that 
every $\sigma \in D$ acts trivially on $H^*(BG)$.

The final part also follows from what we have proven so far together with 
standard facts about closed finite-index subgroups of $\Z_\ell^\times$. 
Let $H = 1 + 2 \ell\,\Z_\ell < \Z_\ell^\times$.
By Corollary~\ref{cor:ladicunitstooutbgcont}, the homomorphism 
\[
	\phi \colon H \longto \Out(BG),\qquad q \longmapsto \psi^q.
\]
is continuous, so
it follows from the first part of the theorem
that the preimage $\phi^{-1} (D)$ is a closed finite-index subgroup of $H$.
Since any such subgroup of $H$ is of the form $1 + 2\ell^k \,\Z_\ell$
for some $k\geq 1$, the claim follows.
\end{proof}

\begin{rem}[Generalizing Proposition~\ref{prop:Dopen} to connected $\ell$--compact groups]
\label{rk:Dopengeneralization}
The proof of Proposition~\ref{prop:Dopen} is the 
only place in the proofs of Theorems~\ref{thm:mainresult}, 
\ref{thm:strtoptezukacrit} and \ref{thm:tezukasubgrp}
where we make use of the assumption that $BG$ is a semisimple 
(as opposed to just connected) $\ell$--compact group
that goes beyond simply ensuring that the assumptions of
Proposition~\ref{prop:pfgobjinf} are satisfied. To help clear the way 
for the potential generalization of Theorem~\ref{thm:tezukasubgrp}
from semisimple to arbitrary connected $\ell$--compact groups,
we now indicate the extra argument required to prove
Proposition~\ref{prop:Dopen} in that greater generality
assuming that the other results of Section~\ref{sec:funclass2}
have already been generalized to connected $\ell$--compact groups.

In the proof of Proposition~\ref{prop:Dopen}, the extra use of
semisimplicity went to ensuring that $\Out(BG)$ 
has an open subgroup $U$ isomorphic to $(\Z_\ell)^t$ for some $t$.
In the case of a general connected $\ell$--compact group $BG$,
Propositions~\ref{prop:topcomparison} and \ref{prop:outDstructure2}
only imply the existence of an open subgroup $U$ of $\Out(GB)$
of the form $U = \Gamma_s \times (\Z_\ell)^t$ 
where $\Gamma_s$ is the principal congruence subgroup
\[
	\Gamma_s = \ker(\GL_n(\Z_\ell) \to \GL_n(\Z/\ell^s))
\]
for $n = \dim_{\Q_\ell}(\pi_1(\bbD_G)\tensor \Q)$.
To deal with the extra $\Gamma_s$--factor, we may argue as follows.
For $x\in \Z_\ell$ and $1\leq i,j \leq n$, 
write $E_{ij}(x)$ for the $(n\times n)$--matrix
whose $(i,j)$--th entry is $x$ and which is zero otherwise, and 
write $A_{ij}(x)$ for the matrix $A_{ij}(x) = I_n + E_{ij}(x)$.
We note that the maps $(1+x) \mapsto A_{ii}(x)$ and $x \mapsto A_{ij}(x)$
for $i\neq j$ are continuous embeddings from 
the multiplicative group $1 + 2\ell\Z_\ell \leq \Z_\ell^\times$
and  the additive group $\Z_\ell$ into $\GL_n(\Z_\ell)$,
respectively. Notice also that $A_{ij}(\ell^s) \in \Gamma_s$
for all $1\leq i,j \leq n$. 
With the aid of Theorem~\ref{thm:ss-comparison2},
we may find powers $k_1$ and $k_2$ such that $A_{ii}(\ell^s)^{k_1} \in D$
for all $1\leq i \leq n$ and $A_{ij}(\ell^s)^{k_2} \in D$ for all $i \neq j$.
Moreover, we may choose $k_1$ and $k_2$ so that 
$A_{ii}(\ell^s)^{k_1} = A_{ii}(z_1)$ and $A_{ij}(\ell^s)^{k_2} = A_{ij}(z_2)$
where the $\ell$--adic valuations of the elements $z_1, z_2 \in \Z_\ell$ 
are equal to some $s'$ (where necessarily $s' > s$). By Lemma~\ref{lm:closedsubgrp2},
we have $\overline{\langle  A_{ii}(z_1)\rangle} \leq D$
and $\overline{\langle  A_{ij}(z_2)\rangle} \leq D$
for $1\leq i,j \leq n$, $i\neq j$,
so $D$ contains the matrix $A_{ij}(z)$ for all
$z\in \ell^{s'}\Z_\ell$, $1 \leq i,j \leq n$.
But the elementary row operations represented by these matrices suffice
to reduce an arbitrary matrix in $\Gamma_{s'}$ to the identity matrix,
so we must have $\Gamma_{s'} \leq D$.
Combining this with the argument for the $(\Z_\ell)^t$--factor
given in the proof of Proposition~\ref{prop:Dopen},
we see that $D$ contains an open subgroup $U'$ of $U$, and the claim 
follows.
\end{rem}

\section{Fundamental classes for finite groups of Lie type: Proof of
  Theorem~\ref{thm:examples}}
\label{sec:funclass1}

Our aim in this section is to  prove Theorem~\ref{thm:examples}.
We begin by making some observations about products in
Section~\ref{subsec:productsfund}. 
In Section~\ref{subsec:spin}, we deal with the
$\Spin(n)$ case.
Finally, in Section~\ref{subsec:putittogether},
we put it all together and prove Theorem~\ref{thm:examples}.

\subsection{Products}\label{subsec:productsfund}
Before proving Theorem~\ref{thm:examples},
we will make some general observations about products.

\begin{lemma}\label{lem:productsfund} 
Suppose $BG$ and $BH$ are connected $\ell$--compact
groups, and let $\sigma \co BG \to BG$ and $\tau \co BH \to BH$
be maps such that $BG^{h\sigma}$ and $BH^{h\tau}$ have
$[G]$-- and $[H]$--fundamental classes, respectively.
Then $(BG \times BH)^{h(\sigma\times \tau)}$
has a $[G \times H]$--fundamental class.
\end{lemma}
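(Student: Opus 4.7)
The plan is to reduce the lemma to a direct Künneth computation by exploiting the fact that homotopy fixed points commute with products. Explicitly, using the definition $X^{h\phi} = \{\alpha\co I \to X \mid \phi\alpha(1) = \alpha(0)\}$ from the introduction, there is a canonical homeomorphism
\[
	(BG \times BH)^{h(\sigma \times \tau)}
	\xto{\ \approx\ }
	BG^{h\sigma} \times BH^{h\tau},
	\quad
	\alpha \longmapsto (\mathrm{pr}_{BG}\circ \alpha, \mathrm{pr}_{BH}\circ \alpha),
\]
and this homeomorphism fits into a commutative diagram with the two evaluation-at-$1$ fibrations to $BG \times BH$. Passing to homotopy fibres over the basepoint therefore identifies the fibre inclusion $i_{G\times H} \co G \times H \to (BG\times BH)^{h(\sigma\times\tau)}$ with the product $i_G \times i_H$ of the fibre inclusions featuring in Definition~\ref{defn:fundclass}.

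Next I would verify that the relevant top class behaves well under this identification. The $\ell$--compact group $BG\times BH$ has dimension $d_G + d_H$, and the Künneth isomorphism gives $H_{d_G+d_H}(G\times H) \isom H_{d_G}(G)\tensor H_{d_H}(H) \isom \F_\ell$, with a generator $[G\times H]$ corresponding (up to a unit) to $[G]\tensor [H]$. Applying $(i_G\times i_H)_\ast$ and using the naturality of the cross product, the image of $[G\times H]$ in $H_{d_G+d_H}(BG^{h\sigma}\times BH^{h\tau})$ is, up to a unit in $\F_\ell$, the cross product $i_G^{}{}_\ast[G] \times i_H^{}{}_\ast [H]$ of the two $[G]$-- and $[H]$--fundamental classes under the Künneth map
\[
	H_{d_G}(BG^{h\sigma}) \tensor H_{d_H}(BH^{h\tau})
	\longto
	H_{d_G+d_H}(BG^{h\sigma} \times BH^{h\tau}).
\]

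The key step is then the observation that this cross product is nonzero. By hypothesis, $i_G^{}{}_\ast[G] \neq 0$ and $i_H^{}{}_\ast[H] \neq 0$, and since we are working with field coefficients the Künneth map above is an isomorphism, so the tensor product of two nonzero classes is sent to a nonzero class. This shows that the canonical class $(i_{G\times H})_\ast[G\times H]$ is nontrivial, which is exactly the content of $(BG\times BH)^{h(\sigma\times\tau)}$ having a $[G\times H]$--fundamental class. I don't anticipate any real obstacle here; the main point is just to set up the identifications carefully so that the appeal to Künneth is unambiguous.
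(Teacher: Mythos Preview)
Your proposal is correct and follows essentially the same route as the paper: the paper's proof is a one-liner observing that the fibre sequence $G\times H \to (BG\times BH)^{h(\sigma\times\tau)} \to BG\times BH$ is the product of the corresponding fibre sequences for $G$ and $H$, and your argument simply spells out the resulting K\"unneth computation that the paper leaves implicit.
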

\begin{proof}
The claim follows from the observation that the fibre 
sequence
\[
	G\times H 
	\longto 
	(BG \times BH)^{h(\sigma\times \tau)}
	\longto 
	BG\times BH	
\]
of equation~\eqref{eq:fibseq} for $G\times H$
is the product of the analogous fibre sequences for $G$ and $H$.
\end{proof}
The next lemma is an analogue of a well-known result for finite groups of Lie type \cite[Exercise~30.2]{MT11}.
\begin{lemma}\label{lem:permute}
Let $BH$ be a simple $\ell$--compact group, and let
$BG = BH^n$.
Suppose $\sigma \in \Out(BG)$ is an element 
(of any order, infinite or finite) such that
the permutation of $\{1,\ldots,n\}$ induced by $\sigma$
\cite[Prop.~8.14]{AG09} is transitive.
Then $\sigma^n = \diag(\rho,\ldots,\rho)$ for  some $\rho \in
\Out(BH)$ and 
\[
	BG^{h\langle\sigma\rangle} \homot BH^{h\langle \rho\rangle}.
\]
\end{lemma}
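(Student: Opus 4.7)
Since $BH$ is simple, $\Out(BG) \isom \Out(\bbD_G) \isom \Out(\bbD_H)\wr \Sigma_n$ by \cite[Prop.~8.14]{AG09}. Write $\sigma = (\vec\rho;\pi)$ with $\vec\rho=(\rho_1,\ldots,\rho_n) \in \Out(\bbD_H)^n$ and $\pi \in \Sigma_n$, acting on $BG = BH^n$ by $\sigma(h_1,\ldots,h_n)_i = \rho_i(h_{\pi^{-1}(i)})$. Since $\pi$ is transitive on $n$ letters it is an $n$-cycle, and after relabelling factors we may take $\pi = (1\,2\,\cdots\,n)$. The strategy is: (i) use that $BG^{h\sigma}$ depends only on the conjugacy class of $\sigma$ in $\Out(BG)$ to reduce $\sigma$ to a normal form in which all but one of the $\rho_i$'s is trivial; (ii) read off $\sigma^n$ directly from that normal form; and (iii) construct an explicit homotopy equivalence $BG^{h\sigma} \homot BH^{h\rho}$ by concatenation of paths.

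For (i), conjugation of $(\vec\rho;\pi)$ by $(\vec\tau;1)$ replaces $\rho_i$ by $\tau_i^{-1}\rho_i\tau_{i-1}$ (indices mod $n$); solving recursively with $\tau_n = 1$ and $\tau_i = \rho_i\cdots\rho_1$ for $i<n$ yields a new representative with $\rho_i' = 1$ for $i\neq n$ and $\rho_n' = \rho := \rho_n\rho_{n-1}\cdots\rho_1$. With $\sigma$ in this normal form, a straightforward induction in the wreath product gives $\sigma^k = (\vec{\rho}^{(k)};\pi^k)$ with $\rho^{(k)}_i = \rho_i'\rho_{i-1}'\cdots\rho_{i-k+1}'$; at $k=n$ the indices in this product run through every residue mod $n$ exactly once, so each $\rho^{(n)}_i$ picks up exactly one nontrivial factor $\rho_n' = \rho$, giving $\sigma^n = \diag(\rho,\ldots,\rho)$ as claimed.

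For (iii), I would use the path-space model $BG^{h\sigma} = \{\alpha\in BG^I : \sigma(\alpha(1))=\alpha(0)\}$ recalled in the introduction. Decomposing $\alpha=(\alpha_1,\ldots,\alpha_n)$ into its $BH$-components, the normalized fixed-point condition becomes $\alpha_j(0) = \alpha_{j-1}(1)$ for $j\neq n$ (with $\alpha_0 := \alpha_n$) together with $\alpha_n(0) = \rho(\alpha_{n-1}(1))$, so the $n$ paths fit together, after application of a chosen self-equivalence realising $\rho$ (still denoted $\rho$) to the first $n-1$ of them, into a single path
\[
  \gamma := (\rho\alpha_1)*(\rho\alpha_2)*\cdots*(\rho\alpha_{n-1})*\alpha_n
\]
in $BH$ satisfying $\gamma(0) = \rho(\alpha_n(1)) = \rho(\gamma(1))$, i.e.\ $\gamma\in BH^{h\rho}$. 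This defines the concatenation map $BG^{h\sigma}\to BH^{h\rho}$, with an evident inverse obtained by subdividing a $\rho$-loop $\gamma$ into $n$ equal-length subpaths $\gamma_1,\ldots,\gamma_n$ and sending it to $(\rho^{-1}\gamma_1,\ldots,\rho^{-1}\gamma_{n-1},\gamma_n)$, using a chosen homotopy inverse $\rho^{-1}$ of $\rho$. The main technical nuisance, rather than a genuine obstacle, is keeping the wreath-product conventions consistent so that indices in the concatenation align with the fixed-point condition; the two composites are then homotopic to the identity via the standard reparameterisation homotopy for iterated concatenations combined with $\rho\rho^{-1}\simeq\id$.
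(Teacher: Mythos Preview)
Your proof is correct, and it takes a genuinely different route from the paper's. For step~(iii), the paper argues abstractly: it invokes Proposition~\ref{autBTtoBG} to lift $\sigma$ to a coherent homotopy action of $\langle\sigma\rangle$ on $BG$, then applies transitivity of homotopy fixed points for the normal subgroup $\langle\sigma^n\rangle\lhd\langle\sigma\rangle$ to get $BG^{h\langle\sigma\rangle}\homot\big((BH^{h\langle\rho\rangle})^n\big)^{hC_n}$, and finishes with Shapiro's lemma \cite[Lem.~10.5 and 10.8]{DW94}. Your path-concatenation argument is more hands-on: it works directly in the path-space model and never needs to promote $\sigma$ to a group action or invoke the transitivity/Shapiro machinery. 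The trade-off is that your argument is longer and requires some bookkeeping with reparametrisation homotopies, whereas the paper's is a clean two-line reduction to standard references. Your normalization in step~(i) is also a point of difference: the paper asserts $\sigma^n=\diag(\rho,\ldots,\rho)$ in one line without conjugating, which is a bit quick (the diagonal entries of $\sigma^n$ are a priori only cyclic conjugates of one another), so your reduction to a normal form is actually more careful on this point, and has the side benefit of making the concatenation map in~(iii) land directly in $BH^{h\rho}$ rather than $BH^{h\rho_i}$ for varying conjugates $\rho_i$.
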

\begin{proof}
By \cite[Thm.~1.2 and Prop.~8.14]{AG09} $\Out(BG) \isom \Out(\bbD_G)
\isom \Out(\bbD_H)\wr \fS_n$. 
Hence $\sigma^n = \diag(\rho,\ldots, \rho)$ 
for some $\rho \in \Out(\bbD_H)$.
Note that by Proposition~\ref{autBTtoBG}, $\sigma$ and $\rho$
determine well-defined homotopy actions on $BG$ and $BH$. 
Furthermore,
$$(BG)^{h\langle \sigma\rangle} \homot
((BG)^{h\langle\sigma^n\rangle})^{hC_n} \homot
((BH^{h\langle\rho\rangle})^n)^{hC_n} \homot BH^{h\langle \rho\rangle}$$
where the first homotopy equivalence follows by transitivity of homotopy
fixed points, see\ e.g.\ \cite[Lem.~10.5]{DW94}, and the last one by
``Shapiro's lemma,'' see e.g.\ \cite[Lem.~10.8]{DW94}.
\end{proof}

We need to know that the untwisting process cannot introduce the summands excluded in Theorem~\ref{thm:examples}. The proof uses some case-by-case observations, but only on the level of root data.

\begin{prop}

\label{prop:bghtfornonexceptional2cptgrps}
Suppose $BG$ is a simply connected $2$--compact group 
not containing any $E_6$, $E_7$, and $E_8$ summands. Then for 
any $\tau \in \Out(BG)$ of finite odd order, the homotopy fixed point
space $BG^{h\langle\tau\rangle}$ is a simply connected $2$--compact group
not containing these summands. Moreover, if in addition $BG$ does not contain
any $\Spin(n)$--summands for $n\geq 10$, neither does $BG^{h\langle\tau\rangle}$.
\end{prop}

\begin{proof}
That $BG^{h\langle\tau\rangle}$ is again a simply connected $2$--compact
group follows from Proposition~\ref{prop:ghksplit}.
By \cite[Thm.~1.4]{dw:split} (or \cite[Thm.~1.2 and Prop.~8.12]{AG09})
$BG$ splits as a product
$BG \homot \prod_{i=1}^s (BK_i)^{m_i}$ for some
non-isomorphic simple simply-connected $2$--compact groups $BK_i$,
and  by 
\cite[Thm.~1.2 and Prop.~8.14]{AG09}
we then have $\Out(BG) \isom \prod_{i=1}^s\Out({BK_i})\wr \fS_{m_i}$.
Consequently, we can write $BG$ as a product
$BG \homot \prod_{i=1}^t (BH_i)^{n_i}$ for (potentially isomorphic)
simple and simply-connected 
$BH_i$ such that $\tau$
permutes the factors of 
each product  $(BH_i)^{n_i}$ transitively.
Thus  Lemma~\ref{lem:permute} implies that 
\[
	BG^{h\langle \tau \rangle} \homot  \prod_{i=1}^t (BH_i)^{h\langle \rho_i\rangle}
\]
for some odd-order elements $\rho_i \in \Out(BH_i)$.
The possible twistings $\rho_i$ can be read off from \cite[Thm.~13.1]{AGMV08}.
A glance at that list reveals that there is in fact only one possibility 
for a nontrivial $\rho_i$ of odd order, 
namely the triality graph automorphism $\rho$ when
$BH_i \homot B\Spin(8)\twocom$.
The resulting $2$--compact group
$(B\Spin(8)\twocom)^{h\langle \rho \rangle}$ certainly does not have
an $E_i$--summand for rank reasons (and in fact it is easily seen to be
$(BG_2)\twocom$, as one would expect). Thus the claim follows.
\end{proof}

\subsection{Fundamental classes for \texorpdfstring{$B\Spin(n)$}{BSpin(n)}}
\label{subsec:spin}

We now prove the existence of a fundamental class in the $\Spin(n)$ 
case for $\sigma = \psi^q$, 
building on the work of Kameko \cite{Kameko-spin}.
\begin{prop}
\label{prop:spin-case}
$(B\Spin(n)\twocom)^{h\psi^q}$
has a $[\Spin(n)\twocom]$--fundamental class for all 
$q\in \Z_2^\times$ and $n \geq 2$.
\end{prop}
\begin{proof}
Since the mod 2 cohomology of $B\Spin(2)\twocom$ is a polynomial ring,
the claim for $n=2$ follows from 
Propositions~\ref{prop:polycollapse} and \ref{prop:idmappoly}.
Let us assume that $n\geq 3$.
For brevity, let us write $BSO_n$ and $B\Spin_n$ for $BSO(n)\twocom$
and $B\Spin(n)\twocom$, respectively.

Let $D$ be as in Theorem~\ref{thm:tezukasubgrp}.
By Corollary~\ref{cor:ladicunitstooutbgcont}, the homomorphism 
\[
	\phi \colon \Z_2^\times \longto \Out(BG),\qquad q \longmapsto \psi^q.
\]
is continuous,
so it follows from Theorem~\ref{thm:tezukasubgrp}
that the preimage $\phi^{-1}(D)$ is a closed subgroup of $\Z_2^\times$.
Since the closed subgroup of $\Z_2^\times$
generated by the elements $3$ and $5\in \Z_2^\times$
is all of $\Z_2^\times$, 
it follows that
it is enough to prove that $(B\Spin_n)^{h\psi^q}$ 
has a fundamental class when $q=3$ or $q=5$.
So let us assume that $q$ is one of these numbers;
in fact, all that matters for the argument that follows
is that $q$ is an odd prime power. By choosing suitable
models for $BSO_n$ and $B\Spin_n$ and for the self-maps
$\psi^q$ of $BSO_n$ and $B\Spin_n$,
we may assume the following:
\begin{enumerate}
\item The maps $\psi^q \co BSO_n \to BSO_n$ and 
	$\psi^q\co B\Spin_n \to B\Spin_n$
	are basepoint-preserving.
\item The map $p\co B\Spin_n \to BSO_n$
	induced by the projection $\Spin_n \to SO_n$
	preserves basepoints and commutes with the action 
	of the maps $\psi^q$.
\end{enumerate}
For example, by replacing $\psi^q \co BSO_n \to BSO_n$
by a homotopic map if necessary, we may assume that 
it preserves the basepoint, after which models for
$B\Spin_n$ and $\psi^q \co B\Spin_n \to B\Spin_n$
with the desired properties can be obtained
by passing to functorial $2$--connected covers.

Let $E_{n,q}$ and $E_n$ be the spaces obtained as pullbacks
\begin{equation}
\label{eq:epbsqs}
    \vcenter{\xymatrix{
    	E_{n,q}
		\pb
    	\ar[r]
    	\ar[d]
    	&
    	BSO_n^{h\psi^q}
    	\ar[d]^{\ev_1}
    	\\
    	B\Spin_n
    	\ar[r]^{p}
    	&
    	BSO_n	
    }}
    \qquad\text{and}\qquad
    \vcenter{\xymatrix{
    	E_{n}
		\pb
    	\ar[r]
    	\ar[d]
    	&
    	LBSO_n^{\vphantom{h\psi^q}}
    	\ar[d]^{\ev_1}
    	\\
    	B\Spin_n
    	\ar[r]^{p}
    	&
    	BSO_n	
    }}
\end{equation}
Let 
\[
	\pi_{n,q} \co B\Spin_n^{h\psi^q} \longto E_{n,q}
	\qquad\text{and}\qquad
	\pi_{n} \co LB\Spin_n \longto E_n
\]
be the maps over $B\Spin_n$ induced by the evaluation maps
\[
	\ev_1 \co B\Spin_n^{h\psi^q} \longto B\Spin_n
	\qquad\text{and}\qquad
	\ev_1 \co LB\Spin_n  \longto B\Spin_n	
\]
and the maps
\[
	B\Spin_n^{h\psi^q} \longto BSO_n^{h\psi^q}
	\qquad\text{and}\qquad
	LB\Spin_n \longto LBSO_n
\]
induced by $p$. Then, up to homotopy, $\pi_{n,q}$ and $\pi_n$
are two-fold covering spaces, and there are maps 
of fibre sequences
\[
    \vcenter{\xymatrix{
    	\Z/2 
    	\ar[r] 
    	\ar@{=}[d]
    	&
    	\loops B\Spin_n 
    	\ar[r]^-{\loops p}
    	\ar[d]^{i_{n,q}}
    	&
    	\loops BSO_n
    	\ar[d]^{j_{n,q}}
    	\\
    	\Z/2
    	\ar[r]
    	&
    	B\Spin_n^{h\psi^q}
    	\ar[r]^-{\pi_{n,q}}
    	&
    	E_{n,q}
    }}
    \qquad\text{and}\qquad
    \vcenter{\xymatrix{
    	\Z/2 
    	\ar[r] 
    	\ar@{=}[d]
    	&
    	\loops B\Spin_n 
    	\ar[r]^-{\loops p}
    	\ar[d]^{i_n}
    	&
    	\loops BSO_n
    	\ar[d]^{j_n}
    	\\
    	\Z/2
    	\ar[r]
    	&
    	LB\Spin_n^{\phantom{h}}
    	\ar[r]^-{\pi_{n}}
    	&
    	E_{n}
    }}
\]
where the vertical arrows are inclusions of fibres of 
the various spaces over $B\Spin_n$ over the basepoint.
We obtain the following commutative
diagram whose rows are long exact Gysin sequences:
\[\xymatrix{
	&
	\mathllap{\cdots\longto}\,
	H^d(E_{n,q})
	\ar[r]^-{\pi_{n,q}^\ast}
	\ar[d]_{j_{n,q}^\ast}
	&
	H^d (B\Spin_n^{h\psi^q})
	\ar[r]^-{(\pi_{n,q})_!}
	\ar[d]_{i_{n,q}^\ast}%
	&	
	H^d(E_{n,q})
	\ar[r]^-{\cup e_{n,q}}
	\ar[d]_{j_{n,q}^\ast}
	&
	H^{d+1}(E_{n,q})
	\ar[d]_{j_{n,q}^\ast}
	\,\mathrlap{\longto\cdots}
	&
	\\
	&
	\mathllap{\cdots\longto}\,
	H^d(\loops BSO_n)
	\ar[r]^{(\loops p)^\ast}%
	&
	H^d(\loops B\Spin_n)
	\ar[r]^-{(\loops p)_!}
	&
	H^d(\loops BSO_n)
	\ar[r]^-{\cup e}
	&
	H^{d+1}(\loops BSO_n)
	\,\mathrlap{\longto\cdots}
	&
	\\
	&
	\mathllap{\cdots\longto}\,
	H^d(E_{n})
	\ar[r]^-{\pi_{n}^\ast}
	\ar[u]^{j_n^\ast}
	&
	H^d (LB\Spin_n)
	\ar[r]^-{(\pi_{n})_!}
	\ar[u]^{i_n^\ast}
	&
	H^d(E_{n})
	\ar[r]^-{\cup e_{n}}
	\ar[u]^{j_n^\ast}
	&
	H^{d+1}(E_{n})
	\ar[u]^{j_n^\ast}
	\,\mathrlap{\longto\cdots}
	&
}\]
Here $d= \dim SO_n =  \dim \Spin_n$
and $e$, $e_n$ and $e_{n,q}$ are 
the Euler classes for the respective double covers.

Our task is to show that the map $i_{n,q}^\ast$ in the above
diagram is nonzero. To do this,
it suffices to show that the composite map
\[
	(\loops p)_! \circ i_{n.q}^\ast 
	= 
	j_{n,q}^\ast \circ (\pi_{n,q})_!
	\co
	H^d(B\Spin_n^{h\psi^q})
	\longto
	H^d(\loops BSO_n)
\]
is nonzero. Thus it is enough to show that there 
exists an element $x\in H^d(E_{n,q})$ such that $j_{n,q}^\ast(x) \neq 0$
and $x \cup e_{n,q} = 0$.

To show the existence of such an element $x$,
we will compare the upper part of the diagram 
with the lower part. 
As in \cite[Props.~4.1 and 4.3]{Kameko-spin},
the Eilenberg--Moore spectral sequences for the 
pullback squares \eqref{eq:epbsqs} yield
ring isomorphisms
\[
    H^\ast(E_{n,q}) 
    \isom 
    H^\ast(B\Spin_n) \tensor_{H^\ast (BSO_n)} H^\ast(BSO_n^{h\psi^q})
\]
and
\[
    H^\ast(E_{n}) 
    \isom 
    H^\ast(B\Spin_n) \tensor_{H^\ast (BSO_n)} H^\ast(LBSO_n).
\]
Let
\[
	k_{n,q} \co \loops BSO_n \longto BSO_n^{h\psi^q}
	\qquad\text{and}\qquad
	k_{n} \co \loops BSO_n \longto LBSO_n
\]
be inclusions of fibres of the evaluation fibrations
$BSO_n^{h\psi^q}\to BSO_n$ and $LBSO_n \to BSO_n$.
Under the above isomorphisms, the maps
$j_{n,q}^\ast$ and $j_n^\ast$
then correspond to the maps induced by
\[
	k_{n,q}^\ast\co H^\ast(BSO_n^{h\psi^q}) \longto H^\ast(\loops BSO_n)
	\qquad\text{and}\qquad
	k_{n}^\ast\co H^\ast(LBSO_n) \longto H^\ast (\loops BSO_n)
\] 
(along with the augmentation $H^\ast(B\Spin_n) \to \F_2)$,
respectively.
By \cite[Thm.~1.7]{Kameko-spin},
there exists a $H^\ast(BSO_n)$--algebra isomorphism
\[
	H^\ast(BSO_n^{h\psi^q}) \isom H^\ast(LBSO_n).
\]
Under this isomorphism, the maps  $k_{n,q}^\ast$ and $k_n^\ast$ agree;
to see this, observe that both halves of the diagram on p.\ 525
of Kameko's paper \cite{Kameko-spin} are pullbacks, so 
$k_{n,q}$ and $k_n$ factor (up to homotopy) through the same
map from $\loops BSO_n$ to Kameko's space $\tilde{B} A_{n-1}$.
We conclude that there exists a ring isomorphism
\begin{equation}
\label{iso:enqen}
	H^\ast(E_{n,q}) \isom H^\ast(E_n)
\end{equation}
under which the maps $j_{n,q}^\ast$ and $j_n^\ast$ agree.

Using for example the Serre spectral sequence, it is easy to see
that $H^1 E_{n,q} \isom H^1 E_n \isom \F_2$.
The Euler classes $e_{n,q} \in H^1 E_{n,q}$ and $e_n \in H^1 E_n$
both pull back to the class $e\in H^1 \loops BSO_n$, which in 
turn pulls back to the (nontrivial) Euler class of the 
double cover $\Spin_2 \to SO_2$. Thus the classes $e_{n,q}$
and $e_n$ must be the unique nontrivial degree $1$ classes
in their respective cohomology groups, and hence they must correspond
under the isomorphism \eqref{iso:enqen}.

We have reduced the task of constructing the desired class 
$x\in H^d E_{n,q}$ to the task of finding a class 
$y\in H^d E_n$ such that $j_n^\ast(y) \neq 0$ and $y\cup e_n = 0$.
By Theorem~\ref{thm:functoriality}, the map $i_n^\ast$
sends the class $s^d(\bbOne)\in H^d(LB\Spin_n)$ to the nontrivial class
$s^d(\bbOne)\in H^d(\loops B\Spin_n)$.
Moreover, since $H^{d+1}(\loops BSO_n) = 0$, the map $(\loops p)_!$ is an
epimorphism and hence an isomorphism, as its source and
target are both one-dimensional.
Thus $(\loops p)_! i_n^\ast s^d(\bbOne) \neq 0$. Now the class
$y = (\pi_n)_!s^d(\bbOne) \in H^d(E_n)$ is as desired.
\end{proof}

\subsection{Proof of Theorem~\ref{thm:examples}}
\label{subsec:putittogether}

With the preparation of the previous subsections, we can now prove
 Theorem~\ref{thm:examples}. Let us first prove the following.
 
\begin{thrm} 
  \label{thm:examples2} Suppose $BG$ is a connected $\ell$--compact group
  that is a product of an $\ell$--compact group
with polynomial mod
$\ell$ cohomology ring %
and copies of
$B\Spin(n)\lcom$
for various $n$. Let $\tau \in \Out(BG) \isom \Out(\bbD_G)$ 
be of finite order, with
$\ell \nmid |\tau|$ if $\ell$ odd and $\tau =1$ if $\ell =2$.
Then for every $q \in \Z_\ell^\times$,  $\BtGq$ has a
$[G^{h\muet}]$--fundamental class.
\end{thrm}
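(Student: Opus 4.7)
The plan combines the untwisting theorem with the permanent cycle criterion of Theorem~\ref{thm:conj-red3}(\ref{it:permanentcycle2}) and verifies this criterion separately for the two model factors appearing in the hypothesis. First, I would apply Theorem~\ref{thm:untwisting-intro} to replace $\BtGq$ by $(BG^{h\muet})(q')$ with $q' \equiv 1 \pmod{\ell}$; when $\ell = 2$ this step is vacuous, since the hypothesis forces $\tau = 1$ and hence $\muet$ is trivial. When $\ell$ is odd, material from Appendix~\ref{appendix:pcg} together with the classification of connected $\ell$--compact groups \cite{AG09} ensures that each simple factor of $BH := BG^{h\muet}$ is again either polynomial or of the form $B\Spin(n)\lcom$; and since at odd primes $B\Spin(n)\lcom$ already has polynomial mod $\ell$ cohomology, only polynomial factors actually occur at odd $\ell$. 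Fundamental classes are multiplicative under products of fibrations (the top class of a product is the exterior product of the top classes of the factors), so after descending to $\muet$--orbits to handle any permutation of simple factors, one reduces to the case where $BH$ is either a simple polynomial $\ell$--compact group with $q' \equiv 1 \pmod{\ell}$, or a copy of $B\Spin(n)\lcom$ with $\ell = 2$ and $q'$ odd.

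In the polynomial case, write $H^*(BH;\F_\ell) = \F_\ell[x_1,\ldots,x_r]$ with $|x_i| = 2d_i$, so that $H^*(H;\F_\ell) = \Lambda(y_1,\ldots,y_r)$ with $y_i$ the suspension of $x_i$. The unstable Adams operation $\psi^{q'}$ acts on $x_i$ by multiplication by $(q')^{d_i}$, so in the Serre spectral sequence of $H \to BH(q') \to BH$ the transgression of $y_i$ equals $((q')^{d_i} - 1)x_i$, which vanishes mod $\ell$ because $q' \equiv 1 \pmod{\ell}$. The generators $y_i$ are primitive in the Hopf algebra $H^*(H)$ and hence transgressive, so $d_r(y_i) = 0$ for every $r$; by multiplicativity the spectral sequence collapses at $E_2$. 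In particular the top class $y_1 \cdots y_r \in H^d(H)$ is a permanent cycle, and Theorem~\ref{thm:conj-red3}(\ref{it:permanentcycle2}) produces the desired fundamental class.

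The remaining case, $B\Spin(n)\lcom$ at $\ell = 2$, is the principal obstacle of the proof: $H^*(B\Spin(n);\F_2)$ is non-polynomial and the preceding collapse argument is unavailable. Here I would invoke the explicit mod $2$ computations of $H^*(B\Spin(n)(q))$ due to Kameko and others (\cite{Kameko-spin}, \cite{KTY12}, \cite{KK10} and related works cited in the introduction), which exhibit a class in $H^d(B\Spin(n)(q))$ whose restriction to the fibre generates $H^d(\Spin(n);\F_2)$; equivalently, one verifies in the mod $2$ Serre spectral sequence that the top fibre class supports no differentials, using the known structure of $H^*(B\Spin(n);\F_2)$ together with the action of $\psi^q$ on Stiefel--Whitney and half-spin classes. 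Once this case is in hand, the fundamental class for the full product assembles via Künneth multiplicativity, completing the proof.
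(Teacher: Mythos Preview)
Your overall structure---untwist, reduce to factors, then handle the polynomial and $\Spin$ pieces separately---matches the paper's. The polynomial-case argument, however, has a genuine gap at $\ell=2$. You assume $H^\ast(BH;\F_\ell)=\F_\ell[x_1,\ldots,x_r]$ with $|x_i|=2d_i$, but at $\ell=2$ polynomial generators need not lie in even degree (e.g.\ $H^\ast(BSO(n);\F_2)=\F_2[w_2,\ldots,w_n]$), so $H^\ast(H;\F_2)$ is not an exterior algebra on odd classes (e.g.\ $H^\ast(SO(3);\F_2)=\F_2[a]/(a^4)$) and your transgression formula has no content. Moreover the restriction $H^\ast(BG;\F_2)\to H^\ast(BT;\F_2)$ is not injective in these cases, so the action of $\psi^q$ cannot be read off the torus; the paper needs a separate argument via a maximal elementary abelian $2$--subgroup and Tits' model for $N_G(T)$ (Proposition~\ref{prop:idmappoly}) to show $\psi^q$ acts trivially. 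Even at odd $\ell$ your argument is incomplete: the exterior generators $y_i$ need not be primitive for the loop-multiplication coproduct, so the assertion that they are transgressive in the specific fibration $G\to BG^{h\sigma}\to BG$ requires justification you do not supply. The paper sidesteps all of this by running the Eilenberg--Moore spectral sequence for the pullback square defining $BG^{h\sigma}$ (Theorem~\ref{thm:polycollapse}): once $\sigma^\ast=\id$ on $H^\ast(BG)$ is known, the $E_2$--page is $\F_\ell[x_1,\ldots,x_r]\otimes\Lambda(z_1,\ldots,z_r)$ with no room for differentials, so $H^\ast(BG^{h\sigma})$ is free over $H^\ast(BG)$ and $i^\ast$ is surjective---an argument insensitive to degree parity.

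Two smaller points. Your reduction to simple factors and handling of permutations is unnecessary for this theorem: at odd $\ell$, once the $\Spin$ factors are absorbed into the polynomial part, Proposition~\ref{prop:ghksplit} shows directly that $BG^{h\muet}$ again has torsion-free $\Z_\ell$--cohomology (hence polynomial mod~$\ell$ cohomology), with no appeal to the classification. And for $B\Spin(n)$ at $\ell=2$, merely citing Kameko is not a proof: the paper's Proposition~\ref{prop:spin-case} runs a specific comparison of the Gysin sequences for the double covers $B\Spin_n^{h\psi^q}\to E_{n,q}$ and $LB\Spin_n\to E_n$, using Kameko's $H^\ast(BSO(n))$--algebra isomorphism $H^\ast(BSO_n^{h\psi^q})\cong H^\ast(LBSO_n)$ as input to transport a class with the required properties from the free-loop side to the $\psi^q$ side.
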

\begin{proof}
Suppose first that $\ell$ is odd. 
Then the cohomology of $B\Spin(n)\lcom$ 
is a polynomial cohomology ring 
(see~Theorem~\ref{thm:poly-cohom}), so
the theorem is reduced to the case where $BG$ has polynomial
cohomology ring. 
By the untwisting theorem, Theorem~\ref{thm:untwisting-intro}, we can write 
 \[
\BtGq \xto{\ \homot\ } (BG^{h\muet})(q')\]
with $\muet$ of order prime to $\ell$ and $q'$ congruent to $1$ modulo $\ell$.
Furthermore, by Proposition~\ref{prop:ghksplit},
the cohomology of the fixed points $BG^{h\muet}$ is also 
a polynomial ring. And by Proposition~\ref{prop:idmappoly}, $\psi^{q'}$ acts as the identity on $H^*(BG^{h\muet})$.
Hence the assumptions of Proposition~\ref{prop:polycollapse} are satisfied and the theorem follows.

Now consider the case $\ell=2$.  In this case $e$, the multiplicative order of $q$ mod $\ell$, 
is $1$, and by assumption, $\tau=1$, so we are reduced to showing that
$BG(q)=BG^{h\psi^q}$ has a $[G]$--fundamental class. 
In view of Lemma~\ref{lem:productsfund}, we are reduced to proving this in the polynomial case and in the spin case individually. The polynomial case again follows from Proposition~\ref{prop:polycollapse}, as the assumptions are satisfied by Proposition~\ref{prop:idmappoly}, and the spin case is the content of 
Proposition~\ref{prop:spin-case}.
\end{proof}

\begin{proof}[Proof of Theorem~\ref{thm:examples}]
For \eqref{it:fcexistwithexclusions} our task is to show that $\BtGq$ has a
$[G^{h\muet}]$--fundamental class
when we are away from the 8 exceptional cases
listed in the statement of Theorem~\ref{thm:examples}.
If $\ell$ is odd,
our list of exclusions ensures that $BG$
has polynomial cohomology, by Theorem~\ref{thm:poly-cohom}(\ref{poly-torsion-free}) 
and (\ref{polyodd}), and it follows 
from Theorem~\ref{thm:examples2} that $\BtGq$ has a
$[G^{h\muet}]$--fundamental class. If $\ell=2$,
our list of exclusions ensures that 
$BG$ is a product of a $2$--compact group with
polynomial cohomology and $B\Spin(n)\twocom$'s for
various $n\geq 10$, by the classification of $2$--compact groups \cite[Thm.~1.1]{AG09}
and  Theorem~\ref{thm:poly-cohom}(\ref{poly2-pi1torsionfree}), and
by Proposition~\ref{prop:bghtfornonexceptional2cptgrps},
the same holds for $BG^{h\muet}$. Hence $\BtGq$ has a
$[G^{h\muet}]$--fundamental class by
Theorem~\ref{thm:examples2}.

For \eqref{it:liftexclusions} it is, by Theorem~\ref{thm:untwisting-intro}, enough to see that $BG^{h\muet}(q')$ has a $[G^{h\muet}]$--fundamental class for any simply connected $\ell$--compact group $BG$, under the stated assumptions. But, by the same theorem,  $BG^{h\muet}$ is again a simply connected $\ell$--compact group. In other words, the claim reduces to showing that, for any simply connected $\ell$--compact group $BG$,  $BG(q)$ has a $[G]$--fundamental class, as long as $q$ is congruent to $1$ modulo $\ell$, under the stated assumptions.
As $BG$ splits as a product of simple simply connected $\ell$--compact groups, by \cite[Thm.~1.4]{dw:split} (or \cite[Thm.~1.2 and Prop.~8.12]{AG09}), we can, by Lemma~\ref{lem:productsfund}, even assume that $BG$ is simple and simply connected. 

Hence, if $\ell =5$, we have to establish that $(BE_8)\fivecom(q)$ has a $[(E_8)\fivecom]$--fundamental class for any $q$ congruent to $1$ modulo $5$, by \eqref{it:fcexistwithexclusions}. However, as $\nu_5(11-1) =1$, Theorem~\ref{thm:tezukasubgrp} implies that if $BE_8(11)\fivecom$ has a fundamental class, then the same is true for $BE_8(q)\fivecom$ for all $q \in \Z_5$ with $q$ congruent to $1$ modulo 5.  If $\ell =3$ we have to establish the same claim for $(BF_4)\threecom$ and $(BE_i)\threecom$, $i=6,7,8$, and $q$ congruent to $1$ modulo $3$, which follows similarly.  For $\ell =2$ there is a small subtlety as the structure of the $\ell$--adic units, and consequently the statement of Theorem~\ref{thm:tezukasubgrp}, is slightly different at $\ell=2$: By assumption $BE_i(5)\twocom$ have fundamental classes, which by Theorem~\ref{thm:tezukasubgrp} means that the same is true for any $q\in \Z_2$ congruent to $1$ modulo $4$. Now, if $i=7,8$ such $q$ generate $\Out(\bbD)$ as $-1 \in W$ (see Proposition~\ref{prop:twistingclassification}). If $i=6$  such $q$ together with $3$ generate $\Z_2^\times \cong \Out(\bbD)$, which finishes the proof in that case.
\end{proof}

\begin{rem}[The assumption $\ell \nmid |\tau|$] In Section~\ref{subapp:untwisting}, and in particular Proposition~\ref{prop:twistingclassification}, we explain the (fairly mild) restrictions posed by the $\ell \nmid |\tau|$ assumption.
It would be interesting to work out directly
what
happens in the few cases where $\tau$ is of order $\ell$.
\end{rem}

\begin{rem} By Corollary~\ref{cor:tezukaimpliesid}, a fundamental class for 
$E_6(q)$ at $\ell=2$ 
would imply that 
$\psi^q$ acts as the identity $H^*(BE_6;\F_2)$ also without the assumption that $q$ is congruent to $1$ mod $4$, a non-obvious claim as $-1$ is not in the Weyl group. 
One may check that this claim \emph{is} true, as the
cohomology as an $\calA$--algebra is generated by the class in degree
$4$ together with pull-backs of Chern classes, by \cite{KNN19}.
\end{rem}

\section{Isomorphisms of algebras: Proof of Theorem~\ref{thm:polyexamples}}
\label{sec:algiso}

Recall from Theorem~\ref{thm:strtoptezukacrit} 
that when $H^\ast(BG^{h\sigma})$ is free of 
rank 1 over $\bbH^\ast(LBG)$, it is possible to find an element 
$x\in H^\ast(BG^{h\sigma})$ such that the map 
\begin{equation}
\label{eq:introbulletx}
\xymatrix@C-0.3em{
	H^\ast(LBG)
	\ar[r]^-{s^{-d}}
	&
	\bbH^\ast(LBG)
	\ar[rr]^-{\stringprod s^{-d}(x)}
	&&
	\bbH^\ast(BG^{h\sigma})
	\ar[r]^-{s^d}
	&
	H^\ast(BG^{h\sigma})
}
\end{equation}
induced by string multiplication by $x$
is close to being a ring isomorphism 
in the sense that it induces an isomorphism between certain 
associated graded algebras of $H^\ast(LBG)$ and $H^\ast(BG^{h\sigma})$.
In this section, we will go further and prove results giving
sufficient conditions under which the element $x$ can be chosen so that
\eqref{eq:introbulletx} is an actual ring isomorphism and 
furthermore commutes with a large number of Steenrod operations.
Our first result in this direction, Theorem~\ref{thm:polytezukaelaboration}, 
covers in particular all $\ell$--compact groups with polynomial cohomology
when $\ell$ is odd, while the second, Theorem~\ref{thm:polysimplyconn2cptgrps},
pertains to simply connected $2$--compact groups with polynomial cohomology.
Finally, combining these two results, we will 
present a proof of Theorem~\ref{thm:polyexamples}. 
Throughout the section, except where otherwise stated, we continue to assume
that $BG$ is a semisimple $\ell$--compact group of dimension $d$.

The section is divided into 3 subsections. First, in 
Section~\ref{subsec:addingstruc}, we will prove a realization result,
Theorem~\ref{thm:realization}, which allows us to realize a highly 
structured isomorphism from $H^\ast(LBG)$ to $H^\ast(BG^{h\sigma})$---meaning 
one commuting with cup products and many Steenrod operations---as composites 
of the form \eqref{eq:introbulletx} given that such an isomorphism exists 
abstractly. Combining this result with ideas and results of 
Kishimoto and Kono \cite{KK10} yielding such abstract isomorphisms, 
in Section~\ref{subsec:polyinevendegrees}
we will prove Theorem~\ref{thm:polytezukaelaboration}.
Finally, in Section~\ref{subsec:polysimplyconn2cptgrps},
we use Theorem~\ref{thm:realization} and computations of Kishimoto and Kono
\cite{KK10} and Kaji \cite{KajiMod2total} to prove 
Theorem~\ref{thm:polysimplyconn2cptgrps};
and use Theorems~\ref{thm:polytezukaelaboration} and \ref{thm:polysimplyconn2cptgrps}
to prove Theorem~\ref{thm:polyexamples}.

\begin{rem}
It would be very interesting 
to find constructions of 
a class $x$ for which \eqref{eq:introbulletx} is a ring isomorphism
that do not depend on the 
a priori knowledge that $H^\ast(LBG)$ and $H^\ast(BG^{h\sigma})$
are isomorphic as rings, especially in cases where these rings 
are difficult to compute. See Question~\ref{qu:thequestion}(\ref{q3}).
\end{rem}

\subsection{Realizing highly structured isomorphisms in terms of the string module structure}
\label{subsec:addingstruc}

In this subsection, we will prove the following result stating
that in the polynomial case, whenever a highly structured isomorphism 
$H^\ast(LBG) \isom H^\ast(BG^{h\sigma})$
exists abstractly, such an isomorphism can be realized in terms of 
the string module structure.

\begin{thrm}[Realization theorem]
\label{thm:realization}
Suppose $BG$ is a semisimple $\ell$--compact  group
of dimension $d$ for which $H^\ast(BG)$ is a polynomial ring, and let 
$\sigma\colon BG \to BG$ be a map such that there exists
an isomorphism 
\[
	\theta \colon H^\ast(LBG) \xto{\ \isom\ } H^\ast(BG^{h\sigma})
\]
of $H^\ast(BG)$--algebras. Then for a suitable scalar 
$c\in \F_\ell^\times$, the composite 
\begin{equation}
\label{eq:basismultcomp}
	H^\ast(LBG) 
	\xto{\ s^{-d}\ } 
	\bbH^\ast(LBG) 
	\xto{\ \stringprod c\theta(\bbOne)\ } 
	\bbH^\ast(BG^{h\sigma})
	\xto{\ s^{d}\ } 	
	H^\ast(BG^{h\sigma}) 
\end{equation}
is an isomorphism of $H^\ast(BG)$--algebras. Moreover,
if $\theta$ commutes with the action of a Hopf subalgebra
$\tilde{\calA} \subset \calA_\ell$ of the mod $\ell$ Steenrod algebra,
so does~\eqref{eq:basismultcomp}.
\end{thrm}

To prove Theorem~\ref{thm:realization}, we begin with 
a series of auxiliary results.
For the remainder of the subsection, assume that 
$H^\ast(BG) = \F_\ell [x_1,\ldots,x_n]$.
As in \cite[Appendix~E]{KM19}, write
$\Delta \colon H^\ast(LBG) \to H^{\ast-1}(LBG)$
for the operator
characterized by the formula
\begin{equation}
\label{eq:actformula}
	\mathrm{act}^\ast(u) = 1\times u - [S^1]^\ast \times \Delta(u)
\end{equation}
for all $u\in H^\ast(LBG)$ where 
\[
	\mathrm{act} \colon S^1 \times LBG \longto LBG
\]
is the rotation action of $S^1$ on $LBG$
and $[S^1]^\ast \in H^1(S^1)$ is the dual of the 
fundamental class $[S^1] \in H_1(S^1)$.
By \cite[Thm.~3.1]{KM19}, $H^\ast(LBG)$ is then a free
$H^\ast(BG)$--module with basis given by the elements
\[
	y_I =\prod_{i\in I} y_i, \qquad I\subset\{1,\ldots,n\},
\]
where $y_i = \Delta \ev_1^\ast(x_i)\in H^\ast(LBG)$.
\begin{lemma}
\label{lm:sastyi}
Let $s\colon BG \to LBG$
be the section of $\ev_1\colon LBG \to BG$ given by 
constant loops. 
Then for $I\subset\{1,\ldots,n\}$
\[
	s^\ast(y_I) 
	= 
    \begin{cases}
   	1 & \text{if $I=\emptyset$} \\	
	0 & \text{otherwise}
	\end{cases}
\]
\end{lemma}
\begin{proof}
It suffices to show that $s^\ast$ vanishes on the image of $\Delta$,
which in turn follows from 
formula~\eqref{eq:actformula} and
the commutativity of the diagram
\[\xymatrix{
	S^1\times BG 
	\ar[r]^-{\pr}
	\ar[d]_{\id\times s}
	&
	BG
	\ar[d]^{s}
	\\
	S^1\times LBG
	\ar[r]^-{\mathrm{act}}
	&
	LBG
}\]
where  $\pr \colon S^1\times BG \to BG$ is the projection map.
\end{proof}

The following lemma is a reformulation of \cite[Thm.~2.2]{KM19}
in terms of the $(H^\ast(LBG),\cup)$--module
structure on $\bbH^\ast(LBG)$ defined in Remark~\ref{rk:middle}.
\begin{lemma}%
\label{lm:bulletderivation}
Suppose $y \in H^\ast(LBG)$ is of the form $y = \Delta \ev_1^\ast (x)$
for some $x\in H^\ast(BG)$. Then 
\begin{equation}
\label{eq:bulletderivation}
	y(u\stringprod v) = (yu)\stringprod v + (-1)^{\deg(y)\deg(u)} u \stringprod (yv)
\end{equation}
for all $u,v \in \bbH^\ast(LBG)$. 
\end{lemma}
\begin{proof}
Following \cite[Proof of Thm.~4.2 (4-5)]{tamanoi2009cap},
we have the equation
\[
	\mathrm{concat}^\ast (y)
	= 
	\mathrm{split}^\ast (y \times 1 + 1 \times y) 
\]
from which the claim follows by applying formula~\eqref{eq:bettermiddleformula}.
\end{proof}

Finally, since under our assumption that 
$H^\ast(BG) \isom \F_\ell[x_1,\ldots,x_n]$ 
the cohomology  $H^\ast(LBG)$ 
is free as an $H^\ast(BG)$--module,
the Eilenberg--Moore spectral sequence
implies the following computation of the cohomology ring
$H^\ast(LBG\times_{BG} BG^{h\sigma})$
and the composite of $\mathrm{split}^\ast$ and $\times$.
\begin{lemma}
\label{lm:fpcohcomp}
The composite
\[
	H^{\ast} (LBG) \tensor H^{\ast} (BG^{h\sigma})
	\xto{\ \times\ }
	H^{\ast} (LBG \times BG^{h\sigma})
	\xto{\ \mathrm{split}^\ast\ }
	H^{\ast} (LBG \times_{BG} BG^{h\sigma})
\]
induces an isomorphism of $H^\ast(BG)$--algebras and  
$\calA_\ell$--modules
\[
    H^\ast(LBG) \tensor_{H^\ast(BG)} H^\ast(BG^{h\sigma})
    \xto{\ \isom\ }
    H^\ast(LBG \times_{BG} BG^{h\sigma})
\]
where the target is equipped with the $H^\ast(BG)$--algebra 
structure induced by the map $LBG \times_{BG} BG^{h\sigma} \to BG$.
\qed
\end{lemma}

We are now ready to prove Theorem~\ref{thm:realization}.

\begin{proof}[Proof of Theorem~\ref{thm:realization}]
Applying $\theta$ to the $H^\ast(BG)$--module basis 
$(y_I)_{I\subset\{1,\ldots,n\}}$ of $H^\ast(LBG)$,
we obtain an $H^\ast(BG)$--module basis 
$(\theta(y_I))_{I\subset\{1,\ldots,n\}}$ for $H^\ast(BG^{h\sigma})$.
Let $V \subset H^\ast(BG^{h\sigma})$ be the graded $\F_\ell$--vector space
spanned by the elements $\theta(y_I)$, $I\subset \{1,\ldots,n\}$.
Identify 
$H^\ast(LBG\times BG^{h\sigma})$ 
with 
$H^\ast(LBG)\tensor H^\ast(BG^{h\sigma})$
via the Künneth isomorphism 
and 
$H^\ast(LBG\times_{BG} BG^{h\sigma})$ 
with
$H^\ast(LBG)\tensor_{H^\ast(BG)} H^\ast(BG^{h\sigma})$
via the isomorphism of Lemma~\ref{lm:fpcohcomp}.
The composite
\[
	t
	\colon
	H^\ast(LBG) \tensor_{H^\ast(BG)} H^\ast(BG^{h\sigma})
	\xto{\ \isom\ }
	H^\ast(LBG) \tensor V
	\longto
	H^\ast(LBG) \tensor H^\ast(BG^{h\sigma})
\]
of the evident isomorphism with the map induced by the inclusion 
of $V$ into $H^\ast(BG^{h\sigma})$ then provides a section for 
the map $\mathrm{split}^\ast$, and writing $f$ for the map
\[
	f \colon H^\ast(LBG) \tensor H^\ast(BG^{h\sigma})
	\longto
	H^\ast(BG^{h\sigma}),
	\qquad
	u \tensor v 
	\longmapsto
	u \tildestringprod (v\cupprod s^d\theta(\bbOne)),
\]
Proposition~\ref{prop:middle}  implies that 
\begin{equation}
\label{eq:acupformula}
	A \cupprod (1 \tildestringprod s^d \theta(\bbOne))
	=
	(-1)^{d\deg(A)}f t \,\mathrm{concat}^\ast(A)
\end{equation}
for all $A \in H^\ast(BG^{h\sigma})$. Here $\tildestringprod$ 
is as defined in Definition~\ref{def:bulletdef1}.

Let $\varepsilon \colon H^\ast(BG^{h\sigma}) \to H^\ast(BG)$
be the map corresponding to the map 
$s^\ast\colon H^\ast(LBG) \to H^\ast(BG)$
of Lemma~\ref{lm:sastyi} under the isomorphism $\theta$.
Then by  Lemma~\ref{lm:sastyi}, we have
\[
	\varepsilon (\theta(y_I)) 
	=	
	\begin{cases}
   	1 & \text{if $I=\emptyset$} \\	
	0 & \text{otherwise}.
	\end{cases}
\] 
Moreover, taking $u=v=\bbOne$ in 
Lemma~\ref{lm:bulletderivation}, we see that 
$y_i \cupprod s^d(\bbOne) = 0$ for all $i=1,\ldots,n$, so that 
\[
	y_I \cupprod s^d(\bbOne)  
	= 
	\begin{cases}
   	s^d(\bbOne) & \text{if $I=\emptyset$} \\
	0 & \text{otherwise}.
	\end{cases}
\]
Noticing that it is enough to consider elements of the form 
\[
	u\tensor_{H^\ast(BG)}\theta(y_I) 
	\in 
	H^\ast(LBG)\tensor_{H^\ast(BG)} H^\ast(BG^{h\sigma}),
\]
it is now straightforward to check that the square
\[\xymatrix{
	H^\ast(LBG) \tensor_{H^\ast(BG)} H^\ast(BG^{h\sigma})
	\ar[r]^-t
	\ar[d]_{\id\tensor_{H^\ast(BG)} \varepsilon}
	&
	H^\ast(LBG) \tensor H^\ast(BG^{h\sigma})
	\ar[d]^f
	\\
	H^\ast(LBG)
	\ar[r]^{\tildestringprod s^d(\theta(\bbOne))}
	&
	 H^\ast(BG^{h\sigma})
}\]
commutes. Combining the commutativity of this square
with \eqref{eq:acupformula} 
and writing 
$\tilde{\varepsilon} = \id \tensor_{H^\ast(BG)} \varepsilon$
for brevity,
we see that 
\begin{align*}
	A \cupprod (1 \tildestringprod s^d \theta(\bbOne))
	&=
	(-1)^{d\deg(A)} 
	\tilde{\varepsilon} \,\mathrm{concat}^\ast(A)
	\tildestringprod 
	s^d(\theta(\bbOne))
\end{align*}
for all $A \in H^\ast(BG^{h\sigma})$,
or what in view of the equation
\eqref{eq:bulletandtildebullet}
is the same,
\begin{align}
\label{eq:acupformula2}
	A \cupprod s^d (s^{-d}(1) \stringprod \theta(\bbOne))
	&=
	s^d \big( 
		s^{-d} \tilde{\varepsilon} \,\mathrm{concat}^\ast(A)
		\stringprod 
		\theta(\bbOne)
	\big) 
\end{align}
for all $A \in H^\ast(BG^{h\sigma})$.

 By the Eilenberg--Moore spectral sequence, the map 
$H^\ast(BG^{h\sigma}) \to H^\ast(F)$ induced by the inclusion 
of the fibre $F$ of the fibration 
$\ev_1 \colon BG^{h\sigma} \to BG$ into $BG^{h\sigma}$
identifies with the map 
$H^\ast(BG^{h\sigma}) \to H^\ast(BG^{h\sigma}) \tensor_{H^\ast(BG)} \F_\ell$, and similarly for the map $H^\ast(LBG) \to H^\ast(\loops BG)$
induced  by the inclusion $\loops BG \incl LBG$. Since 
the  map   $H^\ast(LBG) \to H^\ast(\loops BG)$
sends  $s^d(\bbOne) \in H^d(LBG)$ to a
nontrivial element, the  map
$H^\ast(BG^{h\sigma}) \to H^\ast(F)$ 
does the same to the element
$s^d \theta(\bbOne) \in H^d(BG^{h\sigma})$. 
By Theorem~\ref{thm:conj-red3}, it follows that the map 
\[
	\bbH^\ast(LBG) 
	\xto{\ \stringprod \theta(\bbOne)\ } 
	\bbH^\ast(BG^{h\sigma})
\]
is an isomorphism of graded vector spaces. Consequently,
we can find a scalar $c\in\F_\ell^\times$ such that
$s^{-d}(1) \stringprod c\theta(\bbOne) 
= 
s^{-d}(1) \in \bbH^{-d}(BG^{h\sigma})$.
From \eqref{eq:acupformula2} we can now deduce that the composite
\begin{equation}
\label{eq:compositeid}
	H^\ast(BG^{h\sigma})
	\xto{\ \tilde{\varepsilon} \,\mathrm{concat}^\ast\ }
	H^\ast(LBG)
	\xto{\ s^d  \, (\stringprod c\theta(\bbOne))\,   s^{-d}\ }
	H^\ast(BG^{h\sigma})
\end{equation}
is the identity map.
As the first map is a map of $H^\ast(BG)$--algebras 
and the second map is a bijection, it follows that
the second map is an isomorphism of $H^\ast(BG)$--algebras.
Finally, if $\theta$ is a map of $\tilde{\calA}$--modules
for some Hopf subalgebra $\tilde{\calA}\subset \calA_\ell$,
then so is the map $\varepsilon$ and hence the map
$\tilde{\varepsilon}$ and consequently the first map in 
\eqref{eq:compositeid}. As before, it follows that the 
second map is a map of $\tilde{\calA}$--modules,
as desired.
\end{proof}

\subsection{Ring isomorphisms when \texorpdfstring{$H^*(BG)$}{H*(BG)} 
is polynomial in even degrees}
\label{subsec:polyinevendegrees}

The aim of this subsection is to prove the following theorem
showing that when $H^\ast(BG)$ is a polynomial algebra
concentrated in even degrees and $\sigma$ induces the identity map on 
$H^\ast(BG;\Z/\ell)$ (on $H^\ast(BG;\Z/4)$ when $\ell=2$),
there exists a unique 
and easily identifiable class $x\in H^\ast(BG^{h\sigma})$
for which the composite \eqref{eq:bulletx} is an isomorphism of rings 
and $\calA'$--modules. We remind the reader that 
$\calA'$ denotes the subalgebra of the mod $\ell$
Steenrod algebra $\calA_\ell$ generated by the
Steenrod reduced $\ell$--th power operations when $\ell$ is odd and 
all of $\calA_2$ when $\ell=2$.

\begin{thrm}
\label{thm:polytezukaelaboration} Let $BG$ be a semisimple $\ell$--compact group 
of dimension $d$, and 
suppose $H^\ast(BG) = \F_\ell[x_1,\ldots,x_n]$ is a polynomial algebra
concentrated in even degrees and
$\sigma\colon BG \to BG$ is a map inducing the identity map on $H^\ast(BG;\Z/\ell)$
(on $H^\ast(BG;\Z/4)$ when $\ell= 2$).
Then 
\begin{enumerate}[$\quad$(1)]
\item \label{it:existsuniquex}
    There exists a unique class $x \in H^d (BG^{h\sigma})$ 
    such that the composite
    \begin{equation}
    \label{eq:bulletx}
    \xymatrix@C-0.3em{
    	H^\ast(LBG)
    	\ar[r]^-{s^{-d}}
    	&
    	\bbH^\ast(LBG)
    	\ar[rr]^-{\stringprod s^{-d}(x)}
    	&&
    	\bbH^\ast(BG^{h\sigma})
    	\ar[r]^-{s^d}
    	&
    	H^\ast(BG^{h\sigma})
    }
    \end{equation}
    is an isomorphism of rings 
    (and hence of $H^\ast(BG)$--algebras, by Proposition~\ref{prop:cupmodstrbilin}).
\item \label{it:steenrodopcompat}
	For this $x$, the composite \eqref{eq:bulletx}
	commutes with the action of $\calA'$. Moreover, when $\ell$ is odd,
	\eqref{eq:bulletx}
	commutes with the action of all of $\calA_\ell$ as long as 
	$\sigma$ induces the identity map on $H^\ast(BG;\Z/\ell^2)$
	for all $i=1,\ldots,n$.
\item \label{it:extalgs}
	As $H^\ast(BG)$--algebras, $H^\ast(LBG)$ and hence $H^\ast(BG^{h\sigma})$
	are isomorphic to exterior algebras:
	\[
		H^\ast(LBG) \isom H^\ast(BG^{h\sigma}) \isom \Lambda_{H^\ast(BG)}(y_1,\ldots y_n)
	\]
	where $\deg(y_i) = \deg(x_i)-1$ for all $i=1,\ldots,n$.
\item \label{it:formulaforx}
	For any classes $y_1,\ldots, y_n \in H^\ast(BG^{h\sigma})$
	such that 
	\[
		H^\ast(BG^{h\sigma}) = \Lambda_{H^\ast(BG)}(y_1,\ldots,y_n),
	\]
	the class $x$ in part (\ref{it:existsuniquex})
	is given by the product
	\[
		x = c y_1 \cdots y_n \in H^d(BG^{h\sigma})
	\]
	where $c\in \F_\ell^\times$ is a scalar 
	determined by the requirement that \eqref{eq:bulletx}
	sends $1\in H^0(LBG)$ to $1\in H^0(BG^{h\sigma})$.
\end{enumerate}
\end{thrm}

We now embark on preparations for proving  Theorem~\ref{thm:polytezukaelaboration}.
For any $BG$, self map $\sigma \colon BG \to BG$, and commutative ring $R$,
we have a long exact sequence
\begin{equation}
\label{eq:mappingtorusles} 
	\cdots
	\longto
	H^{\ast-1} (BG; R)
	\longto 
	H^\ast(BG_{h\sigma}; R)
	\xto{\ \iota^\ast\ }
	H^{\ast} (BG; R) 
	\xto{\ \sigma^\ast - \id\ }
	H^{\ast} (BG; R)
	\longto
	\cdots
\end{equation}
where $BG_{h\sigma}$ is the homomotopy orbit space
(or mapping torus) of $\sigma$ and 
$\iota \colon BG \to BG_{h\sigma}$
is the projection;
see e.g.\ \cite[(9)]{KK10}.
When $\sigma$ induces the identity map on 
$H^\ast(BG)$, we therefore have a short exact sequence
\begin{equation}
\label{eq:mappingtorusses}
	0 
	\longto 
	H^{\ast-1} (BG) 
	\longto 
	H^\ast(BG_{h\sigma}) 
	\xto{\ \iota^\ast\ }
	H^{\ast} (BG) 
	\longto
	0.
\end{equation}
In the proof of 
Theorem~\ref{thm:polytezukaelaboration},
we will make use of the following lemma.

\begin{lemma}
\label{lm:structurediso}
Suppose $H^\ast(BG) = \F_\ell[x_1,\ldots,x_n]$ is a polynomial algebra
(not necessarily concentrated in even degrees) and
$\sigma\colon BG \to BG$ is a map inducing the identity map on $H^\ast(BG)$.
Suppose $\tilde{\calA} \subset \calA_\ell$ is a Hopf subalgebra
such that the map $\iota^\ast$ in \eqref{eq:mappingtorusses}
admits a section $\alpha$ which is a ring homomorphism and
commutes with the $\tilde{\calA}$--action.
When $\ell = 2$, suppose moreover that 
\[
	\Sq^{\deg(x_i)-1}\alpha(x_i) = \alpha(\Sq^{\deg(x_i)-1} x_i)
\]
for all $i=1,\ldots,n$.
Then there exists a ring isomorphism
$H^\ast(LBG) \to H^\ast(BG^{h\sigma})$ 
which commutes with the action of $\tilde{\calA}$. 
\end{lemma}
\begin{proof}
Proposition~3 of \cite{KK10}
exhibits a simple system of generators for $H^\ast(LBG)$
as an $H^\ast(BG)$--algebra. In the case where $\ell$ is odd, the squares
of these generators vanish for degree reasons, while in the case $\ell=2$
the squares of these generators are determined by 
Proposition~2(3),(4) of \cite{KK10} from the elements 
$\Sq^{\deg(x_i)-1}(x_i) \in H^\ast(BG)$. Moreover, for all $\ell$, the action of
$\calA_\ell$ on the said generators is determined by Proposition~2(3),(4) of \cite{KK10}.
Thus Propositions~2 and 3 of \cite{KK10} yield a description 
of the ring structure and $\calA_\ell$--action
on $H^\ast(LBG)$ in terms of those on $H^\ast(BG)$.
In the same way, Propositions~14 and 15 of \cite{KK10}
do the same for the ring structure and $\tilde{\calA}$--module
action on $H^\ast(BG^{h\sigma})$.
The claim now follows by comparing the results of these computations.
\end{proof}

\begin{rem}
Lemma~\ref{lm:structurediso} can be viewed as a strengthened version 
\cite[Cor.~16]{KK10}, and it also corrects a subtle mistake afflicting the 
statement of \cite[Cor.~16]{KK10} in the case $\ell=2$
stemming from the fact that the even-degree subalgebra of
$\calA_2$ is not a Hopf subalgebra of $\calA_2$.
A counterexample to the 
statement of \cite[Cor.~16]{KK10} in the case $\ell=2$
is provided by Example~\ref{ex:slnfq}.
\end{rem}

\begin{proof}[Proof of Theorem~\ref{thm:polytezukaelaboration}]
The map $\sigma$ induces the identity map on $H^\ast(BG)$;
when $\ell$ is odd, this is our assumption on $\sigma$, while for $\ell=2$ this
follows by an argument involving the universal coefficient theorem
from the assumption that $\sigma$ induces the identity map 
on $H^\ast(BG;\Z/4)$ and the assumption that $H^\ast(BG)$
is concentrated in even degrees. Thus we have the short
exact sequence \eqref{eq:mappingtorusses}.
The assumption that $H^\ast(BG)$ 
is concentrated in even degrees
implies that the map $\iota^\ast$ in
\eqref{eq:mappingtorusses} has a unique 
section $\alpha$. This section 
is a ring homomorphism commuting
with all even-degree Steenrod operations.
Moreover, when $\sigma$ induces the identity map on 
$H^\ast(BG;\Z/\ell^2)$,
chasing the diagram
\[\xymatrix@C+1em@R-1ex{
	\cdots 
	\ar[r]
	&
	H^\ast(BG_{h\sigma};\Z/\ell^2)
	\ar[r]^-{\iota^\ast}
	\ar[d]
	&
	H^\ast(BG;\Z/\ell^2)
	\ar[r]^{\sigma^\ast - \id}
	\ar[d]
	&
	H^\ast(BG;\Z/\ell^2)
	\ar[r]^{}
	&
	\cdots
	\\
	\cdots 
	\ar[r]
	&
	H^\ast(BG_{h\sigma};\Z/\ell)
	\ar[r]^-{\iota^\ast}
	\ar[d]_\beta
	&
	H^\ast(BG;\Z/\ell)
	\ar[r]^{\sigma^\ast - \id}
	\ar[d]_\beta
	&
	H^\ast(BG;\Z/\ell)
	\ar[r]^{}
	&
	\cdots
	\\
	&
	H^{\ast+1}(BG_{h\sigma};\Z/\ell)
	&
	H^{\ast+1}(BG;\Z/\ell)
}\]
with exact rows and columns where the rows are
instances of \eqref{eq:mappingtorusles}
shows that $\beta\alpha(x_i) = 0$ for all $i=1,\ldots,n$.
Here $\beta$ denotes the Bockstein operation.
Consequently, under this assumption, the section $\alpha$
commutes with all Steenrod operations.
The existence of a class $x$
with the properties asserted in 
claims~(\ref{it:existsuniquex}) and (\ref{it:steenrodopcompat})
now follows from 
Lemma~\ref{lm:structurediso}
and 
Theorem~\ref{thm:realization}.

Claim~(\ref{it:extalgs})
follows from 
\cite[Prop.~3]{KK10}
and the existence part of claim~(\ref{it:existsuniquex}) 
already proven.
To prove claim~(\ref{it:formulaforx})
and the uniqueness part of claim~(\ref{it:existsuniquex}),
assume $x\in H^d(BG^{h\sigma})$
is such that \eqref{eq:bulletx}
is an isomorphism of rings.
By \cite[Proof of Cor.~4.2]{KM19},
the element
$s^d(\bbOne) \in H^d(LBG)$ is given by
a product
\[
	s^d(\bbOne) = y_1\cdots y_n \in H^d(LBG)
\]
where $y_1,\ldots,y_n \in H^\ast(LBG)$ are 
certain elements such that 
\[
	H^\ast(LBG) 
	= 
	\Lambda_{H^\ast(BG)}(y_1,\ldots,y_n)
	=
	H^\ast(BG) \tensor \Lambda(y_1,\ldots,y_n).
\]
Consequently, the element $s^d(\bbOne)$ can be characterized
uniquely up to a nonzero scalar multiple
in terms of the ring structure on $H^\ast(LBG)$ as the 
generator of the annihilator ideal of the nilradical
\[
	\sqrt{H^\ast(LBG)} = (y_1,\ldots,y_n) \subset H^\ast(LBG).
\]
It follows that $x$, the image of $s^d(\bbOne)$ under
\eqref{eq:bulletx}, is similarly characterized
in terms of the ring structure of $H^\ast(BG^{h\sigma})$. The 
claimed uniqueness and formula for $x$ follow.
\end{proof}

\subsection{Polynomial rings with generators also in odd degrees and proof of Theorem~\ref{thm:polyexamples}}
\label{subsec:polysimplyconn2cptgrps}
For odd $\ell$, any polynomial cohomology ring is necessarily concentrated
in even degrees, so  Theorem~\ref{thm:polytezukaelaboration} 
in particular covers all $\ell$--compact groups with polynomial cohomology 
for $\ell$ odd. In this subsection, our focus is on $2$--compact groups
with polynomial cohomology rings, not necessarily in even degrees. 
This analysis, combined with the previous subsection, will allow us to 
prove Theorem~\ref{thm:polyexamples}.

\begin{thrm}
\label{thm:polysimplyconn2cptgrps}
Suppose $\ell=2$ and 
$BG$ is a simply connected $2$--compact group
with polynomial mod $2$ cohomology.
Then for all $q\in \Z_2^\times$ there exists
an element $x \in H^d(BG(q))$
such that 
the composite 
\begin{equation}
	H^\ast(LBG)
	\xto{\ s^{-d}\ }
	\bbH^\ast(LBG)
	\xto{\ \stringprod s^{-d} (x)\ }
	\bbH^\ast(BG(q))	
	\xto{\ s^{d}\ }
	H^\ast(BG(q))
\end{equation}
is an isomorphism of algebras
over the Steenrod algebra,
except when $BG$ contains a $B\SU(n)\twocom$--summand
for some $n\geq 3$,
in which case the same holds for all 
$q \in \Z_2^\times$ satisfying
$q \equiv 1$ modulo~$4$.
\end{thrm}

Example~\ref{ex:slnfq} demonstrates that
the restriction on $B\SU(n)\twocom$--summands in 
Theorem~\ref{thm:polysimplyconn2cptgrps}
and the extra assumption in 
Theorem~\ref{thm:polytezukaelaboration}
on the behavior of $\sigma$ on mod $4$ 
cohomology in the case $\ell=2$
are necessary.

The following proposition is the key point in establishing Theorem~\ref{thm:polysimplyconn2cptgrps}.

\begin{prop}
\label{prop:mod2gen}
Suppose $\ell = 2$. 
Let $BG$ be a simple simply connected $2$--compact
group with polynomial mod $2$ cohomology.
Then for all 
$q\in \Z_2^\times$,
there exists an element $x \in H^d(BG(q))$
such that 
the composite 
\begin{equation}
	H^\ast(LBG)
	\xto{\ s^{-d}\ }
	\bbH^\ast(LBG)
	\xto{\ \stringprod s^{-d} (x)\ }
	\bbH^\ast(BG(q))	
	\xto{\ s^{d}\ }
	H^\ast(BG(q))
\end{equation}
is an isomorphism of algebras
over the Steenrod algebra,
except when 
$BG \homot BSU(n)\twocom$
for some $n\geq 3$,
in which case the same holds for all 
$q \in \Z_2^\times$ satisfying
$q \equiv 1$ modulo $4$.
\end{prop}

\begin{proof}%
Theorems~1.1, 1.4 and 1.5 of \cite{AG09} 
show that any simple simply connected $2$--compact group
with polynomial mod $2$ cohomology is homotopy equivalent
to one of the following:
$BSU(n)\twocom$ for some $n \geq 3$; 
$B\Sp(n)\twocom$ for some $n \geq 1$;
$B\Spin(n)\twocom$ for $n=7,8$, or $9$;
$(BG_2)\twocom$;
$(BF_4)\twocom$;
or 
$BDI(4)$.
In the cases $BG \homot BSU(n)\twocom$
and $BG \homot B\Sp(n)\twocom$
the claim follows from Theorem~\ref{thm:polytezukaelaboration}
together with Proposition~\ref{prop:idmappoly}.
Assume that we are in one of the remaining cases.
In view of Theorem~\ref{thm:realization}, 
it is sufficient to show that there exists 
an abstract isomorphism $H^\ast(LBG) \isom H^\ast(BG(q))$ of 
$H^\ast(BG)$--algebras commuting with the Steenrod
algebra action. 
By Theorem~\ref{thm:closedsubgrphelp}, the cohomology $H^\ast(BG(q))$
as an $H^\ast(BG)$--algebra only depends on the closed subgroup
of $\Out(BG)$ generated by $[\psi^q] \in \Out(BG)$. Since
by Corollary~\ref{cor:ladicunitstooutbgcont}
the homomorphism $\Z_2^\times \to \Out(BG)$, $q \mapsto [\psi^q]$,
is continuous, it follows that 
$H^\ast(BG(q))$ as an $H^\ast(BG)$--algebra depends only on the closed
subgroup of $\Z_2^\times$ generated by $q$. As every closed topologically 
cyclic subgroup of $\Z_2^\times$ is generated by a suitable power of a prime number,
we may assume that $q\in\Z_2^\times$ is a prime power.
The cases $BG = B\Spin(n)\twocom$, $n=7,8,9$;
$BG = (BG_2)\twocom$;
$BG = (BF_4)\twocom$;
and 
$BG = BDI(4)$
now follow from isomorphisms $H^\ast(LBG) \isom H^\ast(BG(q))$
proven by Kishimoto and Kono \cite[Thm.~20]{KK10}
and Kaji \cite[Thm.~1.1]{KajiMod2total};
an inspection of the proofs 
of the cited theorems 
reveals that they do provide isomorphisms of 
$H^\ast(BG)$--algebras.
\end{proof}

\begin{proof}[Proof of Theorem~\ref{thm:polysimplyconn2cptgrps}]
 Proposition~\ref{prop:mod2gen} establishes the claim when $BG$ is
 furthermore assumed simple. 
In the general case, by \cite[Thm.~1.2 and Prop.~8.12]{AG09},
$BG$ splits as a product of simple simply connected $2$--compact
groups. Thus the claim follows
from the simple case and  Proposition~\ref{prop:pairingforproduct}.
\end{proof}

\begin{proof}[Proof of Theorem~\ref{thm:polyexamples}]
We need to show that under the assumptions set out in the
theorem, it is possible to find an element 
$x \in H^\ast(\BtGq)$ such that the composite 
\begin{equation}
	H^\ast(LBG^{h\muet})
	\xto{\ s^{-d}\ }
	\bbH^\ast(LBG^{h\muet})
	\xto{\ \stringprod s^{-d} (x)\ }
	\bbH^\ast(\BtGq)	
	\xto{\ s^{d}\ }
	H^\ast(\BtGq)
\end{equation}
is an isomorphism of $H^\ast(BG)$--algebras and of $\calA'$--modules.
Consider first the case where $\ell$ is odd.
By Theorem~\ref{thm:poly-cohom}(\ref{poly-torsion-free}) and (\ref{polyodd}),
under the assumptions made, 
the cohomology $H^\ast(BG)$ is a polynomial ring concentrated in even degrees.
By Proposition~\ref{prop:ghksplit}, it follows that 
$H^\ast(BG^{h\muet})$ is also a polynomial ring concentrated in even degrees.
Moreover, by Proposition~\ref{prop:idmappoly}, 
the map $\psi^{q'} \colon BG^{h\muet} \to BG^{h\muet}$
induces the identity map on cohomology. Thus the claim for $\ell$ odd 
follows from Theorem~\ref{thm:polytezukaelaboration}. 
It remains to prove the claim when $\ell=2$. 
Propositions~\ref{prop:ghksplit} and \ref{prop:bghtfornonexceptional2cptgrps}
and Theorem~\ref{thm:poly-cohom}(\ref{poly2-pi1torsionfree})
ensure that $BG^{h\muet}$ in this case 
is a simply connected $2$--compact group with 
polynomial cohomology, so the claim follows from
Theorem~\ref{thm:polysimplyconn2cptgrps}. 
\end{proof}

\begin{rem}\label{rem:uniqueclass}
Unlike in Theorem~\ref{thm:polytezukaelaboration},
the element $x$ in Proposition~\ref{prop:mod2gen} 
is not always unique, although that frequently happens to be the case.
We already know from 
Theorem~\ref{thm:polytezukaelaboration}
that $x$ is unique for $BG \homot B\Sp(n)\twocom$, $n\geq 1$
(for any $q\in\Z_2^\times$) and for $BG \homot BSU(n)\twocom$, $n\geq 3$
(for any $q\in \Z_2^\times$ satisfying $q\equiv 1$ mod $4$).
Moreover, computer computations show that 
for the remaining simple simply connected $2$--compact
groups with polynomial cohomology,
the class $x$ is also unique
(for all $q\in \Z_2^\times$)
except for $BG \homot B\Spin(7)\twocom$,
$BG \homot B\Spin(8)\twocom$
and $BG \homot B\Spin(9)\twocom$, in which cases
there are precisely $2$, $4$, and $2$ possible choices for $x$,
respectively (for all $q\in \Z_2^\times$).
In general, we expect the element $x$ to be unique up to 
an automorphism of $BG(q)$, although we have not 
verified this. See Question~\ref{qu:thequestion}(\ref{q3}).
\end{rem}

The following example shows that the 
restriction $q \equiv 1$ mod $4$ in 
Proposition~\ref{prop:mod2gen}
in the case $BG = BSU(n)\twocom$
cannot be removed. (It also demonstrates that 
\cite[Cor.~ 16 and Thm.~18]{KK10} are not quite correct as stated for
the prime $2$.)

\begin{example}
\label{ex:slnfq}
Suppose $n\geq 2$ and let $q$ be an odd prime power.
It follows from Quillen's computation of the cohomology of 
general linear groups over finite fields \cite{quillen72} that
\[
	H^\ast(SL_n(\F_q);\F_2)
	\isom
	\begin{cases}
    \F_2[c_2,\ldots,c_n] \tensor \Lambda(e_2,\ldots,e_n)
    &
    \text{if $q\equiv 1$ mod $4$} 
    \\
    \F_2[c_2,\ldots,c_n,e_2,\ldots,e_n] / I	
    &
    \text{if $q \equiv 3$ mod $4$}
    \end{cases}
\]
where $\deg(c_j) = 2j$, $\deg(e_j) = 2j-1$, and 
$I$ is the ideal generated by the elements
\[
	e_j^2 + \sum_{0 \leq a < j} c_a c_{2j-1-a},
	\qquad
	j = 2,\ldots,n
\]
where we interpret $c_0 = 1$, $c_1 = 0$ and $c_j= 0$ for $j>n$.
By Propositions~\ref{prop:polycollapse} and \ref{prop:idmappoly}, 
the $\bbH^\ast(LBSU(n);\F_2)$--module
structure on $H^\ast(SL_n(\F_q);\F_2)$ is free of rank $1$
for any odd prime power $q$, and when $q \equiv 1$ mod $4$,
$H^\ast(SL_n(\F_q);\F_2)$ is isomorphic as an algebra
over the Steenrod algebra to $H^\ast(LBSU(n);\F_2)$
by Proposition~\ref{prop:mod2gen}.
On the other hand, when $q\equiv 3$ mod $4$,
for $n\geq 3$, 
$H^\ast(LBSU(n);\F_2)$ and $H^\ast(SL_n(\F_q);\F_2)$
are  isomorphic neither as rings nor as modules over $\calA_2$
or its even-degree subalgebra, as follows e.g.\ by considering 
the squaring map $\Sq^8$ on $H^8$.

This example in particular demonstrates that it is possible for 
the $\bbH^\ast(LBG)$--module structure on $H^\ast(BG^{h\sigma})$
to be free of rank $1$ without
$H^\ast(LBG)$ and $H^\ast(BG^{h\sigma})$
being isomorphic as rings even when $BG$ is a simple and simply 
connected $\ell$--compact group.
\end{example}

\appendix

\section{\texorpdfstring{$\Z_\ell$}{Z\_l}--root data and untwisting of finite groups of Lie type}
\label{app:pcg}

As explained in the introduction, the $\ell$--local structure of finite
groups of Lie type can be understood in terms of
$\ell$--compact groups and their homotopy fixed points via
the homotopy equivalences of equations \eqref{lie} and \eqref{lietype}.
In this appendix, we describe the background
on $\ell$--compact groups, their homotopy fixed points, and root data 
necessary for this,
and in the process provide a slightly stronger and more precise versions of
some results in the literature.

The appendix is structured as follows: 
We first recall basic facts about homotopy fixed points in
Section~\ref{subsec:homotopyfix}, followed by facts about $\ell$--compact groups in
Section~\ref{subsec:pcg-recol}. In Section~\ref{subsec:unadams}, we explain how
unstable Adams operations act on polynomial cohomology rings.
In Section~\ref{subapp:homotopy-act}, we prove some general results
about actions on $\ell$--compact groups and their maximal tori. 
In Section~\ref{subapp:rootdata},
we then describe the $\ell$--compact group
$BG^{h\langle \tau\rangle}$
for $\tau \in \Out(\bbD_G)$ of finite order prime to $\ell$,
summarized in Theorem~\ref{thm:root-datum}.
In Section~\ref{subapp:untwisting}, we prove the
untwisting theorem, Theorem~\ref{thm:untwisting-intro}, as a consequence of the more general Theorem~\ref{thm:class-lcglie}
classifying $\ell$--local finite groups of Lie type.
Finally, in Section~\ref{subsec:topology}, we end the appendix with a discussion
of the topology on $\Out(BG)$ used in Section~\ref{sec:funclass2}.

Throughout the appendix, we will work in the generality of connected
$\ell$--compact groups. Contrary to our usual convention of assuming that 
$BG$ is semisimple, \emph{in this appendix, except where indicated otherwise, 
$BG$ will therefore denote a fixed \emph{connected} $\ell$--compact group 
of dimension $d$.}

\subsection{Homotopy fixed-points}\label{subsec:homotopyfix}
For any self-map $\sigma\co X \to X$, we defined in the introduction the homotopy
fixed-points of $\sigma$ to be the space 
\begin{equation}
\label{eq:xhsigmadef}
X^{h\sigma} = \{ \alpha\co I \to X \mid \alpha(0) =
\sigma(\alpha(1))\} 
\end{equation}
i.e., the homotopy equalizer of $\sigma\co X \to X$ and the identity (see \cite{vogt73} and \cite[Ch.~XI\S8]{bk}).
The intuition is that instead of requiring $x$ and $\sigma(x)$
to be equal, we look at all paths between them.
The following
proposition summarizes some basic properties of this construction.

\newcounter{myCounter}
\begin{prop}\label{prop:fixed-point}
For any self-map $\sigma\co X \to X$, the homotopy fixed point space $X^{h\sigma}$
of \eqref{eq:xhsigmadef} is
\begin{enumerate}[(a)]
\item \label{it:htpypb}
	homotopy equivalent to the homotopy pullback of the diagram 
	$\xymatrix@1{X \ar[r]^-{(\sigma,1)} & X\times X & \ar[l]_-{\Delta} X}$,
\item \label{it:n0equivariantmaps}
	homeomorphic to the space $\map_{\bbN_0}(\R_{\geq 0},X)$ of 
    $\bbN_0$--equivariant maps, where $\bbN_0$ is a monoid under addition and 
    $1\in \bbN_0$ acts by addition of $1$ on $\R_{\geq 0}$ and by $\sigma$ on $X$, and
\item \label{it:n0fixedpoints} 
	homotopy equivalent to the homotopy fixed point space
	 $X^{h\bbN_0} = \holim_{\bbN_0} X$ of the action of $\bbN_0$ on $X$ given by $\sigma$.
\setcounter{myCounter}{\value{enumi}}
\end{enumerate}
In particular, the homotopy type of $X^{h\sigma}$ only depends on the free homotopy class of $\sigma$. Finally, when $\sigma$ is a homeomorphism, $X^{h\sigma}$ is furthermore
\begin{enumerate}[(a)]
\setcounter{enumi}{\value{myCounter}}
\item \label{it:zfixedpoints}
	homotopy equivalent to the homotopy fixed point space 
	$X^{h\Z}$ of the action of $\Z$ on $X$ given by $\sigma$.
\end{enumerate}
\end{prop}

\begin{proof} 
Part (\ref{it:htpypb}) follows by noting that $X^{h\sigma}$ agrees with the 
pullback of the diagram 
\[
	\xymatrix@C+2em{X \ar[r]^-{(\sigma,1)} & X\times X & \ar[l]_-{(\ev_0,\ev_1)} \map(I,X)}
\]
obtained by replacing the map $\Delta\co X \to X \times X$ in the diagram
of part (\ref{it:htpypb}) by the fibration $\map(I,X) \to X \times X$ 
given by evaluation at the endpoints of $I$.

For part (\ref{it:n0equivariantmaps}), note that the space $X^{h\sigma}$ 
is homeomorphic to $ \{ \alpha\co I \to X \, |\, \alpha(1) =
\sigma(\alpha(0))\}$ by flipping the interval, and that this latter space in turn
identifies with the space 
\[
    \{ \alpha\co \bbR_{\geq 0} \to X \,|\, \alpha(t+1) 
    = 
    \sigma(\alpha(t)) \mbox{ for all } t\geq 0 \} = \map_{\bbN_0}(\bbR_{\geq 0},X),
\] 
with $\bbN_0$ acting on $\bbR_{\geq 0}$ by addition and on $X$ by sending $1$ to
$\sigma$.

For part (\ref{it:n0fixedpoints}), we first recall that 
for any monoid $M$ acting on a space $X$, the
homotopy fixed-point space is defined as $X^{hM} = \holim_MX = \map_M(EM,X)$, where
$M$ is viewed as a category with one object, and $EM$ is the geometric
realization of the overcategory of the sole object in $M$ (i.e., the
category with objects $m \in M$ and morphism between
$m$ and $m'$ the $n \in M$ such that $n \cdot m = m'$), see e.g.\ \cite[Ch.~XI\S8]{bk}.
The space $E\bbN_0$ identifies with the infinite simplex
$\Delta^\infty$, which equivariantly deformation retracts on its spine
(subspace given by the path from $e_0$ to $e_1$ to $e_2$,...), using
that $\Delta^\infty$ is convex, and this spine again equivariantly
identifies with $\R_{\geq 0}$. Thus the claim follows from 
part~(\ref{it:n0equivariantmaps}).

That the homotopy type of $X^{h\sigma}$ only depends on the free homotopy 
class of $\sigma$ follows from part (\ref{it:htpypb}) (or part (\ref{it:n0fixedpoints}))
and the homotopy invariance of homotopy limits.

Finally, part (\ref{it:zfixedpoints}) follows from the fact that the space
$E\Z$ is $\Z$--equivariantly homotopy equivalent to $\R$: the said equivariant
homotopy equivalence induces a homotopy equivalence
$X^{h\Z} = \map_{\Z}(E\Z,X) \homot \map_{\Z}(\R,X)$
where the last space agrees with $X^{h\sigma}$ by an argument analogous to the 
one used to prove part (\ref{it:n0equivariantmaps}).
See also  \cite[\S4]{BrotoMoellerOliver}.
\end{proof}

\subsection{Basic recollections on \texorpdfstring{$\ell$}{l}--compact groups and
  \texorpdfstring{$\Z_\ell$}{Z\textunderscore l}--root data}\label{subsec:pcg-recol}
  In this subsection we recall the basics of $\ell$--compact groups and $\Z_\ell$--root data, elaborating on 
the introduction. We refer to the survey \cite{grodal10} as well as
\cite{DW94,AGMV08,AG09}, and their combined references, for much more information.
  
An $\ell$--compact group is a pointed connected space $BG$ 
which is $\F_\ell$--local, i.e., local with respect to homology with
coefficients in $\F_\ell$ \cite{bousfield75} or here equivalently Bousfield--Kan
$\ell$--complete \cite[\S11]{DW94}, and whose based loop space $G = \Omega BG$ has
finite mod $\ell$ cohomology. 
The $\ell$--compact group $BG$ is called \emph{connected} 
if $G$ is connected as a space, 
and \emph{semisimple} if furthermore $\pi_1(G)$ is finite.
The \emph{outer automorphism group} $\Out(BG)$ of $BG$ is by definition
the group of free homotopy class of self homotopy equivalences of $BG$
under composition.

It was shown in \cite{DW94} that any 
$\ell$--compact group has an essentially unique \emph{maximal torus}, which
is defined to be a map
$BT \to BG$, subject to an ``injectivity'' and ``maximality''
condition, where $BT$ is a space homotopy equivalent to the $\ell$--completion
of a finite product of copies of $BS^1 \homot \CP^\infty$.
The number of copies of $\CP^\infty$ involved is called the \emph{rank} of $BG$.

It turns out that the existence of maximal tori
enables the construction of an ``$\ell$--adic root datum $\bbD_G$ of
$BG$'', analogous the the standard construction for Lie groups, but with $\Z$
replaced by $\Z_\ell$.
Furthermore, by \cite[Thm.~1.2]{AG09},  connected $\ell$--compact groups $BG$ are classified by their
$\ell$--adic root data $\bbD_G$ and there exist an isomorphism
$\Out(BG) \isom \Out(\bbD_G)$.
To explain this correspondence, we first recall the definition of a $\Z_\ell$--root datum.
Just as with
$\Z$--root data, 
they can be defined in several
equivalent ways: In \cite[\S 8]{AG09} and \cite[Def.~1.1]{grodal10},
a $\Z_\ell$--root datum is viewed as a triple $(W,L,\{\Z_\ell
b_\sigma\})$ consisting of a $\Z_\ell$--lattice $L$, a finite
$\Z_\ell$--reflection group $W
\leq \Aut_{\Z_\ell}(L)$ acting on $L$, and a collection of
`coroots' $\Z_\ell b_\sigma$, given as rank one $\Z_\ell$--submodules
of $L$, subject to certain conditions. Equivalently, 
one can use a triple $(W,L,L_0)$ where $L_0 =
\sum_\sigma \Z_\ell b_\sigma$ is the \emph{coroot lattice}, again
subject to certain natural conditions, like in the case over $\Z$; see
\cite[\S1]{AGMV08} and \cite[\S1]{AG08auto}. We will use the latter
viewpoint below. An isomorphism of root data $\bbD = (W,L,L_0) \to
\bbD' = (W',L',L_0')$ is an isomorphism of $\Z_\ell$--lattices $L
\xrightarrow{\cong} L'$ sending $W$ isomorphically to $W'$ and $L_0$ to $L_0'$. In
particular, this defines the automorphism group $\Aut(\bbD)$ of $\bbD$, 
and we define the \emph{outer automorphism group} $\Out(\bbD)$ of $\bbD$
to be the quotient $\Out(\bbD) = \Aut(\bbD)/W$.
The root datum $\bbD_G$ of a connected $\ell$--compact group $BG$
is obtained  by
fixing a maximal torus  $BT \to BG$  which is a fibration and 
setting $L = \pi_2(BT)$, $L_0 = \ker(L \to \pi_2(BG))$, and $W =
\pi_0(\cW_G(T))$, where $\cW_G(T)$ denotes the Weyl space, 
i.e., the monoid of homotopy equivalences $BT \to BT$ over $BG$.
The action of $W$ on $L$ is induced by the evident action of 
$\cW_G(T)$ on $BT$.

The rational cohomology of a connected
 $\ell$--compact group is recovered from the action of $W$ on $L$ via
\begin{equation}
\vcenter{\xymatrix{
     H^*(BG;\Z_\ell)\otimes \Q  
     \ar@{-}[d]_\isom
     \ar[r]^\isom  
     &  
     (H^*(BT;\Z_\ell)\otimes \Q)^{W}  
     \ar@{-}[d]^\isom
     \\
     \Q_\ell[x_1,\ldots, x_r] 
     \ar[r]^\isom 
     & 
     \Q_\ell[L]^{W}
}}
\end{equation}
where the top horizontal map is induced by the map $BT\to BG$, and $r$ is the rank 
of $BG$ (see \cite[5.11 and Thm.~9.7]{DW94}).
We have $|x_i| = 2d_i$ for 
some $d_i > 0$, and the integers
\begin{equation}
   d_1, d_2 , \ldots, d_r
 \end{equation}
are called the \emph{degrees} of the reflection group $(W,L)$.

The \emph{dimension} $d$ of an $\ell$--compact group is defined as the
maximal non-trivial dimension of $H^*(G;\F_\ell)$. It can also described
purely in terms of the reflection grroup as
\begin{equation}
  d = \sum_i (2d_i-1) = 2|\{\mbox{reflections in $W$}\}|-r
  \end{equation}
    (see \cite[Lem.~3.8]{dw:split} and \cite[Thm.~7.2.1]{benson93}).

For later use in Section~\ref{subapp:rootdata} we will also record the \emph{rational homotopy type} of $BG$:
 The rationalization $BG_\Q$ splits as a product
\begin{equation}
  BG_\Q \isom \prod_iK(\Q_\ell,2d_i)
\end{equation}
(see e.g.\ \cite[pf.~of Lem.~1.2]{ABGP04}). 
In particular, $H^*(BG;\Z_\ell) \otimes \Q$
is isomorphic to the $\Q_\ell$--polynomial ring
on the graded $\Q_\ell$--vector space
$\pi_*(BG) \otimes \Q$:
\begin{equation}
  H^*(BG;\Z_\ell) \otimes \Q \isom \Q_\ell [\pi_*(BG) \otimes \Q].
\end{equation}
and consequently, 
\begin{equation} \label{eq:rationalpiBG}
  \pi_*(BG) \otimes \Q 
  \isom 
  Q(H^*(BG;\Z_\ell) \otimes \Q)
  \isom
  \oplus_i \Q_\ell x_i
  \end{equation}
where $Q(-)$ denotes indecomposable elements. (Beware that the choice of generators $x_i$ is not canonical.)

\subsection{Torsion in the cohomology of \texorpdfstring{$G$}{G}}
It is a consequence of the classification of $\ell$--compact groups
that most $\ell$--compact groups have $\ell$--torsion-free cohomology and have
classifying spaces with polynomial cohomology rings. We summarize this in the next theorem, used in Section~\ref{sec:funclass1}.
\begin{thrm}[Classification of $\ell$--compact groups with polynomial
  cohomology ring] %
\label{thm:poly-cohom}
Let $BG$ be a connected $\ell$--compact group.
\begin{enumerate}
    
\item \label{poly-torsion-free}
$H^*(BG)$ is a polynomial ring concentrated in even degrees iff
$H^*(G;\Z_\ell)$ is $\ell$--torsion free iff $H^*(BG;\Z_\ell)
\xrightarrow{\cong} H^*(BT;\Z_\ell)^W$. (The ``even degrees''
assumption is automatic if $\ell$ is odd.)

\item \label{polyodd} If $\ell$ is odd, then $H^*(G;\Z_\ell)$ is $\ell$--torsion free
  iff $\pi_1(G)$
  is $\ell$--torsion free and the universal cover of $G$ does not
  contain $F_4$,
  $E_6$, $E_7$, or $E_8$ when $\ell=3$, or $E_8$ when $\ell =5$, as
  $\ell$--compact group summands.

\item \label{poly2-pi1torsionfree} Suppose $\ell =2$ and $\pi_1(G)$ is
  $2$--torsion free. Then $H^*(BG)$ is a polynomial ring iff the
 universal cover of $G$ does not contain 
 $E_6$,  $E_7$, $E_8$, or $\Spin(n)$ for $n \geq 10$, as $\ell$--compact
  group summands.
  Moreover,  $H^*(G;\Z_\ell)$
   is $\ell$--torsion free if furthermore the universal cover
  of $G$ does not contain $G_2$, $F_4$, $DI(4)$, or $\Spin(n)$ for
$n \geq 7$, as $\ell$--compact group summands.

\end{enumerate}
\end{thrm}
\begin{proof} Point \eqref{poly-torsion-free} is classical and summarized in
  \cite[Thms.~12.1]{AGMV08}. Point
  \eqref{polyodd} follows from \cite[Thms.~12.1 and 12.2]{AGMV08}
  together with the classification of $p$--compact groups for odd primes \cite[Thm.~1.1]{AGMV08}.
 Point \eqref{poly2-pi1torsionfree} follows from \cite[Thms.~1.1 and 1.4]{AG09}.
 \end{proof}
\begin{rem}\label{rem:poly-cohom}
  For $\ell=2$, there are examples where $BG$
  has polynomial cohomology despite $\pi_1(G)$ containing $2$--torsion,
  $G=\SO(n)$ being one.
A summary of what is known is given in \cite[Rem.~7.1]{AG09}, which
includes a complete list in the case where $G$ is assumed simple.
   \end{rem}

\subsection{Effect of unstable Adams operations on cohomology}\label{subsec:unadams}
  Recall that an unstable Adams operation $\psi^q$, for $q \in
  \Z_\ell^\times$, is the operation $\psi^q \in  \Out(BG)$ corresponding
  to multiplication by $q$ on the lattice $L$ of the root datum
  $\bbD$. We here describe how $\psi^q$ acts on $H^*(BG)$ when $BG$
  has polynomial cohomology ring. Without this assumption the question
  is harder; see the
  discussion after Question~\ref{qu:thequestion}.
  
\begin{prop}
\label{prop:idmappoly} If
$H^*(BG)$ is a polynomial ring concentrated in even degrees then
$\psi^q$ acts as $q^n$ on $H^{2n}(BG;\Z_\ell)$, and in particular acts the identity on  $H^{*}(BG;\Z/\ell^r)$  if $q \equiv 1\ \mathrm{mod}\ \ell^r$.

If $\ell=2$ and $H^*(BG)$ is a polynomial ring (not necessarily concentrated in
even degrees) then $\psi^q$ still acts as the identity on
$H^*(BG)$ for all $q\in \Z_2^\times$.
\end{prop}
\begin{proof}
  If $H^*(BG)$ is concentrated in even degrees, then the statement
  about the action on $H^*(BG;\Z_\ell)$ is
  clear from Theorem~\ref{thm:poly-cohom}\eqref{poly-torsion-free}, as we
  have natural identification $H^2(BT) \cong
  \Hom_{\Z_\ell}(L,\Z_\ell)$. The statement with $\Z/\ell^r$
  coefficients now follow from the universal coefficient theorem.
  
Now, suppose that $\ell =2$. Here $H^*(BG) \to H^*(BT)$ need not be
injective, unless $G$ is $2$--torsion free (see
\cite[Thm.~12.1]{AGMV08}). However, by the theory of unstable modules over the
Steenrod algebra (see e.g.\ \cite[Prop.~7.2]{AG09}), there exists an elementary abelian
$2$--subgroup $V$, which, up to conjugation, contains every other elementary
abelian $2$--subgroup.  
In particular, we can choose a representative which contains ${}_2T$,
the elements of order $2$ in the maximal torus. Let $W_0$ denote the pointwise stabilizer of ${}_2T$ in $W$, which by e.g.\ 
\cite[Lem.~11.3]{AGMV08} is an elementary abelian $2$--subgroup of
$W$. Now, recall Tits' model for $N_G(T)$ from \cite{tits66},
elaborated and extended to $2$--compact groups in \cite{DW05} and \cite{AG08auto}:
$N_G(T)$ can be constructed from the root datum by first constructing
the reflection extension $1 \to \Z[\Sigma] \to \rho(W) \to W \to
1$, where $\Sigma$ is the set of reflections in $W$, and then
constructing $N_G(T)$ as a push-forward along a $W$--map $f\co
\Z[\Sigma] \to T$ sending each reflection $\sigma$ to a certain element of order two
$h_\sigma$ of $T$; see \cite[\S\S2-3]{AG08auto}. Let $\rho_0$ denote
the preimage of $W_0$ in $\rho(W)$, and consider the subgroup $A$
of $N_G(T)$ generated by ${}_2T$  and the image of $\rho_0$ under
$\rho(W) \to N_G(T)$, the map to the push-forward. By construction
$A$ is an abelian subgroup of $N_G(T)$. Likewise by construction it
will contain $V$.
We hence just have to see that $\psi^q$ acts trivially on $A$. However,
this is a consequence of \cite[Thms.~B and C]{AG08auto}, which explain
exactly how $\psi^q$ acts on $N_G(T)$, namely as a quotient of a map
which multiplies by $q$ on $T$ and is the identity on $\rho(W)$ (see
Step~2 of the proof of Thm.~B in \cite{AG08auto} for the definition of
the homomorphism $s\co \Out(\bbD_G) \to \Out(N_G(T))$).
\end{proof}

\subsection{Homotopical actions on
  \texorpdfstring{$BG$}{BG}}\label{subapp:homotopy-act}
We start by briefly recalling homotopical (or proxy) actions; see e.g.\ \cite[\S10]{DW94} for more details. 
A homotopical action of a discrete group $K$
 on a space $X$ is a free
homotopy class of maps 
 $BK\to B\aut(X)$, where $\aut(X)$ denotes the
 group-like topological monoid of homotopy self-equivalences of $X$. By the classification of fibrations, homotopical actions are in bijection with equivalence classes of fibrations with base $BK$ and fiber $X$. The bijection is given by pulling back the universal fibration with fiber $X$ and base $B\aut(X)$ along $BK \to B\aut(X)$. 
 
A homotopical action of $K$ on $X$ produces a free $K$--action on a space homotopy equivalent to $X$ by pulling back the fibration with fiber $X$ and base $BK$ along 
the map $EK \to BK$. The total space $X'$ of the pulled-back fibration is a free $K$--space homotopy equivalent to $X$. Conversely, an actual action of $K$ on $X$, i.e., a homomorphism $K \to \aut(X)$, obviously determines a homotopical action by taking classifying spaces, and the associated fibration is the Borel fibration $X_{hK} \to BK$.
Homotopical actions of $K$ on $X$ are also, via this correspondence, in bijection with free $K$--actions on spaces homotopy equivalent to $X$, up to $K$--homotopy equivalence. (Alternatively, one can drop the freeness assumption and change the equivalence relation to being induced by $K$--maps which are underlying homotopy equivalences, as these induce the same fibrations over $BK$.)
Homotopical actions hence `are' actual actions on a space homotopy equivalent to $X$ (hence the alternative name `proxy'), and thus also induce actions on homology groups, etc.

If $X$ is a pointed space (pointed meaning well-pointed), one can also consider pointed homotopical actions, defined to be 
free homotopy classes of maps $BK \to B\aut_*(X)$, where $\aut_*(X)$ is the
 group-like topological monoid of pointed homotopy self-equivalences of $X$, which again by the classification of fibrations correspond to fibrations over $BK$ with fiber $X$ together with a section, and pointed homotopical $K$--actions on $X$ correspond to $K$--actions on pointed $K$--spaces pointed homotopy equivalent to $X$, modulo the equivalence given by pointed $K$--maps which are underlying pointed homotopy equivalences.

For a homotopy action of $K$ on $X$, we can define the homotopy fixed point space $X^{hK}$ 
as the space of sections of the associated fibration with fiber $X$ and base $BK$, which 
again, up to homotopy, identifies with $\map_K(EK,X')$, where $X'$ is the free $K$-space 
homotopy equivalent to $X$ as above. Similarly, for a homotopy action of $K$ on $X$, we take
the homotopy orbit space $X_{hK}$ to mean the total space in the fibration over $BK$ with
fiber $X$, obtained by pulling the universal fibration back along $BK \to B\aut(X)$.
 
 In this subsection, we will relate homotopical actions of $K$ on $BG$
to homomorphisms $K \to \Out(\bbD_G)$,
building on \cite{AG08auto}, \cite{BM07}, etc.
Note that if $K = \Z$, then $BK \homot S^1$, so $[BK,B\aut(X)] = [S^1,
B\aut(X)] = \Rep(\Z, \Out(\bbD_G))$, i.e., conjugacy classes of
elements of $\Out(\bbD_G)$, but for $K$ finite the story is a priori more complicated.

Let $BT\to BG$ be a maximal torus which is a fibration, and
write $\aut(BT \to BG)$ for
the topological monoid of homotopy self-equivalences of the fibration
$BT \to BG$, that is, the topological monoid of pairs 
$(f,g)$ of homotopy equivalences (not necessarily
preserving the basepoint) fitting into a commutative square
\[\xymatrix{
	BT 
	\ar[r]^f_\homot
	\ar[d]
	&
	BT
	\ar[d]
	\\
	BG
	\ar[r]^g_\homot
	&
	BG
}\]
The center $B\cZ(G)$ of $BG$
is given by %
$B\cZ(G) \simeq \aut_1(BG)$ where $\aut_1(BG)$ denotes
the submonoid of $\aut(BG)$ 
consisting of self-homotopy equivalences of  $BG$ homotopic
to the identity;
see again e.g.\ \cite{AG09,grodal10} (or the original \cite{DW95}) for more information.

\begin{prop}[Homotopy actions on $BT$ and $BG$ versus actions on the
    root datum $\bbD_G$]\label{autBTtoBG}
Let $BG$ be a connected $\ell$--compact group with
root datum $\bbD_G$.  Then we have a commutative diagram
$$\xymatrix { \ast \ar[r] \ar[d] & B\cW_G(T) \ar[r]^\homot\ar[d] & BW \ar[d]\\
   B^2\cZ(G) \ar[r] \ar@{=}[d] & B\aut(BT \to BG) \ar[r]\ar[d]\pb &
   B\Aut(\bbD_G) \ar[d]\\
   B^2\cZ(G) \ar[r] & B\aut(BG) \ar[r] & B\Out(\bbD_G)}$$
with rows and columns fibre sequences
and the bottom right square a homotopy pullback square.
In particular, when $K$ is a finite $\ell'$--group, or 
the product of a finite
$\ell'$--group and $\Z$, we have bijections
\begin{equation}
\label{eq:outrepbij}
 \Rep(K,\Out(\bbD_G)) \isom \Rep(K,\Out(BG)) \xrightarrow{\ \isom\ }
 [BK,B\Out(BG)] \xleftarrow{\ \isom\ } [BK,B\aut(BG)]   \mbox{\,\,\, and }
\end{equation}
\begin{equation}
\label{eq:autrepbij}
\Rep(K,\Aut(\bbD_G)) \xrightarrow{\ \isom\ } [BK, B\Aut(\bbD_G)]
\xleftarrow{\ \isom\ } [BK,B\aut(BT \to BG)]
\end{equation}
where $\Rep(K,\Gamma)$ for a group $\Gamma$ denotes the set of 
conjugacy classes of homomorphisms from $K$ to $\Gamma$.

Furthermore, when $K$ is a finite $\ell'$--group, 
we have a bijection
\begin{equation}
\label{eq:autpt}
[BK,B\aut_*(BG)] \xrightarrow{\ \isom\ } [BK,B\aut(BG)] 
\end{equation}
where $\aut_*(BG)$ denotes the space of pointed homotopy self-equivalences of $BG$.
\end{prop}
    
\begin{proof} 
The rightmost column is the canonical
fibre sequence arising from the definition of $\Out(\bbD_G)$
as $\Aut(\bbD_G)/W$, and the middle column  
is a fibre sequence by the definition of the Weyl space as self-maps
of $BT$ over $BG$.
The bottom right horizontal map is induced by the composite
\[
	\aut(BG) \longto \Out(BG) \xrightarrow{\ \isom\ } \Out(\bbD_G)
\]
and the middle right-hand horizontal map is induced by the composite
\[
	\aut(BT \to BG) \longto \aut(BT)  \longto \Out(BT) \xto{\ \isom\ } \GL(L),
\]
whose image
lands in $\Aut(\bbD_G)$ and which makes the bottom right hand square
commute; see \cite[Thm.~1.2 and  Rec.~8.2]{AG09}.
That the induced map on fibres at the top right hand side 
is a homotopy equivalence
follows from the standard fact that the Weyl space
is homotopy discrete;
see \cite[Prop.~8.10 and Thm.~9.7]{DW94}. 
It follows that the bottom right square is a  homotopy pullback square.
The bottom row is a fibre sequence by the isomorphism
      $\Out(BG) \isom \Out(\bbD_G)$ and the fact that
      $\aut_1(BG) \homot B\cZ(G)$; see \cite[Thm.~1.2]{AG09}. As the
bottom right square is a homotopy pullback square,  it follows that
the middle row is a fiber sequence as well,
establishing the whole diagram.

We now establish the bijections in \eqref{eq:outrepbij}
and \eqref{eq:autrepbij}.
The first bijection
in \eqref{eq:outrepbij} is a
consequence of the isomorphism $\Out(BG) \isom \Out(\bbD_G)$ already
discussed, and the second is elementary homotopy theory. For the
third, consider the pullback of the fibration
$$B^2{\mathcal Z}G \longto B\aut(BG) \longto B\Out(BG)$$
along
$\phi \in [BK,B\Out(BG)]$.
It is enough to show  that this pullback
has a unique section, which we will do 
using obstruction theory. The existence and uniqueness obstructions lie
in 
$H^j(K;\pi_i(B^2{\mathcal Z}G))$, where $i>0$ and $j=i+1$ or $j=i$, with local coefficients induced by $\phi$.
Let $R$ denote the finite part of $K$.
By the Lyndon--Hochschild--Serre spectral sequence, and
the fact that $R$ is a finite group of order prime to $\ell$,
we have
$H^j(K;\pi_i(B^2{\mathcal Z}G)) \isom H^j(K/R;
(\pi_i(B^2{\mathcal Z}G))^{R})$. But $K/R$ is either
infinite cyclic or trivial, and $B^2{\mathcal Z}G$ is simply
connected, so this group is trivial if $j=i$ or $i+1$.
Hence we have a bijection
$[BK,B\aut(BG)] \xrightarrow{\isom} [BK,B\Out(BG)]$
as wanted. (See also \cite[Thm.~A]{AG08auto}.)
The bijections in \eqref{eq:autrepbij}
follow similarly, with the second isomorphism following
from  exactly the same obstruction theory argument as above.

Finally, for \eqref{eq:autpt} note that we have a fibration sequence
$$BG \longto B\aut_*(BG) \longto B\aut(BG)$$
and that $\map(BK,BG)$ is contractible, as $K$ is a finite $\ell'$--group and $BG$ is $\ell$--complete.
\end{proof}

      \begin{rem}
Note that by \cite[Thm.~A]{AG08auto} maps $B\Gamma \to B\Out(\bbD_G)$ in fact always
lift to $B\aut(BG)$, for any discrete group $\Gamma$, but in general the lift may not be unique. 
        \end{rem}

\subsection{Homotopy fixed point \texorpdfstring{$\ell$}{l}--compact groups
  \texorpdfstring{$BG^{h\langle\tau\rangle}$}{BG\textasciicircum h<t>} and their \texorpdfstring{$\Z_\ell$}{Zl}--root data}
\label{subapp:rootdata}

Recall that the homotopy fixed-point space for a homotopical group action of $K$ on a space $X$ is defined as the space of sections of the fibration $X_{hK} \to BK$,
or $\map_K(EK,X')$ where $X'$ is the total space of the fibration pulled back along $EK \to BK$; see Section~\ref{subapp:homotopy-act}.
The goal of this subsection
is to prove 
the following theorem describing the homotopy fixed-point
$\ell$--compact group $BG^{h\langle \tau \rangle}$ in terms of the
root datum $\bbD_G$ of $BG$ when $\tau \in \Out(\bbD_G)$ is of
finite order prime to $\ell$. The theorem can be seen as a
generalization of the Lie theoretic construction of e.g.\  
$F_4$ inside $E_6$ as $\Z/2$--fixed-points.

\begin{thrm}[The root datum of $BG^{h\langle \tau\rangle}$] \label{thm:root-datum}
 Let $\bbD = (W, L, L_0)$ be a $\Z_\ell$--root datum of a connected $\ell$--compact group
 $BG$ with maximal torus $BT$, and  $\tau \in \Out(\bbD)$ an element of finite order prime to
 $\ell$.
Let $\oldphi \in \Aut(\bbD)$ be a lift of $\tau$ with $\ell \nmid
|\oldphi|$ such that the $\oldphi$--fixed-point lattice
$L^{\oldphi}$ has maximal rank among all such lifts.
Then the homotopy fixed point space $BG^{h\langle \tau
  \rangle}$ is an $\ell$--compact group 
  with maximal torus
  $BT^{h\langle \oldphi \rangle} \to BG^{h\langle \oldphi \rangle}
  \simeq BG^{h\langle \tau \rangle}$ (with $\langle \oldphi
  \rangle$--action via Proposition~\ref{autBTtoBG}),
and root datum given by
$$\bbD_{G^{h\langle \tau
    \rangle}} = (N_W(L^{\oldphi})/C_{W}(L^{\oldphi}), L^{\oldphi}, L_0^{\oldphi
}),$$
where $N_W(L^\oldphi) = \{w \in W \mid wL^\oldphi =L^\oldphi \}$ and
$C_W(L^\oldphi) = \{ w \in W \mid wx =x \mbox{ for all } x \in L^\oldphi\}$.

In particular
$$\pi_*( BG^{h\langle \tau \rangle}) \otimes \Q = (Q(\Q_\ell[L]^W))^\tau$$
and
$$H^*(BG^{h\langle \tau \rangle};\Z_\ell) \otimes \Q \cong \Q_\ell[L^\oldphi]^{N_W(L^{\oldphi})/C_{W}(L^{\oldphi})} \cong \Q_\ell[(Q(\Q_\ell[L]^W))^\tau].$$
\end{thrm}
Our first step towards a proof of Theorem~\ref{thm:root-datum} is to show that the homotopy fixed point space $BG^{hK}$ is
again an $\ell$--compact group when $K$ is an $\ell'$--group.
\begin{prop}
\label{prop:ghksplit}
For $BG$ a connected $\ell$--compact group and $K \leq \Out(BG)$ a
finite $\ell'$--group, there exists a canonical pointed homotopical $K$--action on $G = \Omega BG$ together with an isomorphism
\begin{equation}
\label{iso:pighk} 
	\pi_*(G^{hK}) \xrightarrow{\ \isom\ } \pi_*(G)^K
\end{equation}
on homotopy groups 
and a weak equivalence
\begin{equation}
\label{eq:ghksplitting}
	G \xrightarrow{\ \homot\ } G^{hK} \times G/G^{hK}	
\end{equation}
of spaces where
$G/G^{hK}$ denotes the fibre of the map $BG^{hK} \to BG$ as usual.
In particular, $BG^{hK}$ is again an $\ell$--compact group of
at least the connectivity of $BG$,  and semisimple if $BG$ is.
The $\Z_\ell$--cohomology of $G^{hK}$ is $\ell$--torsion
free (equivalently, the cohomology of the $BG^{hK}$
is a polynomial ring on even degree generators)
if the same holds for $G$.
\end{prop}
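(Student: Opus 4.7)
The plan is to exploit the fact that $|K|$ is a unit in $\Z_\ell$, so that $H^s(K;M)$ vanishes for $s > 0$ whenever $M$ is a $\Z_\ell[K]$--module. First I would establish the homotopy-group isomorphism \eqref{iso:pighk} via the homotopy fixed-point spectral sequence
\[
    E_2^{s,t} = H^s(K;\pi_t(G)) \Longrightarrow \pi_{t-s}(G^{hK}).
\]
Since $BG$ is $\F_\ell$--local with $H^\ast(G;\F_\ell)$ finite, each $\pi_t(G)$ is a finitely generated $\Z_\ell$--module, so the $\ell'$--hypothesis forces the $E_2$--page to concentrate on the $s = 0$ line and the spectral sequence collapses.

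Next I would construct the space-level splitting \eqref{eq:ghksplitting} by working inductively up the Postnikov tower of $G$, viewed as a $K$--space. At the $n$--th stage the $\Z_\ell[K]$--module $\pi_n(G)$ decomposes canonically as $\pi_n(G)^K \oplus V_n$ via the averaging projector $e_K = \frac{1}{|K|}\sum_{k \in K} k$, giving a $K$--equivariant product decomposition of the Eilenberg--MacLane layer $K(\pi_n G, n)$ as $K(\pi_n(G)^K, n) \times K(V_n, n)$. The Postnikov obstructions to lifting these layer-wise decompositions to a global equivariant equivalence $G \simeq G^{hK} \times (G/G^{hK})$, with $K$ trivial on the first factor and acting through $\bigoplus V_n$ on the second, lie in equivariant cohomology groups whose higher-group-cohomology contributions are killed by the $\ell'$--hypothesis, so the induction goes through. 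Forgetting the $K$--action yields \eqref{eq:ghksplitting}, and a comparison of long exact homotopy sequences identifies $G/G^{hK}$ with the homotopy fibre of $BG^{hK} \to BG$.

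With \eqref{iso:pighk} and \eqref{eq:ghksplitting} in hand, the remaining claims are formal. The space $BG^{hK}$ is $\F_\ell$--local as a homotopy limit of $\F_\ell$--local spaces, and simply connected since $\pi_1(BG)^K \leq \pi_1(BG) = 0$. The splitting exhibits $H^\ast(G^{hK};\F_\ell)$ as a direct summand of $H^\ast(G;\F_\ell)$, which is therefore finite, so $BG^{hK}$ is an $\ell$--compact group of connectivity at least that of $BG$. The same retract argument shows that $\ell$--torsion-freeness of $H^\ast(G;\Z_\ell)$, equivalently polynomiality of $H^\ast(BG;\F_\ell)$ on even generators, is inherited by $G^{hK}$. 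The hard part will be the Postnikov splitting: individual obstruction classes are a priori nonzero, and the real work lies in tracking the $K$--action coherently through all stages so that they land in equivariant cohomology groups killed by the $\ell'$--assumption.
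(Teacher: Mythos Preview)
Your argument for \eqref{iso:pighk} via collapse of the homotopy fixed-point spectral sequence is correct and matches the paper, and your deduction of the ``in particular'' clause from the splitting is fine as well. The gap is in your construction of the splitting \eqref{eq:ghksplitting}.

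The Postnikov obstruction argument does not go through as stated. The obstructions to propagating the layer-wise decomposition are \emph{not} concentrated in higher group cohomology of $K$; they live in the (Borel-equivariant) cohomology of the Postnikov sections themselves, on the $H^0(K;-)$ line, and these groups need not vanish. Concretely, take $\ell$ odd, $K=\Z/2$, and let $G$ be the third Postnikov section of $S^2\lcom$ with $K$ acting via the antipodal map. Then $\pi_2(G)=\Z_\ell$ carries the sign representation while $\pi_3(G)=\Z_\ell$ is $K$-trivial, so $G^{hK}\simeq K(\Z_\ell,3)$ and $G/G^{hK}\simeq K(\Z_\ell,2)$; but $G$ has nonzero $k$-invariant $\iota^2\in H^4(K(\Z_\ell,2);\Z_\ell)$, which is $K$-fixed and hence survives in Borel cohomology, so $G$ is \emph{not} equivalent to the product $K(\Z_\ell,3)\times K(\Z_\ell,2)$. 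Your argument uses only that $|K|$ is invertible and that the homotopy groups are finitely generated $\Z_\ell$-modules, so it would apply equally to this $G$ --- which is not a loop space --- and therefore cannot be correct as written.

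What your approach is missing is the loop-space structure on $G$, and this is exactly what the paper exploits. One defines a norm map $\rho\colon G\to G$, $f\mapsto\prod_{k\in K}kf$, using loop multiplication, sets $\rho G=\hocolim(G\xrightarrow{\rho}G\xrightarrow{\rho}\cdots)$, and checks on homotopy groups that the composite $G^{hK}\to G\to\rho G$ is a weak equivalence (since on $\pi_*$ the norm $N_K=\sum_{k}k_*$ is, after inverting $|K|$, projection onto $\pi_*(G)^K$). The resulting zigzag $G\to\rho G\xleftarrow{\ \simeq\ }G^{hK}$ splits the long exact homotopy sequence of $G^{hK}\to G\to G/G^{hK}$ and yields \eqref{eq:ghksplitting} in one stroke, with no obstruction theory.
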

\begin{proof}
This is a consequence of \cite[Thm.~5.2]{BM07}, but let us give
the argument for convenience.
The subgroup $K \leq \Out(BG)$ determines a unique base-point preserving homotopical action of $K$ on $BG$ 
by Proposition~\ref{autBTtoBG}, which we can treat as an actual base-point preserving action, after potentially replacing $BG$ by a homotopy equivalent space. 
We then have an evident induced $K$--action on $\loops BG$, and 
$(\Omega BG)^{hK} \homot \Omega (BG^{hK})$ as both spaces are homotopy equivalent
to the space of basepoint-preserving $K$--equivariant maps 
$EK_+ \smashprod S^1 \to BG$. Also, as $K$
is a finite $\ell'$--group, and $BG$ is $\ell$--complete, the homotopy
fixed point spectral sequence collapses and $\pi_*(BG^{hK})
\isom \pi_*(BG)^K$.  The existence of isomorphism \eqref{iso:pighk} follows.

As we have a basepoint-preserving $K$--action on $G =\Omega BG$ we can define a ``norm'' map $\rho: G \to G$ by $f \mapsto \prod_{k \in K}kf$. Here 
$kf$ denotes the action of $k \in K$ on $f \in G =  \Omega BG$, and the
product is taken with respect to the loop product on $G$,
for some
fixed ordering of $K$. 
Then  $\rho_*: \pi_*(G) \to \pi_*(G)$ is given
by $\rho_* = N_K = \sum_{k \in K} k_\ast$.

Write 
$\rho G = \hocolim(G \xrightarrow{\rho} G \xrightarrow{\rho} \cdots)$.
We claim that we have a weak equivalence
\[
	j \co G^{hK} \xto{\ \homot\ } \rho G
\]
given by the composite $G^{hK} \to G \to \rho G$ of the canonical maps.

Under \eqref{iso:pighk} and the isomorphism
\[
\colim (\pi_*(G) \xrightarrow{N_K} \pi_*(G) \xrightarrow{N_K} \cdots)
\xto{\ \isom\ }
\pi_\ast(\rho G),
\]
the map induced by $j$ corresponds to the composite
\[
	\pi_\ast(G)^K 
	\longincl 
	\pi_\ast(G) 
	\longto 
	\colim_n(\pi_*(G) \xrightarrow{N_K} \pi_*(G) \xrightarrow{N_K} \cdots)
\]
which is an isomorphism since $\pi_*(G)$ is $\ell$--local and $K$ is an $\ell'$--group.
Thus $j$ is a weak equivalence, as claimed.
Consequently, we have weak equivalences
\[
	G 
	\xrightarrow{\ \homot\ } 
	\rho G \times G/G^{hK}
	\xleftarrow[\ \homot\ ]{\ j\times \id\ }
	G^{hK} \times G/G^{hK}
\]
as the zigzag $G \to \rho G \xleftarrow{\homot} G^{hK}$ induces a splitting of the
long exact homotopy sequence of the fibration $G^{hK} \to G \to
G/G^{hK}$.

It remains to deduce the `in particular' part. 
It is now clear that $BG^{hK}$ is an $\ell$--compact group, as it is 
$\ell$--complete and the splitting \eqref{eq:ghksplitting} implies that 
its loop space has finite mod $\ell$ cohomology.
From \eqref{eq:ghksplitting},
it is also clear that $BG^{hK}$  has at least the
connectivity of $BG$, that $BG^{hK}$ is semisimple if $BG$ is, and that $G^{hK}$ has torsion free
$\Z_\ell$--cohomology if $G$ has.
The parenthetical statement that $G$ has torsion free 
$\Z_\ell$--cohomology if and only if $H^*(BG;\F_\ell)$ 
is a polynomial ring on
even degree generators is classical and recorded in Theorem~\ref{thm:poly-cohom}\eqref{poly-torsion-free}.
\end{proof}
\begin{rem}
A companion result for fixed points of $\ell$--group actions is given in \cite[Lem.~5.9]{DW94}.
\end{rem}

\begin{proof}[Proof of Theorem~\ref{thm:root-datum}]
First note that $BG^{h\langle \tau \rangle}$ is an $\ell$--compact group of rank $\dim_{\Q_\ell} (Q(\Q_\ell[L]^W))^\tau
  = \dim_{\Q_\ell} (\pi_*(BG) \otimes \Q)^\tau$ by Proposition~\ref{prop:ghksplit} and \eqref{eq:rationalpiBG}. This number equals 
 $\rk_{\Z_\ell}(L^{\oldphi})$ by a result of Springer
\cite[Thm.~6.2(i)]{springer74} (with $d=1$, and $\sigma = \tau$, and
after noting that the rank remains unchanged if we extend scalars to
$\bbC$, and that the maximum rank can always be realized by an element
$\oldphi$ of order
prime to $\ell$, by writing the element as a product of an $\ell$--part and
an $\ell'$--part).
Next, let 
 $\oldphi \in \Aut(\bbD)$ be a lift as in then statement of the theorem. Then we get a homotopy action of $\langle
\oldphi\rangle$  on $BT \to BG$ by
Proposition~\ref{autBTtoBG}. By taking homotopy fixed points, 
we get a canonical map
$BT^{h\langle \oldphi \rangle} \to BG^{h\langle \oldphi \rangle}
\simeq  BG^{h\langle \tau \rangle}$,
which has to be a maximal
torus, as it is a monomorphism since $BT \to BG$ is (see
\cite[Thm.~7.3]{DW94}) and of maximal rank by the first part.
Hence $BG^{h\langle \tau \rangle}$ has lattice $L^\oldphi$.
It is also
clear that $L_0^\oldphi$ is a coroot lattice, as we have an exact
sequence
$$0 \longto L_0^\oldphi \longto L^\oldphi \longto \pi_1(G)^\oldphi \longto 0$$
because taking $\langle
\oldphi \rangle$--fixed-points is exact, and  $\pi_1(G^{h\langle
  \tau\rangle}) \xrightarrow{\cong} \pi_1(G)^\tau$ by Proposition~\ref{prop:ghksplit}.

To identify the root datum of $BG^{h\langle \tau \rangle}$,
it remains to establish the claimed description of the Weyl group.
First note that we have 
$$W_{G^{h\langle\tau\rangle}} \leq
N_{W}(L^\oldphi)/C_{W}(L^\oldphi)$$
as subgroups of $\Aut_{\Z_\ell}(L^\oldphi)$.
Namely, with
$BT^{h\langle \oldphi \rangle}
\xrightarrow{\ \iota\ } BG^{h\langle \oldphi \rangle} \xrightarrow{\ f\ }
BG$, we have
\[
    W_{G^{h\langle \oldphi \rangle}} \cong \{ w \in
    \Aut_{\Z_\ell}(L^\oldphi) \mid  \iota \circ B^2w \simeq  \iota\}
\]
and
\[
    N_{W}(L^\oldphi)/C_{W}(L^\oldphi) \cong \{ w \in
    \Aut_{\Z_\ell}(L^\oldphi) \mid  f \circ \iota \circ B^2w \simeq  f \circ \iota \}
\]
where $B^2 w: BT^{ \langle\oldphi\rangle} \simeq B^2L^\oldphi \to
B^2L^\oldphi \simeq BT^{ \langle\oldphi\rangle}$ 
is the map induced by
$w$, by \cite[\S 7]{DW95}.

We now claim that the two groups have the same order.
For this, consider the inclusion
$$
\Q_\ell[L^\oldphi]^{N_{W}(L^\oldphi)/C_{W}(L^\oldphi)}
\subseteq 
\Q_\ell[L^\oldphi]^{W_{G^{h\langle\tau\rangle}}}.
$$
Both sides are polynomial algebras on the graded $\Q_\ell$--vector
space $Q(\Q_\ell[L]^W)^\tau$, i.e., they have the same degrees;
for the right-hand side this was observed above, and for the left-hand
side it follows from a result in invariant theory
\cite[Thm.~5.1]{LS99} (see also \cite[Thm.~12.20]{LT09}).
As the order of a reflection group is the product of its degrees
(cf.\ \cite[Thm.~4.14(i)]{LT09}), this shows equality.

The claims in the `in particular' part were established above as part of the proof.
 \end{proof} 

    \begin{rem}
Theorem~\ref{thm:root-datum}
should be compared to the construction of the
$\Phi_e$--torus in a finite group of Lie type \cite{BM92}
\cite[\S25]{MT11} \cite{EM18}, which can be
seen as passing from a $\Z$--root datum to a $\Z[\zeta_e]$--root
datum (whereas for $\Z_\ell$--root data $\Z_\ell[\zeta_e] = \Z_\ell$).
    \end{rem}

\begin{rem} Note that over $\bar \Q_\ell$ we can choose a basis $\{\bar x_1, \ldots \bar x_r\}$ for  $Q(\bar \Q_\ell[L]^W)$ such that $\tau$ acts on $\bar x_i$ as a root of unity, and in such a basis we can then read off a $\bar \Q_\ell$-basis for $\pi_*(BG^{h\langle \tau\rangle}) \otimes_{\Z_\ell} \bar \Q_\ell$ as given by those $\bar x_i$ on which $\tau$ acts trivially.
\end{rem}

\subsection{Classification of spaces of the form \texorpdfstring{$\BtGq$}{BtG(q)} 
and untwisting}
\label{subapp:untwisting}
In this subsection we will prove the untwisting theorem, Theorem~\ref{thm:untwisting-intro} from the introduction.
We will do this by presenting a more general classification of spaces of the form $\BtGq$, with the notation of \eqref{eq:lietypegrp}. Spaces of this form are sometimes called $\ell$--local finite groups of Lie type.
The result is a strengthening of \cite[Thm.~E]{BM07} and implies the equivalences of fusion systems of \cite[Thm.~A]{BrotoMoellerOliver}. 
\begin{thrm}[Classification of $\ell$--local finite groups of Lie type]\label{thm:class-lcglie}
Let $BG$ be a connected $\ell$--compact group, $q \in
  \Z_\ell^\times$, and let $\tau \in \Out(BG) \isom \Out(\bbD_G)$ be
  of finite order. 
 Make the factorizations
\begin{itemize}
\item    $q = \zeta_e q'$  in $\Z_\ell^\times$
  where 
$q' \equiv 1 (\ell)$ if $\ell$ odd and $q' \equiv 1 (4)$ if $\ell=2$, and $\zeta_e$ is a primitive $e$--th root of unity; and
\item $\tau\psi^{\zeta_e} = \tau'\tau_\ell$ in $\Out(BG)$, where $\tau'$ has order prime to $\ell$ and $\tau_\ell$ has $\ell$--power order
 \end{itemize}
so that in particular 
 $$ \tau \psi^q = \tau' \tau_\ell \psi^{q'}.$$
 Then
\begin{enumerate}
\item \label{item:Hislcg}
    The finite $\ell'$--group $\langle \tau' \rangle \leq \Out(BG)$ has a
    canonical homotopical action on $BG$. Its homotopy fixed point space 
    $BH = BG^{h\langle \tau' \rangle}$ is a connected $\ell$--compact group, semisimple or 
    simply connected if $BG$ is, and has a canonical homotopical 
    action of $\tau_\ell \psi^{q'}$.
\item \label{item:untwisting}
    With the actions from \eqref{item:Hislcg},
    we have an ``untwisting" homotopy equivalence
    \begin{equation*}%
    \BtGq \xleftarrow{\ \homot\ } B{}^{\tau_\ell}\!H(q').
    \end{equation*}
    In particular, we can completely untwist $\BtGq$ if $\ell \nmid |\tau|$.
\item\label{it:threepiecesofdata}
	The homotopy type of $\BtGq$ is  determined by the following three pieces of data:
    \begin{enumerate}
    \item  \label{item-twroot} The root datum $\bbD_{H}$ of $BH$ 
      (obtained from $\bbD_G$ and $\tau' \in \Out(\bbD_G)$ via 
      Theorem~\ref{thm:root-datum}).
     \item \label{item-ltw} The twisting $\tau_\ell \in \Out(\bbD_{H})$ of finite $\ell$--power order.
      \item \label{item-val}The $\ell$-adic valuation of $q'-1$.
    \end{enumerate}
\end{enumerate}
\end{thrm}
Our proof of Theorem~\ref{thm:class-lcglie}  is a modification of the proof of \cite[Thm.~E(i)]{BM07} (which e.g.\ involved a case-by-case check \cite[Rem.~6.3]{BM07} left to the reader; compare also with
\cite[Thm.~4.2]{BrotoMoellerOliver}). Before embarking on the proof, let us just unravel the notation to see how Theorem~\ref{thm:untwisting-intro} follows:
\begin{proof}[Proof of Theorem~\ref{thm:untwisting-intro} from Theorem~\ref{thm:class-lcglie}]
If $\ell$ is odd, the claim follows directly from Theorem~\ref{thm:class-lcglie}(\ref{item:Hislcg}) and (\ref{item:untwisting}), as $\tau_\ell = 1$ if $\ell \nmid |\tau|$, and the definitions of $\tau'$ and $q'$ across the two theorems agree. The same is true if $\ell=2$
and $q$ is congruent to $1$ mod $4$.

If $\ell =2$ and $q$ is congruent to $3$ mod $4$ the theorem also follows, but there is a slight difference in how we write things, due to different definitions of $q'$ in the two theorems: Theorem~\ref{thm:class-lcglie}\eqref{item:untwisting} tells us that  $\BtGq \xleftarrow{\ \homot\ } B{}^{\tau_2}\!H(-q)$, with $\tau_2 =-1$ and $BH = BG^{h\langle \tau'\rangle}$. But $B{}^{\tau_2}\!H(-q)$ is just another name for $BH(q)$, so Theorem~\ref{thm:untwisting-intro} also follows in that case. (See Remark~\ref{rem:mod4cong} for an explanation of this notation.)
\end{proof}

For the proof of Theorem~\ref{thm:class-lcglie} we need a lemma.
  \begin{lemma}\label{nilpot-lemma}
  Suppose $q \in \Z_\ell^\times$ satisfies 
  $q \equiv 1\ \mathrm{mod}\ \ell$.
  For any connected $\ell$--compact group $BG$, the corresponding
  homomorphism $\Z \to \Out(BG)$, $ 1 \mapsto \psi^q$, extends to a central homomorphism $\Z_\ell \to \Out(BG)$.
  
 In particular, if $F$ is any homotopy invariant functor from spaces
to $\F_\ell$--vector spaces with $F(BG)$ finite dimensional (e.g.\ 
 $\pi_n(\,\cdot\, ) \otimes
\F_\ell$, $H_n(\,\cdot\,;\F_\ell)$ or
$H_n(\Omega( \,\cdot\,);\F_\ell)$), then
$\psi^q$ acts
on  $F(BG)$
as an element of order $\ell^s$ for some $s\geq 0$.
\end{lemma}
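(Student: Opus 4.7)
The plan is first to extend the exponential homomorphism at the level of $\Z_\ell^\times$, and then to exploit finiteness of $F(BG)$ via the structure of finite-index subgroups of $\Z_\ell$.

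First I would observe that the assignment $\zeta \mapsto \psi^\zeta$ gives a group homomorphism $\Z_\ell^\times \to \Out(\bbD_G) \isom \Out(BG)$, since $\psi^\zeta$ corresponds to scalar multiplication by $\zeta$ on the underlying lattice of $\bbD_G$, which is visibly multiplicative in $\zeta$. The problem of extending $\Z \to \Out(BG)$, $n \mapsto (\psi^q)^n = \psi^{q^n}$, to a homomorphism defined on $\Z_\ell$ therefore reduces to extending the exponentiation homomorphism $\Z \to \Z_\ell^\times$, $n \mapsto q^n$, across the inclusion $\Z \incl \Z_\ell$.

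The hypothesis $q \equiv 1 \pmod \ell$ precisely ensures that this exponentiation map is continuous with respect to the $\ell$-adic topology on $\Z$: for $\ell$ odd with $q = 1 + \ell u$, the binomial expansion yields $q^{\ell^k} \in 1 + \ell^{k+1}\Z_\ell$, so $q^n$ tends to $1$ as $v_\ell(n) \to \infty$; for $\ell = 2$, the decomposition $\Z_2^\times = \{\pm 1\} \times (1 + 4\Z_2)$ reduces this to the same estimate together with the trivial observation that $(-1)^n = 1$ once $2 \mid n$. Completeness of $\Z_\ell^\times$ and density of $\Z$ in $\Z_\ell$ then yield a unique continuous extension $\Z_\ell \to \Z_\ell^\times$, which post-composed with $\Z_\ell^\times \to \Out(BG)$ produces the desired extension $\Z_\ell \to \Out(BG)$.

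For the consequence, composing further with the homomorphism $\Out(BG) \to \Aut_{\F_\ell}(F(BG))$ coming from homotopy invariance of $F$ yields a group homomorphism from $\Z_\ell$ to a finite group (finite since $F(BG)$ is a finite-dimensional $\F_\ell$--vector space). Its kernel $H$ has finite index $m$, so $m\Z_\ell \subseteq H$; since $m\Z_\ell = \ell^{v_\ell(m)}\Z_\ell$, and since the subgroups of the cyclic group $\Z_\ell/\ell^{v_\ell(m)}\Z_\ell$ are all of the form $\ell^t\Z_\ell/\ell^{v_\ell(m)}\Z_\ell$, one concludes $H = \ell^s\Z_\ell$ for some $s \geq 0$. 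Thus $\ell^s$ lies in the kernel, giving $(\psi^q)^{\ell^s}_\ast = \id_{F(BG)}$, i.e.\ $(\psi^q)_\ast$ has order dividing $\ell^s$ as claimed. There is no serious obstacle here: the only mild subtlety is the case $\ell = 2$ in the continuity step, handled routinely by splitting off the $2$-torsion piece $\langle -1\rangle \subset \Z_2^\times$.
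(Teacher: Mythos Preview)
Your proof is correct and follows essentially the same route as the paper's: reduce to extending the exponentiation map $\Z \to \Z_\ell^\times$, $n \mapsto q^n$, to $\Z_\ell$ using the structure of the kernel of $\Z_\ell^\times \to \F_\ell^\times$, then use that any homomorphism from $\Z_\ell$ to a finite group factors through some $\Z/\ell^s$. The only difference is presentational: where you argue continuity via explicit binomial estimates and the decomposition $\Z_2^\times \cong \{\pm 1\} \times (1+4\Z_2)$, the paper phrases the same step as invoking the universal property of $\ell$-completion, noting that the image of $n \mapsto q^n$ lands in an $\ell$-complete abelian group (namely $\Z_\ell$ for $\ell$ odd and $\Z_2 \times \langle \pm 1\rangle$ for $\ell=2$).
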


\begin{proof} 
The map $\Z \to \Out(BG)$, $1 \mapsto \psi^q$, factors as 
$$\Z \xrightarrow{1 \mapsto q} \Z_\ell^\times \xrightarrow{t \mapsto \psi^t}\Out(BG)$$
 By
assumption, $q$ is in the kernel of $\Z_\ell^\times \to
\F_\ell^\times$, which is isomorphic to $\Z_\ell$ if $\ell$ is odd and
$\Z_2 \times \langle \pm 1\rangle$ if $\ell =2$.
In both cases, the image of the first map hence lies in an $\ell$--complete
abelian group, so by the universal property of $\ell$--completion we have a factorization
\begin{equation}\label{eq:factorization}
\begin{tikzcd}
\Z \ar[r,"1 \mapsto q"]  \ar[d,hook] & \Z_\ell^\times  \ar[r,"t \mapsto \psi^t"] & \Out(BG)\\
\Z_\ell \ar[ur] 
\end{tikzcd}
\end{equation}
Moreover, the unstable Adams operations are central in $\Out(BG) \cong \Out(\bbD_G)$.

For the second claim, first note that as $BG$ is simply connected we do not have to distinguish between pointed and unpointed homotopy classes of maps, and $\Out(BG)$ does indeed act on $F(BG)$.
By the first part, the action of $\Z$ on  $F(BG)$ via $1 \mapsto (\psi^q)_*$ extends to an action of $\Z_\ell$.
Now, since $F(BG)$ is finite, the action factors through
$\Z/\ell^s$ for some $s$.
  \end{proof}

\begin{proof}[Proof of Theorem~\ref{thm:class-lcglie}] Set $\muet = \langle \tau'\rangle \leq \Out(BG)$ for short.
By Proposition~\ref{autBTtoBG}, $\muet$ has a canonical homotopical action on $BG$ and by Proposition~\ref{prop:ghksplit}, $BH =BG^{h\muet}$ is a connected $\ell$--compact group, semisimple or simply connected if $BG$ is. We would like to see that $BH$ has an action of  $\tau_\ell\psi^{q'}$ and describe the homotopy fixed points of this action. 
Start by noting that $\tau'$ and $\tau_\ell$ commute by elementary group theory, and both commute with the central elements $ \psi^{t}$, $t \in \Z_\ell^\times$, in $\Out(BG) \cong \Out(\bbD_G)$.
Hence we can consider the product subgroup
$$A = \langle \tau'\rangle \times \langle \tau_\ell \psi^{q'}\rangle  \leq
\Out(BG).$$ 
By Proposition~\ref{autBTtoBG},
 the inclusion of $A$ into $\Out(BG)$ also lifts to a unique
homotopy action of $A$ on $BG$.
In particular $\tau_\ell\psi^{q'}$ has a residual action on  $BH =BG^{h\muet}$ (see e.g. \cite[Lem.~10.5]{DW94}), justifying~(\ref{item:Hislcg}).

For (\ref{item:untwisting}), start by noting that
by definition of homotopy fixed points,
$$
BG^{hA} \homot (BG^{h\langle \tau'\rangle})^{h\langle \tau_\ell \psi^{q'} \rangle} = (BH)^{h\langle \tau_\ell \psi^{q'} \rangle}
=  B{}^{\tau_\ell}\!H(q').$$
The canonical inclusion of subgroups $\langle \tau \psi^q
\rangle \to A = \langle \tau'\rangle \times \langle \tau_\ell\psi^{q'}\rangle$,  $\tau \psi^q \mapsto (\tau',
\tau_\ell\psi^{q'})$ defines a map on homotopy fixed points
$$f\co (BH)^{h\langle \tau_\ell\psi^ {q'}\rangle}  \homot  BG^{hA} \longto BG^{h \langle \tau \psi^q
  \rangle}  = \BtGq$$
and the claim is that this map is a homotopy equivalence. As
$A$ is also generated by $ \sigma = (\tau',\psi^{q'})$ and
$(\tau',1)$, we have $BG^{hA} \homot ((BG)^{h\muet})^{h\langle \sigma
  \rangle}$, and the homotopy fiber of $f$ identifies with $(G/G^{h\muet})^{h\langle
  \sigma \rangle}$, which we want to show is contractible. 
We will do so by showing that the $E_2$--page of its homotopy fixed point
spectral sequence vanishes.
 
The space $(G/G^{h\muet})^{h\muet} \homot \Fibre
((BG^{h\muet})^{h\muet} \to  (BG)^{h\muet})$ is contractible, as it is the
fiber of a homotopy equivalence. Hence 
also $(\pi_*(G/G^{h\muet}))^{\muet} = 0$ 
as the homotopy fixed point spectral sequence
degenerates onto the vertical axis, since $\ell \nmid |\muet|$.
Again since $\ell \nmid |\muet|$, this implies that also
$\pi_*(G/G^{h\muet})_{\muet} = 0$ and that $\pi_*(G/G^{h\muet}) \xrightarrow{1-(\tau')_*}
\pi_*(G/G^{h\muet})$ is an isomorphism.
So  $\pi_*(G/G^{h\muet})\otimes \F_\ell \xrightarrow{1-(\tau')_*}
\pi_*(G/G^{h\muet}) \otimes \F_\ell$ is likewise an isomorphism.

For each $n$, $\psi^{q'}$ acts as an element of $\ell$--power order
$\ell^s$ on $\pi_n(G/G^{h\muet})\otimes \F_\ell$  by Lemma~\ref{nilpot-lemma}. As $\tau_\ell$ has $\ell$--power order, and commutes with $\psi^{q'}$, we also have $x^{\ell^s} =1$ with $x = (\tau_\ell\psi^{q'})_*$ for some $s \geq 0$,
Hence $(1-\sigma_*)^{\ell^s} =(1-\sigma_*^{\ell^s}) =
(1-((\tau')_*x)^{\ell^s}) = (1-(\tau')_*^{\ell^s}) =
(1-(\tau')_*)^{\ell^s}$
in $\End(\pi_n(G/G^{h\muet})\tensor \F_\ell)$.
Since
$\pi_n(G/G^{h\muet})\otimes \F_\ell \xrightarrow{1-(\tau')_*}
\pi_n(G/G^{h\muet}) \otimes \F_\ell$
is an isomorphism, it follows that
$\pi_n(G/G^{h\muet})\otimes \F_\ell \xrightarrow{1-\sigma_*}
\pi_n(G/G^{h\muet}) \otimes \F_\ell$ 
also is.  Since $\pi_n(G/G^{h\muet})$ is a
finitely generated $\Z_\ell$--module, 
it follows by Nakayama's lemma that
$\pi_n(G/G^{h\muet}) \xrightarrow{1-\sigma_*}
\pi_n(G/G^{h\muet})$ is an isomorphism as well.
But this means that the $E_2$--page for the homotopy limit spectral sequence for the equalizer is identically zero, so
 $(G/G^{h\muet})^{h\langle
  \sigma \rangle}$ is contractible, as desired.

To see (\ref{it:threepiecesofdata}), i.e., which data determines the homotopy type of $\BtGq \simeq B{}^{\tau_\ell}H(q') = BH^{h\tau_\ell\psi^{q'}}$, we make the following obeservations: By the classification of $\ell$--compact groups, the homotopy type of $BH$ is determined by root datum $\bbD_H$. Moreover, by
Theorem~\ref{thm:closedsubgrphelp}, the homotopy type of 
$BH^{h\tau_\ell\psi^{q'}}$ only depends on the closure of $\langle \tau_\ell \psi^{q'}
\rangle$ in $\Out(BH)$. (We remark that Theorem~\ref{thm:closedsubgrphelp} relies on
\cite[Thm.~2.4]{BrotoMoellerOliver}, an abstraction of
\cite[Thm.~E]{BM07}; we also invite the reader to look up the short
proof of that reference.) By Proposition~\ref{prop:topcomparison},
 $\Out(BG)$ is a profinite group. As any group homomorphism from a finitely generated pro--$\ell$--group to a profinite group is automatically continuous (see e.g.\cite[Cor.~1.21]{DixonSautoyMannSegal}), it follows that the closure of $\langle \tau_\ell \psi^{q'}
\rangle$ in $\Out(BH)$ agrees with the image of the constructed extension of \eqref{eq:factorization} to $\Z_\ell$. But this image is clearly determined by $\tau_\ell$ and the $\ell$--adic valuation of $q'-1$, as wanted.
\end{proof}
  
  \begin{samepage}
  \begin{rem} 
 \nopagebreak[4]
\par\noindent
\begin{itemize}
\item
 Different data in \eqref{item-twroot}--\eqref{item-val} of Theorem~\ref{thm:class-lcglie} 
of generally produce non-isomorphic $\BtGq$. 
 We will not pursue a precise statement here. 
 The dependence on the 
  the $\ell$--adic valuation of $q'-1$ in   \eqref{item-val} is illustrated by looking at the case of $BG$ a torus, and 
the story for `integral' finite groups of Lie type is explained in \cite[Rem.~24.9]{MT11}.

\item The Tezuka conjecture implies the prediction that the 
\emph{cohomology} of $\BtGq$ is independent of \eqref{item-val} in Theorem~\ref{thm:class-lcglie}.

\item
In \eqref{item-ltw} of Theorem~\ref{thm:class-lcglie}, if $\bbD$ is simple,  $\tau_\ell \neq 1$ can only occur for $\ell = 2, 3$. More precisely, it may occur only for $\bbD$ of type $A_n$ $(n\geq2)$, $D_n$ $(n\geq4)$,  $E_6$, and $G_2$ for $\ell =2$ and $D_4$ for $\ell =3$. Moreover, only in the subset of cases $D_{2n}$ $(n\geq2)$ and $G_2$ at $\ell =2$ and $D_4$ at $\ell=3$ is this due to an element in $\Out(\bbD)$ which is not just $-1$. See Proposition~\ref{prop:twistingclassification} and Remark~\ref{rem:mod4cong}.
\end{itemize}
\end{rem}
\end{samepage}
  
In continuation of the last remark, let us further elaborate on Theorem~\ref{thm:class-lcglie} by using the classification of $\ell$--compact groups to describe all possible twistings $\tau$.
The reference \cite[\S8.4]{AG09} describes $\Out(\bbD)$ for an arbitrary
$\Z_\ell$--root datum $\bbD$ in terms of simple
simply connected root data, and \cite[Thm.~13.1]{AGMV08} then tabulates $\Out(\bbD)$
for  simple simply connected root data $\bbD$ (building on work of Brou\'e--Malle--Michel
\cite{BMM99} over the complex numbers). Theorem~\ref{thm:class-lcglie} allows to reduce to a situation when any twisting $\tau \in \Out(\bbD)$ is of $\ell$--power order and Lemma~\ref{lem:permute} helps us reduce to a case where $\bbD$ is simple. 
 The following proposition lists all possible twistings of $\ell$--power order for $\bbD$ simple simply connected (and hence for all simple):
   \begin{prop}[Classification of $\ell$--compact twistings
  of $\ell$--power order] \label{prop:twistingclassification} 
Let $\bbD$ be a simple simply connected $\Z_\ell$--root datum. Then $\Out(\bbD)/\langle \Z_\ell^\times\rangle$ is finite and tabulated in \cite[Thm.~13.1]{AGMV08}.
In particular it is of order prime to $\ell$ except 
in the following four cases:
\begin{enumerate}
\item
$\ell=2$ and 
 $\bbD \cong \bbD_{D_{2n}} \otimes \Z_2 $ ($n\geq2$), in which case $\Out(\bbD) \cong \Z_2^\times/\langle -1\rangle \times \Gamma$ 
with $\Gamma$ the graph automorphisms of $D_{2n}$ i.e., $\Gamma \cong {\mathfrak S}_3$ for $n=2$ and $C_2$ when $n \geq 3$. 
\item  
$\ell=2$ and 
$\bbD \cong \bbD_{G_2} \otimes \Z_2$ in which case $\Out(\bbD) \cong \Z_2^\times/\langle -1 \rangle \times C_2$.
\item  
$\ell=3$ and 
$\bbD \cong \bbD_{D_4} \otimes \Z_3$ in which case  $\Out(\bbD) \cong  \Z_3^\times/\langle -1\rangle \times {\mathfrak S}_3$.
\end{enumerate}
Here
$\bbD_{D_{2n}}$ and $\bbD_{G_2}$
are simply connected root data of the indicated type over $\Z$.

The kernel of $\Z_\ell^\times \to \Out(\bbD)$ is tabulated in \cite[Prop.~2.2 and Table~1]{andersen99}.  In particular, this reference
 lists when $-1$ is in the kernel,
implying $BG(q) \cong BG(-q)$.  
  \end{prop}
  \begin{proof} This is an inspection of the cases in \cite[Thm.~13.1]{AGMV08}.
  \end{proof}

\begin{rem}[Twistings over $\Z$ and $\Z_\ell$]\label{rem:twistingcomparison}
It is interesting to compare the list in Proposition~\ref{prop:twistingclassification} to the `integral' Lie twistings tabulated in e.g., \cite[\S22]{MT11} (following Steinberg's classic work \cite{steinberg68}). The $\Z_\ell$--description becomes simpler than the $\Z$--description for two reasons: First, as $-q$ is a bona-fide $\ell$--adic unit if $q$ is, so twistings given by $-1$ gets absorbed in $q$ (or even better, do not matter at all in the cases where $-1 \in W$). Second, the ``very twisted" groups just become ordinarily twisted over $\Z_\ell$, see \cite[Elaboration~13.10]{AGMV08}.
\end{rem}

  \begin{rem}[The mod $4$ congruence at $\ell=2$]\label{rem:mod4cong}
  Note that the mod $4$ congruence when $\ell =2$ in Theorem~\ref{thm:class-lcglie}  instructs us to view $q$ as $(-1)(-q)$ when $q$ is congruent to 3 modulo 4. This is due to the structure of the $2$-adic units $\Z_2^\times \cong \Z/2 \times \Z_2$, where topologically cyclic closed subgroups are parametrized by whether a generator is non-zero on the first factor or not, and the $2$--adic valuation of the second factor.  For instance $BE_6(3)\twocom$ is equivalent to $B\,{}^2\!E_6(-3)\twocom$ 
which is again equivalent to $B\,{}^2\!E_6(5)\twocom$, as $\nu_2(-3-1) = \nu_2(5-1) = 2$, i.e., we replace the group by the its twisted version to obtain the wanted congruence. In many other cases $-1$ will be inner, and we simply have that $BG(q)$ is equivalent to $BG(-q)$, so the congruence is automatic. 
   \end{rem}

\subsection{The topology on \texorpdfstring{$\Out(\bbD)$}{Out(D)} and 
\texorpdfstring{$\Out(BG)$}{Out(BG)}}\label{subsec:topology}

We end this section by discussing and comparing topologies on $\Out(\bbD_G)$
and $\Out(BG)$ when $BG$ is a connected $\ell$--compact group.
We remind the reader that $\Out(BG)$ is 
equipped with the topology induced by the actions
of $\Out(BG)$ on the cohomology rings $H^\ast(BG; \Z/\ell^k)$, $k\geq 1$,
so that a map from a space into $\Out(BG)$
is continuous
if and only if
its composite with the homomorphisms $\Out(BG) \to \Aut(H^\ast(BG;\Z/\ell^k))$
is continuous for all $k\geq 1$ 
where the groups $\Aut(H^\ast(BG;\Z/\ell^k))$
are equipped with the discrete topology.
See \cite[p.~7]{BrotoMoellerOliver}.
On the other hand, given a root datum $\bbD$,
we equip $\Out(\bbD)$ with the topology 
it obtains as a quotient of
a closed subgroup of $\GL_{\Z_\ell}(L)$
by a finite group, where $L$ denotes
the underlying finitely generated free $\Z_\ell$--module
of $\bbD$. See \cite[p.~388]{AG09}.
This topology makes 
$\Out(\bbD)$ into a profinite group,
and indeed, by Proposition~\ref{prop:outDstructure2} below,
it is the \emph{only} topology on $\Out(\bbD)$
with this property. The main result of the section is

\begin{prop}
\label{prop:topcomparison} For any connected $\ell$--compact group $BG$,
the group isomorphism $\Out(BG) \xrightarrow{\cong}\Out(\bbD_G)$ of
\cite[Thm.~1.2]{AG09} is a
homeomorphism under the topologies on $\Out(BG)$ and $\Out(\bbD_G)$
introduced above.
\end{prop}

For reference, we note the following corollary of Proposition~\ref{prop:topcomparison}.
\begin{cor}
\label{cor:ladicunitstooutbgcont}
For any connected $\ell$--compact group $BG$, the homomorphism
\[
	\phi\colon \Z_\ell^\times \longto \Out(BG), \qquad q \longmapsto [\psi^q]
\]
is continuous.
\end{cor}
\begin{proof}
The corresponding homomorphism $\Z_\ell^\times \to \Out(\bbD_G)$ is evidently
continuous.
\end{proof}

In the proof of Proposition~\ref{prop:topcomparison},
we will rely on the following statement about $\Out(\bbD)$, which we also
use in the paper, and which should be of independent interest.

\begin{prop}\label{prop:outDstructure2}
Given a root datum $\bbD$, the group $\Out(\bbD)$ admits
a unique topology making it into a profinite group. 
In this topology, $\Out(\bbD)$ has an open normal 
subgroup $M$ which is a topologically finitely generated 
pro--$\ell$--group. Moreover, $M$ can be chosen so that 
it is isomorphic as a topological group to 
a product $\Gamma_s \times (\Z_\ell)^t$
for some $s$ and $t$ 
where $\Gamma_s$ is the principal congruence subgroup
\[
	\Gamma_s = \ker(\GL_n(\Z_\ell) \to \GL_n(\Z/\ell^s))
\]
for $n = \dim_{\Q_\ell}(\pi_1(\bbD)\tensor \Q)$.
\end{prop}

The exact value of $t$ can be easily read off from the proof 
of Proposition~\ref{prop:outDstructure2}.
In the proof of both Proposition~\ref{prop:topcomparison}
and Proposition~\ref{prop:outDstructure2},
we will make use of the following lemma.

\begin{lemma}
\label{lm:homzlmodtooutbg3} 
Consider a group homomorphism $\phi\colon H \to K$
where $H$ is a topological group
containing an open subgroup
which is a topologically finitely generated pro--$\ell$--group,
and where $K$ is a topological group 
whose topology is induced from a family of finite groups
in the sense that there exist
finite discrete groups $F_i$ and homomorphisms 
$f_i \colon K \to F_i$, $i\in I$,
such that a homomorphism from a topological group 
into $K$ is continuous if and only if its composite with 
each $f_i$ is continuous.
Then $\phi\colon H \to K$ is continuous.
If furthermore $H$ is compact and $K$ is Hausdorff,
and $\phi$ is a bijection, then $\phi$ is a homeomorphism.
\end{lemma}

\begin{proof}
Since a group homomorphism is continuous if and only if 
its restriction to some open subgroup of the domain is 
continuous, the continuity of $\phi$ follows from 
a theorem of Serre \cite[Thm.~1.17]{DixonSautoyMannSegal}
which states that any group homomorphism from 
a topologically finitely generated pro--$\ell$--group
into a finite group is continuous. 
The last claim now follows from the 
point-set topological fact that a continuous bijection 
from a compact space to a Hausdorff space is a homeomorphism.
\end{proof}

\begin{proof}[Proof of Proposition~\ref{prop:outDstructure2}]
It suffices to construct on $\Out(\bbD)$ \emph{some}
profinite topology in which $\Out(\bbD)$ has a subgroup $M$
with the prescribed properties. That any profinite
topology on $\Out(\bbD)$ coincides with the one 
constructed then follows by applying  Lemma~\ref{lm:homzlmodtooutbg3}
to the identity map of $\Out(\bbD)$. 

We start by constructing the topology on $\Out(\bbD)$.
By \cite[Thm.~8.13 and Prop.~8.15]{AG09}, the group $\Out(\bbD)$ embeds as a 
finite-index subgroup 
\[
	\Out(\bbD) \leq \Out(\bbD')
\]
for a root datum $\bbD'$ splitting as a product
$\bbD' = \prod_{i=0}^k \bbD_i^{m_i}$ %
where $\bbD_0$ is the trivial root datum with lattice $\Z_\ell$
and $\bbD_1$, \ldots, $\bbD_k$ are pairwise nonisomorphic 
irreducible root data with non-trival Weyl groups.
By \cite[Prop.~8.14]{AG09}, there is an isomorphism
of groups
\begin{equation}
\label{eq:bbdprimedesc}
	\Out(\bbD') 
	\isom
	\GL_{m_0}(\Z_\ell) 
    \times 
    \prod_{i=1}^k (\Out(\bbD_i)\wr \fS_{m_i})
\end{equation}
where in view of \cite[Lemma~8.9]{AG09}
we have $m_0 = n = \dim_{\Q_\ell}(\pi_1(\bbD)\tensor \Q)$.
We will henceforth use this isomorphism to identify 
$\Out(\bbD')$ with the right hand side 
of~\eqref{eq:bbdprimedesc}. 
Equipping $\GL_{m_0}(\Z_\ell)$ with its natural topology
and giving each $\Out(\bbD_i)$ the topology 
indicated at the beginning of the section,
this identification yields on $\Out(\bbD')$ a topology  
making  $\Out(\bbD')$ into a topological group.
Finally, we topologize $\Out(\bbD)$ 
as a subgroup of $\Out(\bbD')$.

We proceed to construct the subgroup $M$,
which we will do by constructing for each
factor $F$ on the right hand side of~\eqref{eq:bbdprimedesc}
a subgroup of $F \cap \Out(\bbD)$ which is 
a topologically finitely generated pro--$\ell$--group
which is 
a finite-index closed subgroup of $F$ 
and
a normal subgroup of $\Out(\bbD)$. 
Let us first consider the factors $\Out(\bbD_i)\wr \fS_{m_i}$
of $\Out(\bbD')$.
It can be read off from
\cite[Thm.~13.1]{AGMV08} that for each $i=1,\ldots,k$,
the image of the homomorphism $\Z_\ell^\times \to \Out(\bbD_i)$ sending 
$q\in\Z_\ell^\times$ to the multiplication-by-$q$ 
map is infinite and of finite index in $\Out(\bbD_i)$. 
This homomorphism is continuous, and hence closed 
since $\Z_\ell^\times$ is compact and $\Out(\bbD_i)$
is Hausdorff.
As $\Z_\ell^\times$ contains
$\Z_\ell$ as a finite-index closed subgroup
and the image of the homomorphism $\Z_\ell^\times \to \Out(\bbD_i)$
lies in the center of $\Out(\bbD_i)$,
we conclude that 
$\Out(\bbD_i)$ 
contains a finite-index closed normal subgroup isomorphic to
$\Z_\ell$ 
for all $i$.
It follows that $\Out(\bbD_i)\wr \fS_{m_i}$
contains a finite-index closed normal subgroup $V_i$ 
isomorphic to 
$\Z_\ell^{m_i}$ for all $i$. 
Let $W_i$ be the intersection $W_i = V_i \cap \Out(\bbD)$.
Then $W_i$ is a normal subgroup of $\Out(\bbD)$.
Moreover, as $W_i$ is of finite index in the topologically finitely generated
pro--$\ell$--group $V_i$, 
by \cite[Thm.~1.17]{DixonSautoyMannSegal}
$W_i$ is open and hence closed in $V_i$. 
It follows that $W_i$ 
is a finite-index closed subgroup of $\Out(\bbD_i)\wr \fS_{m_i}$.
Moreover, as closed subgroups
of $V_i$ are $\Z_\ell$--submodules and $\Z_\ell$ is a PID,
it follows that $W_i$ is again isomorphic to $V_i \isom \Z_\ell^{m_i}$ for all $i$. 
In particular, $W_i$ is a topologically finitely generated pro--$\ell$--group.

Let us next consider the factor $\GL_{m_0}(\Z_\ell)$ of $\Out(\bbD')$.
Let $\epsilon = 1$ if  $\ell = 2$ and 
let $\epsilon = 0$ otherwise.
By \cite[Thm.~5.2]{DixonSautoyMannSegal}, the subgroup 
$\Gamma_{1+\epsilon} \leq \GL_{m_0}(\Z_\ell)$ 
is a topologically finitely generated pro--$\ell$--group, 
so again by \cite[Thm.~1.17]{DixonSautoyMannSegal},
the finite-index subgroup $\Gamma_{1+\epsilon} \cap \Out(\bbD)$
of $\Gamma_{1+\epsilon}$ is open in $\Gamma_{1+\epsilon}$.
Since the subgroups $\Gamma_s \leq \GL_{m_0}(\Z_\ell)$ form a
neighborhood basis for $\GL_{m_0}(\Z_\ell)$ at the identity 
(see the beginning of Section~5 of \cite{DixonSautoyMannSegal}),
it follows that $\Gamma_s \leq \Out(\bbD)\cap \Gamma_{1+\epsilon}$
for some $s \geq 1+\epsilon$. 
We note that $\Gamma_s$ is an open and hence a finite-index closed subgroup of
the compact group
$\GL_{m_0}(\Z_\ell)$;
that it is normal in $\Out(\bbD')$ and hence 
also in $\Out(\bbD)$; and that as an open subgroup of the 
topologically finitely generated pro--$\ell$--group
$\Gamma_{1+\epsilon}$ 
it is a topologically finitely generated pro--$\ell$--group.
See \cite[Props.~1.7 and 1.11(i)]{DixonSautoyMannSegal}.
Now the subgroup 
$M = \Gamma_s \times \prod_{i=1}^{k} W_i \leq \Out(\bbD)$
has the desired properties. That $M$ is open in $\Out(\bbD)$
follows by observing that $M$ is closed and of finite index in $\Out(\bbD')$
and hence also in $\Out(\bbD)$.
Finally, since $\Out(\bbD)$ admits a profinite finite-index closed subgroup 
(namely $M$), it is also profinite.
\end{proof}

To finish the proof of Proposition~\ref{prop:topcomparison},
we need an additional lemma.

\begin{lemma}
\label{lm:outbghausdorff}
The topology on $\Out(BG)$ is Hausdorff for all connected $\ell$--compact groups $BG$.
\end{lemma}
\begin{proof}
To prove the claim, it suffices to show that 
the action of $\Out(BG)$ on
$H^*(BG;\Z_\ell)$
is faithful; see \cite[p.~7]{BrotoMoellerOliver}.
By \cite[Thm.~9.7(iii)]{DW94}, 
we have 
\[
    H^*(BG;\Z_\ell) \otimes_{\Z_\ell} \Q_\ell 
    \isom 
    (H^*(BT;\Z_\ell)\otimes_{\Z_\ell} \Q_\ell)^{W_G},
\]
and under the isomorphism $\Out(BG) \isom \Out(\bbD_G)$
of \cite[Thm.~1.2]{AG09}, the induced action of $\Out(BG)$
on the right hand side corresponds to the
action of the group $\Out(\bbD_G) = \Aut(\bbD_G)/W_G$ on 
the ring ${\Q_\ell[L]^{W_G}}$
induced by the faithful action of $\Aut(\bbD_G)$ on $L$,
where $L$ denotes the underlying finitely generated
free $\Z_\ell$--module of $\bbD_G$.
Let $H \leq \Aut(\bbD_G)$
be the subgroup 
consisting of all elements
fixing the subring
${\Q_\ell[L]^{W_G}} 
\subset 
{\Q_\ell[L]}$
pointwise. To prove the claim, it is enough to show that $H = W_G$.
Clearly $W_G \leq H$. To prove the reverse containment,
write ${\Q_\ell(L)}$
for the fraction field of 
${\Q_\ell[L]}$.
Writing $\Aut(E/F)$ for the group of 
automorphisms of a field $E$ fixing a subfield $F$,
we then have an embedding 
$H\to \Aut(\Q_\ell(L) / \Q_\ell(L)^H)$.
Clearly $\Q_\ell(L)^H \subset \Q_\ell(L)^{W_G}$.
Moreover, as $W_G$ is finite,
given an element $f/g \in  \Q_\ell(L)^{W_G}$,
by multiplying $f$ and $g$ by the product of 
all elements of the form $w\cdot g$ for $w\in W_G, w\neq 1$,
we see that $f/g$ can be expressed as a quotient
of two elements of $\Q_\ell[L]^{W_G}$,
wherefore $f/g \in \Q_\ell(L)^H$ by the choice of $H$.
Thus $\Q_\ell(L)^H = \Q_\ell(L)^{W_G}$, and hence
$\Aut(\Q_\ell(L) / \Q_\ell(L)^H) = \Aut(\Q_\ell(L) / \Q_\ell(L)^{W_G})$.
But as $W_G$ is finite, we have 
$\Aut(\Q_\ell(L) / \Q_\ell(L)^{W_G}) = W_G$;
see e.g. \cite[Thm.~VI.1.8]{Lang}.
We conclude that $H$ embeds into $W_G$, and the claim follows.
\end{proof}

\begin{proof}[Proof of Proposition~\ref{prop:topcomparison}]
By \cite[Thm.~4.2]{ACFJS},
the cohomology ring
$H^\ast(BG;\,\Z/\ell^k)$ is Noetherian,
so by \cite[Thm.~13.1]{Matsumura} it is
a finitely generated $\Z/\ell^k$--algebra.
Thus the automorphism groups
$\Aut(H^\ast(BG;\,\Z/\ell^k))$
featuring in the definition of the topology on $\Out(BG)$
are finite for all $k$.
In view of 
Proposition~\ref{prop:outDstructure2}
and
Lemma~\ref{lm:outbghausdorff},
Lemma~\ref{lm:homzlmodtooutbg3}
now applies to show that
the inverse 
$\Out(\bbD) \xrightarrow{\isom} \Out(BG)$ 
of the isomorphism from \cite[Thm.~1.2]{AG09}  
is a homeomorphism. 
\end{proof}

\printbibliography %

\end{document}